\tikzset{>=angle 60}
\title{Linearized optimal transport on manifolds}
\author{Cl\'ement Sarrazin\thanks{RAPSODI team, INRIA de Lille, France}
	\and Bernhard Schmitzer\thanks{Campus Institute Data Science, Göttingen University, Germany}}
\date{\today}
\begin{document}
\maketitle
\begin{abstract}
Optimal transport is a geometrically intuitive, robust and flexible metric for sample comparison in data analysis and machine learning. Its formal Riemannian structure allows for a local linearization via a tangent space approximation. This in turn leads to a reduction of computational complexity and simplifies combination with other methods that require a linear structure. Recently this approach has been extended to the unbalanced Hellinger--Kantorovich (HK) distance. In this article we further extend the framework in various ways, including measures on manifolds, the spherical HK distance, a study of the consistency of discretization via the barycentric projection, and the continuity properties of the logarithmic map for the HK distance.
\end{abstract}
\section{Introduction}
\subsection{Motivation}
\paragraph{Optimal transport in data analysis} Optimal transport (OT) induces a geometrically intuitive and robust metric on the set of probability measures over a metric space, with many applications in geometry, stochastics and PDE analysis. OT is also becoming an increasingly popular tool in data analysis and machine learning, including numerical applications in image classification \cite{feydy2017optimal}, inverse problems \cite{engquist_froese_yang_2016}, and deep learning \cite{pmlr-v70-arjovsky17a,ChizatNIPS2018}.
This is also made possible by the development of increasingly efficient corresponding algorithms.
We refer to \cite{villani2009optimal,santambrogio2015optimal,GalichonOTEconomics} for general introductions to the topic of optimal transport and to \cite{PeyreCuturiCompOT} for an overview on computational techniques.

Consider a collection of probability measures $\{\nu_1,\ldots,\nu_N\} \subset \prob(\R^n)$, each $\nu_i$ representing one data sample, and assume that the 2-Wasserstein distance $\W_2$ is a meaningful metric for their comparison.
Our task is to analyze this dataset. This means we might have to train a classifier, according to some given labels $(l_i,\ldots,l_N)$; to find a meaningful clustering; or to perform a dimensionality reduction and to visualize the dominant types of variation within the samples.

To solve this task, we could, in principle, start by computing the full matrix of pairwise Wasserstein distances $\W_2(\nu_i,\nu_j)$ between all samples.
However, this would require the solution of $N(N-1)/2=O(N^2)$ optimal transport problems. Due to the quadratic growth in $N$ this will quickly become a computational problem, despite recent progress in algorithmic efficiency. Moreover, any of the aforementioned tasks is difficult to perform in a general (non-linear) metric space, based solely on the knowledge of the pairwise distances.

\paragraph{Linearized optimal transport} Instead, in \cite{wang2013linear} it was proposed to leverage the formal Riemannian structure of the 2-Wasserstein distance over $\R^n$ (see for instance \cite{otto2001geometry,ambrosio2005gradient,LottWassersteinRiemannian2008}) and to formally conduct a local linearization by a tangent space approximation.
For this purpose, one chooses a suitable reference measure $\mu \in \prob(\R^n)$ and maps all samples $\nu_i$ to the tangent space at $\mu$ via the logarithmic map, $v_i \assign \Log_\mu(\nu_i)$. The tangent space can be identified with (curl free) vector fields in $\LL^2(\R^n,\R^n;\mu)$. Pairwise distances between the embeddings $v_i$ in the tangent space are then used to approximate distances between samples. This is called the linearized optimal transport distance (LOT).
Beyond mere approximation of pairwise distances, the Hilbertian structure of $\LL^2(\R^n,\R^n;\mu)$ allows the application of a wide variety of standard data analysis techniques to tackle any of the aforementioned potential analysis tasks. For instance, principal component analysis (PCA) can be used for dimensionality reduction and to extract the dominant modes of variation. Interpolation between samples $\nu_i$ is replaced by linear interpolation between the embeddings $v_i$ and these trajectories can be visualized by employing the exponential map (a left-inverse of the logarithmic map). Such curves are called generalized geodesics, introduced in \cite{ambrosio2005gradient}, we also refer the reader to \cite{NennaPass2022} for a recent article on the subject.
In one dimension, $n=1$, this embedding reduces to the cumulative distance transform \cite{Park2018}, which drastically reduces the computational complexity. This can still be applied to higher dimensional datasets via the slicing trick \cite{KolouriSlicedLOT2016}.

The empirical success in applications has also lead to an increased interest in the theoretical properties of the method.
How good is the approximation of the pairwise distances by local linearization? In the conventional, finite-dimensional Riemannian setting this question is tied to the curvature of the manifold. The results do not apply to $\W_2$ as here the Riemannian structure is merely formal and additional assumptions are necessary to establish some approximation guarantees.
For relatively simple families of samples this is addressed in \cite{MoosmuellerLinOTSheared2022}.
More general continuity properties of the logarithmic map are established, for instance, in \cite{gigli11,delalande21}.
Another relevant question is the choice of the reference measure $\mu$. From a Riemannian perspective it should be chosen as the Riemannian center of mass (or barycenter) of the sample set $(\nu_i)_i$, which has been established for the Wasserstein distance in \cite{WassersteinBarycenter}. While this can, in principle, be approximated numerically, for instance via \cite{BenamouIterativeBregman2015}, this is numerically not feasible for large number of samples. In practice an approximation by (a few or even a single iteration of) Lloyd's algorithm works well, as highlighted in \cite{cuturi2014fast} and \cite{alvarez2016fixed}, and even naive linear averaging of the measures (e.g.~as densities in some $\LL^p$ space) tends to work acceptably.
\cite{BigotKleinBarycenter2018} studies the consistency of the barycenter as the number of samples tends to infinity, and \cite{Bigot2017} studies the consistency of tangent space principal component analysis in the one-dimensional, $n=1$, setting.

\paragraph{Unbalanced optimal transport}
Recently, optimal transport has also been extended to comparing measures of varying mass, the most prominent example maybe being the Hellinger--Kantorovich distance ($\HK$) \cite{KMV-OTFisherRao-2015,ChizatOTFR2015,liero2018optimal}. From the perspective of data analysis this can explicitly model growth or shrinkage, but can also provide additional robustness towards general mass fluctuations. The linearized optimal transport framework has been extended to the Hellinger--Kantorovich distance in \cite{cai2022linearized}.

\subsection{Contribution and outline}
In this article we further extend the toolbox of linearized optimal transport and provide some new theoretical results.
In particular we consider the case where the base space $X$ is no longer flat $\R^n$, but itself a non-linear Riemannian manifold.

Sections \ref{sec:WBasic} and \ref{sec:WRiemann} recall basic properties of the $\W_2$ distance and its local linearization. In Section \ref{sec:LinW2} we show how to extend this to the case of manifold base spaces $X$.
In Section \ref{sec:HKBasic} we recall basic properties of the HK metric. In Section \ref{sec:HKMaps} we give a slight extension of a result from \cite{gallouet2021regularity} concerning the existence of optimal HK transport maps on manifold base spaces $X$. In Section \ref{sec:LinHK} we introduce the local linearization for manifold base spaces $X$. Beyond the extension to base manifolds, compared to \cite{cai2022linearized} the formulas have been simplified by using primal-dual optimality conditions for the HK transport problem. In addition we give a formula for the logarithmic map based on a suitable dual variable. The latter will be useful when determining the range of the logarithmic map (see Section \ref{sec:convexity_range}).
In Section \ref{sec:LinSHK} we introduce the local linearization of the spherical Hellinger--Kantorovich (SHK) distance, a projection of the HK metric structure which was introduced in \cite{laschos2019geometric}. Working with SHK might be preferable, when all samples are indeed probability measures, but still present non-local mass fluctuations. The latter make it difficult to work with the standard Wasserstein distance, but working with HK will introduce a bias towards smaller masses. The SHK metric may be a suitable way to attain robustness to local mass fluctuations without the bias induced by the HK metric, see Section \ref{sec:Numerics} for an illustration. Results from \cite{laschos2019geometric} show how SHK can be computed directly from HK and we extend these results to the logarithmic and exponential maps.
In Section \ref{sec:convexity_range} we provide a discussion on the range of the logarithmic map on the (formal) optimal transport manifolds.

In Section 5 we give several approximation results. First (Section \ref{sec:LinHKToLinW}) we show that linearized HK converges to standard linearized $\W_2$, as the length scale parameter in HK tends to infinity, in agreement with the fact that HK converges to $\W_2$ \cite{liero2018optimal}.
In Section \ref{sec:BarycentricW} we show that `barycentric projection', a popular technique for numerical approximation of linearized optimal transport for $\W_2$, does actually converge to the true logarithmic map in the limit of increasing discretization resolution.
In Section \ref{sec:BarycentricHK} we give the corresponding result for the HK distance. This extends beyond discrete approximation and generally provides a continuity (or lack thereof) result for the logarithmic map of the HK metric. We illustrate this result with several examples.

Section \ref{sec:Numerics} provides some numerical experiments, in particular a comparison between linearized HK and SHK, an example of linearized optimal transport over a sphere as base manifold, and some examples on the convexity of the range of the logarithmic map.

\subsection{Notation}
\label{sec:Notation}
\paragraph{Riemannian manifolds}
Throughout the article $\Surf$ will be a connected complete $n$-di\-men\-sional Riemannian manifold. 
For $x \in \Surf$ denote by $T_x \Surf$ the tangent space of $\Surf$ at $x$, and by $TX \assign \bigcup_{x \in \Surf} \{x\} \times T_x \Surf$ the tangent bundle of $\Surf$. For $v,w \in T_x \Surf$ we denote by $\langle v,w\rangle_x$ the corresponding Riemannian inner product and by $\|v\|_x$ the induced norm.

We will denote by $\Log^X_x$ the logarithmic map on $\Surf$ at $x$ and likewise, by $\Exp^X_x$ the exponential map at $x$.
Intuitively, $\Exp^X_x(v)$ yields the point on $\Surf$ that one reaches at time 1 when one embarks on a constant speed geodesic starting at $x$ with velocity vector $v$. Conversely, $\Log^X_x(y)$ yields the smallest such initial velocity vector $v$ such that one ends up at $y$, when starting from $x$ along this geodesic. In particular, $\Exp^X_x(\Log^X_x(y))=y$, i.e.~the exponential map is a left inverse of the logarithmic map (however, the image of $\Log^X_x$ might be strictly smaller than $T_xX$). In addition one has
\begin{equation}
\label{eq:LogMapBasic}
\norm{\Log^\Surf_x(y)}_x =d(x,y)  \qquad \tn{and} \qquad
\nabla_x d^2(x,y)=-2\Log^\Surf_x(y)
\end{equation}
whenever $\Log^\Surf_x$ is defined.
Because we are working on a complete and connected manifold, the Hopf-Rinow theorem guarantees that the exponential map is defined on the entire tangent bundle $TX$ (geodesics can be extended indefinitely before $t=0$ or after $t=1$) and that any two points in $\Surf$ can be connected by such a geodesic (so that it is surjective).

\begin{example}
When $\Surf$ is the `flat space' $\R^n$ (with the Euclidean metric), then $T_x \Surf = \R^n$ and  $\Log^X_x(y)=y-x$ is well defined for any $x$ and $y$ and $\Exp^X_x(v)=x+v$.
\end{example}

Finally, we denote the open ball of $\Surf$ of radius $r\geq 0$ and center $x\in\Surf$ by $B_\Surf(x,r)$.

\paragraph{Measures on metric spaces}
For a Polish metric space $\Surf$, we will denote by $\Borel(\Surf)$ the set of Borel-measurable functions on $\Surf$ (valued in $\bar{\R}=\R\cup\{-\infty,+\infty\}$), $\meas(\Surf)$ the signed Radon measures on $\Surf$, by $\measp(\Surf)$ the non-negative Radon measures and by $\prob(\Surf)$ the Radon probability measures. 

When $\Surf$ is a Riemannian manifold, denote by $\vol$ its volume measure. We will write $\measpl(X)$ for the non-negative measures on $X$ that are dominated by $\vol$ and similarly $\probl(X)$ for the respective probability measures. Intuitively, the volume measure will play the role of the Lebesgue measure on the curved space $X$. In particular, when $X$ is the flat space $\R^d$, $\vol=\Leb^d$ is the Lebesgue measure itself.

For a non-negative measure $\mu$ on $\Surf$ we will denote by $\LL^2(\Surf;\mu)$ the space of real functions on $X$ whose square is $\mu$-integrable (up to equality $\mu$-almost everywhere) and $\LL^2(\Surf,TX;\mu)$ the space of vector fields on $X$ whose squared norm (for the metric on $TX$) is $\mu$-integrable (again, up to equality $\mu$-almost everywhere). We may sometimes merely write $\LL^2(\mu)$ when the context is clear.

Finally, for $\mu\in\meas(\Surf)$ and a measurable map $T:\Surf\to Y$ (where $Y$ is a Polish space equipped with its Borel $\sigma$-algebra), the \emph{push-forward measure} of $\mu$ along $T$ is the measure $T_{\#}\mu$ on $Y$ characterized by
\begin{equation}
	\label{eq:push_forward}
	\int_Y\Phi(x_1)\diff(T_\#\mu)(x_1)=\int_X\Phi(T(x_0))\diff\mu(x_0)
\end{equation}
for all bounded measurable $\Phi : Y \to \R$.

Finally, let $K$ be a compact subset of $\Surf$. For the sake of simplicity, we will frequently assume that the support of measures is contained in $K$ to avoid technical compactness arguments.

\paragraph{Optimal transport}
We refer to \cite{villani2009optimal,santambrogio2015optimal,GalichonOTEconomics,PeyreCuturiCompOT} for detailed monographs on optimal transport, applications, and corresponding computational methods. In the following paragraph we collect some fundamental definitions and concepts that are required for the manuscript.
For two measures $\mu_0, \mu_1 \in \measp(\Surf)$ and a lower-semicontinuous cost function $c:\Surf\times \Surf\to\RCupInf$ the corresponding optimal transport problem is defined as
\begin{align}
	\label{eq:OT_primal}
	\Icost{c}(\mu_0,\mu_1) & \assign \inf \left\{
	\int_{\Surf \times \Surf} c(x_0,x_1)\,\diff \pi(x_0,x_1)~
	\middle|~ \pi \in \Pi(\mu_0,\mu_1) \right\} \\
\intertext{were the infimum is taken among all \emph{transport plans} between $\mu_0$ and $\mu_1$}
	\label{eq:OT_feasible}
	\Pi(\mu_0,\mu_1) & \assign\left\{\pi\in\meas_+(\Surf\times \Surf)~\middle|~\proj_{i\#} \pi = \mu_i \tn{ for } i=0,1\right\}
\end{align}
and $\proj_{i} :  \Surf \times \Surf \to X$ is the projection onto the $i$-th coordinate, $(x_0,x_1) \mapsto x_i$, and therefore $\proj_{i\#} \pi$ yields the corresponding marginal of $\pi$, see \eqref{eq:push_forward}.
When $\mu_0(\Surf) \neq \mu_1(X)$ then $\Pi(\mu_0,\mu_1) = \emptyset$ and therefore by convention $\Icost{c}(\mu_0,\mu_1)=+\infty$. Sometimes the optimal plan $\pi$ is induced by a Borel map $T : \Surf \to \Surf$ via
\begin{equation}
\label{eq:OTMap}
\pi=(\id,T)_{\#} \mu_0, \quad \tn{where} \quad (\id,T): x \mapsto (x,T(x)).
\end{equation}
In this case, $T$ is called an optimal transport map for the associated OT problem.

Since $\Icost{c}(\mu_0,\mu_1)$ is defined via a convex minimization problem of a linear objective over an affine set, one can derive a corresponding concave dual maximization problem:
\begin{align}
	\label{eq:OT_dual}
	\Icost{c}(\mu_0,\mu_1) & = \sup \left\{
	\int_{\Surf} \Phi_0(x_0)\,\diff \mu_0(x_0)+\int_{\Surf} \Phi_1(y)\,\diff \mu_1(y)
	~\middle|~(\Phi_0,\Phi_1)\in\Cont_c(\Surf) \right\}
\intertext{with the admissible set given by}
	\Cont_c(\Surf) & \assign\left\{(\Phi_0,\Phi_1)\in\Cont(\Surf)\times \Cont(\Surf)~\middle|~\forall(x_0,x_1)\in \Surf^2,~\Phi_0(x_0)+\Phi_0(y)\leq c(x_0,x_1)\right\}.
\end{align}
Dual maximizers are sometimes referred to as (Kantorovich) potentials.
Depending on the regularity of $c$ or compactness of the suppport, existence of such Kantorovich potentials may require the space $\Cont(\Surf)$ to be relaxed, e.g.~to $\LL^1(\Surf,\mu_i)$ (see for instance \cite[Theorem 1.40]{santambrogio2015optimal}).
Since $\mu_0$ and $\mu_1$ are non-negative measures, for fixed $\Phi_0$ it is trivial to maximize over $\Phi_1$. One obtains
\begin{align*}
	\Phi_1=\Phi_0^c:=\inf_{x_0\in \Surf}c(x_0,.)-\Phi_0(x_0).
\end{align*}
The function $\Phi_0^c$ is called the $c$-transform of $\Phi_0$ and a function obtained in that way is said to be $c$-concave. Likewise, for fixed $\Phi_1$ we can maximize over $\Phi_0$. When $c$ is symmetric, as all costs considered throughout this article are, both $c$-transforms coincide.
In \eqref{eq:OT_dual} one may therefore impose the additional constraint that the $\Phi_i$ are $c$-transforms of each other and therefore $c$-concave, and we will make that assumption for simplicity in the future.

\section{2-Wasserstein distance}
\subsection{Basic properties}
\label{sec:WBasic}
Setting $c=d^2$ as the squared distance in \eqref{eq:OT_primal}, $\Icost{c}$ becomes the squared 2-Wasserstein distance on suitable subsets of probability measures, e.g.~those with support contained in some compact set $K \subset \Surf$.
\begin{definition}[2-Wasserstein distance]
	\label{def:W2_primal}
	For $\mu_0, \mu_1 \in \prob(K)$, the corresponding 2-Wasserstein distance between $\mu_0$ and $\mu_1$ is defined as
	\begin{align}
		\label{eq:W2_primal}
		\W_2(\mu_0,\mu_1) & \assign \inf \left\{
		\int_{\Surf \times \Surf} d^2(x_0,x_1)\,\diff \pi(x_0,x_1)
		\middle| \pi \in \Pi(\mu_0,\mu_1) \right\}^{1/2}.
	\end{align}
	It is a distance on $\prob(K)$ and the resulting metric space $(\prob(K),W_2)$ is often called the 2-Wasserstein space (over $K$).
\end{definition}
The construction can be generalized to any $p \in [1,\infty)$. For a set $K$ that would be non-compact, an additional bound on the $p$-th moments of the $\mu_i$ needs to be imposed, so as to guarantee a finite transport cost.

Existence of a transport plan $\pi$ minimizing \eqref{eq:W2_primal} is a corollary of the general existence result for \eqref{eq:OT_primal}.
\begin{proposition}
	Minimizing $\pi$ for Problem \eqref{eq:W2_primal} exist.
\end{proposition}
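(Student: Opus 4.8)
The plan is to apply the direct method of the calculus of variations, which is precisely the route through the general existence result for \eqref{eq:OT_primal} mentioned above, specialized to $c = d^2$ on the compact set $K$. First I would check that the feasible set is nonempty and the infimum finite: since $\mu_0,\mu_1 \in \prob(K)$ share the same (unit) mass, the product $\mu_0 \otimes \mu_1$ lies in $\Pi(\mu_0,\mu_1)$, and because $K$ is compact $d^2$ is bounded on $K \times K$, so the value in \eqref{eq:W2_primal} is finite.

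Next comes compactness. Every $\pi \in \Pi(\mu_0,\mu_1)$ is supported in $K \times K$, hence $\Pi(\mu_0,\mu_1) \subset \prob(K \times K)$, which is weak-$*$ compact by Banach--Alaoglu (equivalently Prokhorov, using compactness of $K \times K$). I would then show that $\Pi(\mu_0,\mu_1)$ is weak-$*$ closed: if $\pi_k \rightharpoonup \pi$ with $\pi_k \in \Pi(\mu_0,\mu_1)$, then for any $\varphi \in \Cont(K)$ the map $(x_0,x_1) \mapsto \varphi(x_0)$ is continuous and bounded on $K \times K$, so $\int \varphi \,\diff(\proj_{0\#}\pi) = \lim_k \int \varphi \,\diff(\proj_{0\#}\pi_k) = \int \varphi \,\diff\mu_0$, and similarly for the second marginal; hence $\pi \in \Pi(\mu_0,\mu_1)$. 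Thus $\Pi(\mu_0,\mu_1)$ is weak-$*$ compact.

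Finally, lower semicontinuity of the objective. The cost $d^2$ is lower semicontinuous and nonnegative on $\Surf \times \Surf$ (indeed continuous and bounded on $K \times K$), so it is the pointwise supremum of a nondecreasing sequence of nonnegative continuous bounded functions $c_m$; then each $\pi \mapsto \int c_m \,\diff\pi$ is weak-$*$ continuous and $\pi \mapsto \int d^2 \,\diff\pi = \sup_m \int c_m \,\diff\pi$ is weak-$*$ lower semicontinuous as a supremum of continuous functionals. Taking a minimizing sequence $(\pi_k)_k$ in $\Pi(\mu_0,\mu_1)$, compactness provides a subsequence $\pi_{k_j} \rightharpoonup \pi^\star \in \Pi(\mu_0,\mu_1)$, and lower semicontinuity gives $\int d^2 \,\diff\pi^\star \le \liminf_j \int d^2 \,\diff\pi_{k_j} = \W_2(\mu_0,\mu_1)^2$, so $\pi^\star$ is a minimizer.

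As for the main obstacle: in this compact setting there is essentially none. The only ingredient that carries any weight is the weak-$*$ lower semicontinuity of $\pi \mapsto \int c \,\diff\pi$, which is the genuinely load-bearing step in the general (possibly non-compact) theory; here, with $c = d^2$ continuous and bounded on $K \times K$, this functional is in fact continuous, so the proof reduces to Banach--Alaoglu plus closedness of the marginal constraints. If one instead wanted to invoke the cited general existence theorem for \eqref{eq:OT_primal} directly, it suffices to note that $d^2$ restricted to $K \times K$ is continuous, hence lower semicontinuous, and that $\prob(K)$ is the relevant tight class, so the hypotheses of that theorem are met.
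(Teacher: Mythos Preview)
Your argument is correct and is exactly the direct-method proof underlying the general existence result for \eqref{eq:OT_primal} that the paper invokes; the paper itself gives no proof beyond stating that existence is a corollary of that general result. You have simply unpacked that corollary in the special (and simpler) case $c=d^2$ on a compact $K$, where the cost integral is even weak-$*$ continuous rather than merely lower semicontinuous.
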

Furthermore, compactness of $K$, together with the continuity of $c=d_X^2$ yields the existence of optimal Kantorovich potentials (which can then be taken to be $c$-concave).
\begin{proposition}[Dual transport problem] For $\mu_0,\mu_1\in\prob(K),$
\label{prop:W2_dual}
\begin{multline}
  \label{eq:W2_dual}
	\W_2^2(\mu_0,\mu_1) = \sup\bigg\{ \int_X \Phi_0\,\diff \mu_0+ \int_X \Phi_1\,\diff \mu_1
	\bigg| \Phi_0, \Phi_1 \in \Cont(\Surf),\\
	\Phi_0(x_0)+\Phi_1(x_1) \leq d^2(x_0,x_1)
	\tn{ for all } (x_0,x_1) \in \Surf \times \Surf \bigg\}
\end{multline}
Maximizing $\Phi_0,\Phi_1$ exist.
\end{proposition}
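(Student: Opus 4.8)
The plan is to read off the duality identity \eqref{eq:W2_dual} from the general Kantorovich duality \eqref{eq:OT_dual}, and to concentrate the actual work on the existence of maximizers. For the first point, the cost $c=d^2$ is continuous, hence lower semicontinuous, on $\Surf\times\Surf$, and the constraint set in \eqref{eq:W2_dual} is exactly $\Cont_c(\Surf)$ for this cost; thus \eqref{eq:W2_dual} is nothing but \eqref{eq:OT_dual} specialized to $c=d^2$ (alternatively one may invoke \cite[Theorem~1.40]{santambrogio2015optimal}). The substance of the proposition is therefore the attainment of the supremum, which I would prove by the classical direct method.

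First I would reduce, as in the discussion following \eqref{eq:OT_dual}, to maximizing over pairs $(\Phi_0,\Phi_1)$ with $\Phi_1=\Phi_0^c$ and $\Phi_0=\Phi_1^c$, i.e.\ with both potentials $c$-concave and mutual $c$-transforms. The only metric input needed is that $d^2$ is locally Lipschitz: for $x_0,y_0,x_1$ in a set of diameter at most $R$,
\begin{equation*}
  |d^2(x_0,x_1)-d^2(y_0,x_1)|=\bigl(d(x_0,x_1)+d(y_0,x_1)\bigr)\,|d(x_0,x_1)-d(y_0,x_1)|\le 2R\,d(x_0,y_0)
\end{equation*}
by the triangle inequality, so $d^2$ is $2R$-Lipschitz in each of its arguments on such a set. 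Exploiting the invariance $(\Phi_0,\Phi_1)\mapsto(\Phi_0+a,\Phi_1-a)$, $a\in\R$, of both the objective and the constraint (which preserves the mutual $c$-transform structure) to fix a normalization such as $\min_K\Phi_0^k=0$, and using the Hopf--Rinow theorem (closed bounded subsets of the complete manifold $\Surf$ are compact), standard arguments then show that a suitably normalized maximizing sequence $(\Phi_0^k,\Phi_1^k)$ of $c$-concave pairs is uniformly bounded and equi-Lipschitz on every compact subset of $\Surf$.

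By Arzelà--Ascoli applied along an exhaustion of $\Surf$ by compacts, followed by a diagonal extraction, a subsequence converges locally uniformly on $\Surf$ to a pair $(\Phi_0,\Phi_1)$ of continuous functions. The inequality $\Phi_0(x_0)+\Phi_1(x_1)\le d^2(x_0,x_1)$ is stable under pointwise limits, so the limit pair lies in the admissible set of \eqref{eq:W2_dual}; and since $\mu_0,\mu_1\in\prob(K)$ and the convergence is uniform on the compact set $K$, the dual objective passes to the limit. Hence $(\Phi_0,\Phi_1)$ attains the supremum.

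The delicate point is the interplay between the global inequality constraint on $\Surf\times\Surf$ and the fact that the available compactness lives only on $K$ (and bounded neighborhoods thereof); this is exactly what the reduction to $c$-concave mutual transforms together with completeness of $\Surf$ is for, and beyond this bookkeeping I do not expect any genuine obstacle — on a compact base space the whole argument is entirely standard, see \cite{santambrogio2015optimal,villani2009optimal}.
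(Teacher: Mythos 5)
Your proposal is correct and follows the same standard route the paper implicitly invokes: the paper gives no explicit proof for this proposition, only the one-line remark that ``compactness of $K$, together with the continuity of $c=d_X^2$ yields the existence of optimal Kantorovich potentials (which can then be taken to be $c$-concave)'', with a pointer to \cite[Theorem 1.40]{santambrogio2015optimal}. Your direct-method argument (reduction to mutual $c$-transforms, local Lipschitz bounds on $d^2$, normalization by the additive invariance, Arzel\`a--Ascoli with diagonal extraction, and stability of the pointwise constraint under limits) is exactly the content of that standard result spelled out.
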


For $\Surf= \R^n$ and $\mu_0 \in \probl(X)$ a celebrated result by Brenier \cite{brenier1991polar} shows that the minimizing $\pi$ in \eqref{eq:W2_primal} is unique and concentrated on the graph of the gradient of a convex function.
\begin{theorem}[Brenier]
\label{thm:Brenier}
Let $\Surf=\R^n$ endowed with its flat metric, $\mu_0\! \in\! \probl(K)$ and ${\mu_1\in\prob(K)}$. Then,
\begin{enumerate}[(i)]
\item The minimizer $\pi$ of \eqref{eq:W2_primal} is unique and concentrated on the graph of a map $T : K \to K$, i.e.~it is given by $\pi=(\id,T)_\# \mu_0$ where $(\id,T) : x \mapsto (x,T(x))$.
\item The map $T$ satisfies $T=\id-\tfrac12 \nabla \Phi_0$ $\mu_0$-almost everywhere, where $\Phi_0$ is a corresponding dual optimizer of \eqref{eq:W2_dual}. $T$ is the gradient of the function $x \mapsto \tfrac12 (\|x\|^2-\Phi_0(x))$, which is convex.
\item Conversely, if $\Phi_0 \in \Cont(\Surf)$ such that $x \mapsto \tfrac12 (\|x\|^2-\Phi_0(x))$ is convex, then $T=\id-\tfrac12 \nabla \Phi_0$ is the optimal transport map between $\mu_0$ and $T_\# \mu_0$.
\end{enumerate}
\end{theorem}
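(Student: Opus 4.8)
The plan is to run the classical argument, exploiting the special algebraic structure of the quadratic cost on $\R^n$. Expanding $d^2(x_0,x_1)=\|x_0\|^2-2\langle x_0,x_1\rangle+\|x_1\|^2$ and noting that $\int\|x_0\|^2\,\diff\mu_0+\int\|x_1\|^2\,\diff\mu_1$ is a constant over all $\pi\in\Pi(\mu_0,\mu_1)$, minimizing the transport cost is equivalent to maximizing $\int\langle x_0,x_1\rangle\,\diff\pi$, i.e.~to an optimal transport problem for the cost $-\langle x_0,x_1\rangle$. I would first record this reduction together with the matching reformulation of \eqref{eq:W2_dual}: a $c$-concave admissible pair $(\Phi_0,\Phi_1)$ corresponds, via $u(x)\assign\tfrac12(\|x\|^2-\Phi_0(x))$ and $v(x)\assign\tfrac12(\|x\|^2-\Phi_1(x))$, to a pair of finite convex functions with $v=u^*$ (a short Legendre-transform computation, since $\Phi_1=\Phi_0^c$). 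Existence of optimal $c$-concave potentials is already granted by Proposition~\ref{prop:W2_dual}, and an optimal plan exists by the preceding existence statement for \eqref{eq:W2_primal}.

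For (i) and (ii): fix an optimal plan $\pi$ and optimal $c$-concave potentials $(\Phi_0,\Phi_1)$ with associated conjugate pair $(u,u^*)$. Comparing the optimal primal and dual values, and using $\Phi_0(x_0)+\Phi_1(x_1)\le d^2(x_0,x_1)$ pointwise, forces $\pi$ to be concentrated on $\{\Phi_0(x_0)+\Phi_1(x_1)=d^2(x_0,x_1)\}$, which in the new variables is exactly $\{u(x_0)+u^*(x_1)=\langle x_0,x_1\rangle\}$, i.e.~the set of $(x_0,x_1)$ with $x_1\in\partial u(x_0)$. Since $u$ is finite and convex on $\R^n$, it is locally Lipschitz and, by the classical a.e.~differentiability of finite convex functions, differentiable $\Leb^n$-a.e.; as $\mu_0\in\probl(K)$ is absolutely continuous with respect to $\Leb^n$, it is differentiable $\mu_0$-a.e. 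At every such $x_0$ the subdifferential $\partial u(x_0)=\{\nabla u(x_0)\}$ is a singleton, so on a set of full $\pi$-measure $x_1=\nabla u(x_0)=x_0-\tfrac12\nabla\Phi_0(x_0)$. Hence $\pi=(\id,T)_\#\mu_0$ with $T\assign\nabla u=\id-\tfrac12\nabla\Phi_0$, the gradient of the convex function $u$, which is (ii). Uniqueness follows from the standard convexity trick: if $\pi_0,\pi_1$ are both optimal then so is $\tfrac12(\pi_0+\pi_1)$, which by the above is carried by the graph of a single map; disintegrating against $\mu_0$ then forces the conditionals of $\pi_0$ and $\pi_1$ to coincide $\mu_0$-a.e., so $\pi_0=\pi_1$.

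For (iii): given $\Phi_0\in\Cont(\Surf)$ with $u\assign\tfrac12(\|\cdot\|^2-\Phi_0)$ convex, set $T\assign\nabla u=\id-\tfrac12\nabla\Phi_0$ ($\mu_0$-a.e.~defined as above) and $\mu_1\assign T_\#\mu_0\in\prob(K)$, so that $\pi\assign(\id,T)_\#\mu_0\in\Pi(\mu_0,\mu_1)$. To prove optimality, take $\Phi_1\assign\Phi_0^c$, so $v\assign\tfrac12(\|\cdot\|^2-\Phi_1)=u^*$; for $\mu_0$-a.e.~$x_0$ the relation $T(x_0)=\nabla u(x_0)\in\partial u(x_0)$ gives the Fenchel equality $u(x_0)+u^*(T(x_0))=\langle x_0,T(x_0)\rangle$, i.e.~$\Phi_0(x_0)+\Phi_1(T(x_0))=d^2(x_0,T(x_0))$. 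Integrating against $\mu_0$ yields $\int\Phi_0\,\diff\mu_0+\int\Phi_1\,\diff\mu_1=\int d^2(x_0,T(x_0))\,\diff\mu_0=\int d^2\,\diff\pi$, so by weak duality $\pi$ is optimal and $T$ is the optimal transport map between $\mu_0$ and $\mu_1=T_\#\mu_0$.

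The only genuinely non-formal step — and the place where the hypothesis $\mu_0\in\probl(K)$ is indispensable — is the passage from $\Leb^n$-a.e.~to $\mu_0$-a.e.~differentiability of the convex potential, which is precisely what upgrades the set-valued relation $x_1\in\partial u(x_0)$ to an honest map. Everything else (the quadratic expansion, complementary slackness, the disintegration argument for uniqueness, and weak duality for the converse) is routine bookkeeping; since $\Surf=\R^n$ here, no manifold-specific complications arise and the result is the classical theorem of Brenier.
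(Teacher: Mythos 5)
The paper does not actually prove this statement: it cites Brenier's original article and only remarks afterwards that convexity of $x \mapsto \tfrac12(\|x\|^2 - \Phi_0(x))$ is equivalent to $c$-concavity of $\Phi_0$, with a.e.\ differentiability supplied by Rademacher's or Alexandrov's theorem. So there is no in-paper proof to compare against; what you have written is the classical textbook argument (reduction to the inner-product cost $-\langle x_0,x_1\rangle$, the conjugate convex pair $(u,u^*)$ built from $c$-concave potentials, complementary slackness concentrating $\pi$ on the subdifferential graph, a.e.\ differentiability of finite convex functions together with $\mu_0 \ll \Leb^n$ upgrading this to a genuine map, the convexity argument for uniqueness, and weak duality for the converse), and it is correct. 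It fills in precisely the steps the paper alludes to.

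Two minor imprecisions, neither of which affects the substance. First, the function $u$ obtained from a $c$-concave $\Phi_0$ is guaranteed finite and convex on $\R^n$ only after restricting the $c$-transform infimum to the compact support $K$ (or after the usual modification); taken naively over all of $\R^n$ it could a priori take the value $-\infty$ off $K$. Second, in part (iii) the annotation $T_\#\mu_0 \in \prob(K)$ is unwarranted, since $T$ may push mass outside $K$; it is also unnecessary, because weak duality works equally well with $T_\#\mu_0$ merely a compactly supported probability measure, which is all that the statement of (iii) actually claims.
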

It is easy to check that convexity of $x \mapsto \tfrac12 (\|x\|^2-\Phi_0(x))$ is equivalent to $c$-concavity of $\Phi_0$ in that case. At this point, the reader's choice between Rademacher's theorem (which still holds for more general costs $c$) or Alexandrov's theorem guarantees the differentiability almost-everywhere allowing to state the theorem. 

This result was later generalized to Riemannian manifolds by McCann \cite{mccann2001polar} (see Section \ref{sec:LinW2}) and to the Hellinger--Kantorovich distances, on $\R^n$ by Liero et al. \cite{liero2018optimal} and on Riemannian manifolds by Gallou\"et et al. \cite{gallouet2021regularity} (we give a further extension of this last result in Section \ref{sec:LinHK}).

Finally, when $\Surf\subset \R^n$ is (geodesically) convex, then $(\prob(\Surf),\W_2)$ is also a geodesic space. Geodesics in the latter space are obtained by lifting geodesics from the base space $X$ as formalized in the next result.
\begin{proposition}
Under the assumptions of Theorem \ref{thm:Brenier}, for $\mu_0 \in \probl(K)$, $\mu_1 \in \prob(\Surf)$, let $T$ be the corresponding optimal transport map. Then the curve
\begin{align}
\label{eq:OTGeodesic}
	[0,1] \ni t \mapsto \mu_t \assign ((1-t) \cdot \id + t \cdot T)_\# \mu_0 \in \prob(\Surf)
\end{align}
is the unique geodesic in $(\prob(\Surf),\W_2)$ connecting $\mu_0$ to $\mu_1$.
\end{proposition}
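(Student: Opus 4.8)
The plan is to prove two things: that the curve $(\mu_t)_{t\in[0,1]}$ from \eqref{eq:OTGeodesic} is a (constant-speed, minimizing) geodesic of $(\prob(\Surf),\W_2)$ from $\mu_0$ to $\mu_1$, and that it is the only one. For the first point the endpoint values are immediate, since $\mu_0=\id_\#\mu_0$ and $\mu_1=T_\#\mu_0$ because $T$ is the optimal transport map; moreover the interpolating maps $S_t\assign(1-t)\,\id+t\,T$ take values in $\Surf$ because $\Surf$ is convex and $T(K)\subset K\subset\Surf$, so $\mu_t\in\prob(\Surf)$. For the metric-speed property, fix $0\le s\le t\le1$ and use the admissible plan $(S_s,S_t)_\#\mu_0\in\Pi(\mu_s,\mu_t)$:
\begin{align*}
\W_2^2(\mu_s,\mu_t) & \le \int_\Surf\norm{S_s(x)-S_t(x)}^2\diff\mu_0(x)=(t-s)^2\int_\Surf\norm{x-T(x)}^2\diff\mu_0(x) \\
& = (t-s)^2\,\W_2^2(\mu_0,\mu_1),
\end{align*}
the last equality because $(\id,T)_\#\mu_0$ is $\W_2$-optimal. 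Inserting this bound with the pairs $(0,s)$, $(s,t)$, $(t,1)$ into the triangle inequality $\W_2(\mu_0,\mu_1)\le\W_2(\mu_0,\mu_s)+\W_2(\mu_s,\mu_t)+\W_2(\mu_t,\mu_1)$ forces all three to be equalities, whence $\W_2(\mu_s,\mu_t)=(t-s)\,\W_2(\mu_0,\mu_1)$.

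For uniqueness, let $(\tilde\mu_t)_{t\in[0,1]}$ be any constant-speed geodesic from $\mu_0$ to $\mu_1$ and fix $t\in(0,1)$ (the case $\mu_0=\mu_1$ being trivial). Pick $\W_2$-optimal plans $\gamma^-\in\Pi(\mu_0,\tilde\mu_t)$ and $\gamma^+\in\Pi(\tilde\mu_t,\mu_1)$ and glue them along their common marginal $\tilde\mu_t$ to obtain $\sigma\in\prob(\Surf\times\Surf\times\Surf)$ whose first, middle and last marginals are $\mu_0$, $\tilde\mu_t$, $\mu_1$ and whose $(1,2)$- and $(2,3)$-marginals are $\gamma^-$ and $\gamma^+$. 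Writing $x_0,x_t,x_1$ for the three coordinates, the geodesic property gives $\norm{x_0-x_t}_{\LL^2(\sigma)}=t\,\W_2(\mu_0,\mu_1)$ and $\norm{x_t-x_1}_{\LL^2(\sigma)}=(1-t)\,\W_2(\mu_0,\mu_1)$, and since the $(1,3)$-marginal of $\sigma$ lies in $\Pi(\mu_0,\mu_1)$ we get $\W_2(\mu_0,\mu_1)\le\norm{x_0-x_1}_{\LL^2(\sigma)}\le\norm{x_0-x_t}_{\LL^2(\sigma)}+\norm{x_t-x_1}_{\LL^2(\sigma)}=\W_2(\mu_0,\mu_1)$. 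Equality throughout shows, first, that the $(1,3)$-marginal of $\sigma$ is $\W_2$-optimal, hence equal to $(\id,T)_\#\mu_0$ by Theorem~\ref{thm:Brenier}(i) (as $\mu_0\in\probl(K)$), so $x_1=T(x_0)$ for $\sigma$-a.e.\ triple; and second, by the rigidity of equality in Minkowski's inequality together with the fact that the two $\LL^2(\sigma)$-norms equal $t\,\W_2$ and $(1-t)\,\W_2$, that $x_t=(1-t)x_0+t\,x_1$ $\sigma$-a.e. Combining the two, $\sigma$ is concentrated on $\{(x_0,(1-t)x_0+tT(x_0),T(x_0))\}$, so $\tilde\mu_t=\big((1-t)\,\id+t\,T\big)_\#\mu_0=\mu_t$. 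Since $t\in(0,1)$ was arbitrary and the endpoints agree, $(\tilde\mu_t)=(\mu_t)$.

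The existence half is routine; the delicate step is the rigidity argument in the uniqueness half, where one must upgrade equality in the (pointwise and $\LL^2$-level) triangle inequality to the exact identity $x_t=(1-t)x_0+t\,x_1$: equality in Minkowski only yields that $x_t-x_0$ and $x_1-x_t$ are non-negatively proportional with one common scalar factor, and it is the bookkeeping that $\norm{x_0-x_t}_{\LL^2(\sigma)}=t\,\W_2(\mu_0,\mu_1)$ and $\norm{x_t-x_1}_{\LL^2(\sigma)}=(1-t)\,\W_2(\mu_0,\mu_1)$ that pins this factor down to $(1-t)/t$ and produces the convex combination. The only external inputs are the gluing lemma for transport plans and Brenier's uniqueness, both already at hand. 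Alternatively, uniqueness also follows from the superposition principle, writing any $\W_2$-geodesic as $t\mapsto(e_t)_\#\Sigma$ for a measure $\Sigma$ on affine segments of $\R^n$ and invoking Theorem~\ref{thm:Brenier}(i) for its $(e_0,e_1)$-marginal, but I would prefer the self-contained gluing argument above.
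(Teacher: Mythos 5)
The paper does not actually prove this statement; it simply refers the reader to \cite[Theorem 5.27 and Section 5]{santambrogio2015optimal}. Your self-contained argument --- existence via the displacement interpolation $S_t = (1-t)\id + tT$ together with the triangle inequality, and uniqueness via the gluing lemma plus the combined pointwise and Minkowski rigidity to force $x_t = (1-t)x_0 + t x_1$ and $x_1 = T(x_0)$ $\sigma$-a.e. --- is correct and is essentially the proof given in the cited reference, so it fills the gap the paper leaves to the literature.
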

A detailed exposition of this statement can be found, for instance, in \cite[Section 5]{santambrogio2015optimal}, in particular Theorem 5.27.
Again, this result will readily generalize to Riemannian manifolds (Section \ref{sec:LinW2}).

\subsection{Riemannian structure and linearization}
\label{sec:WRiemann}
The space $(\prob(\Surf\subset \R^n),\W_2)$ exhibits a formal Riemannian structure which can be intuited from the celebrated Benamou--Brenier formula, giving the 2-Wasserstein distance as the length of a path in the Wasserstein space:
\begin{align}
\label{eq:BB}
\W_2^2(\mu_0,\mu_1) = \inf_{(\mu_t,v_t)_{t \in [0,1]}} \int_0^1 \int_X \|v_t(x)\|^2\diff \mu_t(x)\,\diff t
\end{align}
where the infimum is taken over solutions (in the sense of distributions) of the continuity equation $\partial_t \mu_t + \ddiv(v_t \cdot \mu_t)=0$ on $t \in [0,1]$ with $\mu_t \geq 0$, and boundary conditions $\mu_{t=0}=\mu_0$ and $\mu_{t=1}=\mu_1$.

Formula \eqref{eq:BB} may formally be interpreted as energy functional on the Riemannian manifold of probability measures $(\prob(\Surf),\W_2)$, where $v_t$ represents an element of the tangent space at $\mu_t$, the infinitesimal change of $\mu_t$ under $v_t$ being encoded by the continuity equation. The Riemannian inner product at the point $\mu_t$ is given by the $\LL^2(\mu_t)$-inner product, such that $\|v_t\|^2_{\LL^2(\mu_t)}=\int_X \|v_t(x)\|^2 \diff \mu_t$ can be interpreted as squared norm of $v_t$ in the tangent space at $\mu_t$.

Using the vocabulary of Riemannian geometry, the logarithmic map of $(\prob(\Surf),W_2)$ at base point $\mu_0\in \probl(X)$ is then the map that associates to a measure $\mu_1\in\prob(X)$, the tangent vector of the constant speed geodesic from $\mu_0$ to $\mu_1$ at $t=0$, which corresponds to the initial velocity field $v_{t=0}$ of a minimizer in \eqref{eq:BB}.
From \eqref{eq:OTGeodesic}, this velocity field actually turns out to be simply $T-\id$ where $T$ is the corresponding optimal transport map from $\mu_0$ to $\mu_1$.
At an intuitive level, this is since in \eqref{eq:OTGeodesic} mass particles of $\mu_0$ are pushed along constant speed straight lines from their initial positions $x$ at $t=0$ to their final positions $T(x)$ at $t=1$. At a more technical level, we find for $\varphi \in \Cont^1(X)$ that
$$\frac{\diff}{\diff t} \left. \int_X \varphi\,\diff \mu_t \right|_{t=0}
=\frac{\diff}{\diff t} \left. \int_X \varphi((1-t) \cdot x + t \cdot T(x))\,\diff \mu_t(x) \right|_{t=0}
=\int_X \langle \nabla \varphi,T-\id \rangle \diff \mu_0.$$
For a more detailed exposition of geodesics in $\W_2$ and their relation to the continuity equation we refer again to \cite[Section 5.4]{santambrogio2015optimal}.
Therefore, for $\W_2$ we define
\begin{align}
	\label{eq:W2_Log}
	\Log^{\W_2}_{\mu_0} & : \prob(\Surf) \to \LL^2(\Surf,\R^n;\mu_0), &
	\mu_1 & \mapsto T-\id, \\
	\intertext{where $T$ is the optimal transport map from $\mu_0$ to $\mu_1$. We can then define the corresponding \emph{exponential map} as}
	\label{eq:W2_Exp}
	\Exp^{\W_2}_{\mu_0} & : \LL^2(\Surf,TX=\R^n;\mu_0) \to \prob(\R^n), &
	v & \mapsto (\id+v)_\# \mu_0.
\end{align}

We make two observations. First, note that we set the co-domain of $\Exp^{\W_2}_{\mu_0}$ to $\prob(\R^n)$, since an arbitrary vector field $v$ might easily push mass outside of $\Surf$, if $x + v(x) \notin X$.
Second, it is easy to see that $\Exp^{\W_2}_{\mu_0}$ is a left-inverse of $\Log^{\W_2}_{\mu_0}$. It is not a general inverse however, because while any measurable vector field $v$ yields some image measure under $\Exp^{\W_2}_{\mu_0}$, the corresponding transport map $T=\id + v$ is in general not optimal, see Theorem \ref{thm:Brenier}. The logarithmic map will therefore then extract a different velocity field.

Finally, note that by Brenier's theorem (Theorem \ref{thm:Brenier}) the logarithmic map can be rewritten as
\begin{equation}
	\label{eq:W2_LogDual}
	\Log^{\W_2}_{\mu_0}(\mu_1)=-\frac12\nabla \Phi_0
\end{equation}
where $\Phi_0$ is a $c$-concave optimal Kantorovich potential in \eqref{eq:W2_dual}. This implies a better understanding of the range of the logarithmic map (see Section \ref{sec:convexity_range}).
The formal Riemannian structure of $\W_2$ has first been highlighted in \cite{otto2001geometry}, more details are given in \cite{ambrosio2005gradient}. We now give a concrete example that also highlights the limitations of the formal interpretation.

\begin{example}[Formal Riemannian structure of $\W_2$ on the real line]
\label{ex:1dWRiemann}
Let $A\assign[0,1] \cup [1+a,2+a]$ for some $a \geq 0$ and let $\mu$ be the uniform probability measure on $A$. $\mu$ consists of two chunks of mass, separated by $a$. In this case, $X$ can be any compact, connected subset of $\R$, such that its interior contains $A$.
Then $v \in \LL^2(\mu)$, $v(x)\assign-1$ for $x <1+a/2$, $v(x)\assign+1$ for $x >1+a/2$ can be interpreted as tangent vector at $\mu$.
For $t>-a/2$, $\nu_t \assign \Exp^{\W_2}_{\mu}(t \cdot v)=(\id+ t\cdot v)_\# \mu$ is the uniform probability measure on the set $[-t,1-t] \cup [1+a+t,2+a+t]$ and $T_t=\id+t\cdot v= \Log^{\W_2}_\mu(\nu_t)$ is the optimal transport map from $\mu$ to $\nu_t$, since $T_t$ is non-decreasing and thus the gradient of a convex map.
For $t>0$, the two chunks are pushed further apart, for $t<0$ they are pushed towards each other. For $t<-a/2$, the two chunks collide. In this case, the map $\Exp^{\W_2}_{\mu}(t \cdot v)$ can still be evaluated, but $\Log^{\W_2}_\mu$ is no longer a left-inverse and $T_t$ is no longer an optimal transport map (it decreases between $x=1$ and $x=1+a$, and thus is no longer the derivative of a convex potential). The point $t=-a/2$ could be interpreted as running into a cut locus. For the case $a=0$, no negative $t$ are admissible. Velocity fields for which $t<0$ is inadmissible (but small positive $t$ are admissible) exist for every $\mu \in \probl(X)$. Consequently, the logarithmic map is not a local homeomorphism between a small open ball around $\mu$ and open environment of the origin in a subspace of $\LL^2(\mu)$. Hence the Riemannian structure is merely intuitive and formal.
\end{example}

As in conventional Riemannian geometry the logarithmic map can be used for local linearization of the Wasserstein distance. For a reference measure $\mu_0 \in \prob(K)$ we can map a set of samples $(\nu_i)_{i=1}^M$ in $\prob(\Surf)$ to the tangent space of $\mu_0$ via $v_i \assign \Log^{\W_2}_{\mu_0}(\nu_i)$, given by $\LL^2(\Surf,TX;\mu_0)$.
Intuitively, one should have
$$\|v_i-v_j\|_{\LL^2(\mu_0)} \approx \W_2(\nu_i,\nu_j)$$
as long as $\nu_i$ and $\nu_j$ are `close' to $\mu_0$. This approximation was proposed for (medical) image analysis in \cite{wang2013linear}. More applications and details are given, for instance, in \cite{kolouri2016continuous,park2018representing,MoosmuellerLinOTSheared2022}.
Since the Riemannian structure of $\W_2$ is merely formal, it is difficult to quantify what `close' in the above sense means. Important contributions to that question are given in \cite{gigli11,delalande21}.

\subsection{Linearized 2-Wasserstein distance for manifolds}
\label{sec:LinW2}
We now illustrate how the definition of logarithmic and exponential map for $\W_2$ can be extended to the case where $\Surf$ is a connected complete Riemannian manifold.
The key ingredient is a generalization of Brenier's theorem (Theorem \ref{thm:Brenier}) to manifolds due to McCann \cite{mccann2001polar}. We will use the slightly extended version of \cite[Theorem 10.41]{villani2009optimal}.
\begin{theorem}[McCann]
	\label{thm:McCann}
	Let $\Surf$ be a $n$-dimensional Riemannian manifold and $K$ a compact subset of $X$. Let $\mu_0 \in \probl(K)$, $\mu_1 \in \prob(K)$. Furthermore, let $\Phi_0$ be a maximizer of \eqref{eq:W2_dual} for these measures. Then:
	\begin{enumerate}[(i)]
		\item The minimizer $\pi$ of \eqref{eq:W2_primal} is unique and can be written as $\pi=(\id,T)_\# \mu_0$ for a corresponding optimal transport map $T$.
		\item $\Phi_0$ is differentiable $\mu_0$-almost everywhere and the map $T$ satisfies
		\begin{equation}
			\label{eq:Brenier-McCann}
			T(x)=\Exp^X_x\left(-\frac 12\nabla \Phi_0(x)\right) \quad \text{$\mu_0(x)$-almost everywhere.}
		\end{equation}
	\end{enumerate}
\end{theorem}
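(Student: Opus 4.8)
The plan is to follow the classical Brenier--McCann strategy, transplanting the Euclidean convexity argument to the manifold via the cost $c = d_X^2$ and the notion of $c$-concavity. First I would fix a $c$-concave maximizer $\Phi_0$ of \eqref{eq:W2_dual}; by Proposition \ref{prop:W2_dual} such a maximizer exists because $K$ is compact and $c = d_X^2$ is continuous, and by the discussion following the duality in the Notation section we may take it $c$-concave, so $\Phi_0(x) = \inf_{y \in K} \big( d_X^2(x,y) - \Phi_1(y) \big)$ for the conjugate potential $\Phi_1 = \Phi_0^c$. The first substantive step is to establish that $\Phi_0$ is locally semiconcave on the interior of a neighborhood of $K$: it is an infimum of the family $x \mapsto d_X^2(x,y) - \Phi_1(y)$, and since $K$ is compact and $\Surf$ is a smooth manifold, $d_X^2(\cdot, y)$ is uniformly semiconcave in $x$ on compact sets (its Hessian is bounded above, away from the cut locus; near the cut locus of $y$ one uses that the squared distance is still locally a minimum of smooth functions, giving one-sided control). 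An infimum of uniformly semiconcave functions is semiconcave, hence locally Lipschitz, hence differentiable $\vol$-almost everywhere by Rademacher's theorem; since $\mu_0 \in \probl(K) \ll \vol$, we conclude $\Phi_0$ is differentiable $\mu_0$-a.e., which is assertion (ii) up to the formula.

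Next I would derive the transport map formula \eqref{eq:Brenier-McCann} from the first-order optimality condition for the $c$-transform. If $\pi$ is any optimal plan, then $\pi$ is concentrated on the $c$-superdifferential: for $\pi$-a.e. $(x, y)$ one has $\Phi_0(x) + \Phi_1(y) = d_X^2(x,y)$, while $\Phi_0(x') + \Phi_1(y) \le d_X^2(x', y)$ for all $x'$. Thus $x \mapsto d_X^2(x, y) - \Phi_0(x)$ has a minimum at $x$, so at any point $x$ of differentiability of $\Phi_0$ we get $\nabla_x d_X^2(x, y) = \nabla \Phi_0(x)$. By the second identity in \eqref{eq:LogMapBasic}, $\nabla_x d_X^2(x, y) = -2 \Log^X_x(y)$, so $\Log^X_x(y) = -\tfrac12 \nabla \Phi_0(x)$, and applying $\Exp^X_x$ (a left inverse of $\Log^X_x$) gives $y = \Exp^X_x\!\big(-\tfrac12 \nabla \Phi_0(x)\big)$. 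This determines $y$ uniquely as a function of $x$ for $\mu_0$-a.e. $x$; calling this function $T$, every optimal plan $\pi$ must coincide with $(\id, T)_\# \mu_0$, which simultaneously yields existence of the optimal map, uniqueness of the plan (assertion (i)), and the formula (assertion (ii)).

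The main obstacle is the semiconcavity/differentiability step in the presence of the cut locus: unlike in $\R^n$, where $c$-concavity of $\Phi_0$ is literally convexity of $x \mapsto \tfrac12(\|x\|^2 - \Phi_0(x))$, on a general manifold the squared distance $d_X^2(\cdot, y)$ is only semiconcave with a uniform modulus on compact sets, and one must check this modulus is genuinely uniform over $y$ ranging in the compact set $K$ — this is where completeness of $\Surf$ (via Hopf--Rinow, ensuring geodesics and hence $\Exp$, $\Log$ behave well) and compactness of $K$ are essential, and it is the technical heart of \cite[Theorem 10.41]{villani2009optimal} that we are invoking. A secondary subtlety is justifying that $\nabla_x d_X^2(x, y)$ exists and equals $-2\Log^X_x(y)$ precisely at the points where $\Phi_0$ is differentiable: one argues that if $\Phi_0$ is differentiable at $x$ and $x \mapsto d_X^2(x,y) - \Phi_0(x)$ is minimized at $x$, then $d_X^2(\cdot, y)$ is both super- and sub-differentiable at $x$ (super by semiconcavity, sub because it dominates the differentiable function $\Phi_0 + \text{const}$ with equality at $x$), hence differentiable there, so $x$ is not in the cut locus of $y$ and \eqref{eq:LogMapBasic} applies. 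The remaining bookkeeping — that $T$ is Borel measurable and that the constructed plan is admissible — is routine.
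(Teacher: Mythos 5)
The paper does not prove Theorem~\ref{thm:McCann} itself; it imports it verbatim from McCann \cite{mccann2001polar} via \cite[Theorem 10.41]{villani2009optimal}, so there is no in-paper argument to compare against. Your sketch faithfully reconstructs the standard Brenier--McCann proof from those references: semiconcavity of the $c$-concave potential (as an infimum of uniformly semiconcave functions $d^2(\cdot,y)-\Phi_1(y)$ over $y$ in a compact set), Rademacher to get differentiability $\vol$-a.e.\ and hence $\mu_0$-a.e., the first-order condition on the contact set forcing $\nabla_x d^2(x,y)=\nabla\Phi_0(x)$, and the sub-/super-differentiability squeeze to conclude $d^2(\cdot,y)$ is genuinely differentiable at $x$ (hence $y\notin\operatorname{cut}(x)$), which with \eqref{eq:LogMapBasic} pins down $y=T(x)=\Exp^X_x(-\tfrac12\nabla\Phi_0(x))$ and simultaneously delivers uniqueness of the plan. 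All the pressure points you flag — uniformity of the semiconcavity modulus over $y\in K$ (needing completeness via Hopf--Rinow and compactness of $K$), and the cut-locus issue resolved by the two-sided differentiability argument — are exactly the technical heart of the cited proof, so I find nothing missing or incorrect in your reasoning at the sketch level.
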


Logarithmic and exponential maps \eqref{eq:W2_Log} and \eqref{eq:W2_Exp} can then be extended in the natural way:
\begin{definition}[Logarithmic and exponential map for $\W_2$ over manifolds]
	\label{def:LogW2Mfold}
	In the setting of the above Theorem we define the \emph{logarithmic and exponential maps} at base point $\mu_0$ as
	\begin{align}
		\Log^{\W_2}_{\mu_0} : \prob(\Surf) & \to \LL^2(\Surf,TX;\mu_0), &
		\mu_1 & \mapsto (x\mapsto\Log^X_x(T(x))) \\
	\intertext{with $T$ the optimal transport map from $\mu_0$ to $\mu_1$, provided by \Cref{thm:McCann}, and}
		\Exp^{\W_2}_{\mu_0} : \LL^2(\Surf,TX;\mu_0) &\to \prob(\Surf), &
		v & \mapsto [x\mapsto\Exp^X_x(v(x))]_\# \mu_0.
	\end{align}
\end{definition}
For $\Surf \subset \R^n$ these formulas reduce again to \eqref{eq:W2_Log} and \eqref{eq:W2_Exp}.
The following proposition shows that these maps do indeed mimic the behaviour of logarithmic and exponential maps in the Riemannian sense.
\begin{proposition}
	\label{prop:LogW2Mfold}
	For $\Log^{\W_2}_{\mu_0}$ and $\Exp^{\W_2}_{\mu_0}$ as given in Definition \ref{def:LogW2Mfold} one has:
	\begin{enumerate}[(i)]
		\item $\W_2^2(\mu_0,\mu_1) = \|v\|^2_{\LL^2(\mu_0)}$ when $v=\Log^{\W_2}_{\mu_0}(\mu_1)$.
		\label{item:LogW2Mfold_dist}
		\item $\Exp^{\W^2}_{\mu_0}$ is a left-inverse of $\Log^{\W_2}_{\mu_0}$. 
		\label{item:LogW2Mfold_LeftInverse}
		\item For the same $v$ as in \eqref{item:LogW2Mfold_dist}, $[0,1] \ni t \mapsto \Exp^{\W_2}_{\mu_0}(t \cdot v)$ yields the constant speed geodesic from $\mu_0$ to $\mu_1$.
		\label{item:LogW2Mfold_Geodesic}
	\end{enumerate}
\end{proposition}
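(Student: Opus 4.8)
\emph{Proof strategy.}
The plan is to extract all three statements from the structure of the optimal map $T$ provided by \Cref{thm:McCann}. Write $\Phi_0$ for the $c$-concave Kantorovich potential appearing there, so that $T(x)=\Exp^X_x(-\tfrac12\nabla\Phi_0(x))$ for $\mu_0$-a.e.\ $x$. The first and most substantial step I would carry out is to record that, for $\mu_0$-a.e.\ $x$, the curve $t\mapsto\Exp^X_x(-\tfrac{t}{2}\nabla\Phi_0(x))$ is a \emph{minimizing} constant-speed geodesic from $x$ to $T(x)$; equivalently, $v(x):=\Log^X_x(T(x))=-\tfrac12\nabla\Phi_0(x)$ is well defined and measurable $\mu_0$-a.e.\ and satisfies $\norm{v(x)}_x=d(x,T(x))$. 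This is part of the Brenier--McCann theory, and it can also be recovered from the $c$-concavity of $\Phi_0$: at a differentiability point $x$ of $\Phi_0$ whose $c$-superdifferential is attained at some $x_1$, the function $x'\mapsto d^2(x',x_1)-\Phi_0(x')$ has a minimum at $x'=x$, which forces $d^2(\cdot,x_1)$ to be differentiable at $x$ with $\nabla_x d^2(x,x_1)=\nabla\Phi_0(x)$, so that $\Log^X_x(x_1)=-\tfrac12\nabla\Phi_0(x)$ by \eqref{eq:LogMapBasic} and hence $x_1=T(x)$. I expect this identification --- the only place where curvature and cut-locus phenomena really enter --- to be the main obstacle; the three claims are then bookkeeping with the push-forward.

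Granting it, \eqref{item:LogW2Mfold_dist} follows by evaluating the primal cost on the optimal plan $(\id,T)_\#\mu_0$:
\[
\W_2^2(\mu_0,\mu_1)=\int_\Surf d^2(x,T(x))\,\diff\mu_0(x)=\int_\Surf\norm{v(x)}_x^2\,\diff\mu_0(x)=\norm{v}_{\LL^2(\mu_0)}^2.
\]
For \eqref{item:LogW2Mfold_LeftInverse}, with $v=\Log^{\W_2}_{\mu_0}(\mu_1)$ and using that $\Exp^X_x$ is a left inverse of $\Log^X_x$, one has $\Exp^X_x(v(x))=T(x)$ for $\mu_0$-a.e.\ $x$, hence $\Exp^{\W_2}_{\mu_0}(v)=[x\mapsto\Exp^X_x(v(x))]_\#\mu_0=T_\#\mu_0=\mu_1$.

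For \eqref{item:LogW2Mfold_Geodesic} I would set $T_t(x):=\Exp^X_x(t\,v(x))$ and $\mu_t:=(T_t)_\#\mu_0$; the identities just used show the endpoints at $t=0,1$ are $\mu_0$ and $\mu_1$. Since $t\mapsto\Exp^X_x(t\,v(x))$ is a minimizing geodesic on $[0,1]$, its restriction to every $[s,t]\subseteq[0,1]$ is again minimizing, so $d(T_s(x),T_t(x))=|t-s|\,\norm{v(x)}_x$ for $\mu_0$-a.e.\ $x$; testing \eqref{eq:W2_primal} with the coupling $(T_s,T_t)_\#\mu_0\in\Pi(\mu_s,\mu_t)$ and invoking \eqref{item:LogW2Mfold_dist} gives $\W_2(\mu_s,\mu_t)\le|t-s|\,\W_2(\mu_0,\mu_1)$. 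The reverse inequalities are forced for $0\le s\le t\le1$ by the triangle inequality,
\[
\W_2(\mu_0,\mu_1)\le\W_2(\mu_0,\mu_s)+\W_2(\mu_s,\mu_t)+\W_2(\mu_t,\mu_1)\le\big(s+(t-s)+(1-t)\big)\,\W_2(\mu_0,\mu_1),
\]
so $\W_2(\mu_s,\mu_t)=(t-s)\,\W_2(\mu_0,\mu_1)$ and $t\mapsto\mu_t$ is a constant-speed geodesic. A technicality to dispatch in passing is that the $\mu_t$ need not be supported in $K$, but their supports lie in the compact set of time-$t$ points of minimizing geodesics with both endpoints in $K$, so all the $\W_2$ distances above are taken on a fixed compact domain. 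Finally, uniqueness of the geodesic follows as in the flat case recalled around \eqref{eq:OTGeodesic}: any $\W_2$-geodesic from $\mu_0$ to $\mu_1$ lifts to a probability measure on minimizing geodesics of $\Surf$ whose endpoint marginal is an optimal plan, which by \Cref{thm:McCann} is unique and equal to $(\id,T)_\#\mu_0$ because $\mu_0\in\probl(K)$.
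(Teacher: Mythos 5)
Your proof is correct, and for parts (i) and (ii) it coincides with the paper's argument: integrate $d^2(x,T(x))=\norm{\Log^X_x(T(x))}_x^2$ against $\mu_0$ for (i), and use that $\Exp^X_x$ left-inverts $\Log^X_x$ for (ii). For part (iii) you take a genuinely different route. The paper simply invokes the displacement-interpolation theorem (citing \cite[Section 7]{villani2009optimal}): given an optimal plan $\pi$, the curve $t\mapsto p_{t\#}\pi$, where $p_t$ picks the time-$t$ point of a constant-speed geodesic between the endpoints, is a $\W_2$-geodesic. You instead give a self-contained elementary proof: testing the coupling $(T_s,T_t)_\#\mu_0$ yields $\W_2(\mu_s,\mu_t)\le|t-s|\,\W_2(\mu_0,\mu_1)$, and the triangle inequality upgrades this to equality. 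This buys you independence from the general theory at the price of a slightly longer write-up; it is the standard way to verify a curve is a constant-speed geodesic once one has the one-sided estimate. For uniqueness, both arguments rest on the same two facts (uniqueness of the optimal plan from \Cref{thm:McCann} and $\mu_0\otimes\vol$-a.e.\ uniqueness of minimizing geodesics), which you state via the lifting-to-path-space formulation and the paper states via essential uniqueness of $p_t$. A minor remark: your opening step re-derives $\Log^X_x(T(x))=-\tfrac12\nabla\Phi_0(x)$ from $c$-concavity, but this identity is already delivered by \Cref{thm:McCann} (and restated as \Cref{prop:W2_Log_dual}); it is not actually needed for the present proposition, where only the relations \eqref{eq:LogMapBasic} between $\Log^X_x$, $\Exp^X_x$, and $d$ enter. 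Your explicit treatment of the compactness issue for the supports of the $\mu_t$ is a welcome detail that the paper leaves implicit.
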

\begin{proof}
	\eqref{item:LogW2Mfold_dist} is an immediate consequence of the fact that the map $T$ in the definition of $\Log^{\W_2}_{\mu_0}$ is an optimal transport map from $\mu_0$ to $\mu_1$ for the quadratic cost, and the properties of $\Log^X_x$ on $\Surf$. Indeed,
	\begin{align*}
		\|v\|^2_{\LL^2(\mu_0)} & = 
		\int_X \| \Log^X_{x_0}(T({x_0}))\|^2_{x_0} \,\diff \mu_0({x_0})
		= \int_X d({x_0},T({x_0}))^2\,\diff \mu_0({x_0}) = \W_2^2(\mu_0,\mu_1).
	\end{align*}
	where we used \eqref{eq:LogMapBasic} in the middle equality.
	Likewise, for \eqref{item:LogW2Mfold_LeftInverse} the claim follows from
	$$\Exp^X_x(v(x))=\Exp^X_x(\Log^X_x(T(x)))=T(x).$$
	
	\eqref{item:LogW2Mfold_Geodesic}: For $t \in [0,1]$ let $p_t : \Surf \times \Surf \to X$ be a (measurable, also w.r.t.~$t$) map that takes $(x_0,x_1)$ to a point on a constant speed geodesic from $x_0$ to $x_1$ at time $t$.
	Let $\pi$ be a minimizer in \eqref{eq:W2_primal}. Then $t \mapsto p_{t\#}\pi$ yields a geodesic between $\mu_0$ and $\mu_1$. See, for instance, \cite[Section 7]{villani2009optimal} for details.
	In our case, $\pi$ is given by $(\id,T)_{\#} \mu_0$ and by our assumptions a map $p_t$ as above can be given by $(x_0,x_1) \mapsto \Exp^X_{x_0}(t \cdot \Log^X_{x_0}(x_1))$. For these choices of $\pi$ and $p_t$, $t \mapsto p_{t\#}\pi$ yields the expression in \eqref{item:LogW2Mfold_Geodesic}
	Uniqueness follows from uniqueness of $\pi$ (Theorem \ref{thm:McCann}) and essential uniqueness of $p_t$ (Section \ref{sec:Notation}).
\end{proof}

Finally, formula \eqref{eq:Brenier-McCann} of Theorem \ref{thm:McCann} yields again a dual expression for the logarithmic map, which will provide more insight into its range (see Section \ref{sec:convexity_range}).
\begin{proposition}
	\label{prop:W2_Log_dual}
	In the setting of Theorem \ref{thm:McCann} one has $\Log^{W_2}_{\mu_0}(\mu_1)=-\frac{1}{2}\nabla\Phi_0$ where $\Phi_0$ is a $c$-concave optimal Kantorovich potential in \eqref{eq:W2_dual}.
\end{proposition}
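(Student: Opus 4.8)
The plan is to feed the explicit form of the optimal transport map provided by \Cref{thm:McCann} into \Cref{def:LogW2Mfold} and then to check that the geodesic produced by that formula is the \emph{minimizing} one, which is exactly what makes $\Log^X_x$ act as an inverse of $\Exp^X_x$ along it. By \Cref{def:LogW2Mfold}, $\Log^{\W_2}_{\mu_0}(\mu_1)$ is the vector field $x \mapsto \Log^X_x(T(x))$, where $T$ is the (unique) optimal map from $\mu_0$ to $\mu_1$ for $c=d^2$, and \Cref{thm:McCann} gives $T(x)=\Exp^X_x(-\tfrac12\nabla\Phi_0(x))$ for $\mu_0$-a.e.\ $x$, with $\Phi_0$ a fixed $c$-concave maximizer of \eqref{eq:W2_dual}. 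Since $\Exp^X_x$ is only a one-sided inverse of $\Log^X_x$, this does \emph{not} immediately yield $\Log^X_x(T(x))=-\tfrac12\nabla\Phi_0(x)$: it remains to show that for $\mu_0$-a.e.\ $x$ the curve $s\mapsto \Exp^X_x(-\tfrac s2\nabla\Phi_0(x))$, $s\in[0,1]$, is a minimizing geodesic from $x$ to $T(x)$, equivalently that $x$ is not a cut point of $T(x)$ and $\tfrac12\|\nabla\Phi_0(x)\|_x=d(x,T(x))$.

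To establish this, fix a point $x$ at which $\Phi_0$ is differentiable (a $\mu_0$-full set, by \Cref{thm:McCann}) and write $y=T(x)$. From the admissibility constraint $\Phi_0(x')\le d^2(x',y)-\Phi_0^c(y)$ for all $x'\in X$, together with standard complementary slackness for the optimal plan $(\id,T)_\#\mu_0$ (no duality gap in \Cref{prop:W2_dual}), which gives $\Phi_0(x)+\Phi_0^c(y)=d^2(x,y)$, we see that $d^2(\cdot,y)$ is touched from below at $x$ by the map $x'\mapsto \Phi_0(x')+\bigl(d^2(x,y)-\Phi_0(x)\bigr)$, which is differentiable at $x$ with gradient $\nabla\Phi_0(x)$. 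Hence $\nabla\Phi_0(x)$ lies in the subdifferential of $d^2(\cdot,y)$ at $x$. But $d^2(\cdot,y)$ is locally semiconcave on $X$, so its superdifferential is always nonempty, and a semiconcave function whose subdifferential and superdifferential are both nonempty at a point is differentiable there; therefore $d^2(\cdot,y)$ is differentiable at $x$ with $\nabla_x d^2(x,y)=\nabla\Phi_0(x)$. In particular $x$ is not a cut point of $y$, so $\Log^X_x(y)$ is well defined and \eqref{eq:LogMapBasic} applies, giving $\nabla_x d^2(x,y)=-2\Log^X_x(y)$. Combining the two identities yields $\Log^X_x(T(x))=-\tfrac12\nabla\Phi_0(x)$ for $\mu_0$-a.e.\ $x$, which is the assertion.

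The delicate point — and the real geometric content — is the cut-locus issue in the second paragraph: ruling out that $x$ lies in the cut locus of $T(x)$ for $\mu_0$-a.e.\ $x$, so that \eqref{eq:LogMapBasic} can be used with the correct minimizing branch of $\Log^X_x$. This is precisely the substance of the Brenier--McCann theorem and is already contained in \cite[Theorem 10.41]{villani2009optimal}; in the final write-up I would either invoke that result directly or spell out the two standard ingredients used above, namely the local semiconcavity of $y'\mapsto d^2(y',y)$ and the elementary lemma that a semiconcave function touched from below at a point by a function differentiable there is itself differentiable at that point. Everything else is bookkeeping with the definitions and duality relations already recalled in Sections~\ref{sec:Notation}--\ref{sec:WBasic}.
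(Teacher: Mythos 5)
The paper gives no proof for this proposition---it is presented as an immediate consequence of \Cref{thm:McCann} and \Cref{def:LogW2Mfold}. Your write-up is correct and essentially reconstructs the relevant part of the McCann--Villani argument: you correctly flag that $T(x)=\Exp^X_x(-\tfrac12\nabla\Phi_0(x))$ does not by itself yield $\Log^X_x(T(x))=-\tfrac12\nabla\Phi_0(x)$ because $\Exp^X_x$ is not globally injective, and you close the gap by showing that at $\mu_0$-a.e.\ $x$ the function $d^2(\cdot,T(x))$ is differentiable (subdifferential nonempty from the Kantorovich inequality plus complementary slackness, superdifferential nonempty from local semiconcavity), hence $x$ is not in the cut locus of $T(x)$ and \eqref{eq:LogMapBasic} applies. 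This is precisely the content the authors tacitly import from \cite[Theorem 10.41]{villani2009optimal}---indeed they later invoke exactly this fact, ``a byproduct of the proof of \Cref{thm:McCann} is that for $\pi$-almost any $(x_0,x_1)$, the squared distance function $d^2(\cdot,x_1)$ is differentiable at $x_0$,'' in the proof of \Cref{prop:CV_tanvecs_kappa}. So your proof takes the same route as the paper implicitly does, only you make the cut-locus step explicit rather than delegating it to the cited theorem. As a write-up choice, the shortest honest version is your final remark: cite \cite[Theorem 10.41]{villani2009optimal} for the differentiability of $d^2(\cdot,T(x))$ at $\mu_0$-a.e.\ $x$, then apply \eqref{eq:LogMapBasic}; the full subdifferential/superdifferential argument is a fine alternative if you prefer a self-contained presentation.
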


\section{Hellinger--Kantorovich distance}
\label{sec:HK}
\subsection{Basic properties}
\label{sec:HKBasic}
For a meaningful comparison of non-negative measures with varying mass various unbalanced transport models have been introduced. A prominent example is the Hellinger--Kantorovich distance. Similar to $\W_2$ it can be formulated as a static and a dynamic optimization problem, and it exhibits a formal Riemannian structure. We refer to \cite{liero2018optimal} and \cite{ChizatDynamicStatic2018} for more details and further formulations.
We start by recalling a formulation that is obtained by a relaxation of classical balanced transport, \eqref{eq:OT_primal}.
\begin{definition}[Hellinger--Kantorovich distance]
	\label{def:HK_soft_primal}
	For a parameter $\kappa>0$ we set
	\begin{align}
		\label{eq:HK_soft_primal}
		\HK_\kappa^2(\mu_0,\mu_1) &\assign
		\inf \bigg\{E_\kappa(\pi|\mu_{0},\mu_{1})
		\bigg|
		\pi \in \measp(\Surf \times \Surf)\bigg\}
		\intertext{where}
		E_\kappa(\pi|\mu_{0},\mu_{1})&\assign\int_{\Surf \times \Surf} c_\kappa^{\HK}(x_0,x_1)\diff\pi(x_0,x_1)
		+\kappa^2 \cdot \KL(\proj_{0\#} \pi|\mu_0)
		+\kappa^2 \cdot \KL(\proj_{1\#} \pi|\mu_1)\\
		\intertext{with the cost}
		c^{\HK}_\kappa(x_0,x_1) & \assign \begin{cases}
			-2\kappa^2\log\left( \cos\left(\frac{\dist(x_0,x_1)}{\kappa}\right)\right)
			& \tn{if } \dist(x_0,x_1)<\kappa\frac{\pi}{2}, \\
			+\infty & \tn{otherwise}
		\end{cases}
		\intertext{and the Kullback--Leibler divergence is defined as}
		\label{eq:KL}
		\KL(\rho|\mu) & \assign \begin{cases}
			\int_{\Surf}\log\left(\RadNik{\rho}{\mu}\right)\diff\rho - \abs{\rho} + \abs{\mu}
			& \tn{if }\rho\ll\mu,\,\rho \geq 0, \mu \geq 0, \\
			+\infty &  \text{otherwise}
		\end{cases}
	\end{align}
	for $\rho, \mu \in \meas(\Surf)$.
\end{definition}
By relaxing the marginal conditions $\proj_{i\#}\pi=\mu_i$ this optimization problem remains meaningful even when measures of unequal mass are compared. It turns out that this particular relaxation in combination with the cost function $c^{\HK}_\kappa$ indeed yields a metric on $\measp(\Surf)$.
We collect several fundamental properties in the following proposition.
\begin{proposition}\hfill
	\label{prop:HKBasic}
	\begin{enumerate}[(i)]
		\item $\HK_\kappa$ is a metric on $\measp(\Surf)$.
		\label{item:HKBasicMetric}
		\item Minimizing $\pi$ in \eqref{eq:HK_soft_primal} exist. All minimizers have the same marginals $\proj_{i\#} \pi$, $i=0,1$.
		\label{item:HKBasicMinimizer}
		\item Any minimizing $\pi$ is an optimal transport plan for the balanced OT problem with cost $c_\kappa^{\HK}$, between its marginals $p_{0\#}\pi,~p_{1\#}\pi$ in the sense of \eqref{eq:OT_primal}.
		\label{item:HKBasicBalanced}
	\end{enumerate}
\end{proposition}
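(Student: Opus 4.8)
The plan is to handle the three items in increasing order of difficulty. Items \eqref{item:HKBasicMinimizer} and \eqref{item:HKBasicBalanced} follow from the direct method of the calculus of variations and from an elementary comparison argument, respectively; item \eqref{item:HKBasicMetric} is the substantial one, and I would obtain it from the equivalence of \eqref{eq:HK_soft_primal} with the dynamic (Benamou--Brenier-type) and cone formulations of the Hellinger--Kantorovich distance, essentially as carried out in \cite{liero2018optimal}.

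For item \eqref{item:HKBasicMinimizer} I would first observe that $\pi=0$ is admissible, yielding $E_\kappa(0|\mu_0,\mu_1)=\kappa^2(|\mu_0|+|\mu_1|)<\infty$, so the infimum is finite (recall that we assume the $\mu_i$ supported in the compact $K$, hence of finite mass). Along a minimizing sequence $(\pi_n)_n$ the energy is bounded; since $c^{\HK}_\kappa\geq 0$ (because $\cos\leq 1$ on the relevant range, so $-\log\cos\geq 0$) and $\KL(\cdot|\cdot)\geq 0$, each of the three terms is separately bounded, in particular $\KL(\proj_{i\#}\pi_n|\mu_i)\leq C$. Since $\KL(\rho|\mu)$ is the integral of the superlinear convex integrand $f(s)=s\log s-s+1$ against $\mu$, Jensen's inequality gives $\KL(\rho|\mu)\geq|\mu|\,f(|\rho|/|\mu|)$ whenever $|\mu|>0$, so the marginals $\proj_{i\#}\pi_n$ have uniformly bounded mass; moreover finiteness of $\KL(\proj_{i\#}\pi_n|\mu_i)$ forces $\proj_{i\#}\pi_n\ll\mu_i$, hence these marginals are supported in the support of $\mu_i$, which lies in $K$. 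Thus $(\pi_n)_n$ is a bounded sequence of measures on the compact set $K\times K$, and we may extract a weak-$*$ convergent subsequence $\pi_n\rightharpoonup\pi^*$. The cost term $\pi\mapsto\int c^{\HK}_\kappa\,\diff\pi$ is weak-$*$ lower semicontinuous because $c^{\HK}_\kappa$ is lower semicontinuous and non-negative; each term $\rho\mapsto\KL(\rho|\mu_i)$ is weak-$*$ lower semicontinuous (for instance via the dual representation $\KL(\rho|\mu)=\sup\{\int\phi\,\diff\rho-\int(e^{\phi}-1)\,\diff\mu\,:\,\phi\in\Cont(K)\}$, a supremum of weak-$*$ continuous functionals); and the marginal maps $\pi\mapsto\proj_{i\#}\pi$ are weak-$*$ continuous on measures supported in the fixed compact $K\times K$. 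Hence $E_\kappa(\cdot|\mu_0,\mu_1)$ is weak-$*$ lower semicontinuous and $\pi^*$ is a minimizer. For the claim that all minimizers share the same marginals, I would use that $\pi\mapsto\int c^{\HK}_\kappa\,\diff\pi$ and $\pi\mapsto\proj_{i\#}\pi$ are linear while $\rho\mapsto\KL(\rho|\mu_i)$ is strictly convex on its domain: if two minimizers had distinct $i$-th marginal for some $i$, their midpoint would strictly decrease $E_\kappa$, contradicting minimality.

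For item \eqref{item:HKBasicBalanced}, let $\pi^*$ be a minimizer of \eqref{eq:HK_soft_primal} with marginals $\rho_i\assign\proj_{i\#}\pi^*$. If $\pi^*$ were not optimal for the balanced problem \eqref{eq:OT_primal} with cost $c^{\HK}_\kappa$ between $\rho_0$ and $\rho_1$, there would exist $\tilde\pi\in\Pi(\rho_0,\rho_1)$ with $\int c^{\HK}_\kappa\,\diff\tilde\pi<\int c^{\HK}_\kappa\,\diff\pi^*$. But $\tilde\pi\in\measp(\Surf\times\Surf)$ is admissible in \eqref{eq:HK_soft_primal} and has the same marginals as $\pi^*$, so $E_\kappa(\tilde\pi|\mu_0,\mu_1)=\int c^{\HK}_\kappa\,\diff\tilde\pi+\kappa^2\KL(\rho_0|\mu_0)+\kappa^2\KL(\rho_1|\mu_1)<E_\kappa(\pi^*|\mu_0,\mu_1)$, contradicting optimality of $\pi^*$.

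The main obstacle is item \eqref{item:HKBasicMetric}. Symmetry of $\HK_\kappa$ is immediate from symmetry of $c^{\HK}_\kappa$ and of the two Kullback--Leibler terms, and $\HK_\kappa\geq 0$ from non-negativity of all ingredients; the real content is the triangle inequality and the implication $\HK_\kappa(\mu_0,\mu_1)=0\Rightarrow\mu_0=\mu_1$. I would establish these by passing to the cone $\mathfrak{C}(\Surf)=(\Surf\times[0,\infty))/(\Surf\times\{0\})$ over $X$, on which $\HK_\kappa$ lifts to an honest $2$-Wasserstein-type distance whose triangle inequality is standard, together with the gluing lemma, exactly along the lines of \cite{liero2018optimal}; since $c^{\HK}_\kappa$ depends on $(x_0,x_1)$ only through the geodesic distance $\dist(x_0,x_1)$, those arguments transfer verbatim to the Riemannian manifold setting considered here, so no genuinely new work is needed for the base space being a manifold rather than $\R^n$.
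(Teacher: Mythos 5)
Your proposal is correct, and for items \eqref{item:HKBasicMetric} and \eqref{item:HKBasicBalanced} it takes essentially the same route as the paper: item \eqref{item:HKBasicMetric} is delegated to \cite[Theorem~7.20]{liero2018optimal} (whose cone-space argument already works for arbitrary complete separable metric spaces, so no ``transfer'' from $\R^n$ is actually needed), and item \eqref{item:HKBasicBalanced} is the same two-line comparison argument the paper gives. The one genuine difference is item \eqref{item:HKBasicMinimizer}: the paper simply cites \cite[Theorems~6.2 and 6.6]{liero2018optimal}, whereas you give a self-contained direct-method proof (compactness of minimizing sequences via Jensen and $\KL$-boundedness, joint lower semicontinuity of $E_\kappa$, and uniqueness of the marginals from strict convexity of $\KL(\cdot\,|\,\mu_i)$). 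This is a valid and instructive unpacking, with the caveat that your compactness step uses that the $\mu_i$ are supported in a fixed compact $K$ (so the $\proj_{i\#}\pi_n$, being $\ll\mu_i$, live on $K$), which is a standing simplification in the paper but a restriction relative to the cited theorems, which cover general $\measp(\Surf)$ via a tightness argument; worth flagging explicitly if you keep this version.
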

\begin{proof}
	\eqref{item:HKBasicMetric} is one of the key results of \cite{liero2018optimal}, see in particular \cite[Theorem 7.20]{liero2018optimal}.
	Existence in \eqref{item:HKBasicMinimizer} is established in \cite[Theorem 6.2]{liero2018optimal}, uniqueness of the marginals in \cite[Theorem 6.6]{liero2018optimal}.
	\eqref{item:HKBasicBalanced} is a trivial observation: if it were not true, the alternative minimizer $\tilde{\pi}$ for \eqref{eq:OT_primal} (with the same marginals as $\pi$) would also yield a better objective in \eqref{eq:HK_soft_primal}.
\end{proof}
\begin{remark}
	\label{rem:scale}
	Note that for a parameter $\kappa>0$, an optimal $\pi$ for \eqref{eq:HK_soft_primal} will not transport as far or further than distance $\kappa \cdot \pi/2$. The choice of $\kappa$ therefore controls the trade-off between transport and mass creation/annihilation in \eqref{eq:HK_soft_primal}. \cite{liero2018optimal} focuses on the case $\kappa=1$, the other cases can be described by re-scaling the metric on $\Surf$. See also \cite[Remark 3.3]{cai2022linearized}.
\end{remark}
\begin{remark}[Mass transport and generation]
\label{rem:TransportAndCreation}
	Let $\pi$ be an optimal plan for the soft-marginal formulation \eqref{eq:HK_soft_primal} of $\HK_\kappa^2(\mu_{0},\mu_{1})$. We will denote its marginals by $\pi_0\assign p_{0\#}\pi$ and $\pi_1\assign p_{1\#}\pi$.
	$\pi_0$ and $\pi_1$ do not depend on the choice of the minimizer $\pi$ (Proposition \ref{prop:HKBasic} \eqref{item:HKBasicMinimizer}). We can then consider the Lebesgue decompositions of $\mu_i$ with respect to the $\pi_i$, i.e.
	\begin{align*}
	\mu_{0}&=\frac{\diff\mu_{0}}{\diff\pi_0}\pi_0+\mu_{0}^\perp, &
	\mu_1 & =\frac{\diff\mu_1}{\diff\pi_1}\pi_1+\mu_{1}^\perp,
	\end{align*}
	where $\mu_i^\perp$ is the part of $\mu_i$ that is singular with respect to $\pi_i$.
	The parts of $\mu_i$ that are dominated by $\pi_i$ should be interpreted as undergoing transport (and gradual mass change along the transport), whereas the singular parts $\mu_i^\perp$ undergo complete destruction ($i=0$) and creation ($i=1$) respectively, without any movement. This is illustrated in more detail in \cite[Section 3.3]{cai2022linearized}.
	If possible, the $\HK_\kappa$ metric always prefers transport over pure destruction/creation, i.e.~$d(x_0,x_1)\geq\kappa\frac\pi2$ for $\mu_{0}^\perp\times \mu_{1}$-a.e.~$(x_0,x_1)$ (and symmetrically, exchanging the roles of $\mu_{0}$ and $\mu_{1}$), see \cite[Lemma 3.13]{cai2022linearized}.
\end{remark}

Duality for $\HK^2_\kappa$, \eqref{eq:HK_soft_primal}, is slightly more involved than for $\W_2$, since the cost $c^{\HK}_\kappa$ can become $+\infty$ at finite range, and due to the non-linearity of the $\KL$-terms. But \eqref{eq:HK_soft_primal} is still a convex optimization problem and with Fenchel--Rockafellar duality one can obtain, for instance, the following dual problem.
\begin{proposition}
	\label{prop:HK_soft_dual}
	For $\mu_0, \mu_1 \in \measp(\Surf)$,
	\begin{multline}
		\label{eq:HK_soft_dual}
		\HK^2_\kappa(\mu_0,\mu_1)=\kappa^2\sup\bigg\{\int_X 1-e^{-\phi_0(x_0)/\kappa^2}\diff\mu_0(x_0)+\int_X 1-e^{-\phi_1(x_1)/\kappa^2}\diff\mu_1(x_1)~\bigg|\\
		(\phi_0,\phi_1)\in\Cont_{c_\kappa^{\HK}}(\Surf)\bigg\}
	\end{multline}
	with the definition of $\Cont_{c_\kappa^{\HK}}(\Surf)$ from balanced transport, \eqref{eq:OT_feasible}.
\end{proposition}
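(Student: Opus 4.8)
The plan is to derive \eqref{eq:HK_soft_dual} from \eqref{eq:HK_soft_primal} by Fenchel--Rockafellar duality. Under the standing assumption that $\mu_0,\mu_1$ are supported in the compact set $K$, any $\pi$ with $E_\kappa(\pi|\mu_0,\mu_1)<\infty$ has marginals $\proj_{i\#}\pi\ll\mu_i$ (because of the $\KL$ terms), hence is itself supported in $K\times K$; so one may work on the compact space $K\times K$, whose space of continuous functions is the predual of $\meas(K\times K)$.

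First I would put the problem in the form $\HK^2_\kappa(\mu_0,\mu_1)=\inf_{\pi}\{\mathcal G(\pi)+\mathcal H(A\pi)\}$, where $\mathcal G(\pi)\assign\int_{X\times X}c^{\HK}_\kappa\,\diff\pi$ for $\pi\ge0$ and $+\infty$ otherwise, $A\pi\assign(\proj_{0\#}\pi,\proj_{1\#}\pi)$, and $\mathcal H(\gamma_0,\gamma_1)\assign\kappa^2\KL(\gamma_0|\mu_0)+\kappa^2\KL(\gamma_1|\mu_1)$. Here $\mathcal G$ is convex, positively homogeneous and weak-$*$ lower semicontinuous (since $c^{\HK}_\kappa$ is lower semicontinuous and $\ge0$); $A$ is linear and weak-$*$-to-weak-$*$ continuous with adjoint $A^*(\phi_0,\phi_1)=\phi_0\oplus\phi_1$, where $(\phi_0\oplus\phi_1)(x_0,x_1)\assign\phi_0(x_0)+\phi_1(x_1)$; and $\mathcal H$ is convex and weak-$*$ lower semicontinuous. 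The conjugates are routine: $\mathcal G^*(\psi)=0$ if $\psi\le c^{\HK}_\kappa$ pointwise and $+\infty$ otherwise, so $-\mathcal G^*(A^*(\phi_0,\phi_1))$ equals $0$ exactly on the admissible set $\Cont_{c^{\HK}_\kappa}(X)$ and $-\infty$ off it; and, by the classical Legendre transform of the Boltzmann entropy, $\sup_{s\ge0}\{\psi s-(s\log s-s+1)\}=e^{\psi}-1$, one gets $[\kappa^2\KL(\cdot|\mu)]^*(\psi)=\kappa^2\int_X(e^{\psi/\kappa^2}-1)\,\diff\mu$, hence $-\mathcal H^*(-\phi_0,-\phi_1)=\kappa^2\int_X(1-e^{-\phi_0/\kappa^2})\,\diff\mu_0+\kappa^2\int_X(1-e^{-\phi_1/\kappa^2})\,\diff\mu_1$. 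Feeding these into the Fenchel--Rockafellar identity, which here reads $\inf_{\pi}\{\mathcal G(\pi)+\mathcal H(A\pi)\}=\sup_{(\phi_0,\phi_1)}\{-\mathcal G^*(A^*(\phi_0,\phi_1))-\mathcal H^*(-\phi_0,-\phi_1)\}$, reproduces exactly the right-hand side of \eqref{eq:HK_soft_dual}, and weak duality already yields ``$\ge$''.

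The main obstacle is the absence of a duality gap. The usual interiority/continuity qualifications are delicate here because $c^{\HK}_\kappa$ is $+\infty$ at finite range (so $\mathcal G$ has a thin effective domain) and $\KL(\cdot|\mu)$ is merely lower semicontinuous, never continuous on the range of $A$ (so $\mathcal H$ is not continuous there). I would instead argue by minimax: for fixed marginals $\gamma_0,\gamma_1$, the inner minimization of $\int c^{\HK}_\kappa\,\diff\pi$ over $\pi\in\Pi(\gamma_0,\gamma_1)$ is a balanced transport problem with a lower semicontinuous cost bounded below on the compact $K\times K$, so by Kantorovich duality \eqref{eq:OT_dual} it equals $\sup\{\int\phi_0\,\diff\gamma_0+\int\phi_1\,\diff\gamma_1\mid(\phi_0,\phi_1)\in\Cont_{c^{\HK}_\kappa}(X)\}$ with no gap. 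Substituting this turns $\HK^2_\kappa$ into $\inf_{(\gamma_0,\gamma_1)}\sup_{(\phi_0,\phi_1)\in\Cont_{c^{\HK}_\kappa}(X)}\big\{\int\phi_0\,\diff\gamma_0+\int\phi_1\,\diff\gamma_1+\kappa^2\KL(\gamma_0|\mu_0)+\kappa^2\KL(\gamma_1|\mu_1)\big\}$, an expression that is convex and weak-$*$ lower semicontinuous in $(\gamma_0,\gamma_1)$ and affine in $(\phi_0,\phi_1)$. Using that the value is finite, that $\KL(\cdot|\mu_i)$ has weak-$*$ compact sublevel sets (superlinear mass growth, $K$ compact), and that for the $\sup$ one may restrict to $c$-concave pairs, Sion's minimax theorem permits the exchange of $\inf$ and $\sup$; minimizing over each $\gamma_i\ge0$ for fixed $\phi_i$ is then once more the pointwise entropy computation and produces $\kappa^2\int(1-e^{-\phi_i/\kappa^2})\,\diff\mu_i$, yielding \eqref{eq:HK_soft_dual}. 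Alternatively, and more economically, one may invoke \cite{liero2018optimal} (see also \cite{ChizatDynamicStatic2018}), where precisely this dual problem is established.
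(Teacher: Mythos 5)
The paper does not give a proof of this proposition at all: it simply cites \cite[Theorem 6.3]{liero2018optimal} (and \cite[Corollary 5.8]{ChizatDynamicStatic2018} for the related cone-constraint form). Your closing sentence, where you suggest ``more economically'' invoking these references, is therefore exactly the paper's approach.

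Your main self-contained argument takes a genuinely different route, and the Legendre-transform computations in it are correct: the conjugate of $\mathcal G$ indeed encodes the constraint $\phi_0\oplus\phi_1\le c^{\HK}_\kappa$, and the pointwise Legendre transform of $s\mapsto\kappa^2(s\log s-s+1)$ does produce $\kappa^2(e^{\psi/\kappa^2}-1)$, hence the stated dual integrand. You are also right to flag that the usual Attouch--Brezis/interiority qualifications for Fenchel--Rockafellar are not available here. However, the minimax step you propose to replace them has a gap. Sion's theorem requires one of the two constraint sets to be compact, and neither is: the set $\{(\gamma_0,\gamma_1)\in\measp(K)^2\}$ is unbounded, and the admissible potentials $\Cont_{c^{\HK}_\kappa}(\Surf)$ are unbounded as well (e.g.\ the additive shift $(\phi_0+a,\phi_1-a)$ stays admissible for all $a$). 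Restricting the infimum over $(\gamma_0,\gamma_1)$ to a compact $\KL$-sublevel set $C$ does let you apply Sion on $C$, but the resulting chain
\begin{equation*}
\inf_{B}\sup_{\Phi}=\inf_{C}\sup_{\Phi}=\sup_{\Phi}\inf_{C}\ge\sup_{\Phi}\inf_{B}
\end{equation*}
only reproduces weak duality; to close the loop you would need $\inf_{C}F(\cdot,\phi)=\inf_{B}F(\cdot,\phi)$ for all admissible $\phi$, which fails because the unconstrained minimizer $\gamma_i^\phi=e^{-\phi_i/\kappa^2}\mu_i$ has mass $\int e^{-\phi_i/\kappa^2}\diff\mu_i$ that is unbounded over $\Phi$ (the constraint $\phi_0\oplus\phi_1\le c^{\HK}_\kappa$ gives upper bounds but no lower bounds on $\phi_i$). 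Restricting to $c$-concave pairs does not remove the unbounded additive shift either. So as written the inf--sup exchange is not justified, and a genuine proof would require either the more careful limiting arguments of \cite{liero2018optimal}, or a sharper coercivity/normalization argument that you have not supplied. In short: the structure of your derivation is sound and the conjugates are right, but you cannot yet claim zero duality gap; the citation you offer at the end is what actually carries the result.
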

By a (monotonous) change of variables, $\Phi_i=\kappa^2\left(1-e^{-\phi_i/\kappa^2}\right)$, this can be rewritten as a problem with linear objective, but somewhat more complicated constraints.
\begin{proposition}
	\label{prop:HK_cone_dual}
	For any positive measures $\mu_0$, $\mu_1$ on $\Surf$,
	\begin{equation}
		\label{eq:HK_cone_dual}
		\HK^2_\kappa(\mu_0,\mu_1)=\sup\left\{\int_X \Phi_0(x_0)\diff\mu_0(x_0)+\int_X\Phi_1(x_1)\diff\mu_1(x_1)~\middle|~(\Phi_0,\Phi_1)\in\QD_\kappa(\Surf)\right\}
	\end{equation}
	where $\QD_\kappa(\Surf)$ is now the (convex) set
	\begin{multline}
		\label{eq:cone_dual_set}
		\QD_\kappa(\Surf):=\bigg\{\Phi_0,\Phi_1\in\Cont(\Surf)~\bigg|~\left(1-\frac{\Phi_0(x_0)}{\kappa^2}\right)\left(1-\frac{\Phi_1(x_1)}{\kappa^2}\right)\geq\Cos^2\left(\frac{d(x_0,x_1)}{\kappa}\right),\\
		\Phi_i\leq \kappa^2,\, i=0,1 \bigg\}.
	\end{multline}
	where we use $\Cos$ to denote the `truncated' $\cos$ function: $\Cos(s)\assign\cos(\min\{|s|,\pi/2\})$.
\end{proposition}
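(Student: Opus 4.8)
The plan is to derive \eqref{eq:HK_cone_dual} directly from the dual formulation \eqref{eq:HK_soft_dual} of \Cref{prop:HK_soft_dual} by the pointwise substitution $\Phi_i = \kappa^2\bigl(1 - e^{-\phi_i/\kappa^2}\bigr)$ announced in the text. The scalar map $\Psi(t) = \kappa^2\bigl(1 - e^{-t/\kappa^2}\bigr)$ is a strictly increasing bijection from $\R$ onto $(-\infty,\kappa^2)$, with continuous inverse $\Psi^{-1}(s) = -\kappa^2\log\bigl(1 - s/\kappa^2\bigr)$; hence $\phi \mapsto \Psi \circ \phi$ is a bijection from $\Cont(\Surf)$ onto $\{\Phi \in \Cont(\Surf) : \Phi < \kappa^2\}$. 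Under this substitution the integrand transforms as $\kappa^2\bigl(1 - e^{-\phi_i/\kappa^2}\bigr) = \Phi_i$, so the objective of \eqref{eq:HK_soft_dual} becomes the linear objective $\int_X \Phi_0\,\diff\mu_0 + \int_X \Phi_1\,\diff\mu_1$ of \eqref{eq:HK_cone_dual}.

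Next I would rewrite the admissibility constraint $\phi_0(x_0) + \phi_1(x_1) \le c^{\HK}_\kappa(x_0,x_1)$ pointwise, distinguishing the two branches of the cost. If $d(x_0,x_1) < \kappa\pi/2$, dividing by $-\kappa^2$ and applying the (increasing) exponential turns the constraint into $e^{-\phi_0(x_0)/\kappa^2}\,e^{-\phi_1(x_1)/\kappa^2} \ge \cos^2\bigl(d(x_0,x_1)/\kappa\bigr)$; since $e^{-\phi_i/\kappa^2} = 1 - \Phi_i/\kappa^2$ and $\cos(d(x_0,x_1)/\kappa) = \Cos(d(x_0,x_1)/\kappa)$ on this range, this is exactly the product inequality in \eqref{eq:cone_dual_set}. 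If $d(x_0,x_1) \ge \kappa\pi/2$, the cost is $+\infty$, so the constraint is vacuous; and so is the product inequality, because then $\Cos^2(d(x_0,x_1)/\kappa) = 0$ while $\bigl(1 - \Phi_0(x_0)/\kappa^2\bigr)\bigl(1 - \Phi_1(x_1)/\kappa^2\bigr) \ge 0$ for any pair with $\Phi_i < \kappa^2$. Thus $(\phi_0,\phi_1)$ is admissible for \eqref{eq:HK_soft_dual} if and only if $(\Psi\circ\phi_0,\Psi\circ\phi_1)$ satisfies the product inequality of \eqref{eq:cone_dual_set}.

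The only point requiring a genuine (if short) argument, rather than a monotone change of variable, is to match the strict bound $\Phi_i < \kappa^2$ coming from the range of $\Psi$ with the non-strict bound $\Phi_i \le \kappa^2$ imposed in $\QD_\kappa(\Surf)$. These coincide once the product constraint is imposed: if $\Phi_0(x_0) = \kappa^2$ for some $x_0$, then evaluating the product inequality at the diagonal pair $(x_0,x_0)$ and using $\Phi_1(x_0) \le \kappa^2$ gives $0 = \bigl(1 - \Phi_0(x_0)/\kappa^2\bigr)\bigl(1 - \Phi_1(x_0)/\kappa^2\bigr) \ge \Cos^2(0) = 1$, which is impossible; symmetrically for $\Phi_1$. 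Hence $\QD_\kappa(\Surf) = \{\Phi_0,\Phi_1 \in \Cont(\Surf) : \Phi_i < \kappa^2,\ i=0,1,\ \text{and the product inequality holds}\}$, and $\phi \mapsto \Psi\circ\phi$ restricts to a bijection from the feasible set of \eqref{eq:HK_soft_dual} onto $\QD_\kappa(\Surf)$ preserving the objective value; taking suprema on both sides and invoking \Cref{prop:HK_soft_dual} yields \eqref{eq:HK_cone_dual}. I do not expect a serious obstacle here: the only care needed is bookkeeping the finite-range blow-up of $c^{\HK}_\kappa$ against the truncated cosine $\Cos$, and the boundary value $\Phi_i = \kappa^2$, both handled above.
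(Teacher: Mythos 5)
Your proposal is correct and matches the paper's intended argument: the paper states exactly this change of variables $\Phi_i=\kappa^2(1-e^{-\phi_i/\kappa^2})$ in the text preceding the proposition and otherwise defers to \cite[Theorem 6.3]{liero2018optimal}; you have simply written out the bookkeeping (the monotone substitution of the constraint, the role of the truncated cosine when $d\geq\kappa\pi/2$, and the equivalence of $\Phi_i\leq\kappa^2$ with $\Phi_i<\kappa^2$ under the product constraint via the diagonal) that the paper leaves implicit. This is the same route, carried out in full.
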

Both duality formulas are covered by \cite[Theorem 6.3]{liero2018optimal}.
For $\Surf\subset \R^n$ the latter is also given in \cite[Corollary 5.8]{ChizatDynamicStatic2018}.
We will focus on the formulation \eqref{eq:HK_cone_dual} in the following, as it has a more explicit connection with the logarithmic map for $\HK_\kappa$ (Proposition \ref{prop:LogHK_dual}).

\begin{remark}
\label{rem:HKAdmissibility}
If $\spt \mu_i \subset K$ for some compact $K \subset \Surf$ with $\diam K < \kappa \cdot \pi/2$, then $c^{\HK}$ is continuous on the compact set $K \times K$ and continuous maximizers of \eqref{eq:HK_soft_dual} can be shown to exist with the same arguments as for $\W_2$, \eqref{eq:W2_dual}. (This does not hold for \eqref{eq:HK_cone_dual} if one of the two measures is zero.)

This can be generalized to the case where the transport distance of minimal $\pi$ in \eqref{eq:HK_soft_primal} is bounded strictly away from $\kappa \pi/2$. A sufficient condition for this is that the distance between two points, in $\spt \mu_0$ and $\spt \mu_1$ respectively, has to be strictly less than $\kappa \pi/2$. This `admissibility condition' was introduced in \cite{gallouet2021regularity} where further regularity properties are deduced from this assumption. We do not use this assumption in our article.
\end{remark}

In general, existence of dual maximizers can be established by relaxing the maximization space $\Cont(\Surf)$. The following proposition is a subset of \cite[Theorem 6.3]{liero2018optimal} and will be usefull in order to state the existence of a transport map in our fairly general setting:
\begin{proposition}[Existence of weak dual potentials for $\HK$]
	\label{prop:HK_L1_existence}
For $\mu_0, \mu_1 \in \prob(\Surf)$,
\begin{multline}
\label{eq:HK_L1_dual}
\HK_\kappa^2(\mu_0,\mu_1) = \sup \bigg\{
\int_X \Phi_0\,\diff \mu_0 + \int_X \Phi_1 \,\diff \mu_1 \bigg|
\Phi_0,\Phi_1 \in \Borel(\Surf),\,
\Phi_i\leq \kappa^2,\, i=0,1 \text{ and}\\
\forall(x_0,x_1)\in\Surf^2,\,\left(1-\frac{\Phi_0(x_0)}{\kappa^2}\right)\odot\left(1-\frac{\Phi_1(x_1)}{\kappa^2}\right)\geq
\Cos^2\left(\frac{d(x_0,x_1)}{\kappa}\right),
\bigg\}
\end{multline}
\noindent where the extended product $\odot$ is the standard product on $\R\times\R$ and $0\odot+\infty=+\infty\odot0=+\infty$.

Maximizing $(\Phi_0,\Phi_1)$ in \eqref{eq:HK_L1_dual} exist and are all in $\LL^1(\Surf;\mu_{0})\times\LL^1(\Surf;\mu_{1})$. A plan $\pi \in \measp(\Surf \times X)$ and a pair $(\Phi_0,\Phi_1)$ are optimal in \eqref{eq:HK_soft_primal} and \eqref{eq:HK_L1_dual} respectively if and only if
\begin{enumerate}[(i)]
\item for $\pi$-almost every $(x_0,x_1)\in \Surf^2$, $d(x_0,x_1)<\kappa\frac{\pi}{2}$ and  $$\left(1-\frac{\Phi_0(x_0)}{\kappa^2}\right)\left(1-\frac{\Phi_1(x_1)}{\kappa^2}\right)=\cos^2\left(\frac{d(x_0,x_1)}{\kappa}\right)\quad\mathrm{and}\quad\Phi_i(x_i)< \kappa^2,~i=0,1,$$
\item for $i=0,1$ and $(\mu_i^\perp \otimes \mu_{1-i})$-almost every $(x_i,x_{1-i})$,
$$d(x_i,x_{1-i})\geq\kappa\frac{\pi}{2},\quad\mathrm{and}\quad\Phi_i(x_i)=\kappa^2,$$
\label{item:HK_L1_existence_perpfar}%
\item the marginal densities of $\pi$ verify
\label{item:HK_L1_marginals}
\begin{equation}
  \label{eq:marg_densities}
  \RadNik{\pi_i}{\mu_i}(x_i)=1-\frac{\Phi_i(x_i)}{\kappa^2}
  \qquad \text{for $\mu_i$-almost every $x_i$, $i=0,1$.}
\end{equation}
\end{enumerate}
\end{proposition}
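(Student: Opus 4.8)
The plan is to obtain the statement by combining the Fenchel--Rockafellar duality already recorded in Propositions~\ref{prop:HK_soft_dual} and~\ref{prop:HK_cone_dual} with the existence and complementary-slackness parts of \cite[Theorem 6.3]{liero2018optimal}, and then carrying out the bookkeeping needed to pass to the relaxed, $\odot$-form \eqref{eq:HK_L1_dual}.

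First I would check that \eqref{eq:HK_L1_dual} has the same value as \eqref{eq:HK_cone_dual}. The inequality ``$\geq$'' is immediate, since any pair admissible for \eqref{eq:HK_cone_dual} is admissible for \eqref{eq:HK_L1_dual} (the ordinary product and $\odot$ agree on $\R\times\R$). For ``$\leq$'' it is enough that weak duality survives the relaxation: for any feasible $\pi$ in \eqref{eq:HK_soft_primal} and any Borel pair $(\Phi_0,\Phi_1)$ with $\Phi_i\leq\kappa^2$ satisfying the pointwise $\odot$-inequality, the pointwise Young inequality underlying the $\KL$-duality, integrated against $\pi$ together with the cost term, yields $\int_X\Phi_0\diff\mu_0+\int_X\Phi_1\diff\mu_1\leq E_\kappa(\pi|\mu_0,\mu_1)$; taking the infimum over $\pi$ gives $\leq\HK^2_\kappa(\mu_0,\mu_1)$, which equals the value of \eqref{eq:HK_cone_dual}. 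The $\odot$-convention is exactly what keeps the constraint satisfiable at points where $1-\Phi_i(x_i)/\kappa^2=0$ while the partner potential is pushed to $-\infty$; such points correspond to mass that is fully created or annihilated (cf.\ Remark~\ref{rem:TransportAndCreation}).

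Next I would establish existence and the $\LL^1$-bound. Let $\pi$ be optimal for \eqref{eq:HK_soft_primal} with marginals $\pi_0,\pi_1$ (Proposition~\ref{prop:HKBasic}); since $E_\kappa(\pi|\mu_0,\mu_1)<\infty$ both $\KL$-terms are finite, so $\pi_i\ll\mu_i$ and $\RadNik{\pi_i}{\mu_i}\in\LL^1(\mu_i)$ (the $\pi_i$ being finite measures). Setting $\Phi_i:=\kappa^2\bigl(1-\RadNik{\pi_i}{\mu_i}\bigr)$ where the density exists and $\Phi_i:=\kappa^2$ elsewhere gives a Borel pair with $\Phi_i\leq\kappa^2$ and $\Phi_i\in\LL^1(\mu_i)$. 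That this pair is feasible for \eqref{eq:HK_L1_dual} and attains $\HK^2_\kappa(\mu_0,\mu_1)$ is the content of \cite[Theorem 6.3]{liero2018optimal}; concretely, one recovers the ``interior'' part of the potentials through the change of variables $\Phi_i=\kappa^2(1-e^{-\phi_i/\kappa^2})$ from a $c^{\HK}_\kappa$-concave optimal pair in \eqref{eq:HK_soft_dual}, reads off the product constraint there as the equality case of the $c^{\HK}_\kappa$-transform, and checks that on the singular parts $\mu_i^\perp$ the choice $\Phi_i=\kappa^2$ is consistent with $\odot$ because that mass is paired at distance $\geq\kappa\pi/2$ (Remark~\ref{rem:TransportAndCreation}).

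Finally, conditions (i)--(iii) are the complementary-slackness conditions of this primal--dual pair: $\pi$ and $(\Phi_0,\Phi_1)$ are simultaneously optimal iff every inequality used in the weak-duality estimate above is an equality $\pi$- (respectively $\mu_i$-) almost everywhere. Equality in the cost/constraint term along $\spt\pi$ forces $d(x_0,x_1)<\kappa\pi/2$ together with $(1-\Phi_0(x_0)/\kappa^2)(1-\Phi_1(x_1)/\kappa^2)=\cos^2(d(x_0,x_1)/\kappa)$ and both factors positive, i.e.\ (i); equality in the $\KL$-Young inequality forces the marginal densities to equal $1-\Phi_i/\kappa^2$, i.e.\ (iii); and (ii), the characterization of where destruction/creation occurs, follows from \eqref{eq:marg_densities} combined with \cite[Lemma 3.13]{cai2022linearized} (Remark~\ref{rem:TransportAndCreation}). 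I expect the main obstacle to be exactly this last translation, together with checking that the Borel/$\odot$ relaxation introduces no duality gap and encodes the degenerate case correctly; the remainder is bookkeeping on top of \cite[Theorem 6.3]{liero2018optimal}.
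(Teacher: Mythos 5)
The paper does not give a proof of this proposition; it simply states that it is a subset of \cite[Theorem 6.3]{liero2018optimal}. Your proposal takes exactly the same route — reduce everything to that theorem and the propositions already recorded (Propositions~\ref{prop:HK_soft_dual}, \ref{prop:HK_cone_dual}) — and fills in plausible bookkeeping: the $\odot$-relaxation introduces no duality gap by weak duality; the optimal pair can be read off from the marginal densities via \eqref{eq:marg_densities}; and conditions (i)--(iii) are the equality cases of the weak-duality chain (cost/constraint, KL--Young), with the $d\geq\kappa\pi/2$ claim in (ii) supplied by \cite[Lemma 3.13]{cai2022linearized} as in Remark~\ref{rem:TransportAndCreation}. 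This matches the paper's stance and adds the detail the paper omits; I see no gap in the outline.
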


\begin{remark}[Log-$c$-concave functions]
\label{rem:logcconcave}
Let $\Phi_0,\Phi_1$ be admissible in \eqref{eq:HK_L1_dual} for $\HK_\kappa^2(\mu_0,\mu_1)$. Introduce now the function
\begin{equation}
	\label{eq:log_c_transform}
	\Phi_0^{\tn{cone}} : X \ni x_1 \mapsto 
	\begin{cases}
		-\infty & \text{if $\exists\,x_0\in B_\Surf(x_1,\kappa\pi/2):\,\Phi_0(x_0)=\kappa^2$},\\
		\kappa^2\inf_{
				x_0 \in B_\Surf(x_1,\kappa\pi/2)}\left(1-\frac{\Cos^2(d(x_0,x_1)/\kappa)}{1-\frac{\Phi_0(x_0)}{\kappa^2}}\right) & \text{otherwise.}
	\end{cases}
\end{equation}
	Then $(\Phi_0,\Phi_0^{\tn{cone}})$ is a competing pair of functions for \eqref{eq:HK_L1_dual}. 
%
	Indeed, by definition, $\Phi_0^{\tn{cone}}$ is Borel measurable and for  any ~$(x_0,x_1)\in\Surf^2$,
	$$\left(1-\frac{\Phi_0(x_0)}{\kappa^2}\right)\odot\left(1-\frac{\Phi_0^{\tn{cone}}(x_1)}{\kappa^2}\right)\geq
	\Cos^2\left(\frac{d(x_0,x_1)}{\kappa}\right).$$
	Also, $\Phi_0^{\tn{cone}}\leq\kappa^2$ since $\Phi_0\leq\kappa^2$. By admissibility one must have that $\Phi_1$ is less than the above infimum, i.e.~$\Phi_1 \leq \Phi_0^{\tn{cone}}$ and therefore, the pair $(\Phi_0,\Phi_0^{\tn{cone}})$ yields a better objective than $(\Phi_0,\Phi_1)$. This allows us to assume that an optimal pair is of this form $(\Phi_0,\Phi_0^{\tn{cone}})$ (and in turn get $\Phi_0^{\tn{cone}}\in\LL^1(\Surf;\mu_{1})$) and we can rewrite
	\begin{equation}
		\label{eq:HK_logct_dual}
		\HK_\kappa^2(\mu_0,\mu_1) = \max \bigg\{
		\int_X \Phi_0\,\diff \mu_0 + \int_X \Phi_0^{\tn{cone}} \,\diff \mu_1 \bigg|
		\Phi_0 \in \Borel(\Surf),~
		\Phi_0\leq \kappa^2
		\bigg\}.
	\end{equation}
	
	Intuitively, $\Phi_0^{\tn{cone}}$ plays the role of the $c$-transform of $\Phi_0$ for balanced optimal transport, see \cite[Definition 1.10, Section 1.6.1]{santambrogio2015optimal} and in fact, the operation $\Phi_0\mapsto\Phi_0^{\tn{cone}}$ mostly corresponds to $\phi_0\mapsto\phi_0^{c^{\HK}_\kappa}$ up to the change of variable yielding \eqref{eq:HK_cone_dual} (exactly for $\Phi_0<\kappa^2$, otherwise, some additional conventions must be taken). Following this analogy we will call a function `log-$c$-concave' if it can be written as $\Phi_1=\Phi_0^{\tn{cone}}$ for some $\Phi_0\leq\kappa^2$. We revisit these canonical dual solutions in Sections \ref{sec:convexity_range} and \ref{sec:convexity_range_num}.
	
	Finally, similar to the balanced scenario, one can further restrict candidates $\Phi_0$ in \eqref{eq:HK_logct_dual} to the set log-$c$-concave functions by repeating the above argument. $\Phi_0\mapsto\Phi_0^{cone}$ turns out to also be an involution on this set.
\end{remark}

\subsection{Transport maps for HK}
\label{sec:HKMaps}
Similar as for $\W_2$, to state exponential and logarithmic maps for $\HK_\kappa$, it will be helpful to establish the existence of optimal transport maps first.
As in \eqref{eq:OTMap}, by optimal transport map it is meant here that the optimal unbalanced coupling $\pi$ in \eqref{eq:HK_soft_primal} is concentrated on the graph of a map $T: X \to X$. Note however, that for the HK distance, the relation between $T$ and shortest paths is more involved than \eqref{eq:OTGeodesic} for $\W_2$.
For HK, geodesics can be reconstructed from an optimal plan via the logarithmic and exponential map given in Section \ref{sec:LinHK}, see also \cite[Section 3.3]{cai2022linearized}.
While existence of optimal transport maps for a particular class of convex cost functions has been established by \cite{McCannGangboOTGeometry1996} this does not apply to $\HK_\kappa$ directly, as $c^{\HK}_\kappa$ can take value $+\infty$. For $\Surf \subset \R^n$ existence of optimal transport maps for $\HK$ is shown in \cite[Theorem 6.6]{liero2018optimal} and this was generalized to Riemannian manifolds in \cite[Proposition 16]{gallouet2021regularity} under the admissibility assumption mentioned in Remark \ref{rem:HKAdmissibility}.
Dropping this assumption requires to relax the notion of a gradient for the corresponding dual potentials, in order to obtain an expression coherent with the result by McCann. This is accomplished by the notion of \emph{approximate gradient}, the definition of which we recall in \Cref{ap:approx_grad}, in our context of Riemannian manifolds. The following theorem is then a generalization of \cite[Proposition 16]{gallouet2021regularity} with some adaptations for our purposes: First, we will write the transport map corresponding to an optimal plan in the sense of \Cref{def:HK_soft_primal} using the dual variables of \eqref{eq:HK_cone_dual} instead of these of \eqref{eq:HK_soft_dual} as they will be more natural in the context of the logarithmic map. Furthermore, and most importantly, we drop the admissibility assumption of \cite{{gallouet2021regularity}} (see Remark \ref{rem:HKAdmissibility}), at the price that the transport map is only defined $\pi_0$-almost everywhere. However, since we are not concerned with regularity here, this is not an actual restriction, as we know that no transport can happen for the mass of $\mu_0$ outside of $\spt(\pi_0)$ (only destruction or creation, see Remark \ref{rem:TransportAndCreation}).

\begin{theorem}
  \label{thm:McCann_HK}
  Let $\mu_0 \in \measpl(\Surf)$, $\mu_1 \in \measp(X)$ and $(\Phi_0,\Phi_1)$ be a maximizing pair for \eqref{eq:HK_L1_dual} with these measures. Then:
	\begin{enumerate}[(i)]
	\item The minimizer $\pi$ of \eqref{eq:HK_soft_primal} is unique and can be written as $\pi=(\id,T)_\# \pi_0$ for a corresponding optimal transport map $T$ and $\pi_0$ is the first marginal of $\pi$.
	\item The function $\Phi_0$ is $\mu_0$-almost everywhere approximately differentiable and the map $T$ satisfies
		\begin{equation}
			\label{eq:Brenier-McCann-HK}
			T(x)=\Exp^X_x\left(-\kappa\arctan\left(\frac{1}{2\kappa}\frac{\norm{\tilde{\nabla}\Phi_0(x)}}{1-\Phi_0(x)/\kappa^2}\right)\frac{\tilde{\nabla}\Phi_0(x)}{\norm{\tilde{\nabla}\Phi_0(x)}}\right)
		 \quad \text{for $\pi_0-$almost every $x$,}
		\end{equation} 
	while $\tilde{\nabla}\Phi_0(x_0)=0$ for $\mu_0^\perp$-a.e. $x_0\in \Surf$, where $\mu_0^\perp$ is the part of $\mu_0$ that is singular with respect to $\pi_0$.
	\end{enumerate}	
\end{theorem}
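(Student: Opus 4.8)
The plan is to reduce the unbalanced problem to a \emph{balanced} optimal transport problem between the marginals of the optimal plan and then to run a McCann-type argument for the resulting cost. By \Cref{prop:HK_L1_existence} there is an optimal plan $\pi$ for \eqref{eq:HK_soft_primal} and a maximizing pair $(\Phi_0,\Phi_1)\in\LL^1(\Surf;\mu_0)\times\LL^1(\Surf;\mu_1)$ for \eqref{eq:HK_L1_dual}, satisfying the complementary slackness conditions~(i)--(iii) there. In particular \eqref{eq:marg_densities} gives $\pi_i=(1-\Phi_i/\kappa^2)\mu_i$, so $\pi_0\ll\mu_0\ll\vol$, the part $\mu_0^\perp$ of $\mu_0$ singular with respect to $\pi_0$ is concentrated on $\{\Phi_0=\kappa^2\}$, and $\Phi_i(x_i)<\kappa^2$ for $\pi_i$-a.e.\ $x_i$. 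By \Cref{prop:HKBasic}\,(iii), $\pi$ is an optimal plan for the balanced problem \eqref{eq:OT_primal} between $\pi_0$ and $\pi_1$ with cost $c^{\HK}_\kappa$. Passing through the change of variables $\phi_i\assign-\kappa^2\log(1-\Phi_i/\kappa^2)$ (the inverse of the one relating \eqref{eq:HK_soft_dual} to \eqref{eq:HK_cone_dual}), the constraint defining $\QD_\kappa(\Surf)$ becomes $\phi_0(x_0)+\phi_1(x_1)\leq c^{\HK}_\kappa(x_0,x_1)$ everywhere, while condition~(i) of \Cref{prop:HK_L1_existence} becomes $\phi_0(x_0)+\phi_1(x_1)=c^{\HK}_\kappa(x_0,x_1)$ for $\pi$-a.e.\ $(x_0,x_1)$ with $\phi_i$ finite $\pi_i$-a.e.; hence $(\phi_0,\phi_1)$ are optimal Kantorovich potentials for that balanced problem.

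Next I would reproduce the proof of McCann's theorem (\Cref{thm:McCann}, i.e.\ \cite[Theorem 10.41]{villani2009optimal}) for this balanced problem, following \cite[Proposition 16]{gallouet2021regularity} while tracking where the admissibility assumption of \Cref{rem:HKAdmissibility} entered. On $\{d(x_0,x_1)<\kappa\pi/2\}$ the cost $c^{\HK}_\kappa$ is a smooth, strictly increasing, strictly convex function of $d(x_0,x_1)$ that behaves like $d^2$ near the diagonal, so away from the cut locus it is locally semiconcave and satisfies the required twist condition, inheriting from $d^2$ the structure McCann's scheme needs; admissibility was only used to keep the transport uniformly inside $\{d<\kappa\pi/2\}$ and to make the potentials globally Lipschitz. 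Without it one argues $\pi$-almost everywhere: for $\pi$-a.e.\ $(x_0,x_1)$ one has $d(x_0,x_1)<\kappa\pi/2$, and $x_0$ minimizes $y\mapsto c^{\HK}_\kappa(y,x_1)-\phi_0(y)$. Possibly after replacing $\Phi_0$ by its log-$c$-concave double transform --- which coincides with $\Phi_0$ $\mu_0$-a.e.\ by maximality and is, locally, a truncated infimum of $C^1$ functions (\Cref{rem:logcconcave}) --- the potential $\phi_0$ (equivalently $\Phi_0$) is approximately differentiable $\mu_0$-a.e.\ by the argument of \cite[Theorem 10.41]{villani2009optimal} through the approximate-gradient machinery of \Cref{ap:approx_grad}, and at such points one reads off the first-order relation $\tilde{\nabla}\phi_0(x_0)=\nabla_{x_0}c^{\HK}_\kappa(x_0,x_1)$. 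The approximate gradient is genuinely needed here because, without admissibility, $\phi_0$ need not be continuous or finite on all of $\spt\mu_0$.

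It then remains to invert the first-order relation. From \eqref{eq:LogMapBasic} a direct computation gives, with $r\assign d(x_0,x_1)$,
\begin{equation*}
	\nabla_{x_0}c^{\HK}_\kappa(x_0,x_1)=2\kappa\tan(r/\kappa)\,\nabla_{x_0}d(x_0,x_1)=-\frac{2\kappa\tan(r/\kappa)}{r}\,\Log^X_{x_0}(x_1),
\end{equation*}
while $\tilde{\nabla}\phi_0=\tilde{\nabla}\Phi_0/(1-\Phi_0/\kappa^2)$. Taking norms (all scalar factors are non-negative since $r<\kappa\pi/2$ and $\Phi_0<\kappa^2$) yields $\tan(r/\kappa)=\tfrac{1}{2\kappa}\|\tilde{\nabla}\Phi_0(x_0)\|/(1-\Phi_0(x_0)/\kappa^2)$, hence $r=\kappa\arctan\!\big(\tfrac{1}{2\kappa}\|\tilde{\nabla}\Phi_0(x_0)\|/(1-\Phi_0(x_0)/\kappa^2)\big)$; substituting back gives $\Log^X_{x_0}(x_1)=-r\,\tilde{\nabla}\Phi_0(x_0)/\|\tilde{\nabla}\Phi_0(x_0)\|$, and applying $\Exp^X_{x_0}$ and using $\Exp^X_{x_0}(\Log^X_{x_0}(x_1))=x_1=T(x_0)$ produces exactly \eqref{eq:Brenier-McCann-HK}, with the convention that the displacement vanishes when $\tilde{\nabla}\Phi_0(x_0)=0$ (consistent with $r=0$ there). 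Since $\mu_0^\perp$ is concentrated on $\{\Phi_0=\kappa^2\}$, a set of approximate maxima of $\Phi_0\leq\kappa^2$, one gets $\tilde{\nabla}\Phi_0=0$ $\mu_0^\perp$-a.e.\ --- the last claim. Uniqueness of $\pi$ follows by the usual argument: two optimal plans share the marginals $\pi_0,\pi_1$ (\Cref{prop:HKBasic}\,(ii)), their half-sum is again an optimal balanced plan, and the graph representation just obtained, applied to that half-sum, forces the two plans to agree, so $\pi=(\id,T)_\#\pi_0$ is unique. I expect the main obstacle to be the middle step: making the approximate-differentiability and first-order argument rigorous without admissibility, i.e.\ localizing the classical McCann scheme to the $\pi$-full set where $d<\kappa\pi/2$ and the potentials are finite --- precisely the situation the approximate gradient is built for.
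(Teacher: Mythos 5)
Your overall architecture matches the paper's: reduce to the optimality conditions of \Cref{prop:HK_L1_existence}, compute $\nabla_{x_0}c^{\HK}_\kappa(x_0,x_1)=-\tfrac{2\kappa\tan(r/\kappa)}{r}\Log^X_{x_0}(x_1)$, identify it with the (approximate) gradient of the potential, invert to recover \eqref{eq:Brenier-McCann-HK}, handle the singular part by $\Phi_0=\kappa^2$ on $\spt\mu_0^\perp$, and conclude uniqueness from the graph representation together with uniqueness of the marginals. All of those algebraic steps are correct, and your half-sum uniqueness argument is an acceptable alternative to the paper's appeal to strict convexity of $\KL$.

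The gap is precisely where you flag it, and it is not filled: you assert that $\phi_0=-\kappa^2\log(1-\Phi_0/\kappa^2)$ ``is approximately differentiable $\mu_0$-a.e.\ by the argument of \cite[Theorem 10.41]{villani2009optimal}.'' But that argument rests on local Lipschitz bounds for $c$-concave functions, inherited from the local Lipschitz structure of $d^2$; the cost $c^{\HK}_\kappa$ blows up as $d\to\kappa\pi/2$, so without the admissibility assumption of \Cref{rem:HKAdmissibility} the $c^{\HK}_\kappa$-concave function $\phi_0$ has no a priori Lipschitz control and Rademacher's theorem does not apply directly. Passing to the log-$c$-concave transform of $\Phi_0$ from \Cref{rem:logcconcave} gives it the shape of a truncated infimum, but does not by itself give a Lipschitz envelope. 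The paper's actual mechanism is the exhaustion by level sets $A_{1,n}=\{1-\Phi_1/\kappa^2\geq 1/n\}$ together with the functions $f_{0,n}(x_0)=\sup_{x_1\in A_{1,n}}\Cos^2(d(x_0,x_1)/\kappa)/(1-\Phi_1(x_1)/\kappa^2)$, which are genuinely Lipschitz (the denominator is bounded away from zero), dominate $1-\Phi_0/\kappa^2$, and coincide with it $\pi_0$-a.e.\ in the limit $n\to\infty$; Rademacher then gives approximate differentiability of $\Phi_0$ at $\pi_0$-a.e.\ point. This truncation is the one idea your proposal is missing, and without it the middle step you yourself single out as the obstacle remains unproven.
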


The proof of this result is given in \Cref{ap:approx_grad}.

\begin{remark}
	Under the admissibility assumption of \cite{gallouet2021regularity}, the function $\Phi_0$ is actually Lipschitz-continuous and one can then replace the approximate gradients by regular gradients (still almost everywhere) and the map $T$ is then defined $\mu_0$-almost everywhere, see also Remark \ref{rem:HKAdmissibility}.
\end{remark}

\subsection{Riemannian structure and linearization}
\label{sec:LinHK}
Similar to $\W_2$ the formal Riemannian structure of $(\meas_+(\Surf),\HK_\kappa)$ can be intuited from a dynamical formulation of the distance in the spirit of the Benamou--Brenier formulation,
\begin{align}
\label{eq:HK_BB}
\HK^2_\kappa(\mu_0,\mu_1) =\inf_{(\mu_t,v_t,\alpha_t)_{t \in [0,1]}} \int_0^1 \int_X \left[ \|v_t(x)\|^2 + \frac{\kappa^2}{4} \alpha_t(x)^2 \right] \diff \mu_t(x)\,\diff t
\end{align}
where this time the infimum is taken over (distributional) solutions of the continuity equation \emph{with source term},
\begin{align}
\label{eq:HK_CE}
\partial_t \mu_t + \ddiv(v_t \cdot \mu_t)= \alpha_t \cdot \mu_t
\end{align}
on $t \in [0,1]$ with $\mu_t \geq 0$, again with boundary conditions $\mu_{t=0}=\mu_0$ and $\mu_{t=1}=\mu_1$. We refer to \cite{KMV-OTFisherRao-2015,ChizatOTFR2015,liero2018optimal} for more details on this formulation.

Similar to the 2-Wasserstein distance, \eqref{eq:HK_BB} can be thought to induce a formal Riemannian structure on $(\measp(\Surf),\HK_\kappa)$, where tangent vectors now have an additional mass component $\alpha$ describing the rate of mass destroyed/created in order to go from $\mu_0$ to $\mu_1$. In the Euclidean case $\Surf\subset\R^n$, a logarithmic and exponential map for this $\HK_\kappa$ metric space have been introduced in \cite{cai2022linearized} and therefore, we will only give the extensions and basic properties in the case were $X$ is a general Riemannian manifold to minimize redundancy.

\begin{definition}[Logarithmic map]
	\label{def:HKLog_map}
	Let $\mu_0 \in \measpl(\Surf)$ and $\mu_1 \in \measp(X)$, and the optimal transport plan minimizing \eqref{eq:HK_soft_primal}, $\pi=(\id,T)_\#\pi_0$ (uniqueness and existence of $T$ provided by Theorem \ref{thm:McCann_HK}), let $\pi_1 \assign T_\# \pi_0=\proj_{1\#} \pi$. Consider the following Lebesgue decompositions of $\mu_0$ and $\mu_1$,
	\begin{align*}
		\mu_0 & =\RadNik{\mu_{0}}{\pi_0}\pi_0+\mu_0^\perp, & 
		\mu_1 & =\RadNik{\mu_1}{\pi_1} \pi_1+\mu_1^\perp.
	\end{align*}
	
	Then we define the logarithmic map of $\mu_1$ at $\mu_0$ as $\Log^{\HK_\kappa}_{\mu_0}(\mu_1)=(v_0,\alpha_0,\mu_1^{\perp})$, where
	\begin{subequations}
	\label{eq:def_HK_log}
	\begin{align}
	v_0({x_0}) & =\begin{cases}
			\kappa \RadNik{\pi_0}{\mu_{0}}({x_0})\tan\left(\frac{\norm{\Log_{x_0}(T({x_0}))}}{\kappa}\right)\frac{\Log_{x_0}(T({x_0}))}{\norm{\Log_{x_0}(T({x_0}))}} & \text{for } \pi_0\tn{-a.e. }{x_0}\\
			0 & \text{for } \mu_0^\perp\tn{-a.e. }{x_0}
		\end{cases} \\
		\alpha_0({x_0}) & =\begin{cases}
			2\left(\RadNik{\pi_0}{\mu_{0}}({x_0})-1\right)& \text{for } \pi_0\text{-a.e. }{x_0}\\
			-2& \text{for } \mu_0^\perp\text{-a.e. }{x_0}
		\end{cases}
	\end{align}
	\end{subequations}
	These tangent components live in the set of \emph{feasible tangent vectors for $\mu_0$}:
	\begin{multline}
		\label{eq:def_TFeas}
		\TFeas_\kappa(\mu_0) \assign \Big\{(v,\alpha,\mu^\perp)\in\LL^2(\Surf,TX;\mu_0)\times\LL^2(\Surf;\mu_0)\times\measp(\Surf)~\Big|\\
		\alpha\geq -2~\mu_0\tn{-a.e.},~d(x_0,x_1)\geq\kappa\frac{\pi}{2} \text{ for }\mu_0\otimes\mu^\perp\tn{-a.e.~}(x_0,x_1)\Big\}
		\end{multline}
\end{definition}
\begin{remark}
\label{rem:CaiComparison}
Relative to \cite[Proposition 4.1, Definition 4.5]{cai2022linearized} the above expressions have been adjusted to the Riemannian context, simplified by using optimality conditions from Proposition \ref{prop:HK_L1_existence} (which will be more useful in the present article), and the formal square root for the $\mu_1^\perp$ component has been omitted for simplicity as this component will only play a minor role in this article (see \cite[Remark 4.6]{cai2022linearized}).
\end{remark}
The corresponding exponential map can once again be defined as a left-inverse of this logarithmic map. The following generalizes \cite[Proposition 4.8]{cai2022linearized}.
\begin{proposition}
\label{prop:HKExp}
	For $\kappa>0$, $\mu_0 \in \measp(\Surf)$, define the \emph{exponential map} at $\mu_0$ for $\HK_\kappa$ as
	\begin{equation}
		\label{eq:def_HK_exp}
		\Exp^{\HK_\kappa}_{\mu_0}: \TFeas_\kappa(\mu_0) \ni (v,\alpha,\mu^\perp)\mapsto T_\#u^2\mu_0+\mu^\perp
	\end{equation}
	where
	\begin{align}
		\label{eq:transmap_HK}
		T & :x\mapsto\exp_{x}\left(\kappa\arctan\left(\frac{1}{\kappa}\frac{\norm{v(x)}}{1+\alpha(x)/2}\right)\frac{v(x)}{\norm{v(x)}}\right) \\
		\label{eq:massratio_HK}
		u & :x\mapsto\sqrt{\left(1+\frac{\alpha(x)}{2}\right)^2+\frac{1}{\kappa^2}\norm{v(x)}^2}
	\end{align}
	Then $\Exp^{\HK_\kappa}_{\mu_0}$ is a left inverse of $\Log^{\HK_\kappa}_{\mu_0}$,
	$$\Exp^{\HK_\kappa}_{\mu_0}\circ\Log^{\HK_\kappa}_{\mu_0}=\id.$$
\end{proposition}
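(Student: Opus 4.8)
The claim is that starting from $\mu_1 \in \measp(X)$, applying $\Log^{\HK_\kappa}_{\mu_0}$ to obtain $(v_0,\alpha_0,\mu_1^\perp) \in \TFeas_\kappa(\mu_0)$, and then applying $\Exp^{\HK_\kappa}_{\mu_0}$ returns $\mu_1$. The strategy is to feed the explicit formulas for $(v_0,\alpha_0,\mu_1^\perp)$ from Definition \ref{def:HKLog_map} into the formulas \eqref{eq:transmap_HK}--\eqref{eq:massratio_HK} for $T$ and $u$ in Proposition \ref{prop:HKExp}, simplify, and verify that $T_\# u^2 \mu_0 + \mu_1^\perp = \mu_1$. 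First I would treat the two pieces of $\mu_0$ separately: the part dominated by $\pi_0$ and the singular part $\mu_0^\perp$. On $\mu_0^\perp$ the log map gives $v_0 = 0$ and $\alpha_0 = -2$, so $u = \sqrt{(1 + \alpha_0/2)^2 + \kappa^{-2}\|v_0\|^2} = 0$; hence $u^2\mu_0$ restricted to $\mu_0^\perp$ contributes nothing, which is correct since in $\mu_1 = \RadNik{\mu_1}{\pi_1}\pi_1 + \mu_1^\perp$ the mass $\mu_0^\perp$ is pure destruction and plays no role in producing $\mu_1$.

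\textbf{Main computation on the transported part.} On the part of $\mu_0$ dominated by $\pi_0$, write $\theta(x_0) \assign \|\Log_{x_0}(T(x_0))\| = d(x_0, T(x_0))$ and $r(x_0) \assign \RadNik{\pi_0}{\mu_0}(x_0)$; note $\theta < \kappa\pi/2$ by Proposition \ref{prop:HK_L1_existence}\eqref{item:HK_L1_existence_perpfar} so $\tan(\theta/\kappa)$ is finite. Then $v_0 = \kappa\, r \tan(\theta/\kappa)\, \Log_{x_0}(T(x_0))/\theta$, so $\|v_0\| = \kappa\, r \tan(\theta/\kappa)$, and $\alpha_0 = 2(r - 1)$ so $1 + \alpha_0/2 = r$. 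Plugging into \eqref{eq:massratio_HK}: $u^2 = r^2 + \kappa^{-2}\cdot\kappa^2 r^2 \tan^2(\theta/\kappa) = r^2(1 + \tan^2(\theta/\kappa)) = r^2/\cos^2(\theta/\kappa)$, hence $u^2\mu_0 = \big(r^2/\cos^2(\theta/\kappa)\big)\mu_0$. Since $\pi_0 = r\,\mu_0$ on this part, this equals $\big(r/\cos^2(\theta/\kappa)\big)\pi_0$. Plugging into \eqref{eq:transmap_HK}: the argument of $\arctan$ is $\kappa^{-1}\|v_0\|/(1 + \alpha_0/2) = \kappa^{-1}\cdot\kappa r \tan(\theta/\kappa)/r = \tan(\theta/\kappa)$, so $\kappa\arctan(\tan(\theta/\kappa)) = \theta$, and the direction $v_0/\|v_0\| = \Log_{x_0}(T(x_0))/\theta$, giving $T_{\mathrm{Exp}}(x_0) = \Exp_{x_0}(\theta \cdot \Log_{x_0}(T(x_0))/\theta) = \Exp_{x_0}(\Log_{x_0}(T(x_0))) = T(x_0)$, where the last equality uses that $\Exp$ is a left-inverse of $\Log$ on $X$ (Section \ref{sec:Notation}). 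So the map in the exponential coincides $\pi_0$-a.e.\ with the optimal transport map $T$.

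\textbf{Assembling the marginal.} It remains to show $T_\#\big((r/\cos^2(\theta/\kappa))\pi_0\big) = \RadNik{\mu_1}{\pi_1}\pi_1$. By Proposition \ref{prop:HK_L1_existence}\eqref{item:HK_L1_marginals} the marginal density satisfies $\RadNik{\pi_0}{\mu_0} = 1 - \Phi_0/\kappa^2$, i.e.\ $r = 1 - \Phi_0/\kappa^2$, and similarly $\RadNik{\pi_1}{\mu_1} = 1 - \Phi_1/\kappa^2$. By the pointwise optimality relation \eqref{item:HK_L1_existence_perpfar}, along the graph of $T$ (which is where $\pi$ lives) one has $(1 - \Phi_0(x_0)/\kappa^2)(1 - \Phi_1(T(x_0))/\kappa^2) = \cos^2(d(x_0,T(x_0))/\kappa) = \cos^2(\theta(x_0)/\kappa)$. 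Therefore $r(x_0)/\cos^2(\theta(x_0)/\kappa) = 1/(1 - \Phi_1(T(x_0))/\kappa^2) = 1/\RadNik{\pi_1}{\mu_1}(T(x_0)) = \RadNik{\mu_1}{\pi_1}(T(x_0))$ (the last step valid $\pi_1$-a.e.\ since $\pi_1 \ll \mu_1$ with a.e.-positive density). Pushing forward under $T$ and using $T_\#\pi_0 = \pi_1$: for any test $\varphi$, $\int \varphi\, d\big(T_\#((r/\cos^2)\pi_0)\big) = \int \varphi(T(x_0))\, \RadNik{\mu_1}{\pi_1}(T(x_0))\, d\pi_0(x_0) = \int \varphi\, \RadNik{\mu_1}{\pi_1}\, d\pi_1 = \int \varphi\, d\big(\RadNik{\mu_1}{\pi_1}\pi_1\big)$. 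Adding back $\mu_1^\perp$ gives $\Exp^{\HK_\kappa}_{\mu_0}(v_0,\alpha_0,\mu_1^\perp) = \RadNik{\mu_1}{\pi_1}\pi_1 + \mu_1^\perp = \mu_1$, which is the Lebesgue decomposition of $\mu_1$ with respect to $\pi_1$. I also need to remark that $(v_0,\alpha_0,\mu_1^\perp)$ indeed lies in $\TFeas_\kappa(\mu_0)$ — the $\LL^2$ bounds follow because $\spt \mu_0$ is contained in a compact set and $\theta$ is bounded away from $\kappa\pi/2$ on $\spt\pi_0$ (so $\tan(\theta/\kappa) \in \LL^\infty$), $\alpha_0 \geq -2$ by construction, and the support separation condition on $\mu_0 \otimes \mu_1^\perp$ is exactly Proposition \ref{prop:HK_L1_existence}\eqref{item:HK_L1_existence_perpfar}; this should be stated either here or in Definition \ref{def:HKLog_map}.

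\textbf{Expected obstacle.} The routine part is the trigonometric bookkeeping; the delicate point is handling the approximate-gradient/almost-everywhere issues cleanly, namely that $T$ is only defined $\pi_0$-a.e.\ (Theorem \ref{thm:McCann_HK}) and that the density identities from Proposition \ref{prop:HK_L1_existence} hold only up to $\mu_i$-null sets — one must be careful that the pushforward identity $T_\#((r/\cos^2)\pi_0) = \RadNik{\mu_1}{\pi_1}\pi_1$ is not spoiled by the choice of representatives, which works because all the relevant sets are $\pi$- (hence $\pi_0$- and $\pi_1$-) null. A secondary subtlety is that $u^2\mu_0$ a priori could be a measure on $X$ with mass where $\mu_0 = 0$, but since $u$ is a density against $\mu_0$ this is not an issue; and one should double-check the edge case $\theta(x_0) = 0$ (i.e.\ $T(x_0) = x_0$), where $v_0 = 0$ but the formulas still give $u^2 = r^2$, $T_{\mathrm{Exp}}(x_0) = x_0$ consistently, interpreting $v_0/\|v_0\|$ as an arbitrary unit vector when $v_0 = 0$ (harmless since it is multiplied by $\arctan(0) = 0$).
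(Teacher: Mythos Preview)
Your argument is correct and is the natural direct verification; the paper itself does not supply a proof of this proposition, stating it only as a generalization of \cite[Proposition 4.8]{cai2022linearized}, so there is no alternative route to compare against.

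Two small corrections are worth making. First, the facts $\theta<\kappa\pi/2$ and $(1-\Phi_0/\kappa^2)(1-\Phi_1/\kappa^2)=\cos^2(\theta/\kappa)$ along $\spt\pi$ come from item (i) of Proposition~\ref{prop:HK_L1_existence}, not item~\eqref{item:HK_L1_existence_perpfar}. Second, your claim that $\theta$ is bounded away from $\kappa\pi/2$ on $\spt\pi_0$ (so that $\tan(\theta/\kappa)\in\LL^\infty$) is not justified: the paper explicitly drops the admissibility assumption of \cite{gallouet2021regularity} (Remark~\ref{rem:HKAdmissibility}), so transport distances may approach $\kappa\pi/2$. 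The $\LL^2$ membership of $v_0$ nevertheless holds by a different route: using $r(x_0)=\cos^2(\theta/\kappa)\,\RadNik{\mu_1}{\pi_1}(T(x_0))$ one finds $\|v_0(x_0)\|^2=\kappa^2 r\sin^2(\theta/\kappa)\,\RadNik{\mu_1}{\pi_1}(T(x_0))$, and integrating against $\mu_0$ yields at most $\kappa^2\int\RadNik{\mu_1}{\pi_1}\diff\pi_1\leq\kappa^2\abs{\mu_1}<\infty$, which is essentially the computation appearing in the proof of Proposition~\ref{prop:HK_as_L2}.
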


Next, we give the expression for $\HK_\kappa^2(\mu_0,\mu_1)$ in terms of $\Log^{\HK_\kappa}_{\mu_0}(\mu_1)$, generalizing \cite[Proposition 4.1, (4.4)]{cai2022linearized}. As expected in Riemannian geometry, this is a quadratic form in $v$ and $\alpha$, but since we discarded the formal square root for the $\mu_1^\perp$ component (see Remark \ref{rem:CaiComparison}) it is 1-homogeneous in this component.
\begin{proposition}
	\label{prop:HK_as_L2}
	Let $\mu_0 \in \measpl(\Surf)$, $\mu_1 \in \measp(X)$ and let $(v_0,\alpha_0, \mu_1^\perp)=\Log^{\HK_\kappa}_{\mu_0}(\mu_1)$. Then,
	\begin{equation}
		\label{eq:HK_as_L2}
		\HK^2_\kappa(\mu_0,\mu_1)=\norm{v_0}^2_{\LL^2(\mu_0)}+\frac{\kappa^2}{4}\norm{\alpha_0}^2_{\LL^2(\mu_0)}+\kappa^2\abs{\mu_1^\perp}.
	\end{equation}
\end{proposition}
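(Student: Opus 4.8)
The plan is to compute $\HK_\kappa^2(\mu_0,\mu_1)$ starting from the primal soft-marginal objective $E_\kappa(\pi\,|\,\mu_0,\mu_1)$ evaluated at the optimal plan $\pi=(\id,T)_\#\pi_0$, and to rewrite each of its three terms (transport cost, $\KL$ of the first marginal, $\KL$ of the second marginal) in terms of $v_0$, $\alpha_0$ and $\mu_1^\perp$. The key bookkeeping device is the density $\sigma\assign\RadNik{\pi_0}{\mu_0}$, which by Proposition \ref{prop:HK_L1_existence}\eqref{item:HK_L1_marginals} equals $1-\Phi_0/\kappa^2$ on $\spt\pi_0$ and which, by the definition of $\alpha_0$ in \eqref{eq:def_HK_log}, satisfies $\sigma=1+\alpha_0/2$ $\pi_0$-a.e. (and the singular part contributes $\alpha_0=-2$, i.e.\ the fictitious $\sigma=0$, on $\mu_0^\perp$). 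Similarly I set $\theta\assign\RadNik{\pi_1}{\mu_1}$ on $\spt\pi_1$; the optimality relation $(1-\Phi_0(x_0)/\kappa^2)(1-\Phi_1(x_1)/\kappa^2)=\cos^2(d(x_0,x_1)/\kappa)$ on $\pi$ links $\sigma$, $\theta\circ T$ and $\cos^2(\|\Log_{x_0}(T(x_0))\|/\kappa)$, which is precisely what will make the Hellinger and Kantorovich pieces recombine.

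First I would split $\HK_\kappa^2(\mu_0,\mu_1)=A+B_0+B_1$ with $A=\int c^{\HK}_\kappa(x_0,T(x_0))\,d\pi_0$, $B_i=\kappa^2\KL(\pi_i\,|\,\mu_i)$. For $B_0$: since $\pi_0\ll\mu_0$ with density $\sigma$, $\KL(\pi_0\,|\,\mu_0)=\int(\sigma\log\sigma-\sigma+1)\,d\mu_0$, where on $\mu_0^\perp$ the integrand is $1$ (the $\sigma\to0$ limit), matching the $|\mu_0^\perp|$ contribution. For $B_1$: write $\mu_1=\theta^{-1}\pi_1+\mu_1^\perp$, so $\pi_1\ll\mu_1$ and by the change-of-variables/definition of $\KL$ one gets $\KL(\pi_1\,|\,\mu_1)=\int(-\log\theta-1)\,d\pi_1+|\mu_1^\perp|$ after using $|\pi_1|=|\pi_0|=\int\sigma\,d\mu_0$; pushing $\pi_1=T_\#\pi_0$ back, this becomes $\int_{\spt\pi_0}(-\log(\theta\circ T)-1)\,d\pi_0+|\mu_1^\perp|$, and then I replace $\theta\circ T$ by $\cos^2(\|\Log_{x_0}(T(x_0))\|/\kappa)/\sigma$ via the optimality relation. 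Finally $A$: on $\spt\pi_0$ the cost is $-2\kappa^2\log\cos(d(x_0,T(x_0))/\kappa)=-\kappa^2\log\cos^2(\|\Log_{x_0}(T(x_0))\|/\kappa)$, using $\|\Log_{x_0}(T(x_0))\|=d(x_0,T(x_0))$ from \eqref{eq:LogMapBasic}.

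Now I collect. Writing $r(x_0)\assign\|\Log_{x_0}(T(x_0))\|/\kappa\in[0,\pi/2)$, the combination $A/\kappa^2+\KL(\pi_1|\mu_1)$ produces, on $\spt\pi_0$, the integrand $-\log\cos^2 r-\log(\cos^2 r/\sigma)-1=\log\sigma-2\log\cos^2 r-1$ integrated against $\pi_0=\sigma\mu_0$; adding $\KL(\pi_0|\mu_0)$'s integrand $\sigma\log\sigma-\sigma+1$ against $\mu_0$ and the $|\mu_1^\perp|$ term, the $\log\sigma$ and constant terms must telescope down to a clean quadratic. Concretely I expect everything to reduce, via the elementary identities $\sigma=(1+\alpha_0/2)$ and (from the transport-map formula \eqref{eq:transmap_HK}–\eqref{eq:massratio_HK}, or equivalently from \eqref{eq:def_HK_log}) $\kappa^2\tan^2 r\cdot\sigma^2=\|v_0\|^2$ together with $\sigma^2(1+\tan^2 r)=\sigma^2\sec^2 r$, to the pointwise identity
\begin{equation*}
\kappa^2\Big[\tfrac{A}{\kappa^2}+\KL(\pi_0|\mu_0)+\KL(\pi_1|\mu_1)\Big]\text{-integrand}=\|v_0(x_0)\|^2+\tfrac{\kappa^2}{4}\alpha_0(x_0)^2\quad(\mu_0\text{-a.e.}),
\end{equation*}
plus the surviving boundary term $\kappa^2|\mu_1^\perp|$; integrating gives \eqref{eq:HK_as_L2}. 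The main obstacle is precisely verifying that this pointwise algebraic collapse is exact: one has to be careful that on $\mu_0^\perp$ (where $\sigma=0$, $\alpha_0=-2$, $v_0=0$) the degenerate limits of $\sigma\log\sigma$ and of the cost term are handled correctly and contribute zero, so that the only leftover mass-creation term is the single $\kappa^2|\mu_1^\perp|$; and that the $\log$ terms genuinely cancel rather than leaving a residual — this is where the optimality relation from Proposition \ref{prop:HK_L1_existence}\eqref{item:HK_L1_existence_perpfar} (forcing $d\ge\kappa\pi/2$, hence no transport, on $\mu_0^\perp\otimes\mu_1$ and symmetrically) must be invoked to justify the decomposition $\mu_1=\theta^{-1}\pi_1+\mu_1^\perp$ and that $\mu_1^\perp$ is exactly the singular-creation part with no double counting. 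A short alternative is to reference \cite[Proposition 4.1]{cai2022linearized} for the Euclidean computation and note that all identities used are pointwise in $x_0$ and only involve $d(x_0,T(x_0))=\|\Log_{x_0}(T(x_0))\|$, so they transfer verbatim to the manifold setting; I would likely present the direct computation for completeness but lean on that reference for the most tedious cancellations.
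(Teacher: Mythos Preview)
Your overall strategy---expand the primal objective $E_\kappa(\pi\mid\mu_0,\mu_1)$ at the optimal $\pi$ and hope that the pieces recombine into $\|v_0\|^2+\tfrac{\kappa^2}{4}\|\alpha_0\|^2+\kappa^2|\mu_1^\perp|$---is different from what the paper does, and as written it has two concrete problems.

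First, the formula you give for $\KL(\pi_1\mid\mu_1)$ is wrong. With $\theta=\RadNik{\pi_1}{\mu_1}$ one has
\[
\KL(\pi_1\mid\mu_1)=\int_X \log\theta\,\diff\pi_1-|\pi_1|+|\mu_1|,
\]
not $\int(-\log\theta-1)\,\diff\pi_1+|\mu_1^\perp|$: the sign on $\log\theta$ is flipped and the constant term is $|\mu_1|$, not $|\mu_1^\perp|$. Second, and more structurally, the ``pointwise collapse'' you anticipate does not happen. If you redo the bookkeeping correctly, all the logarithms cancel \emph{completely}: the combined $\mu_0$-integrand of $A+B_0+(B_1-\kappa^2|\mu_1|)$ is simply $\kappa^2(1-2\sigma)$, so you have merely rederived the mass identity
\[
\HK_\kappa^2(\mu_0,\mu_1)=\kappa^2\big(|\mu_0|+|\mu_1|-2|\pi|\big).
\]
This is \emph{not} equal pointwise to $\|v_0(x_0)\|^2+\tfrac{\kappa^2}{4}\alpha_0(x_0)^2=\kappa^2(\sigma^2\sec^2 r-2\sigma+1)$; the discrepancy $\kappa^2\sigma^2\sec^2 r$ only matches globally, via the identity $\int \sigma^2\sec^2 r\,\diff\mu_0=|\mu_1|-|\mu_1^\perp|$, which is itself another application of the optimality relation $\sigma\cdot(\theta\circ T)=\cos^2 r$. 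So your route, even after correction, is the mass formula plus an extra step.

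The paper bypasses all of this by starting directly from the mass identity (quoted from \cite[Theorem~6.3]{liero2018optimal}) and then computing $\|v_0\|^2_{\LL^2(\mu_0)}$ and $\tfrac{\kappa^2}{4}\|\alpha_0\|^2_{\LL^2(\mu_0)}$ from the definitions: one line each, using $\RadNik{\pi_0}{\mu_0}\cdot\RadNik{\mu_1}{\pi_1}\circ T=\sec^2(d/\kappa)$ to turn $\sigma^2\tan^2 r$ into $\RadNik{\mu_1}{\pi_1}(x_1)\sin^2(d/\kappa)$ under $\pi$, and expanding $(\sigma-1)^2$. Summing gives $\kappa^2(|\mu_0|-2|\pi|+\int\RadNik{\mu_1}{\pi_1}\diff\pi_1+|\mu_1^\perp|)=\kappa^2(|\mu_0|+|\mu_1|-2|\pi|)$. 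This is both shorter and avoids the sign/constant pitfalls in the $\KL$ expansion.
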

\begin{proof}
	For the optimal transport plan $\pi$ for the soft-marginal formulation, one has the following expression for $\HK^2$ as a cumulated mass discrepancy (see for instance \cite[Theorem 6.3, eq. (6.20)]{liero2018optimal}):
	\begin{equation*}
		\HK_\kappa^2(\mu_0,\mu_1)= \kappa^2 \left(\abs{\mu_0} + \abs{\mu_1} - 2\abs{\pi}\right)
	\end{equation*}
	Now we evaluate the right-hand side of \eqref{eq:HK_as_L2} (using the optimality conditions from \Cref{prop:HK_L1_existence})
	\begin{align*}
	\int_X\norm{v_0({x_0})}^2\diff\mu_0({x_0}) & =\kappa^2\int_{\Surf }\RadNik{\pi_0}{\mu_{0}}({x_0})
		\frac{\sin^2\left(\frac{d({x_0},T({x_0}))}{\kappa^2}\right)}{\cos^2\left(\frac{\dist({x_0},T({x_0}))}{\kappa^2}\right)}
		\diff\pi_0({x_0})\\
	&=\kappa^2\int_{\Surf\times \Surf}\RadNik{\mu_1}{\pi_{1}}(x_1)\sin^2\left(\frac{\dist(x_0,x_1)}{\kappa}\right)\diff\pi(x_0,x_1) \\
	\tfrac14 \int_X\alpha_0({x_0})^2\diff\mu_0({x_0}) &
	= \int_X \left(\RadNik{\pi_0}{\mu_0}-1\right)^2 \diff \mu_0
	= \int_X \left(1-2 \RadNik{\pi_0}{\mu_0} + \left(\RadNik{\pi_0}{\mu_0}\right)^2 \right) \diff \mu_0 \\
	& = \abs{\mu_0} - 2 \abs{\pi} + \int_X \RadNik{\mu_1}{\pi_{1}}(x_1) \cos^2\left(\frac{\dist(x_0,x_1)}{\kappa}\right)\diff \pi(x_0,x_1)
	\end{align*}
	and therefore,
\begin{equation*}
	\begin{split}
		\norm{v_0}^2_{\LL^2(\mu_0)}+\frac{\kappa^2}{4}\norm{\alpha_0}^2_{\LL^2(\mu_0)}+\kappa^2\abs{\mu_1^\perp}
		=& \kappa^2 \left(\abs{\mu_0}-2\abs{\pi}+\int_X \RadNik{\mu_1}{\pi_{1}}(x_1) \diff \pi(x_0,x_1) + \abs{\mu_1^\perp} \right)
		\\=&\HK_\kappa^2(\mu_{0},\mu_{1})
	\end{split}
\end{equation*}
	and we obtain the claim.
\end{proof}

Finally, the expression for the optimal transport map $T$, in the style of McCann using the dual potential, \eqref{eq:Brenier-McCann-HK} allows to give simpler expressions for the logarithmic map in terms of the dual potential. This form of the logarithmic map is closely related to the polar factorization result for $\HK$ given in \cite[Theorem 18]{gallouet2021regularity}.
\begin{proposition}
	\label{prop:LogHK_dual}
	For $\mu_0 \in \measpl(\Surf)$ and $\mu_1 \in \measp(X)$, let $(\Phi_0,\Phi_1)$ be optimal dual solutions of  \eqref{eq:HK_L1_dual}. Then the components $(v_0, \alpha_0,\mu_1^\perp)=\Log^{\HK_\kappa}_{\mu_0}(\mu_1)$ of the logarithmic map (\cref{def:HKLog_map}) can be written as
		\begin{align}
			\label{eq:LogHK_dual}
			v_0 & =-\frac{\tilde{\nabla}\Phi_0}{2}, &
			\alpha_0 & =-\frac{2}{\kappa^2} \Phi_0, &
			\mu_1^\perp & =\mu_1\mres\{\Phi_1=\kappa^2\}.
		\end{align}
\end{proposition}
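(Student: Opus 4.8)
The plan is to reconcile the two expressions for the components of the logarithmic map: the one given in \Cref{def:HKLog_map} in terms of the optimal plan $\pi=(\id,T)_\#\pi_0$, and the claimed one in terms of the optimal dual potential $\Phi_0$. The bridge is the Brenier--McCann formula for $\HK$, \eqref{eq:Brenier-McCann-HK}, together with the optimality conditions of \Cref{prop:HK_L1_existence}. First I would treat the $\alpha_0$ component: on $\spt(\pi_0)$ the marginal density condition \eqref{eq:marg_densities} gives $\RadNik{\pi_0}{\mu_0}(x_0)=1-\Phi_0(x_0)/\kappa^2$, so the definition $\alpha_0 = 2\big(\RadNik{\pi_0}{\mu_0}-1\big)$ immediately yields $\alpha_0 = -\tfrac{2}{\kappa^2}\Phi_0$ $\pi_0$-almost everywhere; on $\mu_0^\perp$ the optimality condition (ii) of \Cref{prop:HK_L1_existence} forces $\Phi_0=\kappa^2$, so $-\tfrac{2}{\kappa^2}\Phi_0=-2=\alpha_0$ there as well, matching the second branch in the definition of $\alpha_0$. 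Hence $\alpha_0=-\tfrac{2}{\kappa^2}\Phi_0$ holds $\mu_0$-a.e.

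Next I would handle $v_0$. On $\mu_0^\perp$ one has $\tilde\nabla\Phi_0=0$ by the last clause of \Cref{thm:McCann_HK}, so $-\tfrac12\tilde\nabla\Phi_0 = 0 = v_0$ there. On $\spt(\pi_0)$, I would substitute the Brenier--McCann formula \eqref{eq:Brenier-McCann-HK} for $T$ into the definition of $v_0$. Writing $r(x_0)\assign\norm{\Log_{x_0}(T(x_0))}=d(x_0,T(x_0))$, formula \eqref{eq:Brenier-McCann-HK} identifies this with $\kappa\arctan\!\big(\tfrac{1}{2\kappa}\tfrac{\norm{\tilde\nabla\Phi_0(x_0)}}{1-\Phi_0(x_0)/\kappa^2}\big)$, and it identifies the unit direction $\tfrac{\Log_{x_0}(T(x_0))}{r(x_0)}$ with $-\tfrac{\tilde\nabla\Phi_0(x_0)}{\norm{\tilde\nabla\Phi_0(x_0)}}$ (since $\Exp^X_{x_0}$ is injective on the relevant ball and $\Log_{x_0}\circ\Exp^X_{x_0}=\id$ there). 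Then $\tan(r(x_0)/\kappa) = \tfrac{1}{2\kappa}\tfrac{\norm{\tilde\nabla\Phi_0(x_0)}}{1-\Phi_0(x_0)/\kappa^2}$. Plugging these into $v_0(x_0)=\kappa\,\RadNik{\pi_0}{\mu_0}(x_0)\tan(r(x_0)/\kappa)\cdot\tfrac{\Log_{x_0}(T(x_0))}{r(x_0)}$ and using $\RadNik{\pi_0}{\mu_0}(x_0)=1-\Phi_0(x_0)/\kappa^2$ from \eqref{eq:marg_densities}, the factors $\kappa$, the density, and the $(1-\Phi_0/\kappa^2)$ in the denominator of $\tan$ collapse, leaving exactly $v_0(x_0)=-\tfrac12\tilde\nabla\Phi_0(x_0)$.

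Finally, for $\mu_1^\perp$: by definition $\mu_1^\perp$ is the part of $\mu_1$ singular with respect to $\pi_1$. Optimality condition (ii) of \Cref{prop:HK_L1_existence} (with $i=1$) shows $\Phi_1=\kappa^2$ on a set carrying $\mu_1^\perp$, while the marginal density condition $\RadNik{\pi_1}{\mu_1}=1-\Phi_1/\kappa^2$ shows that on $\{\Phi_1<\kappa^2\}$ the measure $\mu_1$ has a strictly positive $\pi_1$-density, hence that portion is absolutely continuous with respect to $\pi_1$ and carries no $\mu_1^\perp$ mass. Therefore $\mu_1^\perp = \mu_1\mres\{\Phi_1=\kappa^2\}$. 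I expect the main obstacle to be the bookkeeping around the direction vector and the a.e.\ caveats: one must be careful that $\tilde\nabla\Phi_0$ is only an \emph{approximate} gradient, that \eqref{eq:Brenier-McCann-HK} only holds $\pi_0$-a.e., and that the argument of $\arctan$ (equivalently $r(x_0)/\kappa$) stays strictly below $\pi/2$ so that $\tan$ and the truncations behave, but all of this is guaranteed by \Cref{thm:McCann_HK} and condition (i) of \Cref{prop:HK_L1_existence}; the degenerate case $\tilde\nabla\Phi_0(x_0)=0$ (where $T(x_0)=x_0$) is consistent since then both sides vanish.
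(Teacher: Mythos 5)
Your proposal is correct and follows essentially the same route as the paper's own proof: substitute the Brenier--McCann formula of \Cref{thm:McCann_HK} into the definition of $\Log^{\HK_\kappa}_{\mu_0}$, use the marginal density optimality condition \eqref{eq:marg_densities} to replace $\RadNik{\pi_0}{\mu_0}$ by $1-\Phi_0/\kappa^2$ and collapse the $\arctan/\tan$ pair, treat the singular part $\mu_0^\perp$ via $\Phi_0=\kappa^2$ and $\tilde\nabla\Phi_0=0$, and characterise $\mu_1^\perp$ via optimality conditions (i)--(ii). The bookkeeping around approximate gradients, the $\pi_0$-a.e.\ validity, and the degenerate case $\tilde\nabla\Phi_0=0$ that you flag are indeed the correct caveats, and they are covered by \Cref{thm:McCann_HK} exactly as you say.
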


\begin{proof}
	We plug the expression for the optimal transport map $T$ from \cref{thm:McCann_HK} into the definition of $v_0$ and $\alpha_0$ of Definition \ref{def:HKLog_map} and find for $\pi_0$-almost every $x$,
	$$\tan\left(\frac{\norm{\Log_x(T(x))}}{\kappa}\right)\frac{\Log_x(T(x))}{\norm{\Log_x(T(x))}}=-\frac{1}{2\kappa}\frac{\tilde{\nabla}\Phi_0(x)}{1-\Phi_0(x)/\kappa^2},$$
	and therefore for $\pi_0$-a.e.~$x$,
	\begin{align*}
			v_0(x) & =\kappa\frac{\Log_x(T(x))}{\norm{\Log_x(T(x))}}\RadNik{\pi_0}{\mu_{0}}(x)\tan\left(\frac{\norm{\Log_x(T(x))}}{\kappa}\right)\\
			& =\kappa\frac{\Log_x(T(x))}{\norm{\Log_x(T(x))}}(1-\Phi_0(x)/\kappa^2)\tan\left(\frac{\norm{\Log_x(T(x))}}{\kappa}\right)\\
			&=\kappa(1-\Phi_0(x)/\kappa^2)\left(-\frac{1}{2\kappa}\frac{\tilde{\nabla}\Phi_0(x)}{1-\Phi_0(x)/\kappa^2}\right)=-\frac{1}{2}\tilde{\nabla}\Phi_0(x)\\
	\intertext{and}
		\alpha_0(x)&=2(1-\frac{\Phi_0(x)}{\kappa^2}-1)	=-\frac{2}{\kappa^2}\Phi_0(x).
	\end{align*}

On the other hand, for $\mu_0^\perp$-a.e.~$x\in \Surf$, $\Phi_0(x)=\kappa^2$ so that $\tilde{\nabla}\Phi_0(x)=0$ (up to a $\mu_0^\perp$-negligible set of points) and so the expressions for $v_0(x)$ and $\alpha_0(x)$ are still valid.

Finally, by the optimality conditions of \cref{prop:HK_L1_existence} \eqref{item:HK_L1_existence_perpfar}, $\Phi_1<\kappa^2$ $\pi_1$-a.e., whereas $\Phi_1=\kappa^2$ $\mu_1^\perp$-a.e.~which yields the expression for $\mu_1^\perp$.
\end{proof}

\begin{remark}[Relation to Hamilton--Jacobi equation]
Formally the dual problem to \eqref{eq:HK_BB} is given by
\begin{align*}
\sup_{\phi : [0,1] \times \Surf \to \R} \int_X \phi(1,\cdot)\diff \mu_1 - \int_X \phi(0,\cdot)\diff \mu_0
\end{align*}
over sufficiently regular functions $\phi$ subject to the constraint
$$\partial_t \phi(t,x) + \frac{\|\nabla \phi(t,x)\|^2}{4} + \frac{\phi(t,x)^2}{\kappa^2} \leq 0$$
and with formal primal-dual optimality conditions
\begin{equation}
\label{eq:FormalPDDynamicHK}
v_t=\nabla \phi(t,\cdot)/2 \qquad \tn{and} \qquad \alpha_t=2\phi(t,\cdot)/\kappa^2.
\end{equation}
This dynamic dual problem can subsequently be identified with the static dual \eqref{eq:HK_L1_dual} with the correspondence $(\Phi_0,\Phi_1)=(-\phi(0,\cdot),\phi(1,\cdot))$.
Formulas \eqref{eq:LogHK_dual} and \eqref{eq:FormalPDDynamicHK} can then seen to be consistent. We refer to \cite[Section 8.4]{liero2018optimal} for more details on the dynamic dual perspective.
\end{remark}

\subsection{Linearized Spherical Hellinger--Kantorovich distance}
\label{sec:LinSHK}
A motivation for the introduction of the Hellinger--Kantorovich distance is its ability to compare measures of different total mass. For one, this allows the description of growth and destruction processes, but it also makes the metric more robust with respect to small mass fluctuations and measurement errors when comparing measures of (almost) identical mass.
It turns out that the $\HK$ distance between measures of different mass can be reduced to the comparison of probability measures by a simple scaling formula.
Let $\mu_0,\mu_1 \in \prob(X)$, $m_0,m_1 \geq 0$, then \cite[Theorem 3.3]{laschos2019geometric}
$$\HK_1(m_0 \cdot \mu_0,m_1 \cdot \mu_1)^2 = \sqrt{m_0 \cdot m_1} \HK_1(\mu_0,\mu_1)^2 + (\sqrt{m_0}-\sqrt{m_1})^2.$$
So the expected key advantage of $\HK$ over $W_2$ in data analysis applications may not be so much the ability to deal with global mass differences, but rather the robustness with respect to local fluctuations, i.e.~being able to slightly increase mass in one area of the sample while decreasing it in another to allow for a better matching.

Of course, by restriction $\HK$ still induces a metric on the set of probability measures, but it will no longer be geodesic, since shortest paths in the full space $(\measp(X),\HK)$ between $\mu_0, \mu_1 \in \prob(X)$ will have mass less than 1 for times $t \in (0;1)$. In fact, one will have \cite[Equation (1.1)]{laschos2019geometric}
\begin{equation}
\label{eq:HKMass}
M(t) \assign \abs{\mu_t}=1-t(1-t) \HK_\kappa^2(\mu_0,\mu_1)/\kappa^2.
\end{equation}
Thus, when working with $\HK_\kappa$ on probability measures, we will always see this bias in the proposed interpolations and it will also affect the linearized analysis.

To obtain a geodesic distance within $\prob(X)$, one can restrict admissible paths in \eqref{eq:HK_BB} to those with constant unit mass. That is, one still admits a growth field $\alpha_t$ that can locally create or destroy mass, but on average the mass must be preserved, i.e.~one adds the constraint to the optimization problem that $\int_X \alpha_t \diff \mu_t=0$ for all $t$.
The resulting metric is called the spherical Hellinger--Kantorovich distance ($\SHK_1$), introduced in \cite{laschos2019geometric}. A key observation of \cite{laschos2019geometric} is that $\SHK_1$ and $\HK_1$ are even more intimately related than the preceding discussion might suggest. Indeed, $(\measp(X),\HK_1)$ turns out to be a cone space over $(\prob(X),\SHK_1)$. This means that $\SHK(\mu_0,\mu_1)$ and $\HK(\mu_0,\mu_1)$ can be directly computed from each other, as can be their geodesics, by a simple geometric intuition, sketched in Figure \ref{fig:SHKIntuition}.
\begin{figure}[hbt]
	\centering
	\includegraphics{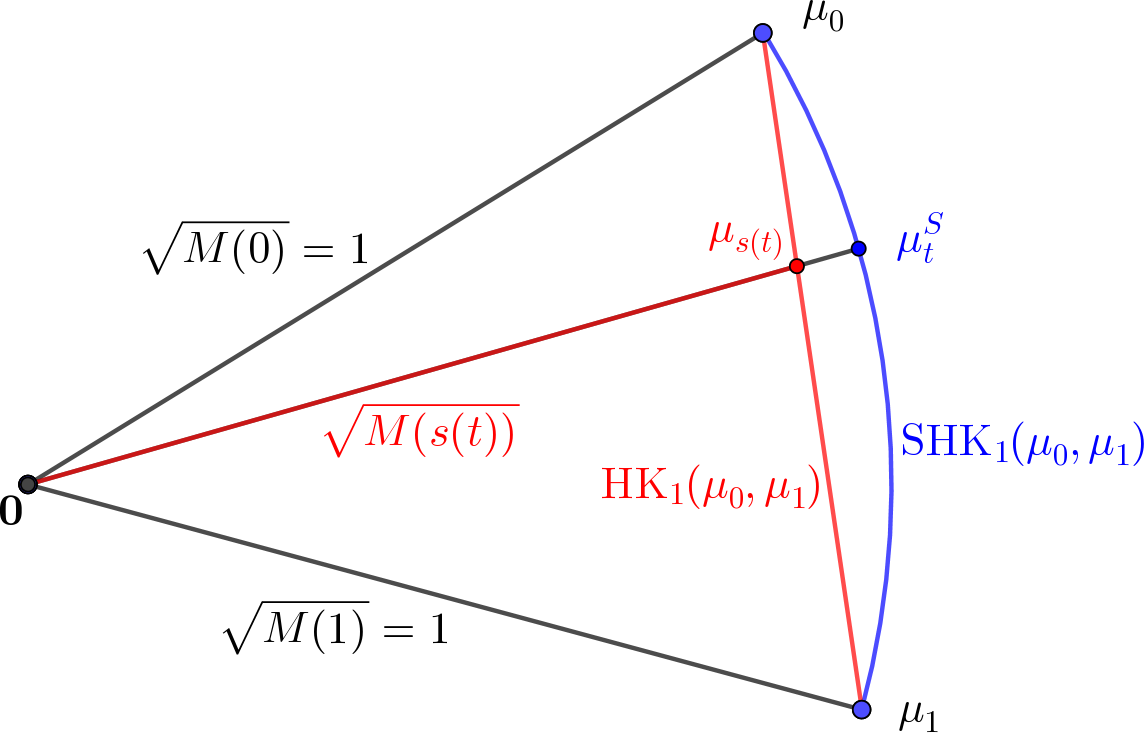}
	\caption{Geometric intuition for the cone structure and relation between $\HK_1$ and $\SHK_1$. See text for definition of symbols.}
	\label{fig:SHKIntuition}
\end{figure}

\begin{definition}[Spherical Hellinger-Kantorovich distance]
For $\mu_0, \mu_1 \in \prob(X)$, $\kappa>0$, the spherical Hellinger--Kantorovich distance between $\mu_0$ and $\mu_1$ (with scale $\kappa$) is defined as
\begin{equation}
\label{eq:SHK}
\SHK_\kappa(\mu_0,\mu_1) \assign \kappa \cdot \arccos\left(1-\frac{\HK_\kappa(\mu,\nu)^2}{2\kappa^2}\right).
\end{equation}
\end{definition}
For $\kappa=1$ this formula is given by \cite[Equation (1.1)]{laschos2019geometric} for $\alpha=1$, $\beta=4$.
The proper generalization to $\kappa \neq 1$ can be deduced from a simple scaling argument, see \cite[Remark 3.3]{cai2022linearized}.

Moreover, if $[0,1] \ni t \mapsto \mu_t \in \measp(X)$ is a constant speed geodesic between $\mu_0$ and $\mu_1$ for $\HK_\kappa$, then a constant speed geodesic $(\mu_t^S)_{t \in [0,1]}$ in $(\prob(X),\SHK_\kappa)$ can be obtained by re-normalizing to unit mass and a reparametrization to account for the discrepancy between the arc and the chord in Figure \ref{fig:SHKIntuition}. One finds \cite[Theorem 2.7]{laschos2019geometric},
\begin{align*}
 \mu^S_t & \assign \mu_{s(t)}/M(s(t))
\end{align*}
with $M(s) \assign\abs{\mu_s}=1-s(1-s) \HK_\kappa^2(\mu_0,\mu_1)/\kappa^2$ as in \eqref{eq:HKMass} and
\begin{align}
\label{eq:SHKReparam}
s(t) & \assign \frac{\sin(t \SHK_\kappa(\mu_0,\mu_1)/\kappa)}{\sin((1-t)\SHK_\kappa(\mu_0,\mu_1)/\kappa)+\sin(t\SHK_\kappa(\mu_0,\mu_1)/\kappa)} 
\end{align}
The formula for $s$ can be deduced from Figure \ref{fig:SHKIntuition}, it is also a special case of \cite[Theorem 2.7]{laschos2019geometric}.

The reparametrization and rescaling of $\mu_t$ to $\mu^S_t$ imply corresponding transformations on the velocity and mass change fields. Let $(v_t,\alpha_t)_{t \in [0,1]}$ be the fields for the $\HK_\kappa$-geodesic. Then the ones for the $\SHK_\kappa$-geodesic are given by
\begin{align}
\label{eq:SHKFieldTransform}
	v^S_t & \assign v_{s(t)} \cdot s'(t), &
	\alpha^S_t & \assign (\alpha_{s(t)}-\ol{\alpha}_{s(t)}) \cdot s'(t), &
	\ol{\alpha}_{s} & \assign \tfrac{1}{M(s)} \int_X \alpha_s \diff \mu_s = \tfrac{M'(s)}{M(s)}
\end{align}
where for $t\in[0;1]$, $\ol{\alpha}_{s(t)}$ is a constant shift in the growth field to maintain the total mass of $\mu_t^S$ constant.
One can show that $(\mu^S_t,v^S_t,\alpha^S_t)_{t \in [0,1]}$ do indeed solve the continuity equation \eqref{eq:HK_CE}, and that plugging them into \eqref{eq:HK_BB} yields the expected value $\SHK_\kappa^2(\mu_0,\mu_1)$, as the integral of a constant squared speed.

One can then introduce the corresponding notion of logarithmic map for $\SHK_\kappa$, similar to $\HK_\kappa$.
\begin{definition}[Logarithmic map for $\SHK_\kappa$]
	\label{def:LogSHK_map}
	Let $\mu_0\in\probl(X)$ and $\mu_1\in\prob(X)$ and the components of the logarithmic map $(v_0,\alpha_0,\mu_1^\perp)=\Log_{\mu_0}^{\HK_\kappa}(\mu_1)$. We define the Logarithmic map for $\SHK_\kappa$ at base point $\mu_0$ as
	$$\Log_{\mu_0}^{\SHK_\kappa}(\mu_1) \assign (v^S_0,\alpha^S_0,(\mu^S_1)^\perp)$$
	with
	\begin{align}
		\label{eq:SHKLog}
		v^S_0 & \assign v_0 \cdot s'(0), &
		\alpha^S_0 & \assign (\alpha_0+\HK_\kappa^2(\mu_0,\mu_1)/\kappa^2) \cdot s'(0), &
		(\mu^S_1)^\perp & = \mu_1^\perp \cdot s'(0)^2
	\end{align}
	and
	\begin{align}	
	\label{eq:SHKLogSPrime}
	s'(0)=\frac{\SHK_\kappa(\mu_0,\mu_1)/\kappa}{\sin(\SHK_\kappa(\mu_0,\mu_1)/\kappa)}.
	\end{align}
\end{definition}
Expressions for $v_0^S$ and $\alpha_0^S$ follow directly from \eqref{eq:SHKFieldTransform}.
Evaluating $s'$, \eqref{eq:SHKReparam} at $t=0$ is a simple algebraic exercise. The value for $\ol{\alpha}_0$ is computed in the following \Cref{lem:mean_alpha}.
The appropriate rescaling of $\mu^S_1$ then becomes apparent, for example by requiring that the norm of 
$\Log_{\mu_0}^{\SHK_\kappa}(\mu_1)$ equals $\SHK_\kappa(\mu_0,\mu_1)$, as shown in \Cref{prop:SHK_as_L2} below.

\begin{lemma}
	\label{lem:mean_alpha}
	Let $\mu_0\in\probl(\Surf)$, $\mu_1\in\prob(\Surf)$ and $v_0,\alpha_0,\mu_1^\perp=\Log^{\HK_\kappa}_{\mu_0}(\mu_1)$.
	
	Then, 
	$$\bar{\alpha}_0=\int_{\Surf}\alpha_0\diff\mu_0=-\frac{1}{\kappa^2}\HK_\kappa^2(\mu_0,\mu_1)=2\left[\cos\left(\frac{\SHK_\kappa(\mu_0,\mu_1)}{\kappa}\right)-1\right]$$
\end{lemma}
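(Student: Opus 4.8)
The plan is to express $\bar\alpha_0$ through the total mass $\abs{\pi}$ of the optimal soft-marginal plan $\pi$ for $\HK_\kappa^2(\mu_0,\mu_1)$, and then to combine the mass-discrepancy identity for $\HK_\kappa$ with the defining formula \eqref{eq:SHK} of $\SHK_\kappa$.

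First I would record that $\alpha_0 = -\frac{2}{\kappa^2}\Phi_0$ holds $\mu_0$-almost everywhere, where $\Phi_0$ is an optimal potential in \eqref{eq:HK_L1_dual}: on the part of $\mu_0$ dominated by $\pi_0$ this is \Cref{def:HKLog_map} together with the marginal identity \eqref{eq:marg_densities}, and on $\mu_0^\perp$ one has $\Phi_0 = \kappa^2$ by \Cref{prop:HK_L1_existence} \eqref{item:HK_L1_existence_perpfar}, so that $\alpha_0 = -2 = -\frac{2}{\kappa^2}\Phi_0$ there as well (this is exactly the $\alpha_0$-component of \Cref{prop:LogHK_dual}). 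Since $\pi_0 \ll \mu_0$ with $\RadNik{\pi_0}{\mu_0} = 1 - \Phi_0/\kappa^2$, integrating against $\mu_0$ gives $\abs{\pi_0} = \abs{\mu_0} - \tfrac{1}{\kappa^2}\int_\Surf \Phi_0\,\diff\mu_0$; using $\abs{\mu_0}=1$ and $\abs{\pi_0}=\abs{\pi}$ this yields $\int_\Surf \alpha_0\,\diff\mu_0 = -\tfrac{2}{\kappa^2}\int_\Surf\Phi_0\,\diff\mu_0 = 2\abs{\pi} - 2$.

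Next I would invoke the mass-discrepancy identity recalled in the proof of \Cref{prop:HK_as_L2}, namely $\HK_\kappa^2(\mu_0,\mu_1) = \kappa^2(\abs{\mu_0}+\abs{\mu_1}-2\abs{\pi})$, which for probability measures reads $\abs{\pi} = 1 - \HK_\kappa^2(\mu_0,\mu_1)/(2\kappa^2)$. Substituting into the previous line gives $\bar\alpha_0 = \int_\Surf \alpha_0\,\diff\mu_0 = -\HK_\kappa^2(\mu_0,\mu_1)/\kappa^2$, which is the first claimed equality. For the second, the definition \eqref{eq:SHK} gives $\cos(\SHK_\kappa(\mu_0,\mu_1)/\kappa) = 1 - \HK_\kappa^2(\mu_0,\mu_1)/(2\kappa^2)$, hence $2[\cos(\SHK_\kappa(\mu_0,\mu_1)/\kappa)-1] = -\HK_\kappa^2(\mu_0,\mu_1)/\kappa^2$, closing the chain of equalities.

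There is no serious obstacle here, as the computation is short. The only point that needs a little care is the bookkeeping of the Lebesgue decomposition $\mu_0 = \RadNik{\mu_0}{\pi_0}\pi_0 + \mu_0^\perp$: one must check that the clean formula $\alpha_0 = -\frac{2}{\kappa^2}\Phi_0$ is valid $\mu_0$-almost everywhere on both the absolutely continuous and the singular parts, which is precisely what makes the passage to total masses $\abs{\pi_0}=\abs{\pi}$ immediate and hence keeps the integral computation transparent.
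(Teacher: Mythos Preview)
Your proof is correct and follows essentially the same route as the paper: both use the identity $\alpha_0=-\tfrac{2}{\kappa^2}\Phi_0$ from \Cref{prop:LogHK_dual} and then reduce $\int_\Surf \Phi_0\,\diff\mu_0$ to $\HK_\kappa^2(\mu_0,\mu_1)$. The only cosmetic difference is that the paper concludes via the dual value $\HK_\kappa^2=\int\Phi_0\,\diff\mu_0+\int\Phi_1\,\diff\mu_1$ together with $\int\Phi_0\,\diff\mu_0=\int\Phi_1\,\diff\mu_1$ (from $\abs{\pi_0}=\abs{\pi_1}$ and $\abs{\mu_0}=\abs{\mu_1}$), whereas you pass through the primal mass-discrepancy identity $\HK_\kappa^2=\kappa^2(\abs{\mu_0}+\abs{\mu_1}-2\abs{\pi})$; these are equivalent one-line computations.
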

\begin{proof}
	This is straightforward from the expressions in \cref{prop:LogHK_dual} and the fact that $\mu_0$ and $\mu_1$ have same mass. Indeed, recall that the marginals of the optimal transport plan $\pi$ defining the $\HK_\kappa$ distance are given by the dual potentials $\Phi_0$, $\Phi_1$ and also have same mass (that of $\pi$):
	$$\int_{\Surf}\diff \pi_0=\int_\Surf\left(1-\frac{\Phi_0}{\kappa^2}\right)\diff\mu_0=\int_\Surf\left(1-\frac{\Phi_1}{\kappa^2}\right)\diff\mu_1=\int_{\Surf}\diff \pi_1$$
	Simplifying the middle equality (and recalling $\abs{\mu_{0}}=\abs{\mu_1}=1$), we get
	$$\HK_\kappa^2(\mu_0,\mu_1)=\int_{\Surf}\Phi_0\diff\mu_0+\int_{\Surf}\Phi_1\diff\mu_1=2\int_{\Surf}\Phi_0\diff\mu_0$$
	and one can conclude by replacing $\Phi_0$ by $-\kappa^2\alpha_0/2$.
\end{proof}

\begin{proposition}
\label{prop:SHK_as_L2}
In the setting of \Cref{def:LogSHK_map} one has
	\begin{equation}
		\label{eq:BB_SHK}
		\SHK_\kappa^2(\mu_0,\mu_1) = \norm{v_0^S}^2_{\LL^2(\mu_0)}
		+\frac{\kappa^2}{4}\norm{\alpha_0^S}^2_{\LL^2(\mu_0)}+\kappa^2\abs{(\mu^S_1)^\perp}.
	\end{equation}
\end{proposition}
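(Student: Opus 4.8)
The plan is to verify \eqref{eq:BB_SHK} by a direct computation: substitute the definitions from \Cref{def:LogSHK_map}, reduce everything to $\HK_\kappa^2(\mu_0,\mu_1)$ by means of \Cref{prop:HK_as_L2}, and finish with a short trigonometric simplification based on the defining relation \eqref{eq:SHK} of $\SHK_\kappa$.

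First I would plug the expressions for $v_0^S$, $\alpha_0^S$ and $(\mu_1^S)^\perp$ from \eqref{eq:SHKLog} into the right-hand side of \eqref{eq:BB_SHK} and factor out the common scalar $s'(0)^2$. The terms $\norm{v_0^S}^2_{\LL^2(\mu_0)}$ and $\kappa^2\abs{(\mu_1^S)^\perp}$ become $s'(0)^2\norm{v_0}^2_{\LL^2(\mu_0)}$ and $s'(0)^2\kappa^2\abs{\mu_1^\perp}$ immediately. For the growth term, writing $\beta \assign \HK_\kappa^2(\mu_0,\mu_1)/\kappa^2$, I would expand the square and use $\abs{\mu_0}=1$ to get
$$\norm{\alpha_0^S}^2_{\LL^2(\mu_0)} = s'(0)^2\left(\norm{\alpha_0}^2_{\LL^2(\mu_0)} + 2\beta\int_X \alpha_0\,\diff\mu_0 + \beta^2\right).$$
Here \Cref{lem:mean_alpha} is the essential input: it states $\int_X\alpha_0\,\diff\mu_0 = \bar\alpha_0 = -\beta$, so the cross term equals $-2\beta^2$ and the bracket collapses to $\norm{\alpha_0}^2_{\LL^2(\mu_0)} - \beta^2$.

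Collecting the three contributions and applying \Cref{prop:HK_as_L2}, the right-hand side of \eqref{eq:BB_SHK} becomes $s'(0)^2\big(\HK_\kappa^2(\mu_0,\mu_1) - \tfrac{\kappa^2}{4}\beta^2\big) = s'(0)^2\,\HK_\kappa^2(\mu_0,\mu_1)\big(1 - \tfrac{\HK_\kappa^2(\mu_0,\mu_1)}{4\kappa^2}\big)$. The remaining step is purely trigonometric: setting $\sigma \assign \SHK_\kappa(\mu_0,\mu_1)/\kappa$, the definition \eqref{eq:SHK} gives $\HK_\kappa^2(\mu_0,\mu_1) = 2\kappa^2(1-\cos\sigma)$, hence $1 - \HK_\kappa^2(\mu_0,\mu_1)/(4\kappa^2) = (1+\cos\sigma)/2$, while \eqref{eq:SHKLogSPrime} gives $s'(0)^2 = \sigma^2/\sin^2\sigma$. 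Multiplying these out, the identity $(1-\cos\sigma)(1+\cos\sigma)=\sin^2\sigma$ cancels the denominator and leaves $\kappa^2\sigma^2 = \SHK_\kappa^2(\mu_0,\mu_1)$, as claimed.

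There is no genuine obstacle; the only point requiring care is the bookkeeping in the growth term, namely recognizing that the constant shift $\HK_\kappa^2(\mu_0,\mu_1)/\kappa^2$ built into $\alpha_0^S$ is exactly $-\bar\alpha_0$, so that replacing $\alpha_0$ by $\alpha_0 - \bar\alpha_0$ \emph{subtracts} a variance-type term rather than adding one. This is the static counterpart of the continuity-equation computation sketched before \Cref{def:LogSHK_map} for the fields $(v_t^S,\alpha_t^S)$, and one could alternatively obtain the statement from that dynamic picture (checking that the rescaled triple solves \eqref{eq:HK_CE} with constant squared speed $\SHK_\kappa^2(\mu_0,\mu_1)$), but the direct algebraic route above is shorter.
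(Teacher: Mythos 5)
Your proof is correct and follows essentially the same route as the paper's: plug in \eqref{eq:SHKLog}, factor out $s'(0)^2$, use \Cref{lem:mean_alpha} (equivalently the identity $\int(\alpha_0-\bar\alpha_0)^2\diff\mu_0=\|\alpha_0\|^2_{\LL^2(\mu_0)}-\bar\alpha_0^2$) to collapse the growth term, apply \Cref{prop:HK_as_L2}, and finish with the trigonometric identity linking \eqref{eq:SHK} and \eqref{eq:SHKLogSPrime}. The only cosmetic difference is that you phrase the final cancellation via $(1-\cos\sigma)(1+\cos\sigma)=\sin^2\sigma$ whereas the paper substitutes $\cos(\SHK_\kappa/\kappa)=1-\HK_\kappa^2/(2\kappa^2)$ algebraically; these are the same computation.
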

\begin{proof}
First observe that using $\ol{\alpha}_0=\int_X \alpha_0\diff \mu_0$, one finds
\begin{align*}
	\|\alpha_0^S\|^2_{\LL^2(\mu_0)} & = (s'(0))^2
	\int_X (\alpha_0-\ol{\alpha}_0)^2\diff \mu_0=
	(s'(0))^2 \left( \|\alpha_0\|^2_{\LL^2(\mu_0)}-\ol{\alpha}_0^2 \right)
\end{align*}
where $\ol{\alpha}_0^2=\HK_\kappa(\mu_0,\mu_1)^4/\kappa^4$, and consequently with \Cref{prop:HK_as_L2}, \eqref{eq:SHKLogSPrime} and finally \eqref{eq:SHK},
\begin{align*}
& \norm{v_0^S}^2_{\LL^2(\mu_0)} +\frac{\kappa^2}{4}\norm{\alpha_0^S}^2_{\LL^2(\mu_0)}+\kappa^2\abs{(\mu^S_1)^\perp} \\
= {} & (s'(0))^2 \cdot \left(
\norm{v_0}^2_{\LL^2(\mu_0)} +\frac{\kappa^2}{4}\norm{\alpha_0}^2_{\LL^2(\mu_0)}- \frac{\HK^4_\kappa(\mu_0,\mu_1)}{4\kappa^2} +\kappa^2\abs{\mu^\perp_1} \right) \\
= {} & \frac{\SHK_\kappa^2(\mu_0,\mu_1)}{\kappa^2\sin^2\left(\SHK_\kappa(\mu_0,\mu_1)/\kappa\right)}
\left( \HK_\kappa^2(\mu_0,\mu_1)- \frac{\HK^4_\kappa(\mu_0,\mu_1)}{4\kappa^2}\right) \\
= {} & \frac{\SHK_\kappa^2(\mu_0,\mu_1)}{\kappa^2-\kappa^2\left(1-\frac{\HK_\kappa^2(\mu_0,\mu_1)}{2\kappa^2}\right)^2}
\left( \HK_\kappa^2(\mu_0,\mu_1)- \frac{\HK^4_\kappa(\mu_0,\mu_1)}{4\kappa^2}\right) \\
= {} & \SHK_\kappa^2(\mu_0,\mu_1). 
\end{align*}
\end{proof}

\begin{remark}[Exponential map for $\SHK_\kappa$]
The formulas that provide the logarithmic map for $\SHK_\kappa$, based on that of $\HK_\kappa$ are invertible (using that \eqref{eq:SHK} is also invertible). Consequently, using this inversion and then using the exponential map for $\HK_\kappa$, one can define the exponential map for $\SHK_\kappa$. By construction it is, once again, a left-inverse of the logarithmic map, yielding
$$\Exp^{\SHK_\kappa}_{\mu_0}\circ\Log^{\SHK_\kappa}_{\mu_0}=\id.$$
We point out here that the slope $s'(0)$ did depend on the target measure $\mu_{1}$ and therefore, should be computed from the arguments of the exponential, using \eqref{eq:BB_SHK}.
\end{remark}

\subsection{Convexity of the range of the logarithmic map}
\label{sec:convexity_range}
%
A natural question in the context of linearized optimal transport is whether or not taking averages of embeddings (or interpolating between them) in the tangent space yields another `valid' tangent vector, i.e.~a vector that corresponds to an embedding of a measure via the logarithmic map. In other words: Is the range of the logarithmic map convex?

For $\W_2$ the logarithmic map can be expressed as gradient of a $c$-concave potential, \Cref{prop:W2_Log_dual}, and therefore convexity of the range of the logarithmic map is equivalent to convexity of the set of $c$-concave functions
(here we must carefully distinguish between \emph{convexity of a set} in a vector space of functions, and \emph{convexity of a function}, which means that its epigraph is a convex set).
For $X \subset \R^n$ with $c(x,y)=\|x-y\|^2$, a function $\phi$ is $c$-concave if and only if $x \mapsto \tfrac12(\|x\|^2-\phi(x))$ is convex, see \Cref{thm:Brenier}, and thus the answer is positive: the set of $c$-convex functions is a convex set.
More generally, convexity of the set of $c$-concave functions in turn is equivalent to the strong Ma-Trudinger-Wang condition MTW(0,0), see \cite[Definition 12.27]{villani2009optimal} for several formulations of this condition, such as the definite positivity of the Ma-Trudinger-Wang tensor, and \cite[Theorem 3.2]{figalli2011multidimensional} and \cite[Theorem 1.5.4]{gallouet2012optimal} for the fact that this condition yields the convexity of the set of $c$-concave functions.

Consider now the $\HK$ distance with the logarithmic map $(v_0,\alpha_0,\mu_1^\perp)=\Log_{\mu_0}^{\HK_\kappa}(\mu_1)$ as given in \Cref{def:HKLog_map}. For simplicity we restrict this discussion to the case where $\mu_1^\perp$ is zero. \eqref{eq:LogHK_dual} then suggests that the convexity of the range of the logarithmic map is equivalent to the convexity of the set of optimal potentials $\Phi_0$ for \eqref{eq:HK_logct_dual} (for fixed $\mu_0$ and arbitrary $\mu_1$).
By analogy with the balanced case, it is likely that this is equivalent to convexity of the set of log-$c$-concave functions, as introduced in \Cref{rem:logcconcave}.
Gallouët et al.~\cite{gallouet2021regularity} give some results on the MTW(0) condition (which is weaker than MTW(0,0)) for the cost $c_\kappa^{\HK}$. However, due to the non-linearity of the change of variable from dual problem \eqref{eq:HK_soft_dual} to \eqref{eq:HK_cone_dual},  at this point it is unclear whether this allows statements on the convexity of the set of log-$c$-concave functions and we are not aware of other relevant results in this direction.
For $\SHK$ the situation is even less clear due to the presence of an additional non-linear transformation by the prefactor $s'(0)$, see \eqref{eq:SHKLog}.

Some numerical experiments related to this are presented in Section \ref{sec:convexity_range_num}.
\section{Asymptotic stability of the logarithmic maps}
\subsection[Convergence of the HK tangent structure to the W2 one for large scales]{Convergence of the $\HK$ tangent structure to the $W_2$ one as $\kappa\to\infty$}
\label{sec:LinHKToLinW}
It is well-known that, as $\kappa\to\infty$, the distance $\HK_\kappa$ converges to the 2-Wasserstein distance, see for instance \cite[Theorem 7.24]{liero2018optimal} and \cite[Theorem 5.10]{ChizatDynamicStatic2018}. A very natural `sanity check' for our definition of their tangent structure is to verify whether or not the logarithmic map of $\HK_\kappa$ does also converge (in a suitable sense) to that of $\W_2$.

To establish this result it is convenient to exploit the convergence of the optimal plans $\pi_\kappa$ in \eqref{eq:HK_soft_primal} to a corresponding optimizer of \eqref{eq:W2_primal} as $\kappa \to \infty$. To the best of our knowledge, this result is only implicit in previous statements of the convergence of the distances that are references above. We will therefore give a brief sketch here.
\begin{lemma}
	\label{lem:CV_plans_delta}
	Let $\mu_0, \mu_1 \in \measp(K)$ for some compact $K \subset X$ be probability measures and $(\kappa_n)_{n \in \N}$ be a sequence with $\kappa_n\rightarrow+\infty$ as $n \to \infty$. For $n\in\N$, let $\pi_n$ be an optimal transport plan for $\HK_{\kappa_n}^2(\mu_0,\mu_1)$, minimizing \eqref{eq:HK_soft_primal}.
	
	Then (up to subsequences) $(\pi_n)_{n\in\N}$ converges weakly-$\star$ to some $\pi$ which is an optimal transport plan for $\W_2^2(\mu_0,\mu_1)$, minimizing \eqref{eq:W2_primal}. The optimal values also converge, i.e.
	$$\lim_{n\to \infty}\HK^2_{\kappa_n}(\mu_0,\mu_1)=\W_2^2(\mu_0,\mu_1).$$
\end{lemma}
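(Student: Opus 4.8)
The plan is a $\Gamma$-convergence-type argument, exploiting that every measure in sight is supported in the compact set $K$ and that $c^{\HK}_\kappa\to d^2$ uniformly on $K\times K$ as $\kappa\to\infty$. The first ingredient is two elementary pointwise estimates on the cost: from $\tan s\geq s$ on $[0,\pi/2)$ one gets $\log\cos s\leq -s^2/2$, hence $c^{\HK}_\kappa(x_0,x_1)\geq d(x_0,x_1)^2$ whenever the cost is finite; and from $\log(1-u)\geq -u/(1-u)$ together with $\cos s\geq 1-s^2/2$ one gets $c^{\HK}_\kappa(x_0,x_1)\leq d(x_0,x_1)^2/(1-d(x_0,x_1)^2/(2\kappa^2))$. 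For $n$ large enough that $\diam(K)<\kappa_n\pi/2$, both bounds hold on all of $K\times K$ and squeeze $c^{\HK}_{\kappa_n}$ to $d^2$ uniformly there. Inserting a $\W_2$-optimal plan $\pi^\star\in\Pi(\mu_0,\mu_1)$ into $E_{\kappa_n}(\,\cdot\,|\mu_0,\mu_1)$, whose two $\KL$ terms then vanish, yields $\HK^2_{\kappa_n}(\mu_0,\mu_1)\leq\int c^{\HK}_{\kappa_n}\,d\pi^\star$, so that $\limsup_n\HK^2_{\kappa_n}(\mu_0,\mu_1)\leq\W_2^2(\mu_0,\mu_1)<\infty$.

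Second, I would extract a convergent subsequence of plans. Since the three terms of $E_{\kappa_n}(\pi_n|\mu_0,\mu_1)$ are nonnegative and their sum is bounded by the previous step, each is bounded; in particular $\KL(\proj_{i\#}\pi_n|\mu_i)\leq C/\kappa_n^2\to 0$ for $i=0,1$. A Jensen inequality applied to $t\mapsto t\log t$ gives $\KL(\rho|\mu_i)\geq |\rho|\log|\rho|-|\rho|+1$, a superlinear function of $|\rho|$, so the masses $|\pi_n|=|\proj_{i\#}\pi_n|$ are bounded; moreover $\proj_{i\#}\pi_n\ll\mu_i$ forces $\pi_n$ to be supported in the compact set $K\times K$. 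Hence, up to a subsequence, $\pi_n\rightharpoonup^\star\pi$ for some $\pi\in\measp(K\times K)$. Continuity of $\proj_i$ on $K\times K$ gives $\proj_{i\#}\pi_n\rightharpoonup^\star\proj_{i\#}\pi$, and lower semicontinuity of $\rho\mapsto\KL(\rho|\mu_i)$ then yields $\KL(\proj_{i\#}\pi|\mu_i)\leq\liminf_n\KL(\proj_{i\#}\pi_n|\mu_i)=0$, i.e.\ $\proj_{i\#}\pi=\mu_i$. Thus $\pi\in\Pi(\mu_0,\mu_1)$.

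Third, the $\liminf$ inequality: dropping the nonnegative $\KL$ terms, $\HK^2_{\kappa_n}(\mu_0,\mu_1)\geq\int c^{\HK}_{\kappa_n}\,d\pi_n=\int d^2\,d\pi_n+\int(c^{\HK}_{\kappa_n}-d^2)\,d\pi_n$; the last term tends to $0$ by uniform convergence of $c^{\HK}_{\kappa_n}$ to $d^2$ on $K\times K$ and boundedness of $|\pi_n|$, while $\int d^2\,d\pi_n\to\int d^2\,d\pi$ by weak-$\star$ convergence and continuity of $d^2$ on the compact $K\times K$. Hence $\liminf_n\HK^2_{\kappa_n}(\mu_0,\mu_1)\geq\int d^2\,d\pi\geq\W_2^2(\mu_0,\mu_1)$, and combining with the first step, $\HK^2_{\kappa_n}(\mu_0,\mu_1)\to\W_2^2(\mu_0,\mu_1)$ and $\int d^2\,d\pi=\W_2^2(\mu_0,\mu_1)$, so $\pi$ is optimal for \eqref{eq:W2_primal}. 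I expect the delicate point to be the second step: turning the vanishing of $\KL(\proj_{i\#}\pi_n|\mu_i)$ into genuine narrow convergence of the marginals to $\mu_i$, so that no mass escapes in the limit and $\pi$ is a true coupling of $\mu_0$ and $\mu_1$ — this is where one essentially uses that the $\mu_i$ are fixed probability measures rather than generic measures.
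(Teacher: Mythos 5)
Your proposal is correct and takes essentially the same $\Gamma$-convergence route as the paper, which only gives a sketch; you fill in exactly the details the paper leaves implicit (explicit squeezing estimates for $c^{\HK}_\kappa\to d^2$, mass boundedness via the superlinearity of $\KL$, lower semicontinuity of $\KL$ to turn vanishing divergences into exact marginals). One tiny slip: your upper estimate $\log\cos s\geq -\frac{s^2/2}{1-s^2/2}$ requires $s<\sqrt{2}$, so the threshold you need is $\diam(K)<\kappa_n\sqrt{2}$, not merely $\diam(K)<\kappa_n\pi/2$; since $\kappa_n\to\infty$ this is still eventually satisfied, so the argument goes through unchanged.
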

\begin{proof}
	This is a simple Gamma-convergence result on the primal formulation of $\HK^2_{\kappa_n}(\mu_0,\mu_1)$ as $n\to\infty$ and we merely give a sketch. 
	
	Note that by compactness of $K$ and for $\kappa_n$ large enough, $c^{\HK}_{\kappa_n}$ is uniformly continuous on $K^2$ (since eventually $\diam(K)<\kappa_n\frac{\pi}{2}$) and converges uniformly to the quadratic cost $c=d^2$ of $\W_2$. Hence, for any converging sequence $(\pi_n)_n$ with limit $\pi$, one has $\int_{K \times K} c^{\HK}_{\kappa_n} \diff \pi_n \to \int_{K \times K} c \diff \pi$.

	For the liminf condition, in addition to the above convergence of the cost we need to consider the marginal constraints. When $\kappa_n^2 \KL(\proj_{i\#}\pi_n)$ remains bounded as $n \to \infty$, this implies that the limit $\pi$ lies in $\Pi(\mu_0,\mu_1)$ as required.
	The limsup condition is readily verified by taking the constant sequence $\pi_n=\pi$ as recovery sequence. In this case the Kullback--Leibler terms always yield zero and the cost term converges, as discussed above.

	By boundedness of $\HK_{\kappa_n}^2(\mu_0,\mu_1)$ we obtain that the sequence $(\pi_n)_n$ of optimal plans has bounded mass and tight marginals and is therefore weak* precompact. The previous Gamma-convergence then establishes that any cluster point $\pi$ solves \eqref{eq:W2_primal}.
\end{proof}

\begin{theorem}
	\label{prop:CV_tanvecs_kappa}	
	Let $K \subset X$ be compact, let $\mu_0 \in \probl(K)$, $\mu_1 \in \prob(K)$ and $(\kappa_n)_{n \in \N}$ be a positive sequence with $\kappa_n\rightarrow\infty$ as $n \to \infty$.
	Let $(v_0^N, \alpha_0^N, (\mu_1^N)^\perp)\assign\Log^{HK_{\kappa_n}}_{\mu_0}(\mu_1)$.
	
	Then, there will be some $N \in \N$ such that $(\mu_1^N)^\perp = 0$ for $n \geq N$. Furthermore, as $n \to \infty$, $v_0^N$ converges strongly in $\LL^2(\Surf,\R^n,\mu_0)$ to $v_0\assign\Log^{\W_2}_{\mu_0}(\mu_1)$ and $\alpha_0^N$ converges strongly in $\LL^2(\Surf; \mu_0)$ to 0.
\end{theorem}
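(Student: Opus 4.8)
The plan is to transfer the weak-$\star$ convergence of the optimal plans supplied by \Cref{lem:CV_plans_delta} through the explicit formulas \eqref{eq:def_HK_log} for the logarithmic map. Write $\pi_n$ for an optimal plan of $\HK^2_{\kappa_n}(\mu_0,\mu_1)$, $\pi_i^n\assign\proj_{i\#}\pi_n$ for its marginals and $T_n$ for the associated transport map, so $\pi_n=(\id,T_n)_\#\pi_0^n$; by \Cref{lem:CV_plans_delta}, along a subsequence $\pi_n\to\pi$ weakly-$\star$, where $\pi$ is the (by \Cref{thm:McCann} unique) $\W_2$-optimal plan, of the form $\pi=(\id,T)_\#\mu_0$, and $\HK^2_{\kappa_n}(\mu_0,\mu_1)\to\W_2^2(\mu_0,\mu_1)<\infty$. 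Two reductions are then immediate. First, since $\spt\mu_0,\spt\mu_1\subset K$ with $K$ compact, for $n$ large $\diam K<\kappa_n\pi/2$, so no mass travels as far as $\kappa_n\pi/2$; by \Cref{rem:TransportAndCreation} this forces the singular parts of $\mu_0$ and $\mu_1$ with respect to $\pi_0^n$, $\pi_1^n$ to vanish, so $(\mu_1^n)^\perp=0$ for $n$ large and $\pi_0^n$, $\mu_0$ are mutually absolutely continuous. Second, by \Cref{prop:HK_as_L2} the three nonnegative summands in \eqref{eq:HK_as_L2} add up to $\HK^2_{\kappa_n}(\mu_0,\mu_1)$, so $\norm{\alpha_0^n}^2_{\LL^2(\mu_0)}\le(4/\kappa_n^2)\HK^2_{\kappa_n}(\mu_0,\mu_1)\to0$; this already gives $\alpha_0^n\to0$ strongly.

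For $v_0^n$ I would first establish weak convergence in $\LL^2(\Surf,TX;\mu_0)$ and then upgrade it. By \eqref{eq:HK_as_L2}, $\norm{v_0^n}^2_{\LL^2(\mu_0)}\le\HK^2_{\kappa_n}(\mu_0,\mu_1)$ is bounded, so it suffices to test against continuous compactly supported vector fields $\xi$. Using \eqref{eq:def_HK_log}, $\norm{\Log_{x_0}(x_1)}_{x_0}=d(x_0,x_1)$ from \eqref{eq:LogMapBasic}, the marginal-density identity of \Cref{prop:HK_L1_existence} and $\pi_n=(\id,T_n)_\#\pi_0^n$, one rewrites
\begin{equation*}
\int_\Surf\langle v_0^n,\xi\rangle_{x_0}\diff\mu_0(x_0)
=\int_{\Surf\times\Surf}\kappa_n\tan\!\left(\frac{d(x_0,x_1)}{\kappa_n}\right)\frac{\langle\Log_{x_0}(x_1),\xi(x_0)\rangle_{x_0}}{d(x_0,x_1)}\diff\pi_n(x_0,x_1),
\end{equation*}
with the integrand set to $0$ on the diagonal. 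On $K\times K$ it is uniformly bounded, since $\kappa_n\tan(\diam K/\kappa_n)$ stays bounded and $\abs{\langle\Log_{x_0}(x_1),\xi(x_0)\rangle_{x_0}}\le d(x_0,x_1)\norm{\xi}_\infty$, and, because $\kappa_n\tan(s/\kappa_n)/s\to1$ uniformly on $[0,\diam K]$, it converges uniformly away from the set $\mathrm{Cut}\assign\{(x_0,x_1):x_1\in\mathrm{cut}(x_0)\}$ to $\langle\Log_{x_0}(x_1),\xi(x_0)\rangle_{x_0}$, which is continuous off $\mathrm{Cut}$. The set $\mathrm{Cut}$ is closed and, crucially, $\pi$-negligible, because the $\W_2$-optimal plan for the quadratic cost charges no mass on the cut locus (\Cref{thm:McCann}; cf.~\cite[Chapter 10]{villani2009optimal}). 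Combining the uniform bound, the uniform convergence off $\mathrm{Cut}$, the convergence $\pi_n\to\pi$, the Portmanteau estimate $\limsup_n\pi_n(\mathrm{Cut})\le\pi(\mathrm{Cut})=0$, and $\abs{\pi_n}=\int_{K\times K}1\diff\pi_n\to\abs{\pi}=1$, a continuous-mapping-theorem argument (after normalizing the $\pi_n$ to probability measures) shows the right-hand side tends to $\int_{\Surf\times\Surf}\langle\Log_{x_0}(x_1),\xi(x_0)\rangle_{x_0}\diff\pi=\int_\Surf\langle\Log_{x_0}(T(x_0)),\xi(x_0)\rangle_{x_0}\diff\mu_0=\int_\Surf\langle v_0,\xi\rangle_{x_0}\diff\mu_0$, where $v_0=\Log^{\W_2}_{\mu_0}(\mu_1)$ by \Cref{def:LogW2Mfold}. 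Hence $v_0^n\rightharpoonup v_0$ weakly.

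It then remains to show $\norm{v_0^n}_{\LL^2(\mu_0)}\to\norm{v_0}_{\LL^2(\mu_0)}$, after which weak convergence in the Hilbert space $\LL^2(\Surf,TX;\mu_0)$ upgrades to strong convergence via $\norm{v_0^n-v_0}^2=\norm{v_0^n}^2-2\langle v_0^n,v_0\rangle+\norm{v_0}^2$. Weak lower semicontinuity gives $\liminf_n\norm{v_0^n}^2\ge\norm{v_0}^2=\W_2^2(\mu_0,\mu_1)$, the equality by \Cref{prop:LogW2Mfold}\eqref{item:LogW2Mfold_dist}; conversely, dropping the nonnegative $\alpha_0^n$- and $(\mu_1^n)^\perp$-terms in \eqref{eq:HK_as_L2} gives $\limsup_n\norm{v_0^n}^2\le\lim_n\HK^2_{\kappa_n}(\mu_0,\mu_1)=\W_2^2(\mu_0,\mu_1)$. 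Since the $\W_2$-optimal plan $\pi$ is unique (\Cref{thm:McCann}), the limits obtained along subsequences are all the same, so the full sequence converges. The step I expect to be the main obstacle is the weak-limit identification in the middle paragraph, i.e.~controlling the contribution of the cut locus: this needs both that the $\W_2$-optimal plan does not charge the cut locus and a Portmanteau/continuous-mapping argument for an integrand that is only $\pi$-almost everywhere continuous. The rest follows rather directly from \Cref{lem:CV_plans_delta} and \Cref{prop:HK_as_L2}.
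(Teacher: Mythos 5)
Your proposal is correct and takes essentially the same approach as the paper: eliminate the singular part via compactness and \Cref{prop:HK_L1_existence}, kill $\alpha_0^n$ through the $\kappa_n^{-2}$ prefactor in \eqref{eq:HK_as_L2}, establish weak convergence of $v_0^n$ by rewriting the momentum integral against $\pi_n$ and passing to the limit with the continuous mapping theorem (using that $\Log^X$ is continuous off the cut locus, which is $\pi$-null), then upgrade to strong convergence by matching $\LL^2$-norms via \Cref{prop:LogW2Mfold}\eqref{item:LogW2Mfold_dist} and \Cref{prop:HK_as_L2}. The only cosmetic difference is that you explicitly normalize the $\pi_n$ to probability measures before invoking the continuous mapping theorem, which the paper leaves implicit.
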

\begin{proof}
	Let us take $N \in \N$ large enough, such that $\kappa_n \pi/2 > \diam(K)$ for $n \geq N$.
	By \Cref{prop:HK_L1_existence} \eqref{item:HK_L1_existence_perpfar} one has $d(x_0,x_1) \geq \kappa_N \pi/2$ for $\mu_0 \otimes (\mu_1^N)^\perp$ almost all $(x_0,x_1)$. Therefore, for measurable $A \subset K$ one has
	\begin{align*}
		(\mu_1^N)^\perp(A) & = \mu_0 \otimes (\mu_1^N)^\perp(K \times A)
		=\mu_0 \otimes (\mu_1^N)^\perp\left(\{(x_0,x_1) \in K \times A\,:\,d(x_0,x_1) \geq \kappa_n \pi/2\}\right)=0
	\end{align*}
	where the last equality follows because the considered subset of $K \times A$ is empty.
	
	For $N$ large enough, by \Cref{prop:LogW2Mfold} \eqref{item:LogW2Mfold_dist} and \Cref{prop:HK_as_L2} (where we use $(\mu_1^N)^\perp=0$) one has
	\begin{equation*}
		\begin{array}{ccc}
			\W_2^2(\mu_0,\mu_1) = \|v_0\|_{\LL^2(\Surf;\mu_0)}^2 &\text{ and }&
			\HK_{\kappa_N}^2(\mu_0,\mu_1) = \|v_0^N\|_{\LL^2(\Surf;\mu_0)}^2 + \tfrac{\kappa_N^2}{4} \|\alpha_0^N\|_{\LL^2(\Surf;\mu_0)}^2
		\end{array}
	\end{equation*}
	and since $\HK_{\kappa_N}^2(\mu_0,\mu_1) \uparrow \W_2^2(\mu_0,\mu_1)$ as $N\to\infty$ \cite[Theorem 7.24]{liero2018optimal}, we deduce that $\alpha_{N} \to 0$ strongly in $\LL^2(\Surf;\mu_0)$ and that
	\begin{align}
		\label{eq:LinHKConv_vnnorm}
		\limsup_{N \to \infty} \|v_0^N\|_{\LL^2(\Surf;\mu_0)}^2 \leq \|v_0\|_{\LL^2(\Surf;\mu_0)}^2.
	\end{align}
	We now show the strong convergence of $v_0^N$ to $v_0$.
	For $n\geq N$, let $\pi^n$ be optimal for the primal soft-marginal formulation \eqref{eq:HK_soft_primal} of $\HK_{\kappa_N}$.
	By \Cref{thm:McCann_HK} there is a map $T^N$ such that $\pi^N=(\id,T^N)_\# \pi_0^N$ where $\pi_0^N$ is the first marginal of $\pi^N$.
	Analogously let $\pi=(\id,T)_\# \mu_0$ be the optimal transport plan for $\W_2^2(\mu_0,\mu_1)$ in \eqref{eq:W2_primal}, uniqueness and existence of the map $T$ provided by \Cref{thm:McCann}. By \Cref{lem:CV_plans_delta}, $\pi^N\narrc{N\to\infty}\pi$ . 
	By the bound \eqref{eq:LinHKConv_vnnorm} the sequence $(v_0^N)_{N\in\N}$ must (up to selection of a subsequence) converge weakly in $\LL^2(\Surf;\mu_0)$ to some $\tilde{v}$. Below we will show that indeed $\tilde{v}=v_0$ (i.e.~the whole sequence converges weakly to this unique cluster point). By weak lower-semicontinuity of the norm and the bound \eqref{eq:LinHKConv_vnnorm}, we then get that $\lim_{n \to \infty} \|v_0^N\|_{\LL^2(\Surf;\mu_0)} = \|v_0\|_{\LL^2(\Surf;\mu_0)}$, which in combination with weak convergence will imply strong convergence of $v_0^N \to v_0$.
	
	To show $\tilde{v}=v_0$, let $\xi \in \Cont(\Surf)$. Then, recalling the expression for $v_0^N$ from \Cref{def:HKLog_map},
 \begin{align*}
 	\int_X \xi(x_0)\cdot& \tilde{v}(x_0)\diff\mu_0(x_0)\\
 	& = \lim_{N\to\infty} \int_X \xi(x_0) \cdot v_0^N(x_0)\,\diff\mu_0(x_0) \\
	&= \lim_{N\to\infty} \int_{\Surf\times\Surf}
	\kappa_N\xi(x_0)\cdot\frac{\Log^X_{x_0}(T^N(x_0))}{\norm{\Log^X_{x_0}(T^N(x_0))}}
	\tan\left(\frac{\norm{\Log^X_{x_0}(T^N(x_0))}}{\kappa_N}\right) \RadNik{\pi_0^N}{\mu_0}(x_0)\diff\mu_{0}(x_0)\\
	&= \lim_{N\to\infty} \int_{\Surf\times\Surf}
	\xi(x_0)\cdot \Log^X_{x_0}(x_1)
	\frac{\tan\left(\frac{\norm{\Log^X_{x_0}(x_1)}}{\kappa_N}\right)}{\norm{\Log^X_{x_0}(x_1)}/\kappa_N} \diff\pi^N(x_0,x_1)\\
	&=\int_{\Surf\times\Surf}\xi(x_0)\cdot\Log^X_{x_0}(x_1)\,\diff\pi(x_0,x_1)\\
 	&=\int_{\Surf\times\Surf}\xi(x_0)\cdot v_0 (x_0)\,\diff\mu_0(x_0)
 \end{align*}
The second to last equality here would simply be the narrow convergence of $\pi^N$ and uniform convergence of the integrated functions as $N\to 0$ if not for the fact that the logarithmic map $(x_0,x_1)\mapsto\Log_{x_0}^\Surf(x_1)$ might not be continuous everywhere on $K\times K$. Fortunately, a byproduct of the proof of \Cref{thm:McCann} is that for $\pi$-almost any $(x_0,x_1)$, the squared distance function $d^2(.,x_1)$ is differentiable at $x_0$. This is actually equivalent to the function $d^2$ being smooth at $(x_0,x_1)$, and in particular, the logarithmic map being continuous at $(x_0,x_1)$ (this is due to the link between the differentiability of the squared distance and the cut locus at $x_1$, see for instance \cite[Proposition 4.8]{sakai1996riemannian}). Furthermore, we have a trivial global (essential) bound on the norm of the $\Log^X_x$ map, namely $\diam(K)$. This is sufficient to apply the continuous mapping theorem (see for instance \cite[Theorem 2.3]{vaart_1998}) to be able to replace the limit integral with the integral of the limit. Finally, by the density of continuous functions in $\LL^2(\Surf,T\Surf;\mu_0)$, we have $\tilde{v}=v_0$ is the strong $\LL^2(\mu_{0})$ limit of $(v_0^N)_{N\in\N}$.
\end{proof}

\subsection{Convergence of barycentric projection for Wasserstein distance}
\label{sec:BarycentricW}
For the theoretical introduction of the logarithmic map for $\W_2$, $\HK_\kappa$ and $\SHK_\kappa$ it was necessary to assume that the reference measure $\mu_0$ had a Lebesgue density such that the optimal transport from $\mu_0$ to the sample $\mu_1$ is deterministic, i.e.~induced by an optimal map (in fact, slightly weaker assumptions on $\mu_0$ suffice, see e.g.~\cite{brenier1991polar}). Only then can the information about the transport be fully encoded in a velocity field (and an additional growth field for the unbalanced metrics).
In numerical practice however measures are often approximated by discretization and will then be concentrated on a finite number of points. The optimal transport plans are then in general not deterministic, i.e.~no map $T$ exists. On top of this, in practice one often uses entropic regularization for numerical approximation, which introduces additional non-determinism.
A popular remedy for the linearized $\W_2$ metric is `barycentric projection' where the mean velocity of all particles leaving a point $x_0$ is used as approximation \cite{wang2013linear}. A similar heuristic has been proposed in \cite{cai2022linearized} for the linearized $\HK_\kappa$ metric.
In this section we show convergence of the approximation of \cite{wang2013linear} as the approximation of the marginal measures and transport plans improves, thus providing a rigorous foundation for their use.
In the next section we show the analogous result for a slightly adjusted variant of \cite{cai2022linearized}.
As a corollary we gain some insight on the continuity (or lack thereof) of the logarithmic map for $\HK$.
Our results are not restricted to $\Surf \subset \R^n$, but also apply in the manifold setting.

We now introduce a generalized notion of the logarithmic maps for $\W_2$ and $\HK_\kappa$ that corresponds to the barycentric projection used as a stand-in for a transport map in \cite{cai2022linearized}, but can also be used to construct `tangent components' based on slightly sub-optimal transport plans (see \Cref{rem:HKentropicPiN}). As a consequence, we define them directly as a function of the transport plan instead of second measure $\mu_1$. Since such a plan does not have to be deterministic, the expressions for the Logarithmic components will be obtained as averages, according to a disintegration of the sub-optimal plan w.r.t. its first marginal (the one corresponding to the base measure $\mu_{0}$):
\begin{definition}
	for any measure $\pi \in \measp(\Surf \times \Surf)$ we denote by $(\pi(\cdot|x_0))_{x_0 \in X}$ its disintegration with respect to its first marginal $\pi_0=\proj_{0\#}\pi$, i.e.~the $\pi_0$-a.e.~unique family of measures satisfying
	\begin{equation*}
		\int_{\Surf \times \Surf} \phi(x_0,x_1)\,\diff \pi(x_0,x_1)
		= \int_X \left[ \int_X \phi(x_0,x_1)\,\diff \pi(x_1|x_0) \right] \diff \pi_{0}(x_0)
	\end{equation*}
	for any measurable test functions $\phi$.
\end{definition}
This is a specific case to a theorem generalizing the notion of Radon-Nikodim derivatives, see for instance \cite[Theorem 5.3.1]{ambrosio2005gradient} for the general statement. We will simply point out that, for a deterministic plan $\pi=(\id,T)_{\#} \sigma$, one has $\pi(\cdot|x)=\delta_{T(x)}$ for $\sigma$-almost all $x \in \Surf$. In the non-deterministic case $\pi(\cdot|x)$ is ($\sigma$-almost everywhere) a probability measure on $X$ which gives us the distribution of targets of particles starting at $x$.

\begin{definition}[Barycentric logarithmic map for $\W_2$]
\label{def:average_logmap_W}
For $\mu_0 \in \prob(K)$ and $\pi \in \prob(K \times K)$ with $\proj_{0\#}\pi=\mu_0$ and $(x,y)\mapsto\Log_x(y)$ defined $\pi-a.e.$, we set
	$$\Log_{\mu_0}^{W_2}(\pi)(x)\assign\int_X\Log^X_x(y)\diff\pi(y|x).$$
\end{definition}
When $\mu_0 \in \probl(K)$ and $\pi$ is an optimal plan for $W_2^2(\mu_0,\mu_1)$ to its second marginal $\mu_1 \assign \proj_{1\#}$ then this definition reduces to \Cref{def:LogW2Mfold}.

The next result then provides convergence of the barycentric projection approximation for the Wasserstein-2 case, when the approximated measure is absolutely continuous and as the discretization or approximation becomes increasingly refined.
\begin{theorem}
	\label{prop:CV_tanvecs_W2}
	Let $(\mu_{0}^N)_N$, $(\mu_{1}^N)_N$ be two sequences in $\prob(K)$, converging weakly* to $\mu_0 \in \probl(K)$ and $\mu_1 \in \prob(K)$ respectively.
	Let $(\pi^N)_N$ be a sequence with $\pi^N \in \Pi(\mu_0^N,\mu_1^N)$ such that $\pi^N \narrc{N\to\infty} \pi$ where $\pi$ is the optimal plan for $\W_2(\mu_0,\mu_1)$.
	Set $v_0^N \assign \Log^{\W_2}_{\mu_{0}^N}(\pi^N)$ as given by \Cref{def:average_logmap_W} and $v_0 \assign \Log^{\W_2}_{\mu_0}(\pi)(=\Log_{\mu_0}^{\W_2}(\mu_1))$, using either \Cref{def:average_logmap_W} or \Cref{def:LogW2Mfold}. Then the momentum measures $v_0^N\mu_{0}^N$ converge weakly* to $v_0\mu_{0}$ as $N \to \infty$.
\end{theorem}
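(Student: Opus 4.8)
The plan is to test the vector-valued measures against an arbitrary continuous, compactly supported vector field $\xi$ on $X$ and to turn the whole statement into one about the narrow convergence $\pi^N \narrc{N\to\infty} \pi$. First I would note that, since $K$ is compact, $\norm{v_0^N(x)}_x \le \int_X \norm{\Log^X_x(x_1)}_x\,d\pi^N(x_1|x) = \int_X d(x,x_1)\,d\pi^N(x_1|x) \le \diam K$ for $\mu_0^N$-a.e.~$x$ (using \eqref{eq:LogMapBasic}), so the momentum measures $v_0^N\mu_0^N$ are supported in $K$ with total variation uniformly bounded by $\diam K$; hence it indeed suffices to test against such $\xi$. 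Using the defining formula of \Cref{def:average_logmap_W}, the identity $\proj_{0\#}\pi^N = \mu_0^N$, and the disintegration of $\pi^N$ along its first marginal, I would rewrite, with $f_\xi(x_0,x_1) := \langle \xi(x_0),\Log^X_{x_0}(x_1)\rangle_{x_0}$,
$$\int_X \langle \xi(x_0), v_0^N(x_0)\rangle_{x_0}\,d\mu_0^N(x_0) = \int_{X\times X} f_\xi(x_0,x_1)\,d\pi^N(x_0,x_1),$$
and likewise $\int_X \langle\xi,v_0\rangle\,d\mu_0 = \int_{X\times X} f_\xi\,d\pi$ (here $v_0$ is well-defined and coincides with the map of \Cref{def:LogW2Mfold} because $\mu_0 \in \probl(K)$ and \Cref{thm:McCann} gives $\pi = (\id,T)_\#\mu_0$, so the barycentric disintegration $\pi(\cdot|x)=\delta_{T(x)}$ reduces the barycentric formula to $x\mapsto\Log^X_x(T(x))$). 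Thus the whole claim reduces to $\int f_\xi\,d\pi^N \to \int f_\xi\,d\pi$.

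The function $f_\xi$ is bounded on $K \times K$ by $\norm{\xi}_\infty \diam K$, so the only thing standing between us and the conclusion is that $(x_0,x_1)\mapsto\Log^X_{x_0}(x_1)$, and hence $f_\xi$, is in general not continuous on all of $K\times K$: it fails exactly at pairs $(x_0,x_1)$ where $x_1$ lies in the cut locus of $x_0$, equivalently where $d^2(\cdot,x_1)$ is not differentiable at $x_0$ (see \cite[Proposition 4.8]{sakai1996riemannian}). This is precisely the difficulty already met in the proof of \Cref{prop:CV_tanvecs_kappa}, and I would dispatch it the same way: a byproduct of the proof of \Cref{thm:McCann} (which crucially uses $\mu_0 \in \probl(K)$) is that $d^2(\cdot,x_1)$ is differentiable at $x_0$ for $\pi$-almost every $(x_0,x_1)$, so the discontinuity set of $f_\xi$ is $\pi$-negligible. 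Since $f_\xi$ is bounded and $\pi$-a.e.~continuous and $\pi^N\narrc{N\to\infty}\pi$, the extended continuous mapping theorem \cite[Theorem 2.3]{vaart_1998} yields $\int f_\xi\,d\pi^N \to \int f_\xi\,d\pi$.

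Finally I would read off the limit: with $\pi = (\id,T)_\#\mu_0$ and $T$ the $\W_2$-optimal map, $\int_{X\times X} f_\xi\,d\pi = \int_X \langle\xi(x_0),\Log^X_{x_0}(T(x_0))\rangle_{x_0}\,d\mu_0(x_0) = \int_X\langle\xi,v_0\rangle\,d\mu_0$, so the pairings converge for every test field $\xi$, which is exactly the asserted weak* convergence $v_0^N\mu_0^N \to v_0\mu_0$. I expect the only real obstacle to be the cut-locus discontinuity of the logarithmic map; once its negligibility with respect to the limit plan $\pi$ is in hand (borrowed, as above, from the analysis behind \Cref{thm:McCann}), the remaining steps are just bookkeeping with disintegrations and the uniform bound $d \le \diam K$.
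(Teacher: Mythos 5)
Your proof is correct and follows essentially the same route as the paper: unfold the barycentric definition via disintegration to reduce the claim to $\int f_\xi\,\diff\pi^N \to \int f_\xi\,\diff\pi$, then apply the continuous mapping theorem using the $\pi$-negligibility of the cut-locus discontinuities of $(x_0,x_1)\mapsto\Log^X_{x_0}(x_1)$, which is the same byproduct of \Cref{thm:McCann} that the paper invokes (referencing its more detailed discussion in the proof of \Cref{prop:CV_tanvecs_kappa}). You spell out the cut-locus argument and the uniform bound $\norm{v_0^N}\le\diam K$ more explicitly, and pairing against a continuous vector field $\xi$ via $\langle\xi,\cdot\rangle_{x_0}$ is a slightly cleaner way to express weak* convergence of a $TX$-valued momentum measure on a manifold than the paper's scalar test function, but these are presentational refinements, not a different argument.
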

Above the $\pi^N$ could, for instance, be approximately optimal plans for $W_2(\mu_0^N,\mu_1^N)$ with vanishing sub-optimality as $N \to \infty$, e.g.~computed with entropic regularization with vanishing entropy as $N \to \infty$.
This narrow convergence of momentum measures then implies strong $\LL^2(\mu_0)$ convergence of a suitable small transformation of the $v_0^N$ to the limit $v_0$, as the next result shows. In particular, for large $N$, values of $v_0^N$ on $\mu_0^N$ are on average good approximations of values of $v_0$ on nearby points of $\mu_0$ and thus the barycentric projection works as intended.
\begin{corollary}
\label{cor:strong_after_translation}
For the setup of \Cref{prop:CV_tanvecs_W2}, let $S^N$ be the optimal transport map for $W_2(\mu_0,\mu_0^N)$.
Then $v_0^N \circ S^N$ converges to $v_0$ strongly in $\LL^2(\mu_0)$.
\end{corollary}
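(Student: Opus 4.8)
The plan is to transfer the velocity fields $v_0^N$, which a priori live in the varying spaces $\LL^2(\Surf,TX;\mu_0^N)$, onto the single fixed Hilbert space $\LL^2(\Surf,TX;\mu_0)$ by pre-composition with $S^N$, to prove that there they converge both weakly and in norm, and then to invoke the elementary fact that in a Hilbert space weak convergence together with convergence of norms implies strong convergence. First I would record a few preliminaries. Since $S^N_\#\mu_0=\mu_0^N$, the change of variables \eqref{eq:push_forward} gives $\norm{v_0^N\circ S^N}_{\LL^2(\mu_0)}=\norm{v_0^N}_{\LL^2(\mu_0^N)}$. Since $\mu_0^N\narrc{N\to\infty}\mu_0$ with all supports in the compact $K$, one has $\W_2(\mu_0,\mu_0^N)\to 0$, hence $\int_X d(x,S^N(x))^2\diff\mu_0(x)=\W_2^2(\mu_0,\mu_0^N)\to 0$, so $S^N\to\id$ in $\LL^2(\mu_0)$ and, after passing to a subsequence, $\mu_0$-almost everywhere. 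Finally, Jensen's inequality applied to the barycentric average of \Cref{def:average_logmap_W} yields $\norm{v_0^N(x_0)}^2\leq\int_X d(x_0,x_1)^2\diff\pi^N(x_1|x_0)\leq\diam(K)^2$, so the fields $v_0^N\circ S^N$ are uniformly bounded in $\LL^2(\mu_0)$ (even in $\LL^\infty$).

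Next I would treat the norms. Integrating the Jensen bound against $\pi^N_0=\mu_0^N$ gives $\norm{v_0^N}_{\LL^2(\mu_0^N)}^2\leq\int_{\Surf\times\Surf}d^2\diff\pi^N$; since $d^2$ is bounded and continuous on $K\times K$ and $\pi^N\narrc{N\to\infty}\pi$, the right-hand side converges to $\int d^2\diff\pi=\W_2^2(\mu_0,\mu_1)=\norm{v_0}_{\LL^2(\mu_0)}^2$, the last equality by \Cref{prop:LogW2Mfold}~\eqref{item:LogW2Mfold_dist}. Together with the identity above this yields $\limsup_N\norm{v_0^N\circ S^N}_{\LL^2(\mu_0)}\leq\norm{v_0}_{\LL^2(\mu_0)}$. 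The matching lower bound $\liminf_N\norm{v_0^N\circ S^N}_{\LL^2(\mu_0)}\geq\norm{v_0}_{\LL^2(\mu_0)}$ then comes for free from weak lower semicontinuity of the norm once the weak convergence below is established, so altogether $\norm{v_0^N\circ S^N}_{\LL^2(\mu_0)}\to\norm{v_0}_{\LL^2(\mu_0)}$.

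For the weak convergence I would start from \Cref{prop:CV_tanvecs_W2}, which gives that the momentum measures $v_0^N\mu_0^N$ converge weakly-$\star$ to $v_0\mu_0$. Viewing $v_0^N\mu_0^N$ as the push-forward of $\mu_0^N$ under $y\mapsto(y,v_0^N(y))$, i.e.~equivalently as the push-forward of $\mu_0$ under $x\mapsto(S^N(x),v_0^N(S^N(x)))$, and using $S^N(x)\to x$ $\mu_0$-a.e. along the subsequence together with the uniform bound on $v_0^N$, one can replace the base point $S^N(x)$ by $x$ — transporting $v_0^N(S^N(x))\in T_{S^N(x)}\Surf$ to $T_x\Surf$ along the short minimizing geodesic — without changing the weak-$\star$ limit. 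Thus the momentum measures $(v_0^N\circ S^N)\mu_0$, now over the fixed base $\mu_0$, still converge weakly-$\star$ to $v_0\mu_0$; together with the uniform $\LL^2(\mu_0)$ bound and the density of continuous vector fields in $\LL^2(\Surf,TX;\mu_0)$ this upgrades to $v_0^N\circ S^N\rightharpoonup v_0$ weakly in $\LL^2(\mu_0)$, exactly as in the closing argument of the proof of \Cref{prop:CV_tanvecs_kappa}. Since the weak limit is independent of the extracted subsequence, the whole sequence converges weakly.

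Combining the two, in the Hilbert space $\LL^2(\Surf,TX;\mu_0)$ one gets $\norm{v_0^N\circ S^N-v_0}^2=\norm{v_0^N\circ S^N}^2-2\langle v_0^N\circ S^N,v_0\rangle+\norm{v_0}^2\to 0$, which is the claim. The main obstacle is the weak-convergence step in the genuinely curved case: the composition $v_0^N\circ S^N$ is a section of the pulled-back bundle $(S^N)^\ast T\Surf$ rather than an honest vector field on $\Surf$, so both the statement and the argument rest on the parallel-transport identification of $T_{S^N(x)}\Surf$ with $T_x\Surf$, which has to be shown asymptotically negligible. This is precisely why the auxiliary map $S^N$ is introduced: the bound $\norm{S^N-\id}_{\LL^2(\mu_0)}\to 0$, combined with the essential continuity of $(x_0,x_1)\mapsto\Log^X_{x_0}(x_1)$ away from cut loci (a byproduct of McCann's theorem, as already exploited in the proof of \Cref{prop:CV_tanvecs_kappa}), makes the base-point change vanish in the limit. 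In the flat case $\Surf\subset\R^n$ all tangent spaces coincide and this subtlety disappears entirely.
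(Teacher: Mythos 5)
Your argument is correct and mirrors the paper's proof: the same change-of-variables identity $\|v_0^N\circ S^N\|_{\LL^2(\mu_0)}=\|v_0^N\|_{\LL^2(\mu_0^N)}$, the same Jensen bound giving $\limsup_N\|v_0^N\circ S^N\|_{\LL^2(\mu_0)}\leq\W_2(\mu_0,\mu_1)=\|v_0\|_{\LL^2(\mu_0)}$, weak $\LL^2(\mu_0)$-convergence obtained by showing the base-point mismatch vanishes, and the standard upgrade to strong convergence from weak convergence plus convergence of norms. The only variations are technical, not structural: the paper kills the base-point mismatch by Cauchy--Schwarz against $\gamma^N=(\id,S^N)_\#\mu_0\narrc{}(\id,\id)_\#\mu_0$ with the $\LL^2$ bound and no subsequence extraction, whereas you use $\mu_0$-a.e.\ convergence of $S^N$ along a subsequence with the uniform $\LL^\infty$ bound and dominated convergence, and you spell out the parallel-transport identification of $T_{S^N(x)}\Surf$ with $T_x\Surf$ that the paper leaves implicit.
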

\begin{proof}[Proof of \Cref{prop:CV_tanvecs_W2}]
The result is closely related to the stability of optimal transport plans under perturbations of the marginals \cite[Theorem 5.20, Corollary 5.23]{villani2009optimal}. We give it mainly as preparation for the more involved proof for the $\HK$-metric.

For a test function $\phi\in\Cont(K)$ we find that
\begin{align*}
		\int_K\phi(x)v_0^N(x)\diff\mu_0^N(x)
		&=\int_K\int_K\phi(x)\Log^X_x(y)\diff\pi^N(y|x)\diff\mu_0^N(x)\\
		&=\int_{K\times K}\phi(x)\Log^X_x(y)\diff\pi^N(x_0,x_1)
		\xrightarrow[N\to\infty]{} \int_{K\times K}\phi(x)\Log^X_x(y)\diff\pi(x_0,x_1)
\end{align*}
as $N \to \infty$ where we have once more applied the continuous mapping theorem (\cite[Theorem 2.3]{vaart_1998}, see proof of \Cref{lem:CV_plans_delta} for the same argument and more details) for the transition to the limit. Therefore, we can conclude that $v_0^N\mu_0^N$ converges narrowly towards $v_0 \mu_{0}$.
\end{proof}

\begin{proof}[Proof of \Cref{cor:strong_after_translation}]
First observe that,
\begin{align}
\norm{v_0^N \circ S^N}^2_{\LL^2(\mu_0)} & = \norm{v_0^N}^2_{\LL^2(\mu_0^N)}
	= \int_K\norm{v_0^N(x_0)}^2\diff\mu_0^N(x_0) \nonumber \\
	&\leq\int_K\int_K\norm{\Log_{x_0}(x_1)}^2\diff\pi^N(x_1|x_0)\diff\mu_0^N(x_0) \nonumber \\
	&=\int_{K\times K}d(x_0,x_1)^2\diff\pi^N(x_0,x_1)
	\to \W_2^2(\mu_0,\mu_1)	\label{eq:strong_proof_normbound}
\end{align}
where the inequality is due to Jensen for the probability measures $\pi^N(\cdot|x_0)$ and the convergence is due to $\pi^N \narrc{N\to\infty} \pi$.

Now set $\gamma^N\assign(\id,S^N)_\#\mu_{0}$ the optimal transport plan associated with $S^N$.
We first show that $v_0^N\circ S^N$ converges weakly to $v_0$ in $\LL^2(\mu_{0})$.
Indeed for any continuous vector field supported on $K$, $\xi:K\to TX$,
\begin{align*}
	& \left\lvert\int_K\xi(x)\cdot v_0^N(S^N(x))\diff\mu_{0}(x)-\int_K\xi(S^N(x))\cdot v_0^N(S^N(x))\diff\mu_{0}(x)\right\lvert\\
	= {} & \int_{K\times K}\abs{(\xi(x)-\xi(y))\cdot v_0^N(y)}\diff\gamma^N(x,y)\\
	\leq {} & \sqrt{\int_{K\times K}\norm{\xi(x)-\xi(y)}^2\diff\gamma^N(x,y)}
	\sqrt{\int_{K\times K}\norm{v_0^N(y)}^2\diff\gamma^N(x,y)}
\end{align*}
by the Cauchy--Schwarz inequality. The second factor equals $\|v_0^N\|_{\LL^2(\mu_0^N)}$, which is bounded due to \eqref{eq:strong_proof_normbound}. The first term tends to zero since $\gamma^N$ has to converge narrowly to the unique optimal plan from $\mu_{0}$ to $\mu_{0}$ for the quadratic cost, i.e.~$(\id,\id)_\#\mu_{0}$, and so
\begin{align*}
		\lim_{N\to\infty}\int_K\xi(x)\cdot v_0^N(S^N(x))\diff\mu_{0}(x)=&\lim_{N \to \infty}\int_K\xi(S^N(x))\cdot v_0^N(S^N(x))\diff\mu_{0}(x)\\
		=&\lim_{N \to \infty}\int_K\xi(y)\cdot v_0^N(y)\diff\mu_0^N(y)=\int_K\xi(y)\cdot v_0(y)\diff\mu_{0}(y)
\end{align*}
and we have the weak convergence $v_0^N \circ S^N \weakc{N\to\infty}{\LL^{2}(\mu_{0})}v_0$.
 
We augment this to strong convergence by showing that $\|v_0^N\circ S^N\|_{\LL^2(\mu_0)}$ also converges to $\norm{v}_{\LL^2(\mu_{0})}$.
From \eqref{eq:strong_proof_normbound} one has
$$\limsup_{n \to \infty}\norm{v_0^N \circ S^N}^2_{\LL^2(\mu_0)}\leq W_2^2(\mu_{0},\mu_1)=\norm{v}^2_{\LL^2(\mu_{0})}$$
where the last equality is due to \Cref{prop:LogW2Mfold} \eqref{item:LogW2Mfold_dist}.
By weak lower-semicontinuity of the norm this implies convergence of the norm and thus strong convergence.
\end{proof}

\subsection{Convergence of barycentric projection for Hellinger--Kantorovich distance}
\label{sec:BarycentricHK}
Now we show the corresponding results for the Hellinger--Kantorovich distance.
Here the situation is more involved due to the cut-off of transport at $\kappa \pi/2$.
We will find that the singular part of the logarithmic map $\mu_1^\perp$ will not be continuous (\Cref{ex:HKdiscontMu1Perp}), that mere asymptotic optimality of the plans $\pi^N$ is not sufficient for convergence of the logarithmic map (\Cref{ex:HKdiscontPiCos}) and additional conditions must therefore be imposed (\Cref{prop:HKexactPiN} and \Cref{rem:HKentropicPiN}), and that for singular $\mu_0$ the logarithmic map may not converge even when optimal transport maps would exist (\Cref{ex:HKdiscontVelSingular}).
The following is the analogue of \Cref{def:average_logmap_W}.
\begin{definition}[Barycentric logarithmic map for $\HK_\kappa$]
\label{def:average_logmap_HK}
Let $\mu_0 \in \measp(K)$, $\pi \in \measp(K \times K)$ with $\KL(\proj_{0\#}\pi|\mu_0)<\infty$ and $\int_{X \times X} c^{\HK}_\kappa \diff \pi < \infty$. Then we set
$$\Log_{\mu_0}^{\HK_\kappa}(\pi) \assign (v_0,\alpha_0)$$
where the components are defined for $\mu_{0}-$almost any $x\in X$ by
\begin{align*}
	v_0(x) & \assign \kappa \RadNik{\pi_0}{\mu_{0}}(x)\int_X \frac{\Log^X_x(y)}{\norm{\Log_x(y)}}\tan\left(\frac{\norm{\Log_x(y)}}{\kappa}\right)\diff\pi(y|x), &
	\alpha_0(x) &  \assign 2\left(\RadNik{\pi_0}{\mu_{0}}(x)-1\right).
\end{align*}
\end{definition}
When $\mu_0 \in \measpl(K)$ and $\pi$ is an optimal plan for $\HK_\kappa^2(\mu_0,\mu_1)$ for the formulation \eqref{eq:HK_soft_primal} then this definition reduces to \Cref{def:HKLog_map}, for $v_0$ and $\alpha_0$. We drop the singular part $\mu_1^\perp$ from the generalized definition since it does not exhibit meaningful continuity properties (see for instance \Cref{ex:HKdiscontMu1Perp}).
Note that the expression for $\alpha_0(x)$ does not depend on the transport target(s) $y$ associated to $x$ by $\pi$ (since $\pi_0$ is uniquely defined by $\mu_{0}$ and $\mu_{1}$) and therefore, averaging over the possible targets is unnecessary.
We then have the following stability result.
\begin{theorem}
	\label{prop:CV_tanvecs_HK}
	Let $(\mu_{0}^N)_N$, $(\mu_{1}^N)_N$ be two sequences in $\measp(K)$, narrowly converging to measures $\mu_0 \in \measpl(K)$ and $\mu_1 \in \measp(K)$ respectively.
	Let $(\pi^N)_N$ be a sequence in $\measp(K \times K)$ such that
	$\pi^N/\cos(d/\kappa) \narrc{N\to\infty} \pi/\cos(d/\kappa)$
	where $\pi$ is the optimal plan for $\HK_\kappa^2(\mu_0,\mu_1)$ in \eqref{eq:HK_soft_primal}.
	Set $(v_0^N,\alpha_0^N) \assign \Log^{\HK_\kappa}_{\mu_{0}^N}(\pi^N)$, and $(v_0,\alpha_0) \assign \Log^{\HK_\kappa}_{\mu_0}(\pi)$, as given by \Cref{def:average_logmap_HK}. Then the momentum measures $v_0^N\mu_{0}^N$ and $\alpha_0^N \mu_0^N$ narrowly converge to $v_0\mu_{0}$ and $\alpha_0 \mu_0$ as $N \to \infty$.
\end{theorem}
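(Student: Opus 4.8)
The plan is to prove narrow convergence of the two momentum measures by testing against continuous functions and passing to the limit, treating the growth component $\alpha_0^N$ and the velocity component $v_0^N$ separately; the only substantial work is in the velocity component, and the one genuine difficulty is the blow-up of $\tan$ at the cut-off distance.

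First I would dispose of $\alpha_0$. The hypotheses force $\pi_0^N \assign \proj_{0\#}\pi^N \ll \mu_0^N$ (this is implicit in $\KL(\proj_{0\#}\pi^N|\mu_0^N)<\infty$), so the density $\RadNik{\pi_0^N}{\mu_0^N}$ is an honest Radon--Nikodym derivative and, directly from \Cref{def:average_logmap_HK}, $\alpha_0^N\mu_0^N=2(\pi_0^N-\mu_0^N)$, and likewise $\alpha_0\mu_0=2(\pi_0-\mu_0)$. It therefore suffices to show $\pi_0^N\narrc{N\to\infty}\pi_0$, which will follow from $\pi^N\narrc{N\to\infty}\pi$ since $\proj_0$ is continuous. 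To obtain the latter from the weighted convergence in the hypothesis, I would multiply by the bounded continuous function $\Cos(d/\kappa)$: any plan with finite $c^{\HK}_\kappa$-cost (in particular all the $\pi^N$ and $\pi$, by \Cref{def:average_logmap_HK}) is concentrated on $\{d<\kappa\pi/2\}$, where $\Cos(d/\kappa)=\cos(d/\kappa)$, so $\Cos(d/\kappa)\cdot\big(\pi^N/\cos(d/\kappa)\big)=\pi^N$ and similarly for $\pi$; narrow convergence is preserved under multiplication by a bounded continuous density. Combined with $\mu_0^N\narrc{}\mu_0$ this gives $\alpha_0^N\mu_0^N\narrc{}\alpha_0\mu_0$.

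For $v_0$, using $\RadNik{\pi_0^N}{\mu_0^N}\mu_0^N=\pi_0^N$ and the disintegration identity of \Cref{def:average_logmap_HK}, I would rewrite, for any continuous vector field $\xi$ on $K$,
$$\int_K \xi(x_0)\cdot v_0^N(x_0)\,\diff\mu_0^N(x_0)=\kappa\int_{K\times K}\xi(x_0)\cdot\frac{\Log^X_{x_0}(x_1)}{\norm{\Log^X_{x_0}(x_1)}}\tan\!\left(\frac{\norm{\Log^X_{x_0}(x_1)}}{\kappa}\right)\diff\pi^N(x_0,x_1),$$
and the identical formula with $\pi,\mu_0,v_0$. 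The algebraic heart of the argument is to absorb the singularity of $\tan$ at $d=\kappa\pi/2$: setting $g(x_0,x_1)\assign\frac{\Log^X_{x_0}(x_1)}{\norm{\Log^X_{x_0}(x_1)}}\sin(d(x_0,x_1)/\kappa)$ (with value $0$ on the diagonal), one has $\abs{g}\le 1$ and $\frac{\Log^X_{x_0}(x_1)}{\norm{\Log^X_{x_0}(x_1)}}\tan(d(x_0,x_1)/\kappa)=g(x_0,x_1)/\cos(d(x_0,x_1)/\kappa)$, so the right-hand side above equals $\kappa\int_{K\times K}\xi(x_0)\cdot g(x_0,x_1)\,\diff\!\big(\pi^N/\cos(d/\kappa)\big)(x_0,x_1)$, an integral of a \emph{bounded} integrand against precisely the weighted plans that converge. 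I would then apply the continuous mapping theorem (\cite[Theorem 2.3]{vaart_1998}) exactly as in the proofs of \Cref{lem:CV_plans_delta} and \Cref{prop:CV_tanvecs_W2}: the weighted measures are tight on the compact $K\times K$ with masses converging to that of $\pi/\cos(d/\kappa)$ (test with the constant $1$), and $\xi\cdot g$ is bounded and continuous at $\big(\pi/\cos(d/\kappa)\big)$-almost every point — because $\pi/\cos(d/\kappa)$ is mutually absolutely continuous with $\pi$ (the density $1/\cos(d/\kappa)$ is finite $\pi$-a.e., $\pi$ being concentrated on $\{d<\kappa\pi/2\}$ by \Cref{prop:HK_L1_existence}\eqref{item:HK_L1_existence_perpfar}) and $(x_0,x_1)\mapsto\Log^X_{x_0}(x_1)$ is continuous wherever $d^2(\cdot,x_1)$ is differentiable at $x_0$, which holds $\pi$-a.e.\ as a byproduct of the proof of \Cref{thm:McCann} (the same point as in the proof of \Cref{prop:CV_tanvecs_kappa}). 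Passing to the limit yields convergence to $\kappa\int\xi(x_0)\cdot g\,\diff(\pi/\cos(d/\kappa))=\int_K\xi\cdot v_0\,\diff\mu_0$; together with the uniform bound $\abs{v_0^N\mu_0^N}(K)\le\kappa\int\sin(d/\kappa)\,\diff\big(\pi^N/\cos(d/\kappa)\big)$ (Jensen and $\abs{g}\le1$), which makes $(v_0^N\mu_0^N)_N$ tight, this identifies the limit and gives $v_0^N\mu_0^N\narrc{}v_0\mu_0$.

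The main obstacle — and the reason this is more delicate than its $\W_2$ counterpart \Cref{prop:CV_tanvecs_W2} — is exactly the blow-up of $\tan(\norm{\Log^X_{x_0}(x_1)}/\kappa)$ near the cut-off distance $\kappa\pi/2$: it forces the hypothesis to be stated in terms of the $\cos(d/\kappa)$-weighted plans rather than the plans themselves (plain narrow convergence of asymptotically optimal $\pi^N$ is not enough, cf.\ \Cref{ex:HKdiscontPiCos}). Once this factor is pulled out through the bounded field $g$, the remainder is the standard continuous-mapping argument already used twice in this section, the only further subtlety being the $\pi$-a.e.\ continuity of the Riemannian logarithm inherited from McCann's theorem.
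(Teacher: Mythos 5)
Your proposal is correct and follows essentially the same route as the paper's own proof: you handle $\alpha_0^N\mu_0^N$ by reducing to $\pi^N\narrc{}\pi$ (recovered from the weighted convergence by multiplying by the bounded continuous $\Cos(d/\kappa)$), and for $v_0^N\mu_0^N$ you absorb the $\cos$ factor of $\tan=\sin/\cos$ into the weighted plans $\pi^N/\cos(d/\kappa)$ and bound the remainder $\kappa\,\xi\cdot g$ (with $g$ involving $\sin$ and the unit log-vector) before invoking the continuous mapping theorem with $\pi$-a.e.\ continuity of the Riemannian log, which is exactly the paper's decomposition into $F(x_0,x_1)\cdot\frac{1}{\cos(d/\kappa)}$. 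The only cosmetic difference is that you spell out the mutual absolute continuity of $\pi$ and $\pi/\cos(d/\kappa)$ and add a (superfluous, since $K\times K$ is compact) tightness remark, which the paper leaves implicit.
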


Necessity of the assumption about the convergence of $\pi^N/\cos(d/\kappa)$ is demonstrated in \Cref{ex:HKdiscontPiCos}. We show in \Cref{prop:HKexactPiN} that it is satisfied when the $\pi^N$ are actually optimal, and discuss the case of entropic approximation in \Cref{rem:HKentropicPiN}.
\begin{remark}
	We quickly mention here for clarity that, provided $\pi$ is the optimal plan for for $\HK_\kappa^2(\mu_0,\mu_1)$, the measure $\pi/\cos(d/\kappa)$ is finite. Indeed, due to optimality conditions \Cref{prop:HK_L1_existence}, \cref{item:HK_L1_marginals}, one has
	\begin{equation*}
			\int_{K \times K}\frac{\diff\pi(x,y)}{\cos \left(\frac{d(x,y)}{\kappa}\right)}=\int_{K \times K}\frac{\diff\pi(x,y)}{\sqrt{\RadNik{\pi_0}{\mu_{0}}(x)\RadNik{\pi_1}{\mu_{1}}(y)}}\leq\sqrt{\abs{\mu_0}\abs{\mu_1}}.
	\end{equation*}
\end{remark}
\begin{proof}[Proof of \Cref{prop:CV_tanvecs_HK}]
We start with the convergence of $\alpha^N \cdot \mu_0^N$. For a test function $\phi \in \Cont(X)$ one finds that
\begin{equation*}
		\int_X\phi(x)\alpha_0^N(x)\diff\mu_{0}^N(x)=2\left(\int_{\Surf\times \Surf}\phi(x)\diff\pi^N(x,y)-\int_{\Surf}\phi(x)\diff\mu_0^N(x)\right)
\end{equation*}
and the same expression holds for the limit. As by assumption, $\pi^N/\cos(\dist/\kappa) \narrc{N\to\infty} \pi/\cos(\dist/\kappa)$, one also has $\pi^N \narrc{N\to\infty} \pi$, from which we deduce that $\alpha_0^N \mu_0^N \narrc{N \to \infty} \alpha_0 \mu_0$.
As for the convergence of $v_0^N \cdot \mu_0^N$, consider a test function $\phi \in \Cont(X,TX)$. One has
\begin{align*}
& \int_{\Surf}\phi(x_0)\cdot v_0^N(x_0)\diff\mu_0^N(x_0) \\
={} &\kappa \int_{\Surf}\int_X \tan\left(\frac{\dist(x_0,x_1)}{\kappa}\right)\phi(x_0)\cdot
	\frac{\Log_{x_0}^\Surf(x_1)}{\norm{\Log_{x_0}^\Surf(x_1)}}
	\,\diff\pi^N(x_1|x_0)\RadNik{\pi_0^N}{\mu_{0}^N}(x_0)\,\diff\mu_0^N(x_0)\\
={} &\int_{\Surf\times \Surf}F(x_0,x_1)\frac{1}{\cos\left(\frac{\dist(x_0,x_1)}{\kappa}\right)}
	\,\diff\pi^N(x_0,x_1)
\end{align*}
where we grouped some terms into the function $F$, which is independent of $N$, bounded, and continuous except on a $\pi$-negligible set (see related discussion in the proof of \Cref{prop:CV_tanvecs_kappa}).
By the assumption that $\pi^N/\cos(d/\kappa) \narrc{N\to\infty} \pi/\cos(d/\kappa)$ and the continuous mapping theorem (again, see proof of \Cref{prop:CV_tanvecs_kappa}) this last integral converges to
\begin{equation*}
\int_{\Surf\times \Surf}F(x_0,x_1)\frac{1}{\cos\left(\frac{\dist(x_0,x_1)}{\kappa}\right)}\diff\pi(x_0,x_1)=\int_{\Surf}\phi(x)\cdot v_0(x)\diff\mu_0(x).
\end{equation*}
\end{proof}

The rest of this section deals mainly with the investigation of the strong convergence assumption for the plans from \Cref{prop:CV_tanvecs_HK}, namely $\pi^N/\cos(d/\kappa) \narrc{N\to\infty} \pi/\cos(d/\kappa)$. First, we show that it is satisfied when $\mu_0 \ll \vol$ and the $\pi^N$ are optimal plans for each $N$.
\begin{proposition}
\label{prop:HKexactPiN}
Let $(\mu_{0}^N)_N$, $(\mu_{1}^N)_N$ be two sequences in $\measp(K)$, converging narrowly to $\mu_0 \in \measpl(K)$ and $\mu_1 \in \measp(K)$ respectively.
Let $(\pi^N)_N$ be an optimal unbalanced plan for $\HK_\kappa^2(\mu_0^N,\mu_1^N)$ in \eqref{eq:HK_soft_primal} and likewise let $\pi$ be optimal for $\HK_\kappa^2(\mu_0,\mu_1)$.
Then $\pi^N/\cos(d/\kappa)$ narrowly converges to $ \pi/\cos(d/\kappa)$ as $ N\to\infty$.
\end{proposition}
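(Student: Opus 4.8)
The plan is to first strengthen the hypotheses to narrow convergence of the optimal plans themselves, $\pi^N\narrc{N\to\infty}\pi$, and then to control the effect of dividing by $\cos(d/\kappa)$, which is the delicate point because this weight degenerates on the cut-off locus $\{d=\kappa\pi/2\}$.

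\textbf{Step 1 (the plans converge).} Since narrow convergence of non-negative measures with uniformly bounded support implies $\HK_\kappa$-convergence (\cite{liero2018optimal}), the triangle inequality gives $\HK_\kappa^2(\mu_0^N,\mu_1^N)\to\HK_\kappa^2(\mu_0,\mu_1)$; as $\pi^N$ is optimal, the energy $E_\kappa(\pi^N|\mu_0^N,\mu_1^N)=\HK_\kappa^2(\mu_0^N,\mu_1^N)$ is therefore bounded, which forces $(|\pi^N|)_N$ to be bounded (otherwise the $\KL$-terms would blow up), so $(\pi^N)_N$ is weak-$*$ precompact on the compact set $K\times K$. For any limit point $\pi'$, lower semicontinuity of $E_\kappa$ (the cost integral is lower semicontinuous because $c^{\HK}_\kappa$ is, and $\KL$ is jointly lower semicontinuous, using $\mu_i^N\narrc{N\to\infty}\mu_i$) together with the value convergence gives $E_\kappa(\pi'|\mu_0,\mu_1)\le\HK_\kappa^2(\mu_0,\mu_1)$, so $\pi'$ is optimal; since $\mu_0\in\measpl(K)$, \Cref{thm:McCann_HK} guarantees this optimal plan is \emph{unique}, hence $\pi^N\narrc{N\to\infty}\pi$ along the whole sequence.

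\textbf{Step 2 (precompactness of $\tilde\pi^N$ and identification off the cut-off locus).} Write $\tilde\pi^N\assign\pi^N/\cos(d/\kappa)$ and $\tilde\pi\assign\pi/\cos(d/\kappa)$. By the marginal optimality condition \Cref{prop:HK_L1_existence}~\eqref{item:HK_L1_marginals} and the Cauchy--Schwarz computation recorded in the remark preceding \Cref{prop:CV_tanvecs_HK}, $|\tilde\pi^N|\le\sqrt{|\mu_0^N|\,|\mu_1^N|}$, which is bounded; hence $(\tilde\pi^N)_N$ is weak-$*$ precompact on $K\times K$. Let $\lambda$ be any limit point. Each $\pi^N$, hence each $\tilde\pi^N$, is concentrated on the \emph{open} set $\{d<\kappa\pi/2\}$ by the first optimality condition of \Cref{prop:HK_L1_existence}, where $\cos(d/\kappa)>0$, so $\cos(d/\kappa)\tilde\pi^N=\pi^N$ literally; since $\cos(d/\kappa)\in\Cont(K\times K)$ is bounded, $\pi^N=\cos(d/\kappa)\tilde\pi^N\narrc{N\to\infty}\cos(d/\kappa)\lambda$, whence $\cos(d/\kappa)\lambda=\pi$ by Step 1 and uniqueness of weak-$*$ limits. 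The portmanteau lemma gives $\lambda(\{d>\kappa\pi/2\})=0$, and on $\{d<\kappa\pi/2\}$ the relation $\cos(d/\kappa)\lambda=\pi$ forces $\lambda=\tilde\pi$ there, so $\lambda=\tilde\pi+\sigma$ with $\sigma\assign\lambda\mres\{d=\kappa\pi/2\}\ge0$. Since masses converge on the compact space, $|\tilde\pi|+|\sigma|=|\lambda|=\lim_N|\tilde\pi^N|$, and the claim reduces to proving $\sigma=0$, i.e.\ $\limsup_N|\tilde\pi^N|\le|\tilde\pi|$.

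\textbf{Step 3 (the main obstacle: no mass escapes to the cut-off locus).} It suffices to show $\sup_N\tilde\pi^N(A_\epsilon)\to0$ as $\epsilon\to0$, where $A_\epsilon\assign\{d\ge\kappa\pi/2-\epsilon\}$, since on the complement the integrand $1/\cos(d/\kappa)$ is bounded and continuous, so $\tilde\pi^N$ converges there to $\tilde\pi$. Here one uses the structure of optimal plans: by \Cref{thm:McCann_HK} each $\pi^N$ is deterministic, $\pi^N=(\id,T^N)_\#\pi_0^N$, and using $\cos^2(d(x_0,T^N(x_0))/\kappa)=\RadNik{\pi_0^N}{\mu_0^N}(x_0)\cdot\RadNik{\pi_1^N}{\mu_1^N}(T^N(x_0))$ on $\spt\pi^N$ one obtains the identity $\proj_{0\#}\tilde\pi^N=u^N\mu_0^N$, where $u^N$ is the mass-distortion factor \eqref{eq:massratio_HK} of the exponential map attached to $(v_0^N,\alpha_0^N)=\Log^{\HK_\kappa}_{\mu_0^N}(\pi^N)$. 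On $E^N_\epsilon\assign\{x_0:d(x_0,T^N(x_0))\ge\kappa\pi/2-\epsilon\}$ one has the pointwise bound $u^N\le\sec(\epsilon/\kappa)\,\norm{v_0^N}/\kappa$ (because $(u^N)^2=(\RadNik{\pi_0^N}{\mu_0^N})^2\sec^2$ and $\norm{v_0^N}^2/\kappa^2=(\RadNik{\pi_0^N}{\mu_0^N})^2\tan^2$, with $\sin^2(d/\kappa)\ge\cos^2(\epsilon/\kappa)$ on $E^N_\epsilon$), so with the energy bound $\norm{v_0^N}_{\LL^2(\mu_0^N)}^2\le\HK_\kappa^2(\mu_0^N,\mu_1^N)$ from \Cref{prop:HK_as_L2} and Cauchy--Schwarz,
\[
\tilde\pi^N(A_\epsilon)=\int_{E^N_\epsilon}u^N\,\diff\mu_0^N\le\frac{\sec(\epsilon/\kappa)}{\kappa}\sqrt{\mu_0^N(E^N_\epsilon)}\,\sqrt{\HK_\kappa^2(\mu_0^N,\mu_1^N)}.
\]
The remaining and hardest point is the uniform-in-$N$ smallness of $\mu_0^N(E^N_\epsilon)$ --- the base mass of the portion of $\mu_0^N$ transported almost maximally --- as $\epsilon\to0$. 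This is where $\pi^N\narrc{N\to\infty}\pi$ must be combined with $\mu_0\ll\vol$: the convergence yields that $\pi^N(A_\epsilon)=\pi_0^N(E^N_\epsilon)$ is uniformly small for $\epsilon$ small (the limit $\pi$ charges no pair at distance $\ge\kappa\pi/2$), and one then has to transfer this to $\mu_0^N(E^N_\epsilon)$ through the density $\RadNik{\pi_0^N}{\mu_0^N}$. A more structural alternative, which I would fall back on if the hands-on estimate proves stubborn, is to lift the whole problem to the metric cone over $X$: there $\HK_\kappa$ is a genuine quadratic optimal transport distance, $\tilde\pi^N$ is the $X\times X$-projection of the cone-optimal plan, and the bound $|\tilde\pi^N|\le\sqrt{|\mu_0^N|\,|\mu_1^N|}$ provides the tightness on the non-compact cone needed to invoke classical stability of optimal transport plans under convergence of the marginals \cite[Theorem 5.20]{villani2009optimal}, which localizes the entire difficulty into one application of cone stability.
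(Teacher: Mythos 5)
Your Steps 1--2 are correct and mirror the beginning of the paper's argument: obtain $\pi^N\narrc{N\to\infty}\pi$, reduce the claim to ruling out extra mass on the cut-off locus $\{d=\kappa\pi/2\}$ for any cluster point of $\pi^N/\cos(d/\kappa)$. But Step 3 has a genuine gap, which you in fact acknowledge. The transfer from $\pi_0^N(E^N_\epsilon)$ small to $\mu_0^N(E^N_\epsilon)$ small via the density $\RadNik{\pi_0^N}{\mu_0^N}$ cannot work: on $E^N_\epsilon$ the relation $\RadNik{\pi_0^N}{\mu_0^N}(x_0)\cdot\RadNik{\pi_1^N}{\mu_1^N}(T^N(x_0))=\cos^2(d(x_0,T^N(x_0))/\kappa)$ forces the product of densities to be small, so at least one of them degenerates, and there is no lower bound on $\RadNik{\pi_0^N}{\mu_0^N}$ on exactly the set you need it. Your estimate then only gives a uniform bound, not one that vanishes as $\epsilon\to0$. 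Notice also that your hands-on Step 3 has not yet used $\mu_0\ll\vol$ at all, so it cannot possibly close: \Cref{ex:HKdiscontVelSingular} (which violates only $\mu_0\ll\vol$) has $\mu_0^N(E^N_\epsilon)=1$ for all large $N$, showing the desired uniform smallness is simply false without absolute continuity.

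The ``structural alternative'' via the cone also does not immediately go through: the cone lift of the $\HK$ problem is a \emph{semi-coupling} (only the $X$-projections of the cone marginals are prescribed), not a classical OT problem with fixed marginals, so \cite[Theorem 5.20]{villani2009optimal} is not applicable as such; and the cone marginals themselves need not converge, as the same example shows. The paper circumvents both difficulties using exactly the semi-coupling formulation (\Cref{prop:HK_semi_couplings}): writing $\pi^N/\cos(d/\kappa)=\sqrt{\gamma_0^N\gamma_1^N}$, it extracts cluster semi-couplings $(\tilde\gamma_0,\tilde\gamma_1)$, shows by a strict-concavity (line-segment) argument that the transport parts of $\tilde\gamma_i$ and $\gamma_i$ coincide $\pi$-a.e., deduces that the possible discrepancy $\Delta\rho=\sqrt{\tilde\gamma_0^\perp\tilde\gamma_1^\perp}$ has first marginal dominated by $\mu_0^\perp$ and supported on the null set $\{x: d(x,\spt\mu_1^\perp)=\kappa\pi/2\}$, and only then uses $\mu_0\ll\vol$ to conclude $\Delta\rho=0$. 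That support/null-set mechanism is the missing ingredient in your Step 3.
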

The proof of \Cref{prop:HKexactPiN} uses the following \emph{semi-coupling} formulation of $\HK_\kappa$, introduced in \cite{ChizatDynamicStatic2018}.
\begin{remark}[Square-root measure]
\label{rem:SquareRootMeasure}
For $\rho, \sigma \in \measp(K)$ (and similarly for general compact metric spaces) the square root measure $\sqrt{\rho \sigma}$ is characterized by
$$\int_K \phi \diff\sqrt{\rho\sigma} \assign \int_K \phi \sqrt{\RadNik{\rho}{\tau}\RadNik{\sigma}{\tau}} \diff \tau$$
for all $\phi \in \Cont(K)$, where $\tau \in \measp(K)$ is such that $\rho,\sigma \ll \tau$. Due to the joint 1-homogeneity of the function $(r,s) \mapsto \sqrt{rs}$ the right integral does not depend on the choice of $\tau$.

Using that the function $(s,t) \mapsto s+t-2\sqrt{rs}$ is non-negative, convex, lower-semicontinuous and positively 1-homogeneous on $\R_+^2$, by integrating against continuous test functions, and with \cite[Theorem 2.38]{ambrosio2000functions}, we obtain that if $(\rho_n,\sigma_n) \narrc{N\to\infty} (\rho,\tau)$ (in the sense of narrow convergence of vector measures), then
$$\int_K \phi \diff \sqrt{\rho \sigma} \geq \limsup_n \int_K \phi \diff \sqrt{\rho_n \sigma_n}.$$
In particular $\sqrt{\rho \sigma} \geq \widehat{\sqrt{\rho \sigma}}$ holds for any narrow cluster point $\widehat{\sqrt{\rho \sigma}}$ of $\sqrt{\rho_n \sigma_n}$.
\end{remark}

\begin{proposition}[Semi-coupling formulation]
	\label{prop:HK_semi_couplings}
	For $\mu_{0}, \mu_1 \in \measp(K)$ one has
	\begin{multline}
		\label{eq:HK_semi_couplings}
		\HK_\kappa^2(\mu_{0},\mu_1)= \kappa^2 \inf\Bigg\{
			\|\gamma_{0}\|+\|\gamma_{1}\|
			-2\int_{\Surf \times \Surf}\Cos\left(\frac{\dist(x_0,x_1)}{\kappa}\right)
			\diff\sqrt{\gamma_0\gamma_1}(x_0,x_1)\\
		\Bigg|\gamma_{0},\gamma_1\in\measp(K\times K),\,p_{i\#}\gamma_i=\mu_i \tn{ for } i=0,1
		\Bigg\}
	\end{multline}
	and minimizers for the right-hand side exist.
	Furthermore, with $\pi$ denoting an optimal transport plan for the soft-marginal formulation \eqref{eq:HK_soft_primal}, optimal semi-couplings $(\gamma_0,\gamma_1)$ for \eqref{eq:HK_semi_couplings} are given by:
	\begin{align}
	\label{eq:GammaFromPi}
	\gamma_{0} & \assign\RadNik{\mu_{0}}{\pi_0}\pi+(\id,\id)_\#\mu_{0}^\perp, &
	\gamma_1 & \assign\RadNik{\mu_{1}}{\pi_1}\pi+(\id,\id)_\#\mu_1^\perp	
	\end{align}
	with the notations of \Cref{rem:scale}.
	Conversely, if $(\gamma_{0},\gamma_1)$ are optimal for \eqref{eq:HK_semi_couplings}, then
	\begin{align}
	\label{eq:PiFromGamma}
	\pi \assign \Cos\left(\frac{\dist(x_0,x_1)}{\kappa}\right)\sqrt{\gamma_{0}\gamma_1}
	\end{align}
	is a minimizer of the soft-marginal formulation \eqref{eq:HK_soft_primal}.
\end{proposition}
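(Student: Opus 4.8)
The plan is to reduce the identity \eqref{eq:HK_semi_couplings} to an elementary one-particle computation and then establish the two inequalities separately, using the two explicit maps \eqref{eq:GammaFromPi} and \eqref{eq:PiFromGamma} that the statement already records. The elementary fact I would rely on is that, for fixed reals $a,b\ge0$ and an angle $\theta\in[0,\pi/2)$, the scalar minimization over a ``transported mass'' $m\ge0$, namely $\min_{m\ge0}\bigl(-2\log(\cos\theta)\,m+\KL(m|a)+\KL(m|b)\bigr)$, is attained at $m^\star=\sqrt{ab}\,\cos\theta$ with optimal value $a+b-2\sqrt{ab}\,\cos\theta$; after truncation this extends to every $\theta$ with $\Cos$ in place of $\cos$ (and $m^\star=0$ once $\theta\ge\pi/2$). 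This is exactly the ``cost per unit of point mass'' encoded in \eqref{eq:PiFromGamma}, and integrating it against a common reference measure is what should convert the soft-marginal energy $E_\kappa$ into the semi-coupling energy.

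For the inequality ``$\HK_\kappa^2\le\kappa^2\cdot(\text{the semi-coupling energy})$'' I would take an arbitrary feasible pair $(\gamma_0,\gamma_1)$, set $\sigma\assign\gamma_0+\gamma_1$, $a\assign\RadNik{\gamma_0}{\sigma}$, $b\assign\RadNik{\gamma_1}{\sigma}$, so that $\sqrt{\gamma_0\gamma_1}=\sqrt{ab}\,\sigma$ and the candidate $\pi\assign\Cos(\dist/\kappa)\sqrt{\gamma_0\gamma_1}$ from \eqref{eq:PiFromGamma} has $\sigma$-density $m=\Cos(\dist/\kappa)\sqrt{ab}$; note that $\pi$ puts no mass on $\{\dist\ge\kappa\pi/2\}$ and that $\int c^{\HK}_\kappa\,\diff\pi<\infty$ since $t\mapsto|\log\cos t|\cos t$ is bounded on $[0,\pi/2)$. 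Because $\pi\ll\gamma_i$ and $\proj_{i\#}\gamma_i=\mu_i$, monotonicity of $\KL$ under push-forward gives $\KL(\proj_{i\#}\pi|\mu_i)\le\KL(\pi|\gamma_i)$ for $i=0,1$; plugging these into $E_\kappa(\pi|\mu_0,\mu_1)$ and collecting everything under the $\sigma$-integral reproduces the one-particle expression above with $\theta=\dist(x_0,x_1)/\kappa$, so that for the chosen $m$ the integrand collapses to $a+b-2\Cos(\dist/\kappa)\sqrt{ab}$. Integration then yields $\HK_\kappa^2(\mu_0,\mu_1)\le E_\kappa(\pi|\mu_0,\mu_1)\le\kappa^2\bigl(\|\gamma_0\|+\|\gamma_1\|-2\int\Cos(\dist/\kappa)\,\diff\sqrt{\gamma_0\gamma_1}\bigr)$, and taking the infimum over $(\gamma_0,\gamma_1)$ gives ``$\le$'' in \eqref{eq:HK_semi_couplings}. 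Moreover, once the reverse inequality is in hand, if $(\gamma_0,\gamma_1)$ is itself optimal then every inequality above is an equality, so $\pi$ is a minimizer of \eqref{eq:HK_soft_primal}; this proves the final ``conversely'' assertion.

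For the reverse inequality I would take an optimal $\pi$ for \eqref{eq:HK_soft_primal} and define $(\gamma_0,\gamma_1)$ by \eqref{eq:GammaFromPi}. Feasibility and the mass values $\|\gamma_i\|=|\mu_i|$ are a direct marginal computation, since $\proj_{0\#}\bigl(\RadNik{\mu_0}{\pi_0}\pi\bigr)=\RadNik{\mu_0}{\pi_0}\pi_0$ is the $\pi_0$-absolutely-continuous part of $\mu_0$, to which $(\id,\id)_\#\mu_0^\perp$ restores the singular part (symmetrically for $\gamma_1$ and the second marginal). The crux is to show $\sqrt{\gamma_0\gamma_1}=\tfrac1{\cos(\dist/\kappa)}\pi$: on the portion carried by $\pi$ this is the optimality relation $\RadNik{\mu_0}{\pi_0}(x_0)\,\RadNik{\mu_1}{\pi_1}(x_1)=\cos^{-2}(\dist(x_0,x_1)/\kappa)$ obtained by combining parts (i) and (iii) of \Cref{prop:HK_L1_existence}, while the diagonal pieces $(\id,\id)_\#\mu_i^\perp$ contribute nothing because they are mutually singular and singular with respect to $\pi$. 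Mutual singularity follows from \Cref{rem:TransportAndCreation}: since $\dist\ge\kappa\pi/2$ holds $\mu_0^\perp\otimes\mu_1^\perp$-a.e.\ (as $\mu_1^\perp\le\mu_1$), a common support point of $\mu_0^\perp$ and $\mu_1^\perp$ would give a product box of arbitrarily small diameter carrying positive $\mu_0^\perp\otimes\mu_1^\perp$ mass, which is impossible; hence the two (compact) supports are disjoint, and singularity with respect to $\pi$ is immediate from $\mu_i^\perp\perp\pi_i$. Then $\int\Cos(\dist/\kappa)\,\diff\sqrt{\gamma_0\gamma_1}=|\pi|$, the semi-coupling energy of $(\gamma_0,\gamma_1)$ equals $|\mu_0|+|\mu_1|-2|\pi|$, and this is $\HK_\kappa^2(\mu_0,\mu_1)/\kappa^2$ by the cumulated-mass identity \cite[Thm.~6.3, eq.~(6.20)]{liero2018optimal}. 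This establishes ``$\ge$'' in \eqref{eq:HK_semi_couplings}, completes the identity, shows the infimum is attained at \eqref{eq:GammaFromPi}, and therefore gives both existence of minimizers and the explicit formula.

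The step I expect to be the main obstacle is the measure-theoretic bookkeeping around the square-root measure: making the informal one-particle identity rigorous through the Lebesgue decompositions (including the conventions $0\log(0/a)=0$ and the degenerate cases $a=0$ or $b=0$), and above all justifying that the singular diagonal parts $(\id,\id)_\#\mu_i^\perp$ leave $\sqrt{\gamma_0\gamma_1}$ untouched, which rests on the disjoint-support argument. Existence could alternatively be obtained directly: $\|\gamma_i\|=|\mu_i|$ keeps a minimizing sequence tight, the constraints $\proj_{i\#}\gamma_i=\mu_i$ pass to weak-$\star$ limits, and $(\gamma_0,\gamma_1)\mapsto\int\Cos(\dist/\kappa)\,\diff\sqrt{\gamma_0\gamma_1}$ is weak-$\star$ upper semicontinuous by \Cref{rem:SquareRootMeasure} (as $\Cos(\dist/\kappa)\ge0$ is continuous on $K\times K$), so the semi-coupling energy is weak-$\star$ lower semicontinuous.
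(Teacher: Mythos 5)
Your proof is correct and largely parallels the paper's, but it is more self-contained and reorganized in one respect. The paper simply cites \cite[Corollary~5.9]{ChizatDynamicStatic2018} for the identity \eqref{eq:HK_semi_couplings} and for existence of minimizers, and then only verifies the two formulas \eqref{eq:GammaFromPi} and \eqref{eq:PiFromGamma}: the former by integrating the pointwise cost inequality \eqref{eq:soft_geq_cone} from \cite[Lemma~3.16]{cai2022linearized} with $u_i=\RadNik{\mu_i}{\pi_i}$ against the optimal $\pi$, the latter by the same inequality plus the $\KL$ push-forward monotonicity from \cite[Lemma~3.15]{cai2022linearized}. You instead prove the identity from scratch: your ``$\le$'' direction is essentially the same argument as the paper's proof of \eqref{eq:PiFromGamma}, using the same one-particle minimization (equivalent to \eqref{eq:soft_geq_cone}) and $\KL$ data-processing; your ``$\ge$'' direction, however, avoids integrating the pointwise equality and instead computes $\sqrt{\gamma_0\gamma_1}=\pi/\cos(d/\kappa)$ directly from the primal--dual optimality conditions of \Cref{prop:HK_L1_existence}, carefully checking that the diagonal singular pieces $(\id,\id)_\#\mu_i^\perp$ are mutually singular and singular w.r.t.\ $\pi$ (your disjoint-support argument via \Cref{rem:TransportAndCreation} is sound), and then closes via the cumulated-mass identity $\HK_\kappa^2=\kappa^2(|\mu_0|+|\mu_1|-2|\pi|)$ from \cite{liero2018optimal}. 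Both routes are valid; yours makes the measure-theoretic bookkeeping around $\sqrt{\gamma_0\gamma_1}$ explicit, which the paper's proof leaves to the reader, and your alternative existence argument via weak-$\star$ upper semicontinuity of $\int\Cos(d/\kappa)\,\diff\sqrt{\gamma_0\gamma_1}$ from \Cref{rem:SquareRootMeasure} is a correct stand-alone replacement for the citation.
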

\begin{proof}
	Existence of minimizers and equivalence with \eqref{eq:HK_soft_primal} was established in \cite[Corollary 5.9]{ChizatDynamicStatic2018}.

	Formula \eqref{eq:GammaFromPi} follows from the following inequality (see \cite[Lemma 3.16]{cai2022linearized}) between the costs featured in the two problems: for $x_0,x_1\in \Surf$ and $u_0, u_1>0$,
		\begin{equation}
				\label{eq:soft_geq_cone}
				c_\kappa(x_0,x_1)+\kappa^2 \cdot \sum_{i \in \{0,1\}}\varphi\left(\frac{1}{u_i}\right)u_i\geq \kappa^2\left(u_0+u_1-2\sqrt{u_0u_1}\Cos\left(\frac{d(x_0,x_1)}{\kappa}\right)\right)
			\end{equation}
		where $\varphi(s)=s \log(s)-s+1$ is the integrand of the KL-divergence, \eqref{eq:KL}. Furthermore, equality happens if and only if
		$$\sqrt{u_0u_1}\Cos\left(\frac{d(x_0,x_1)}{\kappa}\right)=1.$$
	Integrating \eqref{eq:soft_geq_cone} with $u_i \assign \RadNik{\mu_i}{\pi_i}(x_i)$ against the optimal plan $\pi$ yields optimality of the specific semi-couplings in \eqref{eq:GammaFromPi}. 
	
	Formula \eqref{eq:PiFromGamma} is established for the constructed $\pi$ as follows:
	\begin{align*}
		E_\kappa(\pi|\mu_0,\mu_1) & = \int_{K \times K} c^{\HK}_\kappa \diff \pi
			+ \kappa^2 \sum_{i \in \{0,1\}}
		  \int_K \varphi(\RadNik{\pi_i}{\mu_i}) \diff \mu_i \\
		& \leq \int_{K \times K} c^{\HK}_\kappa \diff \pi + \kappa^2 \sum_{i \in \{0,1\}}
		  \int_{K \times K} \varphi(\RadNik{\pi}{\gamma_i}) \diff \gamma_i
		\intertext{where we used \cite[Lemma 3.15]{cai2022linearized} with $T=\proj_i$ and then finally expanding the formulae for $\varphi$ and $c^{\HK}_\kappa$ (or equivalently leveraging the equality case in \eqref{eq:soft_geq_cone}), the reader can check that the expression simplifies into:}
		& = \kappa^2 \left[
		\abs{\gamma_{0}}+\abs{\gamma_{1}}
			-2\int_{\Surf \times \Surf}\Cos\left(\frac{\dist(x_0,x_1)}{\kappa}\right)
			\diff\sqrt{\gamma_0\gamma_1}(x_0,x_1)\right]
	\end{align*}
	which equals $\HK^2_\kappa(\mu_{0},\mu_{1})$ and thus completes the proof.
\end{proof}

\begin{proof}[Proof of \Cref{prop:HKexactPiN}]
The narrow convergence of $\pi^N$ to $\pi$ follows directly from the joint lower-semicontinuity of $E_\kappa(\pi|\mu_{0},\mu_{1})$, \eqref{eq:HK_soft_primal}, with respect to all three measures and the triangle inequality for $\HK_\kappa$. The convergence of $\pi^N/\cos(d/\kappa)$ is however more involved.

In the following for $i \in \{0,1\}$, let
\begin{align}
\label{eq:HKBarycenterGammaI}
	\gamma_i^N & \assign \RadNik{\mu_i^N}{\pi_i^N} \cdot \pi^N + (\id,\id)_{\#} (\mu_i^N)^\perp, &
	\gamma_i & \assign \RadNik{\mu_i}{\pi_i} \cdot \pi + (\id,\id)_{\#} (\mu_i)^\perp,
\end{align}
where $\pi_i^{N}=\proj_{i\#}\pi^{N}$ (in the spirit of Remark \ref{rem:TransportAndCreation}).
By \Cref{prop:HK_L1_existence} one has
$$\frac{\pi^{N}}{\cos(d/\kappa)}=\sqrt{\gamma_0^{N} \gamma_1^{(N)}},$$
and by \Cref{prop:HK_semi_couplings} $(\gamma^{(N)}_0,\gamma^{(N)}_1)$ are optimal semi-couplings for $\HK_\kappa^2(\mu_0^{(N)},\mu_1^{(N)})$.
Finally, by narrow compactness we select a common subsequence and respective cluster points such that narrowly as $N \to \infty$ (on this subsequence)
\begin{align*}
	\frac{\pi^N}{\cos(d/\kappa)} & = \sqrt{\gamma_0^{N} \gamma_1^{N}} \to \rho, &
	\gamma_0^{N} & \to \tilde{\gamma}_0, &
	\gamma_1^{N} & \to \tilde{\gamma}_1.
\end{align*}
By standard lower-semicontinuity and the triangle inequality the cluster point $(\tilde{\gamma}_0,\tilde{\gamma}_1)$ are also optimal semi-couplings for $\HK^2_\kappa(\mu_0,\mu_1)$. In addition, we set
$$\tilde{\rho} \assign \sqrt{\tilde{\gamma}_0 \tilde{\gamma}_1},$$
and by \Cref{prop:HK_semi_couplings} and uniqueness of the optimal $\pi$ in \eqref{eq:HK_soft_primal} one has $\pi=\cos(d/\kappa) \cdot \tilde{\rho}$.

Let now
\begin{align*}
	A_= & \assign \{(x_0,x_1) \in X^2 | d(x_0,x_1)=\kappa \pi/2\}
\end{align*}
and in the same way define $A_<$ and $A_>$ for the point pairs closer and further than $\kappa \pi/2$ respectively.
From the above considerations we conclude
\begin{align*}
	\pi/\cos(d/\kappa) \restr_{A_<} & = \rho \restr_{A_<} = \tilde{\rho} \restr_{A_<}.
\end{align*}
This can be verified by integrating against continuous test functions $\phi$ with compact support in $A_<$. These do not see the singularity of $1/\cos(d/\kappa)$, and thus $\phi/\cos(d/\kappa)$ is still continuous test function and the above convergence is reduced to the already established convergence of $\pi^N$ to $\pi$.
By optimality of $\pi$ for \eqref{eq:HK_soft_primal} one has
\begin{align*}
	\pi/\cos(d/\kappa) \restr_{A_=} = \pi/\cos(d/\kappa) \restr_{A_>} = 0,
\end{align*}
and by the Portmanteau theorem one has
\begin{align*}
	\rho \restr_{A_>} = \tilde{\rho} \restr_{A_>} = 0.
\end{align*}
By \Cref{rem:SquareRootMeasure} one has furthermore that $\tilde{\rho} \geq \rho$. So in conclusion one has $\tilde{\rho} \geq \rho \geq \pi/\cos(d/\kappa)$ and the differences between the measures (if any) are concentrated on $A_=$.

Next, we prove that the components of $\gamma_i$ and $\tilde{\gamma}_i$ that are dominated by $\pi$ are identical, for $i=\{0,1\}$. We have already established that both $(\gamma_0,\gamma_1)$ and $(\tilde{\gamma}_0,\tilde{\gamma}_1)$ are optimal semi-couplings for $\HK^2_\kappa(\mu_0,\mu_1)$.
By convexity, any convex combination between the two, denoted by $(\gamma^t_0,\gamma^t_1)$ for $t \in [0,1]$, must also be optimal, and once more by \Cref{prop:HK_semi_couplings} and uniqueness of $\pi$ one must have $\sqrt{\gamma_0^t \gamma_1^t} = \pi$ is constant on $t \in [0,1]$.
The function $\R_+^2 \ni (a,b) \mapsto \sqrt{a b}$ is constant along line segments (of non-zero length) if and only if $a=0$ or $b=0$ along the whole segment. This implies that the $\gamma_i^t$ must be constant in $t$, $\pi$-almost everywhere. Recalling the Lebesgue decomposition of $\gamma_i$ with respect to $\pi$ from \eqref{eq:HKBarycenterGammaI} we can therefore now write for $i \in \{0,1\}$,
\begin{align*}
	\gamma_i & \assign \RadNik{\mu_i}{\pi_i} \cdot \pi + (\id,\id)_{\#} (\mu_i)^\perp, &
	\tilde{\gamma}_i & \assign \RadNik{\mu_i}{\pi_i} \cdot \pi + \tilde{\gamma}_i^\perp.
\end{align*}
We recall from \Cref{prop:HK_L1_existence} that $\mu_0^\perp$ and $\mu_1^\perp$ are mutually singular and in fact even
$\dist(\spt \mu_0,\allowbreak\spt \mu_1) \geq \kappa \pi/2$.
From this decomposition (and $\proj_{i\#} \tilde{\gamma}_i=\mu_i$) we also deduce that $\proj_{i\#} \tilde{\gamma}_i^\perp = \mu_i^\perp$.
We then observe that
$$\Delta \rho \assign \tilde{\rho}-\pi/\cos(d/\kappa)=\sqrt{\tilde{\gamma}_0^\perp \tilde{\gamma}_1^\perp}$$
from which we also infer that $\proj_{i\#} \Delta \rho \ll \mu_i^\perp$, which implies
\begin{align*}
\dist(\spt \proj_{0\#} \Delta \rho,\spt \mu_1^\perp) & \geq \kappa \pi/2, \\
\intertext{Also, we have shown earlier that $\Delta \rho$ is concentrated on $A_=$, and therefore}
\dist(\spt \proj_{0\#} \Delta \rho,\spt \mu_1^\perp) & \leq \dist(\spt \proj_{0\#} \Delta \rho,\spt \proj_{1\#} \Delta \rho) \leq \kappa \pi/2 \\
\intertext{and in combination}
\spt \proj_{0\#} \Delta \rho & \subset B=\{ x \in X\,|\, \dist(x,\spt \mu_1^\perp)=\kappa \pi/2 \}.
\end{align*}
Since
$$\spt \proj_{0\#} \Delta \rho \ll \mu_0^\perp \ll \mu_0 \ll \vol$$
and $\vol(B)=0$ we conclude that $\Delta \rho=0$ and thus $\tilde{\rho}=\rho=\pi/\cos(d/\kappa)$.
\end{proof}
Similarly to the balanced case, one can augment the narrow convergence of the momentum measures from \Cref{prop:CV_tanvecs_HK} to a strong $\LL^2(\mu_{0})$ convergence, up to composition with a transport map from $\mu_{0}/\abs{\mu_0}$ to $\mu_{0}^N/\abs{\mu_0^N}$. However, in the unbalanced case an additional regularity assumption is required. A counter-example, in the absence of the assumption, is given in \Cref{ex:HKdiscontMu1Perp}. 
\begin{corollary}
	\label{cor:HK_strong_L2}
	With the notations and under the assumptions of \Cref{prop:CV_tanvecs_HK}, assume furthermore that $\mu_0 \neq 0$ and
	\begin{equation}
	\label{eq:HK_strong_limsup}
	\limsup_{N \to \infty}\int_{\Surf\times \Surf}\RadNik{\pi_0^N}{\mu_{0}^N}(x_0)\frac{\diff\pi^N}{\cos\left(\frac{d(x_0,x_1)}{\kappa}\right)}\leq\int_{\Surf}\RadNik{\mu_1}{\pi_{1}}(x_1)\diff\pi_1(x_1).
	\end{equation}
	Then, denoting by $S^N$ the optimal transport map for $W_2$ from $\mu_{0}/\abs{\mu_{0}}$ to $\mu_{0}^N/\abs{\mu_{0}^N}$, $(v_0^N\circ S^N)_{N\in\N}$ and $(\alpha_0^N\circ S^N)_{N\in\N}$ converge strongly in $\LL^2(\mu_{0})$ towards respectively $v_0$ and $\alpha_0$.
\end{corollary}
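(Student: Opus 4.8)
The plan is to follow the two‑step blueprint of the proof of \Cref{cor:strong_after_translation}, but now carried out \emph{jointly} for the pair $(v_0^N,\alpha_0^N)$ in the Hilbert space $\LL^2(\Surf,T\Surf;\mu_0)\times\LL^2(\Surf;\mu_0)$ equipped with the norm $(v,\alpha)\mapsto\norm{v}^2_{\LL^2(\mu_0)}+\tfrac{\kappa^2}{4}\norm{\alpha}^2_{\LL^2(\mu_0)}$: first establish narrow, then weak, convergence after re‑aligning by $S^N$, and then upgrade to strong convergence via a matching upper bound on the norms. Since $\mu_0\in\measpl(K)$ is nonzero and $\abs{\mu_0^N}\to\abs{\mu_0}$ (narrow convergence preserves mass), for $N$ large the normalizations $\mu_0/\abs{\mu_0}$ and $\mu_0^N/\abs{\mu_0^N}$ are probability measures, the former absolutely continuous, so \Cref{thm:McCann} provides the optimal map $S^N$; moreover $(\id,S^N)_\#(\mu_0/\abs{\mu_0})\narrc{N\to\infty}(\id,\id)_\#(\mu_0/\abs{\mu_0})$ by stability of optimal plans, and $S^N_\#\mu_0=(\abs{\mu_0}/\abs{\mu_0^N})\,\mu_0^N$, so $\norm{v_0^N\circ S^N}^2_{\LL^2(\mu_0)}=(\abs{\mu_0}/\abs{\mu_0^N})\norm{v_0^N}^2_{\LL^2(\mu_0^N)}$ with prefactor tending to $1$, and likewise for $\alpha_0^N$.

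The first and central step would be the joint norm estimate $\limsup_{N}\bigl(\norm{v_0^N}^2_{\LL^2(\mu_0^N)}+\tfrac{\kappa^2}{4}\norm{\alpha_0^N}^2_{\LL^2(\mu_0^N)}\bigr)\le\norm{v_0}^2_{\LL^2(\mu_0)}+\tfrac{\kappa^2}{4}\norm{\alpha_0}^2_{\LL^2(\mu_0)}$. Applying Jensen's inequality to the barycentric average in \Cref{def:average_logmap_HK} and disintegrating $\pi^N$ along $\pi_0^N=\proj_{0\#}\pi^N$ gives $\norm{v_0^N}^2_{\LL^2(\mu_0^N)}\le\kappa^2\int\RadNik{\pi_0^N}{\mu_0^N}(x_0)\tan^2(d(x_0,x_1)/\kappa)\,\diff\pi^N$ (an equality when $\pi^N$ is deterministic, e.g.\ optimal), while the identity $\alpha_0^N=2(\RadNik{\pi_0^N}{\mu_0^N}-1)$ gives $\tfrac{\kappa^2}{4}\norm{\alpha_0^N}^2_{\LL^2(\mu_0^N)}=\kappa^2\bigl(\int\RadNik{\pi_0^N}{\mu_0^N}(x_0)\,\diff\pi^N-2\abs{\pi^N}+\abs{\mu_0^N}\bigr)$ exactly. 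Adding these, using $1+\tan^2=1/\cos^2$, rewriting against the measure $\pi^N/\cos(d/\kappa)$, and separating off the contribution of point pairs near the cut‑off distance (the limit plan $\pi$ charges no pair at distance exactly $\kappa\pi/2$), the sum is controlled — up to the converging mass terms $\abs{\pi^N}\to\abs{\pi}$, $\abs{\mu_0^N}\to\abs{\mu_0}$ and the assumed narrow convergence $\pi^N/\cos(d/\kappa)\narrc{N\to\infty}\pi/\cos(d/\kappa)$ — by the quantity $\limsup_N\int\RadNik{\pi_0^N}{\mu_0^N}(x_0)\,\diff(\pi^N/\cos(d/\kappa))$, which is exactly what hypothesis \eqref{eq:HK_strong_limsup} bounds. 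Its limiting value $\int\RadNik{\mu_1}{\pi_1}\,\diff\pi_1$ is then identified, via the primal--dual optimality conditions of \Cref{prop:HK_L1_existence}, with $\kappa^{-2}\bigl(\norm{v_0}^2_{\LL^2(\mu_0)}+\tfrac{\kappa^2}{4}\norm{\alpha_0}^2_{\LL^2(\mu_0)}\bigr)+2\abs{\pi}-\abs{\mu_0}$ — this is precisely the computation in the proof of \Cref{prop:HK_as_L2} read for the limit plan $\pi$, using $\HK_\kappa^2(\mu_0,\mu_1)=\norm{v_0}^2+\tfrac{\kappa^2}{4}\norm{\alpha_0}^2+\kappa^2\abs{\mu_1^\perp}$. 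In particular this shows the $\LL^2$‑norms of $v_0^N$ and $\alpha_0^N$ are bounded along the sequence.

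With this in hand, I would, exactly as in the proof of \Cref{cor:strong_after_translation}, use the narrow convergence of the momentum measures $v_0^N\mu_0^N\narrc{N\to\infty}v_0\mu_0$ and $\alpha_0^N\mu_0^N\narrc{N\to\infty}\alpha_0\mu_0$ from \Cref{prop:CV_tanvecs_HK} together with $(\id,S^N)_\#(\mu_0/\abs{\mu_0})\to$ the diagonal: for continuous test fields a Cauchy--Schwarz estimate, using the $\LL^2$‑bounds from the previous step, replaces $\xi\circ S^N$ by $\xi$ with vanishing error, giving $\int\xi\cdot(v_0^N\circ S^N)\,\diff\mu_0\to\int\xi\cdot v_0\,\diff\mu_0$ and the analogue for $\alpha_0^N\circ S^N$; density of continuous fields in $\LL^2(\mu_0)$ then yields $v_0^N\circ S^N\weakc{N\to\infty}{\LL^2(\mu_0)}v_0$ and $\alpha_0^N\circ S^N\weakc{N\to\infty}{\LL^2(\mu_0)}\alpha_0$. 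Finally, the norm estimate of the first step, combined with $\abs{\mu_0}/\abs{\mu_0^N}\to1$, gives $\limsup_N\bigl(\norm{v_0^N\circ S^N}^2_{\LL^2(\mu_0)}+\tfrac{\kappa^2}{4}\norm{\alpha_0^N\circ S^N}^2_{\LL^2(\mu_0)}\bigr)\le\norm{v_0}^2_{\LL^2(\mu_0)}+\tfrac{\kappa^2}{4}\norm{\alpha_0}^2_{\LL^2(\mu_0)}$; since weak lower semicontinuity gives the two matching $\liminf$ bounds separately, it follows that $\norm{v_0^N\circ S^N}_{\LL^2(\mu_0)}\to\norm{v_0}_{\LL^2(\mu_0)}$ and $\norm{\alpha_0^N\circ S^N}_{\LL^2(\mu_0)}\to\norm{\alpha_0}_{\LL^2(\mu_0)}$, and norm convergence plus weak convergence is strong convergence. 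The main obstacle is the first step — the careful bookkeeping that isolates the cut‑off contribution so that \eqref{eq:HK_strong_limsup} can be invoked, and the exact identification of the limiting constant with the target energy; without this hypothesis the conclusion genuinely fails, as \Cref{ex:HKdiscontMu1Perp} demonstrates.
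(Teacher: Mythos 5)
Your strategy is exactly the one the paper intends: the corollary has no explicit proof in the text — the authors only say it works ``similarly to the balanced case'' and give a partial calculation in the following remark — so the right thing to do is indeed to splice together the argument of \Cref{cor:strong_after_translation} with the energy identity of \Cref{prop:HK_as_L2}. Your three steps (joint norm estimate via Jensen and the $\alpha$-identity, weak $\LL^2$ convergence after re-alignment via the narrow momentum limits and Cauchy--Schwarz, then strong convergence from norm convergence plus weak convergence) are the correct skeleton, and the bookkeeping relating the prefactors $\abs{\mu_0}/\abs{\mu_0^N}\to1$ and $\abs{\pi^N}\to\abs{\pi}$ is right. The closing step (separate $\liminf$ lower bounds from weak lower semicontinuity plus a joint $\limsup$ upper bound forcing both norms to converge) is also logically sound.

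The one genuine gap is the power of $\cos$. Your norm computation correctly yields
$\norm{v_0^N}^2_{\LL^2(\mu_0^N)}+\tfrac{\kappa^2}{4}\norm{\alpha_0^N}^2_{\LL^2(\mu_0^N)}\le\kappa^2\bigl(\int\RadNik{\pi_0^N}{\mu_0^N}\,\diff\pi^N/\cos^2(d/\kappa)-2\abs{\pi^N}+\abs{\mu_0^N}\bigr)$, i.e.\ with $\cos^2$ in the denominator, whereas hypothesis~\eqref{eq:HK_strong_limsup} as printed has $\cos$. Since $1/\cos^2\ge1/\cos$, the hypothesis with $\cos$ bounds a \emph{smaller} quantity, and your hand-wavy ``separating off the contribution near the cut-off'' does not bridge the gap: on the region bounded away from $\kappa\pi/2$ the ratio $(1/\cos^2)/(1/\cos)=1/\cos$ is of order one, not small. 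You should have noticed that this is actually an inconsistency in the paper itself: the remark following the corollary invokes the optimality condition $\RadNik{\pi_0^N}{\mu_0^N}(x_0)\RadNik{\pi_1^N}{\mu_1^N}(x_1)=\cos^2(d(x_0,x_1)/\kappa)$ to identify the left-hand side with $\int\RadNik{\mu_1^N}{\pi_1^N}\diff\pi_1^N$ — an identification that holds only if the denominator is $\cos^2$. So \eqref{eq:HK_strong_limsup} should read $\cos^2$, and once that is corrected your Step~1 goes through without the splitting argument, directly yielding $\limsup_N(\dots)\le\kappa^2(\int\RadNik{\mu_1}{\pi_1}\diff\pi_1-2\abs{\pi}+\abs{\mu_0})=\norm{v_0}^2+\tfrac{\kappa^2}{4}\norm{\alpha_0}^2$ by \Cref{prop:HK_as_L2}. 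Rather than papering over the exponent mismatch with an unjustified cut-off argument, you should have either flagged the typo or worked throughout with the $\cos^2$ version.
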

\begin{remark}
The additional condition for \Cref{cor:HK_strong_L2} might seem unexpected at first. However, in the case were, for any $N$, $\pi^N$ is an optimal plan for the soft-marginal formulation of $\HK^2_\kappa(\mu_{0}^N,\mu_{1}^N)$, inequality \eqref{eq:HK_strong_limsup} simplifies to $\liminf_{N\to\infty}\abs{(\mu_1^N)^\perp}\geq \abs{\mu_{1}^\perp}$. Indeed, using the optimality conditions \cref{item:HK_L1_marginals} of \Cref{prop:HK_L1_existence}, \cref{eq:HK_strong_limsup} simplifies to 
		$$\limsup_{N\to\infty}\int_{X\times X}\RadNik{\mu_{1}N}{\pi_1^N}(x_1)\diff\pi^N(x_0,x_1)=\limsup_{N\to\infty}\int_{X}\RadNik{\mu_{1}N}{\pi_1^N}(x_1)\diff\pi_1^N(x_1)\leq\int_X\RadNik{\mu_{1}}{\pi_1}\diff\pi_1(x)$$ 
		and using the fact that $\lim_{N\to\infty}\abs{\mu_{1}^N}=\abs{\mu_{1}}$ and the definition of $(\mu_{1}^N)^\perp,\mu_{1}^\perp$, the former limsup inequality is equivalent to the latter liminf one.
		
		Note that we always have $\limsup_{N\to\infty}\abs{(\mu_1^N)^\perp}\leq \abs{\mu_{1}^\perp}$ from the narrow continuity of the squared $\HK_\kappa$ distance, $\lim_{N \to \infty}\HK^2_\kappa(\mu_{0}^N,\mu_1^N)=\HK_\kappa^2(\mu_{0},\mu_{1})$: Indeed, leveraging the weak $\LL^2(\mu_{0})$ convergence of $v_0^N\circ S_N$ and $\alpha_0^N\circ S_N$ to $v_0$ and $\alpha_0$ (which does not require to assume \eqref{eq:HK_strong_limsup}),
		\begin{equation*}
			\begin{split}
				\HK^2(\mu_{0},\mu_{1})=&\norm{v_0}_{\LL^2(\mu_{0})}^2+\frac{\kappa^2}{4}\norm{\alpha_0}_{\LL^2(\mu_{0})}^2+\kappa^2\abs{\mu_{1}^\perp}\\
				\leq&\liminf_{N\to\infty}\norm{v_0^N\circ S_N}_{\LL^2(\mu_{0})}^2+\frac{\kappa^2}{4}\norm{\alpha_0\circ S_N}_{\LL^2(\mu_{0})}^2+\kappa^2\abs{\mu_{1}^\perp}
			\end{split}
		\end{equation*}
		and rewritting 
		\begin{align*}
			\HK^2(\mu_{0},\mu_{1})=&\lim_{N \to \infty}\HK^2_\kappa(\mu_{0}^N,\mu_1^N)\\
			\geq& \limsup_{N\to\infty}\norm{v_0^N\circ S_N}_{\LL^2(\mu_{0})}^2+\frac{\kappa^2}{4}\norm{\alpha_0\circ S_N}_{\LL^2(\mu_{0})}^2+\kappa^2\abs{(\mu_{1}^N)^\perp},
		\end{align*}
		we have, \emph{a priori}, $\limsup_{N\to\infty}\abs{(\mu_{1}^N)^\perp}\leq\abs{\mu_{1}^\perp}$.	Therefore, the assumption of \Cref{cor:HK_strong_L2} is equivalent to $\lim_{N \to \infty}\abs{(\mu_{1}^N)^\perp}=\abs{\mu_{1}^\perp}$ (which may not always be verified, see \Cref{ex:HKdiscontMu1Perp}).
\end{remark}
\begin{remark}[Extension to SHK]
We remark without proof that by leveraging the relationship between the logarithmic maps for the $\HK$ and $\SHK$ metrics, one can obtain the same stability results, under the same assumptions as \Cref{prop:CV_tanvecs_HK} (resp.~\Cref{cor:HK_strong_L2}) for the $\SHK$ logarithmic map. For instance, the continuity of the scaling factors
$$s'(0)=\frac{\SHK_\kappa(\mu_{0}^N,\mu_{1}^N)/\kappa}{\sin(\SHK_\kappa(\mu_{0}^N,\mu_{1}^N)/\kappa)}$$ 
is a consequence of the continuity of the $\SHK$ distance with respect to the narrow convergence of measures.
\end{remark}

\begin{remark}[Re-scaling of HK tangent vectors for SHK conversion]
The barycentric logarithmic map of \Cref{def:average_logmap_HK} is not exactly invertible due to the averaging of the velocity field. As this section shows, in the limit the true logarithmic map is recovered.
However, when working numerically with SHK, small deviations at finite $N$ can already lead to undesirable errors.
Let $\mu_0,\mu_1 \in \prob(X)$, let $\pi$ be an (approximately) optimal non-deterministic plan. Computing $(v_0,\alpha_0)$ as in \Cref{def:average_logmap_HK} one finds in general that $\tilde{\mu}_1 \assign \Exp_{\mu_0}^{\HK_\kappa}(v_0,\alpha_0) \neq \mu_1$ and $\tilde{\mu}_1 \notin \prob(X)$. In this case, the formulas of Section \ref{sec:LinSHK} for the conversion to SHK tangent vectors become inconsistent as they rely on the assumption $\tilde{\mu}_1 \in \prob(X)$. This can be remedied, by re-scaling the vectors $(v_0,\alpha_0)$ first. Indeed, using \Cref{prop:HKExp} one can show for re-scaled tangent vector $(\tilde{v}_0,\tilde{\alpha}_0) \assign (q \cdot v_0,q \cdot \alpha_0+2(q-1))$ for $q=1/\sqrt{|\tilde{\mu}_1|}$ that $\Exp_{\mu_0}^{\HK_\kappa}(\tilde{v}_0,\tilde{\alpha}_0)=\tilde{\mu}_1/|\tilde{\mu}_1| \in \prob(X)$.
\end{remark}

\begin{example}[Discontinuity of $\mu_1^\perp$]
\label{ex:HKdiscontMu1Perp}
The singular component $\mu_1^\perp$ of the logarithmic map does not necessarily converge, even under the assumptions of \Cref{prop:CV_tanvecs_HK} (despite it being uniquely defined for any $N$, unlike the velocity $v_0^N$ which potentially depends on the choice the optimal plan $\pi^N$ solving \eqref{def:HK_soft_primal}).
Indeed, let $X=\R$, $\kappa=1$ and consider for some $L > \pi/2+1$,
\begin{align*}
	\mu_0^N & \assign (1-1/N) \Leb\restr_{[0,1]} + 1/N \Leb\restr_{[L,L+1]}
	\rightweaks \mu_0 \assign \Leb\restr{[0,1]}, \\
	\mu_1^N & \assign 1/N \Leb\restr_{[0,1]} + (1-1/N) \Leb\restr_{[L,L+1]}
	\rightweaks \mu_1 \assign \Leb\restr{[L,L+1]}.
\end{align*}
One can easily see that for every $N$ one has
$$\pi^N=\sqrt{(1-1/N)1/N} (\id,\id)_{\#} [\Leb\restr_{[0,1]} + \Leb\restr_{[L,L+1]}]$$
and so $(\mu_i^N)^\perp=0$, whereas for the limit one has $\pi=0$ and thus $\mu_i^\perp=\mu_i \neq 0$.
	
This also readily gives an example where the strong $\LL^2$-convergence is not verified (although we make no claims as to the necessity of the assumption $\abs{(\mu_{1}^N)^\perp}\xrightarrow[N\to\infty]{}\abs{\mu_{1}^\perp}$ in \Cref{cor:HK_strong_L2}). Indeed, since $(\mu_i^N)^\perp=0$, and $\mu_{0}^N \ll \Leb$, one has (after some straightforward calculations),
	$$\HK_1^2(\mu_{0}^N,\mu_{1}^N)=\norm{v_0^N}^2_{\LL^2(\mu_{0}^N)}+\frac{1}{4}\norm{\alpha_0^N}^2_{\LL^2(\mu_{0}^N)} \xrightarrow[N\to\infty]{}\HK_1^2(\mu_{0},\mu_{1})$$
	whereas
	$$\norm{v_0}^2_{\LL^2(\mu_{0})}+\frac{\kappa^2}{4}\norm{\alpha_0}^2_{\LL^2(\mu_{0})}=\abs{\mu_{0}}<\HK_1^2(\mu_{0},\mu_{1})=\abs{\mu_0}+\abs{\mu_1},$$
	forbidding the strong $\LL^2$ convergence. In fact, one can even check that $v^N_0=v_0=0$ as neither $\pi^N$ nor $\pi$ represent any movement of mass particles but
	$$\alpha^N_0=\begin{cases}
		2 \left(\sqrt{\frac{1/N}{1-1/N}}-1\right) &\text{on $[0;1]$}\\
		2 \left(\sqrt{\frac{1-1/N}{1/N}}-1\right)&\text{on $[L;L+1]$}
	\end{cases}$$
	and the expression on $[L;L+1]$ diverges to $+\infty$ as $N\to\infty$ (however, this diverging expression is exactly compensated by the density of $\mu_{0}^N$ in the moment measure $\alpha_0^N\mu_{0}^N$, allowing us to still have the claim of \Cref{prop:CV_tanvecs_HK}).
\end{example}

\begin{example}[Non-convergence of $\pi^N/\cos(d/\kappa)$ due to sub-optimality of $\pi^N$]
\label{ex:HKdiscontPiCos}
For $\mu_1 \ll \vol$ and $\pi^N$ optimal, \Cref{prop:HKexactPiN} implies convergence of $\pi^N/\cos(d/\kappa)$ to $\pi/\cos(d/\kappa)$. We show that slight sub-optimality of $\pi^N$, such that still $\pi^N \to \pi$, can already hinder the convergence $\pi^N/\cos(d/\kappa)$ to $\pi/\cos(d/\kappa)$, even when $\mu_1 \ll \vol$.
Let $X=\R$, for simplicity let $\kappa=1$, and consider the identical sequences of measures
$$\mu_{0}^N \assign \mu_1^N\assign\underbrace{\Leb\restr_{[0;1]}}_{\assignRe \mu_{00}}
+\Leb\restr_{[\frac{\pi}{2}-\frac{1}{N};1+\frac{\pi}{2}-\frac{1}{N}]}$$
narrowly converging to the limit measures $\mu_{0} \assign \mu_{1}\assign\Leb\restr_{[0;1]}+\Leb\restr_{[\frac{\pi}{2};1+\frac{\pi}{2}]}$. The obvious optimal plans $\pi^N$ for the soft marginal formulation of $\HK^2(\mu_{0}^N,\mu_{1}^N)$ are induced by the identity map, $\pi^N\assign(\id,\id)_\#\mu_{0}^N$, and similarly at the limit, $\pi\assign(\id,\id)_\#\mu_{0}$.
Consider now the sequence of perturbed plans $\tilde{\pi}^N\assign\left(1-\frac{1}{N}\right)\pi^N+\frac{1}{N}(\id,T^N)_\#\mu_{00}$ where $T^N:x\in\R\mapsto x+\frac{\pi}{2}-\frac1N$. We notice that both $\pi^N$ and $\tilde{\pi}^N$ narrowly converge to $\pi$ and in fact, one even has
\begin{equation*}
	\begin{split}
		E(\tilde{\pi}^N|\mu_{0}^N,\mu_{1}^N)=&\left(1-\frac{1}{N}\right)\times 0-\frac{2}{N}\log\left(\cos\left(\frac{\pi}{2}-\frac{1}{N}\right)\right)+\KL\left(\left(1-\frac1N\right)\mu_{0}^N+\frac1N\mu_{00}~\middle|~\mu_{0}^N\right)\\
		&+\KL\left(\left(1-\frac1N\right)\mu_{0}^N+\frac1N (T^N_\#\mu_{00})~\middle|~\mu_{0}^N\right)\\
		&\xrightarrow[N\to\infty]{}0=\HK^2(\mu_{0},\mu_{1}).
	\end{split}
\end{equation*}
However, 
\begin{equation*}
		\frac{\tilde{\pi}^N}{\cos(d)}=\left(1-\frac{1}{N}\right)\pi^N+\frac{1}{N}\frac{(\id,T^N)_\#\mu_{00}}{\cos\left(\frac\pi 2-\frac1N\right)} \\
		\narrc{N\to\infty}\pi+(\id,T)_\#\mu_{00}\neq\frac{\pi}{\cos\left(d\right)}=\pi
\end{equation*}
where $T:x\in\R\mapsto x+\frac{\pi}{2}$.
And indeed, for the logarithmic maps, $v_0^N,\alpha_0^N\assign\Log_{\mu_0^N}(\tilde{\pi}^N)$ and $v_0,\alpha_0\assign\Log_{\mu_{0}}(\mu_{1})=(0,0)$, we obtain that
$$v_0^N\mu_{0}^N\narrc{N\to\infty}0+\frac{T-\id}{\norm{T-\id}}\mu_{00}\neq v_0\mu_{0}.$$
\end{example}

\begin{example}[Non-convergence of $\pi^N/\cos(d/\kappa)$ due to singular $\mu_1$]
\label{ex:HKdiscontVelSingular}
In this example, the $\pi^N$ are optimal, but assumption $\mu_0 \ll \vol$ is violated, resulting in non-convergence of $\pi^N/\cos(d/\kappa)$.
Let $X=\R$, $\kappa=1$, and set $\mu_{0}^N \assign \mu_{0} \assign \delta_0$, as well as $\mu_{1}^N \assign \delta_{\pi/2-1/N}$, converging narrowly to $\mu_1 \assign \delta_{\pi/2}$.
One quickly finds that the unique optimal plan for the soft-marginal formulation of $\HK_1^2(\mu_{0}^N,\mu_{1}^N)$ is given by $\pi^N\assign\cos(\pi/2-1/N)\cdot \delta_0\otimes\delta_{\pi/2-1/N}$. Between $\mu_0$ and $\mu_1$ the optimal plan is $\pi=0$. Therefore, we have $\pi^N \to \pi$, but $\pi^N/\cos(d) \to \delta_0 \otimes \delta_{\pi/2} \neq \pi$.
At the level of logarithmic maps one obtains for $(v_0^N,\alpha_0^N)\assign\Log_{\mu_0}^{\HK_1}(\pi^N)$ (defined only at $x_0=0$) that
\begin{align*}
v_0^N & =\sin\left(\frac\pi2-\frac1N\right), &
\alpha_0^N & =2\left[\Cos\left(\frac\pi2-\frac1N\right)-1\right]
\end{align*}
for which $\lim_{N \to \infty} (v_0^N,\alpha_0^N)=(1,-2)$. On the other hand, $v_0,\alpha_0\assign\Log^{\HK_1}_{\mu_{0}}(\pi)=(0,-2)$ and we have neither the narrow convergence of $v_0^N\mu_{0}^N$ to $v_0\mu_{0}$ nor the strong $\LL^2(\mu_{0})$-convergence $v_0^N\circ S_N\xrightarrow[N\to\infty]{}v_0$, with the notations of \Cref{cor:HK_strong_L2}.
\end{example}

\begin{remark}[Entropic regularization]
\label{rem:HKentropicPiN}
A relevant sequence of plans $\pi^N$ is given by the solution of the entropic regularization of the soft-marginal formulation,
\begin{equation*}
	\HK^2_{\kappa,\epsilon}(\mu_{0},\mu_{1})\assign \min_{\pi}E_\kappa(\pi|\mu_{0},\mu_{1})+\epsilon \KL(\pi|\mu_{0}\otimes\mu_{1}).
\end{equation*}
We refer the reader to \cite{chizat2018scaling} for a more extensive introduction to this regularized unbalanced transport problem and its numerical resolution. This problem admits a dual formulation,
\begin{align}
	\label{eq:HK_soft_entropic_dual}
	\HK^2_{\kappa,\epsilon}(\mu_{0},\mu_{1})=\sup_{u_0,u_1\in\Cont(K)}&\kappa^2\left(\int_\Surf 1-e^{-u_0(x_0)/\kappa^2}\diff\mu_{0}(x_0)+\int_\Surf 1-e^{-u_1(x_1)/\kappa^2}\diff\mu_{1}(x_1)\right) \nonumber\\
	&+\epsilon\int_{\Surf}1-e^{\frac{u_0(x_0)+u_1(x_1)-c^{\HK}_{\kappa}(x_0,x_1)}{\epsilon}}\diff\mu_{0}(x_0)\otimes\mu_{1}(x_1)
\end{align}
and a corresponding primal-dual optimality condition for $u_{\veps,0}, u_{1,\veps}\in\Cont(K)$ and $\pi_\veps\in\meas(K\times K)$,
\begin{equation}
	\label{eq:ent_UOT_opti}
	\pi_\veps=e^{\frac{u_{\veps,0} \oplus u_{\veps,1}-c^{\HK}_{\kappa}}{\epsilon}} \cdot \mu_{0}\otimes\mu_{1}.
\end{equation}
For simplicity, assume that optimal $u_{\veps,i} \in \Cont(K)$ exist for all $\veps \geq 0$ and that one has uniform convergence of $u_{\veps,i}$ to unregularized limit solutions $u_{0,i}$. This implies that the entropic plans $\pi_\epsilon$ narrowly converge to some $\pi$ that minimizes the unregularized problem. Indeed in that case, the entropic term in the dual formulation \eqref{eq:HK_soft_entropic_dual} vanishes as $\epsilon\to0$ and therefore, any cluster point of the (tight) family $(\pi_\epsilon)_\epsilon$ has to be optimal. Furthermore, using \eqref{eq:ent_UOT_opti} and the definition of $c^{\HK}_\kappa$, one finds
\begin{align*}
	\frac{\pi_\veps}{\cos(d/\kappa)} = \exp\left(
	\left[2\kappa^2\log \Cos(d/\kappa)-\veps \log \Cos(d/\kappa)+u_{\veps,0} \oplus u_{\veps,1}\right]
	/\veps\right) \cdot \mu_0 \otimes \mu_1
\end{align*}
with the convention that $\frac{\pi_\veps}{\cos(d/\kappa)}$ assigns zero mass to points at distance $\kappa \pi/2$ or larger (as does \eqref{eq:ent_UOT_opti}), and that the exponential function on the right side evaluates to zero for $d(x,y)>\kappa \pi/2$.

Then, using the uniform boundedness of the sequences $u_{\veps,i}$, $i=0,1$, one gets that there exist $\eta_0>0$ and $C>0$ such that for any $\eta<\eta_0$, $0<\veps<2\kappa^2$ and $\pi/2-\eta<d(x,y)/\kappa<\pi/2$,
$$2\kappa^2\log \cos(d(x,y)/\kappa)-\veps \log \cos(d(x,y)/\kappa)+u_{\veps,0}(x) + u_{\veps,1}(y)\leq C\log \cos(d(x,y)/\kappa).$$
Consequently, for any $\phi\in\Cont(K^2)$ and $\eta<\eta_0$,
\begin{align*}
  & \limsup_{\epsilon\to 0} \abs{
    \int_{K^2}\phi(x,y)\diff\frac{\pi_\epsilon(x,y)}{\cos\left(d(x,y)/\kappa\right)}
    -\int_{K^2}\phi(x,y)\diff\frac{\pi(x,y)}{\cos\left(d(x,y)/\kappa\right)}} \\
  \leq & \limsup_{\epsilon\to 0} 
    \int_{d(x,y)/\kappa>\pi/2-\eta}\abs{\phi(x,y)}e^{C\log \cos(d(x,y)/\kappa)/\epsilon}\diff\mu_{0}(x)\diff\mu_{1}(y) \\
  & \qquad + \int_{d(x,y)/\kappa>\pi/2-\eta}\abs{\phi(x,y)}\diff\frac{\pi(x,y)}{\cos\left(d(x,y)/\kappa\right)}\\
  & \qquad + \abs{
    \int_{d(x,y)/\kappa\leq\pi/2-\eta}\phi(x,y)\diff\frac{\pi_\epsilon(x,y)}{\cos\left(d(x,y)/\kappa\right)}
    -\int_{d(x,y)/\kappa\leq\pi/2-\eta}\phi(x,y)\diff\frac{\pi(x,y)}{\cos\left(d(x,y)/\kappa\right)}}\\
		\leq&\int_{d(x,y)/\kappa>\pi/2-\eta}\abs{\phi(x,y)}\diff\frac{\pi(x,y)}{\cos\left(d(x,y)/\kappa\right)}
\end{align*}
and the desired convergence is obtained by taking $\eta\to 0$.
\end{remark}

\section{Numerical examples}
\label{sec:Numerics}

\subsection{Preliminaries}
\label{sec:NumericsPrel}
\paragraph{Scope} The numerical experiments in \cite[Section 5.2]{cai2022linearized} focused on the comparison between the linearization of $W_2$ and $\HK$. In particular they demonstrated the susceptibility of balanced transport to misinterpret small mass fluctuations as transport, even when they happen parts of the measures that are far from each other. The Hellinger--Kantorovich can explain such mass discrepancies purely as mass vanishing/creation thus better capturing the local transport variations in the data set.
In this section, we focus on a comparison between $\HK$ and $\SHK$. We will demonstrate that the flexibility of $\HK$ to change the total mass of measures introduces a systematic bias towards lower masses when averaging between samples.
The $\SHK$ metric retains the ability to create and destroy mass locally, but is subjected to the constraint that the total mass must be preserved. It is therefore still able to deal gracefully with long range mass fluctuations but avoids the bias of $\HK$.
In addition, we give an example for linearized optimal transport analysis of measures living on a sphere.
Finally, we give some numerical illustrations for the (non-)convexity of the range of the logarithmic map in balanced and unbalanced linearized optimal transport.

\paragraph{Code} As usual, for our numerical experiments we discretize measures as weighted point clouds.
The discrete (unbalanced) optimal transport problems are solved via entropic regularization and a generalized Sinkhorn algorithm \cite{chizat2018scaling} (see \cite{SchmitzerScaling2019} for some algorithmic details). The length scale of the entropic blur is set to be comparable to the nearest-neighbour distance of the discrete point clouds to somewhat dampen discretization artefacts.
The code (Python/Numpy) for the discrete linearized optimal transport analysis is an updated version of that used in \cite{cai2022linearized}, accounting for some extensions introduced in the present article. This includes support for the SHK metric, Section \ref{sec:LinSHK}; an improved implementation of the barycentric projection for the $\HK$ metric, as analyzed in Section \ref{sec:BarycentricHK}; and support for manifolds as base spaces. The updated code is available at \url{https://gitlab.gwdg.de/bernhard.schmitzer/linot}.

\paragraph{Experiment protocol} In each of the following experiments, we choose a reference measure $\mu_0 \in \prob(X)$ and a set of samples $\{\nu_1,\ldots,\nu_N\} \subset \prob(X)$.
We then compute the embeddings $w_i=(v_i,\alpha_i) \assign \Log_{\mu_0}(\nu_i) \in H$ where $\Log_{\mu_0}$ is the logarithmic map for $\HK_\kappa$ or $\SHK_\kappa$ and $H$ is the corresponding tangent space of velocity and growth fields, as given in \Cref{prop:HK_as_L2}, and \Cref{prop:SHK_as_L2}.
For simplicity, we pick examples where the singular component of unbalanced transport is zero (cf.~$\mu_i^\perp$ in \Cref{rem:TransportAndCreation}). So $H$ is isometric to $(\LL^2(\mu_0))^{n+1}$. When $\mu_0$ is a discrete measure on $k$ points, then $H$ can be embedded isometrically into $\R^{k \cdot (n+1)}$ (one velocity vector and a growth value at each discrete point).
We then perform principal component analysis (PCA) on the collection of embeddings $\{w_i\}_{i=1}^N$, i.e.~we diagonalize the empirical covariance matrix of the $\{w_i\}_{i=1}^N$. The vectors $w_i$ can then be projected to the subspace spanned by the first $m$ dominant eigenvectors (corresponding to the largest eigenvalues) to obtain low-dimensional embeddings of the samples.
An eigenvalue/eigenvector pair $(\sigma^2,u)$ of the PCA indicates that there is variance $\sigma^2$ along the direction $u$ in the set $\{w_i\}_{i=1}^N$. To interpret the direction $u$, one can use the exponential map and look at the path of measures $[-\sigma,\sigma] \ni t \mapsto \assign \Exp_{\mu_0}(\ol{w}+t \cdot u)$ where $\ol{w}=\sum_{i=1}^N w_i/N$ is the empirical mean of the embedded samples and the range of $t$ is chosen to reflect one standard deviation of variation in both directions.

Since we consider discrete measures of the form $\mu_0=\sum_{i=1}^k m_i^0\,\delta_{x_i^0}$, the measure $\nu^w\assign\Exp_{\mu_0}(w)$ for some $w \in H$ will be of the form $\nu^w=\sum_{i=1}^k m_i^w\,\delta_{x_i^w}$, i.e.~we change positions and weights of the original point cloud.
If the reference point cloud $(x_i^0)_{i=1}^k$ is taken from a regular Cartesian grid with constant point density (or an equivalent structure on a manifold), after applying the exponential map, this is in general no longer true.
Therefore, to take these point density fluctuations into account during visualization we re-rasterize the deformed point cloud $(x_i^w)_{i=1}^k$ (and its weights $m_i^w$) back to $(x_i^0)_{i=1}^k$. To dampen Moir\'e artefacts, a small additional blur kernel is applied to the rasterized images.

\subsection{Linearized HK and SHK in the plane}
\paragraph{Dirac measures along a line}
The aforementioned bias of $\HK$ towards smaller masses can be studied analytically in the case of single Dirac measures on the interval $[0;L]$, for $0 < L < \kappa \pi/2$ (assuming the upper bound to avoid the pure Hellinger regime for simplicity).
In this case one has that
\begin{align}
\HK_\kappa^2(m_0\delta_{x_0},m_1 \delta_{x_1}) & =\kappa^2 \left(m_0+m_1-2\sqrt{m_0m_1} \Cos(|x_0-x_1|/\kappa)\right) \nonumber \\
\label{eq:HKDirac}
& =\kappa^2 \left\|\sqrt{m_0} \exp(ix_0/\kappa)-\sqrt{m_1} \exp(ix_1/\kappa)\right\|^2_{\C}.
\end{align}
This can be obtained, for instance, by explicitly solving \eqref{eq:HK_soft_primal} for this problem. The $\KL$-terms acting on the marginals imply that any admissible $\pi$ must satisfy $\proj_{i\#} \pi \ll m_i\delta_{x_i}$ and thus the minimizer must satisfy $\pi=m \delta_{x_0} \otimes \delta_{x_1}$ for some $m \geq 0$. By explicitly solving the remaining one-dimensional optimization over $m$ one obtains
\begin{equation}
\label{eq:PiDiracDirac}
\pi=\Cos\left(\abs{x_1-x_0}/\kappa\right)\sqrt{m_0 m_1} \delta_{x_0}\otimes\delta_{x_1}.
\end{equation}
In particular, in this case an optimal transport map exists, even though $m_0 \delta_{x_0}$ is not dominated by the Lebesgue measure. So the linearized optimal transport framework is applicable.

We pick $\mu_0 \assign \delta_{x_0}$, $x_0 \assign L/2$ as reference measure.
As can be  seen from \eqref{eq:HKDirac}, on the subset of Dirac measures $\{m \delta_x | m \geq 0, x \in [0;L]\}$ the $\HK_\kappa$ metric is flat, since it can be isometrically embedded into $\C$.
Consequently, we expect that the tangent space approximation of such measures around $\mu_0$ is exact.
As sample measures we pick the one-dimensional parametric family of probability Dirac measures $\{\delta_x| x\in [0;L]\}$.
According to \eqref{eq:HKDirac} this is isometric to a circle segment in $\C$.
For $\mu_1\assign\delta_{x_1}$, $x_1 \in [0;L]$, \eqref{eq:PiDiracDirac} yields that $T(x_0)=x_1$ and $\RadNik{\pi_0}{\mu_0}=\Cos\left(\abs{x_1-x_0}/\kappa\right)$ (recall that we chose $m_0=m_1=1$ here). So with \eqref{eq:def_HK_log} one finds for $(v_0,\alpha_0,\mu_1^\perp)=\Log_{\mu_0}^{\HK_\kappa}(\mu_1)$ the components
\begin{align}
\label{eq:HKDiracLog}
v_0(x_0)&=\kappa\sin\left(\frac{x_1-x_0}{\kappa}\right),
&
\alpha_0(x_0)&=2\left(\cos\left(\frac{\abs{x_1-x_0}}{\kappa}\right)-1\right), &
\mu_1^\perp & = 0.
\end{align}
Indeed, one sees that the embedding describes a circle segment as $x_1$ is varied.
Therefore, the mean of the embedded measures will then lie strictly within the circle (which corresponds to mass less than 1 by \eqref{eq:HKDirac}), a PCA of the embedded samples will yield two non-zero eigenvalues, and the corresponding PCA projection will show the circle shape. 

In the $\SHK$ metric, the creation and destruction of mass is still allowed, but the total mass must be preserved. In the case of single Dirac measures, this means that the metric structure reduces to the standard $\W_2$ distance. Indeed, for $m_0=m_1=1$ \eqref{eq:HKDirac} simplifies to $\HK_\kappa^2(\delta_{x_0},\delta_{x_1})=2\kappa^2(1-\Cos(|x_0-x_1|/\kappa))$, and plugging this into \eqref{eq:SHK} yields $\SHK_\kappa(\delta_{x_0},\delta_{x_1})=|x_0-x_1|$.
With this and \eqref{eq:HKDiracLog} one then obtains from \eqref{eq:SHKLog}
\begin{align*}
v^S_0(x_0)&=x_1-x_0,
&
\alpha^S_0(x_0)&=0,
&
(\mu_1^S)^\perp & =0,
\end{align*}
where we used that $s'(0)=|x_0-x_1|/(\kappa \sin(|x_0-x_1|/\kappa))$ in \eqref{eq:SHKLogSPrime}.
So in this case, the $SHK$ and $W_2$ metrics yield the same embedding of the samples onto a straight line in tangent space. Consequently, the mean of the embeddings will also lie on this line and there will only be one non-zero eigenvalue in the PCA.

Comparing the $\HK$ and $\SHK$ embeddings, we see that the mass changes in $\HK$ lead to an embedding as a circle segment, such that the mean of the embeddings in tangent space corresponds to a measure with mass less than one, and to the appearance of an extra non-zero eigenvalue in PCA.
By analysis of \eqref{eq:HKDirac}, \eqref{eq:HKDiracLog} one can verify that the corresponding additional eigenvector corresponds to pure mass changes that are not present in the samples.
In this sense, $\SHK$ yields a more faithful embedding.
We will observe the appearance of this additional eigenvalue numerically in the following experiments.

\paragraph{Small disks (with fixed radius) along a line}
\begin{figure}[hbt]
	\centering
	\subcaptionbox{Generated data,\newline reference measure supported on black disk\label{fig:fixRad1D_Samples}}%
	{\includegraphics[scale=.5]{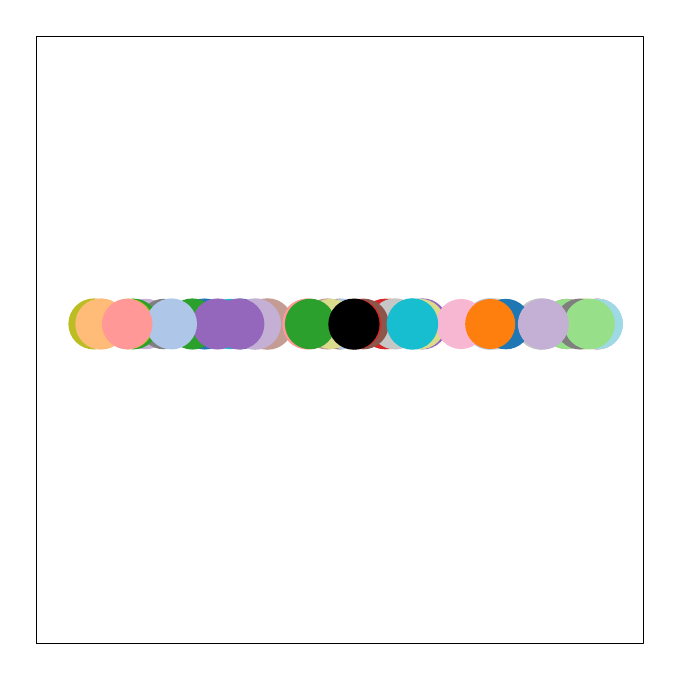}}%
	~\vline~
	\subcaptionbox{Ratios of explained variance for first 4 PCA modes (log scale) and 2-dimensional PCA projection for $\HK$ and $\SHK$ embeddings.\label{fig:fixRad1D_HKPCA_spec_1v1}}%
	{\includegraphics[scale=.3]{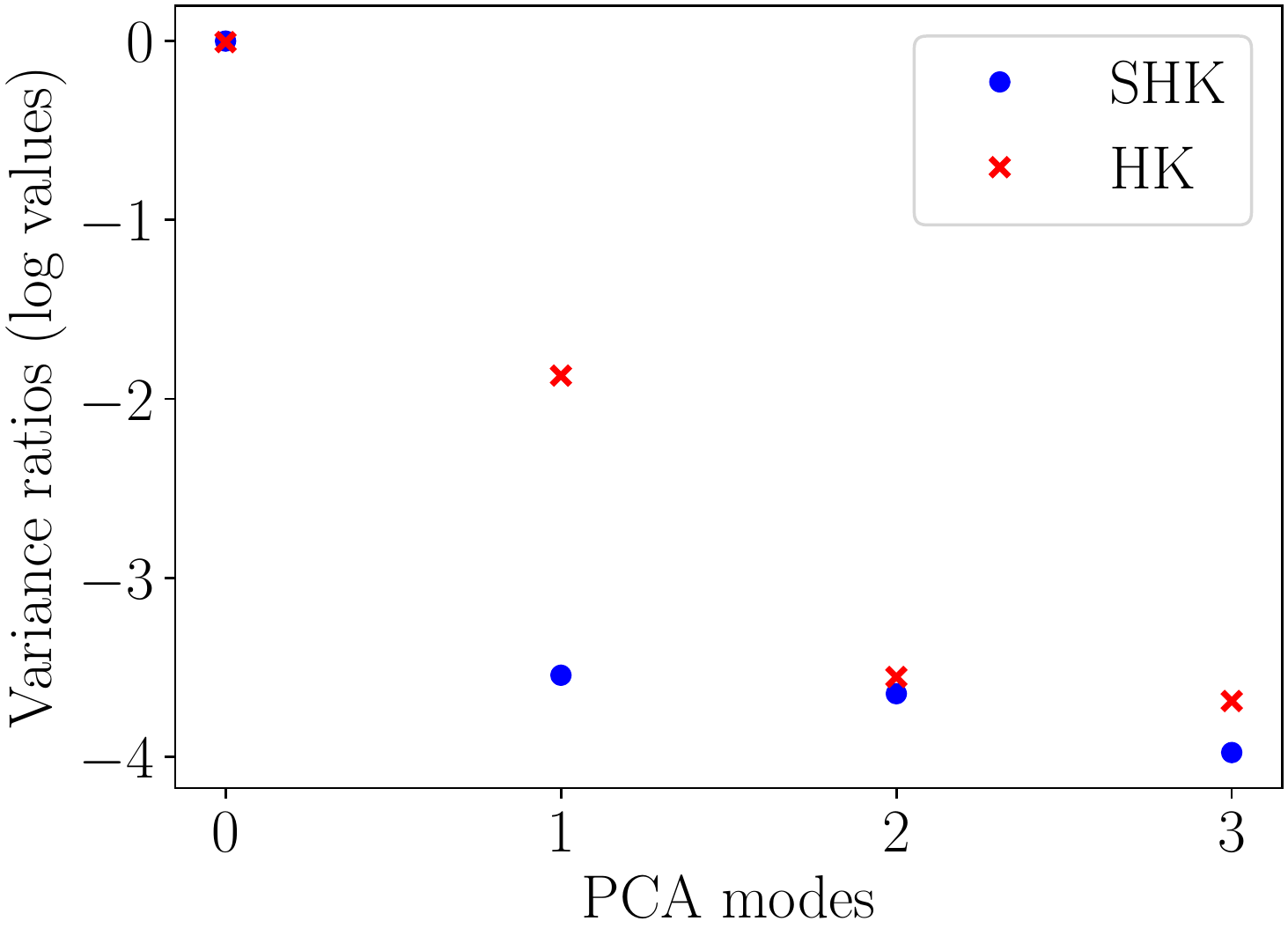}\quad\includegraphics[scale=.3]{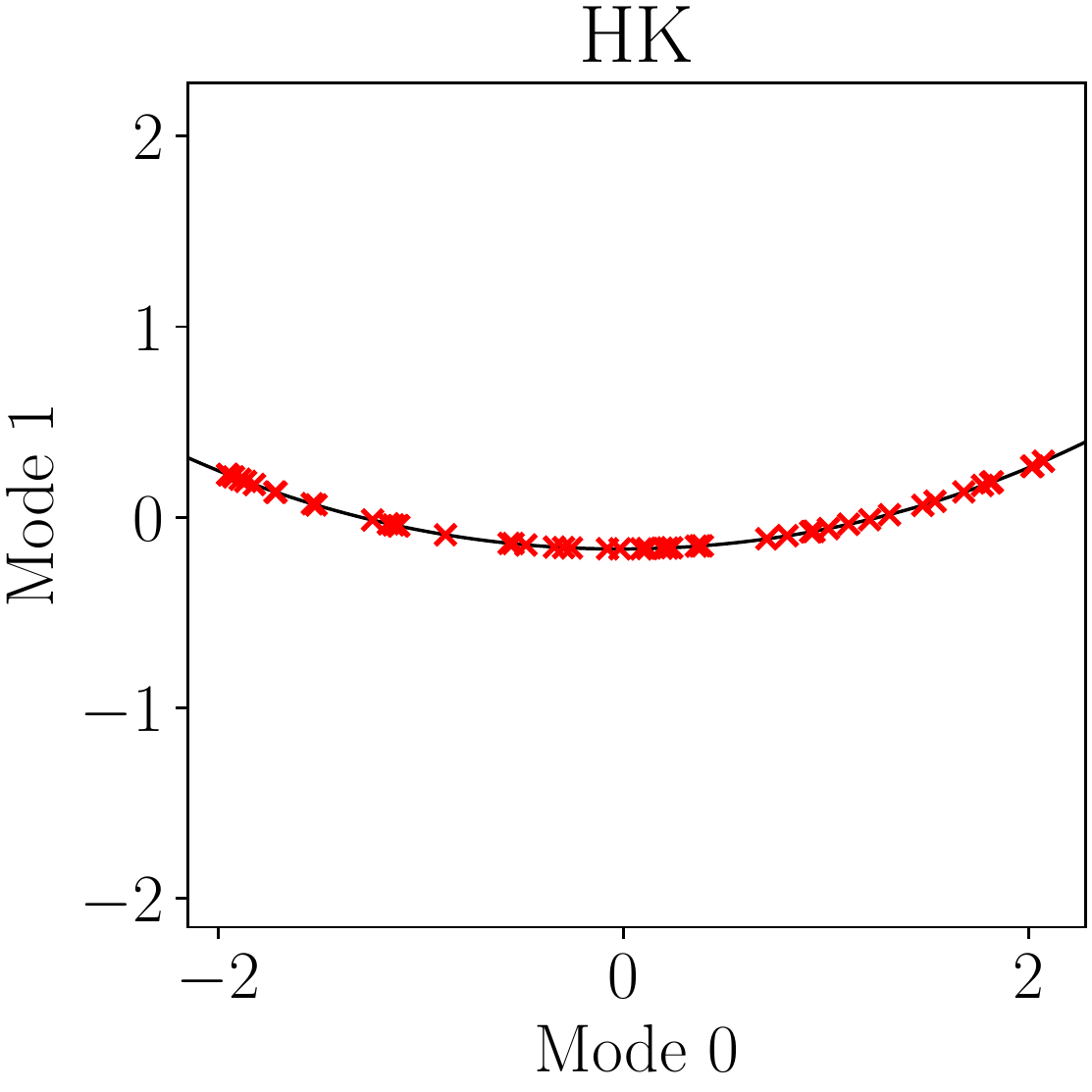}\quad\includegraphics[scale=.3]{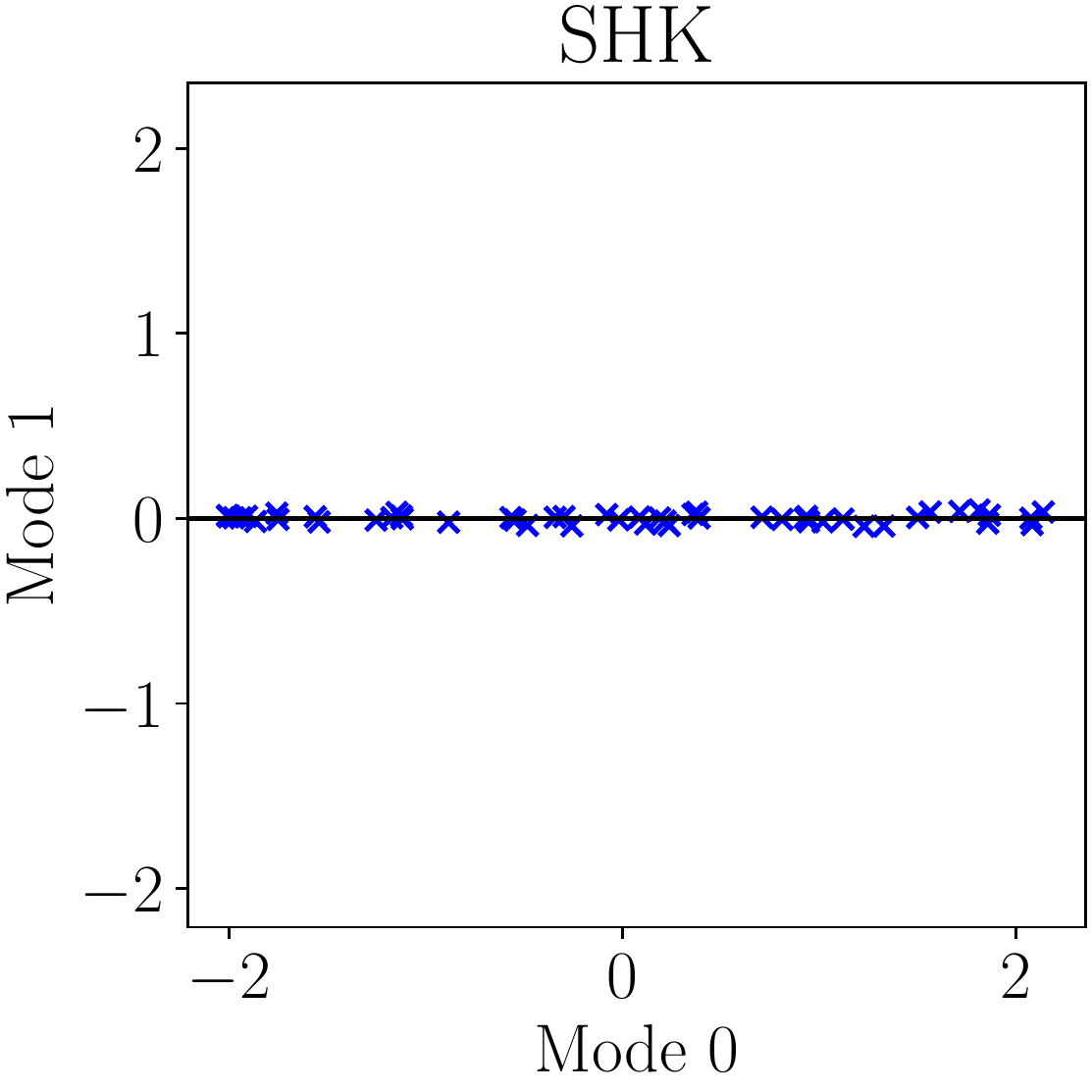}}
	
	\caption{Principal component analysis for the $\HK$ and $\SHK$ embeddings of the 1-dimensional disk data. In \subref{fig:fixRad1D_HKPCA_spec_1v1} one can observe the circular structure for $\HK$ and the appearance of a corresponding additional `large' eigenvalue.}
	\label{fig:fixRad1D_PCA}
\end{figure}
\begin{figure}[hbt]
	\centering
	\subcaptionbox{Exponential map along first two PCA modes for $\HK$.\label{fig:fixRad1D_HKPCA_shooting}}%
	{\includegraphics[scale=.3]{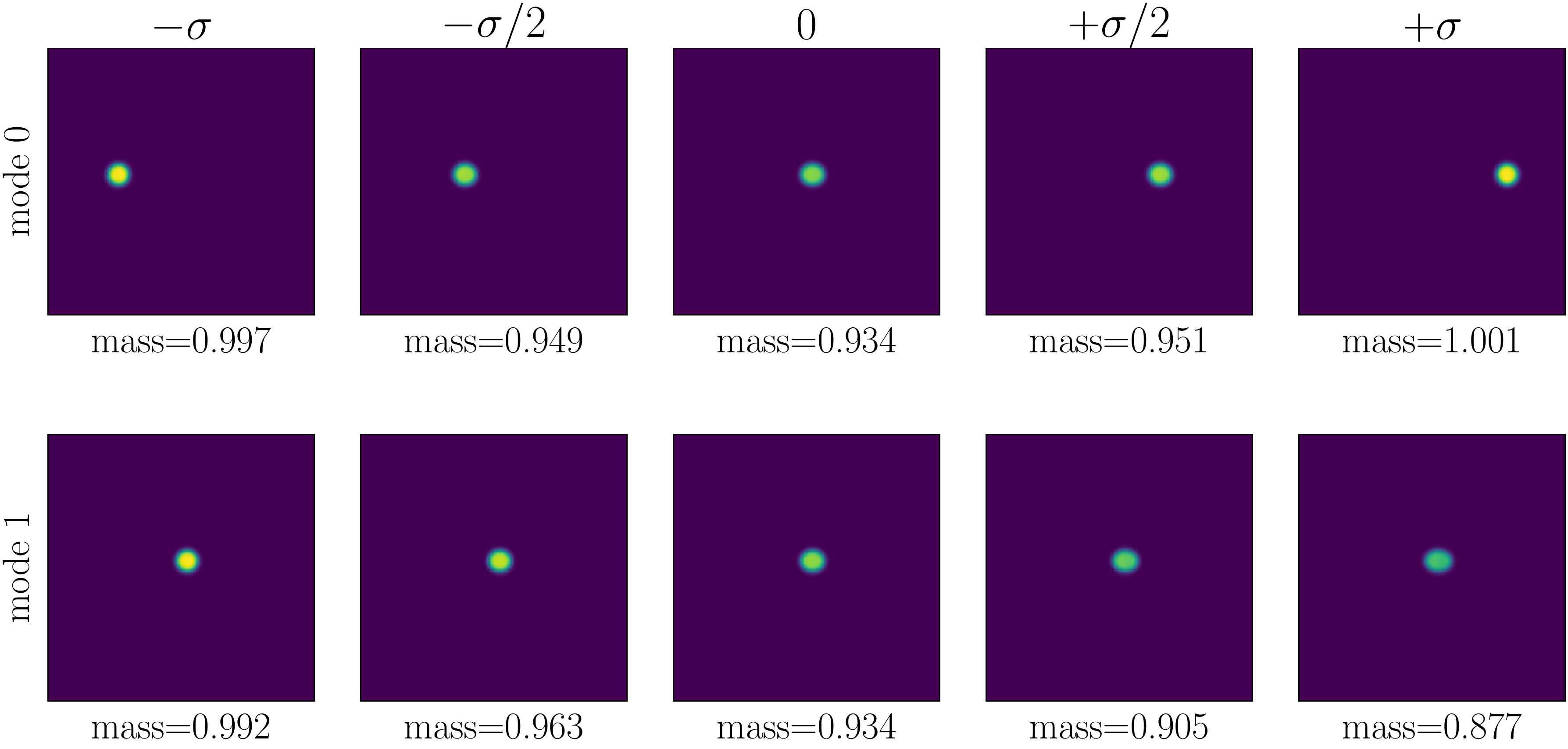}}
	
	\subcaptionbox{Exponential map along first PCA mode for SHK.\label{fig:fixRad1D_SHKPCA_shooting}}%
	{\includegraphics[scale=.3]{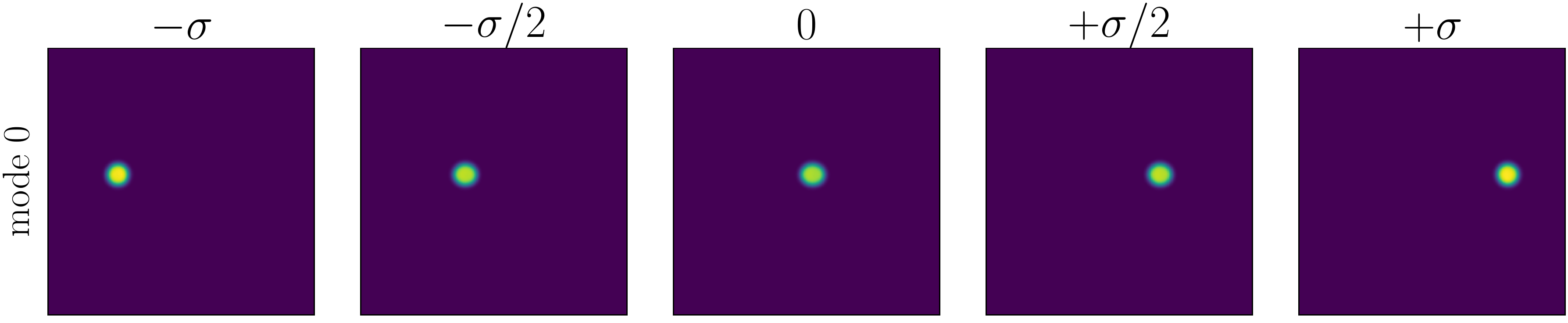}}
	\caption{Visualizing the dominant PCA eigenvectors via the exponential map for $\HK$ and $\SHK$ on the 1-dimensional disk data. In both cases the dominant eigenvector corresponds to translation of the disk, albeit with an additional mass change for $\HK$. To compensate for this, $\HK$ has a second large eigenvalue, corresponding to pure mass changes (eigenvector shown in second row).}
	\label{fig:fixRad1D_shooting}
\end{figure}
Next, we consider uniform probability measures on disks of radius $R\assign0.2$ with centers sampled at random uniformly from the segment $[R; L-R] \times \{0\}$ for $L\assign5$ and set $\kappa\assign6$. The reference measure $\mu_0$ is taken to be the disk centered in the middle of the segment (see \Cref{fig:fixRad1D_Samples}). For these choices of $R,L$ and $\kappa$, the pure Hellinger parts in both $\HK$ and $\SHK$ metrics are zero.

The stability results from \Cref{prop:CV_tanvecs_HK} and \Cref{cor:HK_strong_L2} suggest that the embedded data will be close to the Dirac example from the previous paragraph.
Indeed, the PCA spectra and two-dimensional embeddings for $\HK$ and $\SHK$, shown in \Cref{fig:fixRad1D_HKPCA_spec_1v1}, confirm this. The two-dimensional PCA embedding for $\HK$ shows a circle segment, for $\SHK$ we see a straight line.
For both metrics the dominant eigenvector corresponds to the translations along the chosen horizontal axis (cf.~\Cref{fig:fixRad1D_HKPCA_shooting} and \Cref{fig:fixRad1D_SHKPCA_shooting} where this eigenvector is visualized via the exponential map as described in \Cref{sec:NumericsPrel}). For $\HK$ this translation is combined with a change in mass, as discussed in the previous paragraph. Accordingly, in the PCA spectrum for $\HK$ the second largest eigenvalue is substantially larger than for $\SHK$ and it corresponds to the direction of pure mass changes (see \Cref{fig:fixRad1D_HKPCA_shooting} again).

\paragraph{Disks with random radii and positions in a box}
\begin{figure}[hbtp]
	\centering
	\subcaptionbox{Generated data,\newline reference measure supported on black disk.\label{fig:varRad2D_Samples}}{\includegraphics[scale=.6]{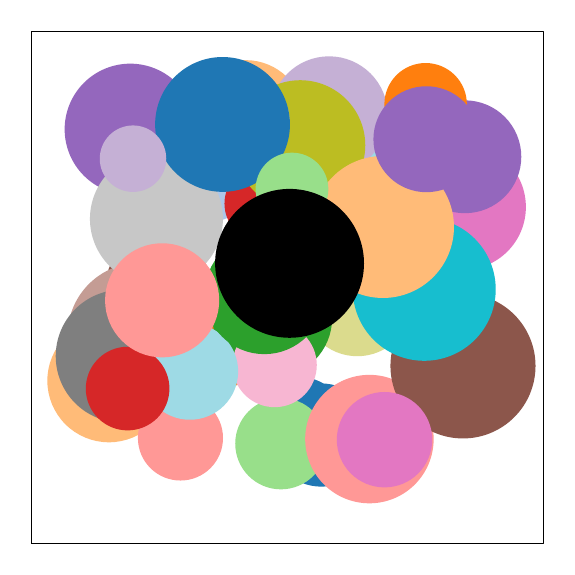}}~\vline~
	\subcaptionbox{Ratios of explained variance for first PCA modes for $\HK$ and $\SHK$ (log scale), 2d PCA embedding for $\HK$ (the 2d $\SHK$ embedding looks virtually identical), and scatter plot of the sample disk centers. \label{fig:varRad2D_HKPCA_spec}}{\includegraphics[scale=.3]{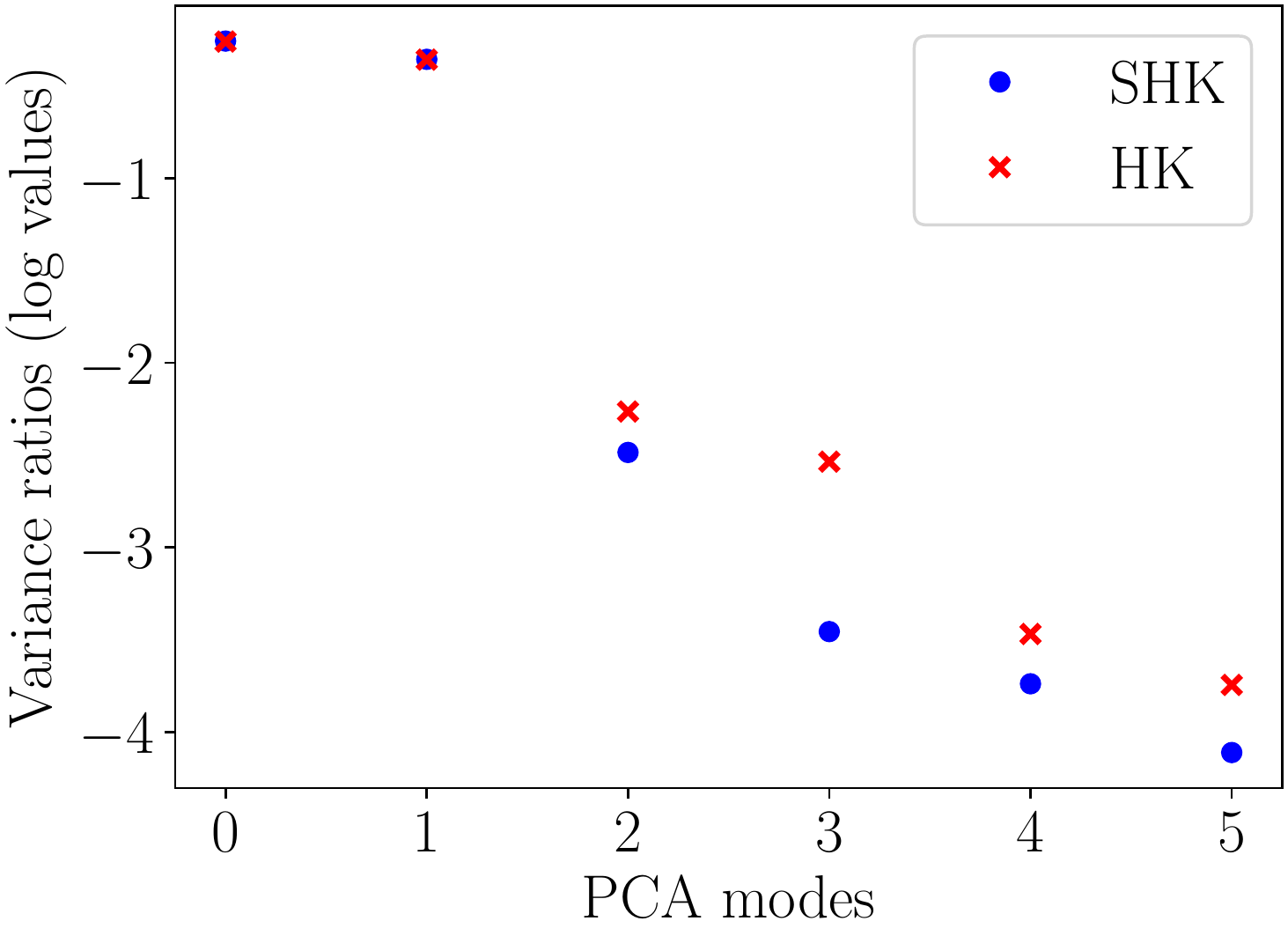}~\includegraphics[scale=.2]{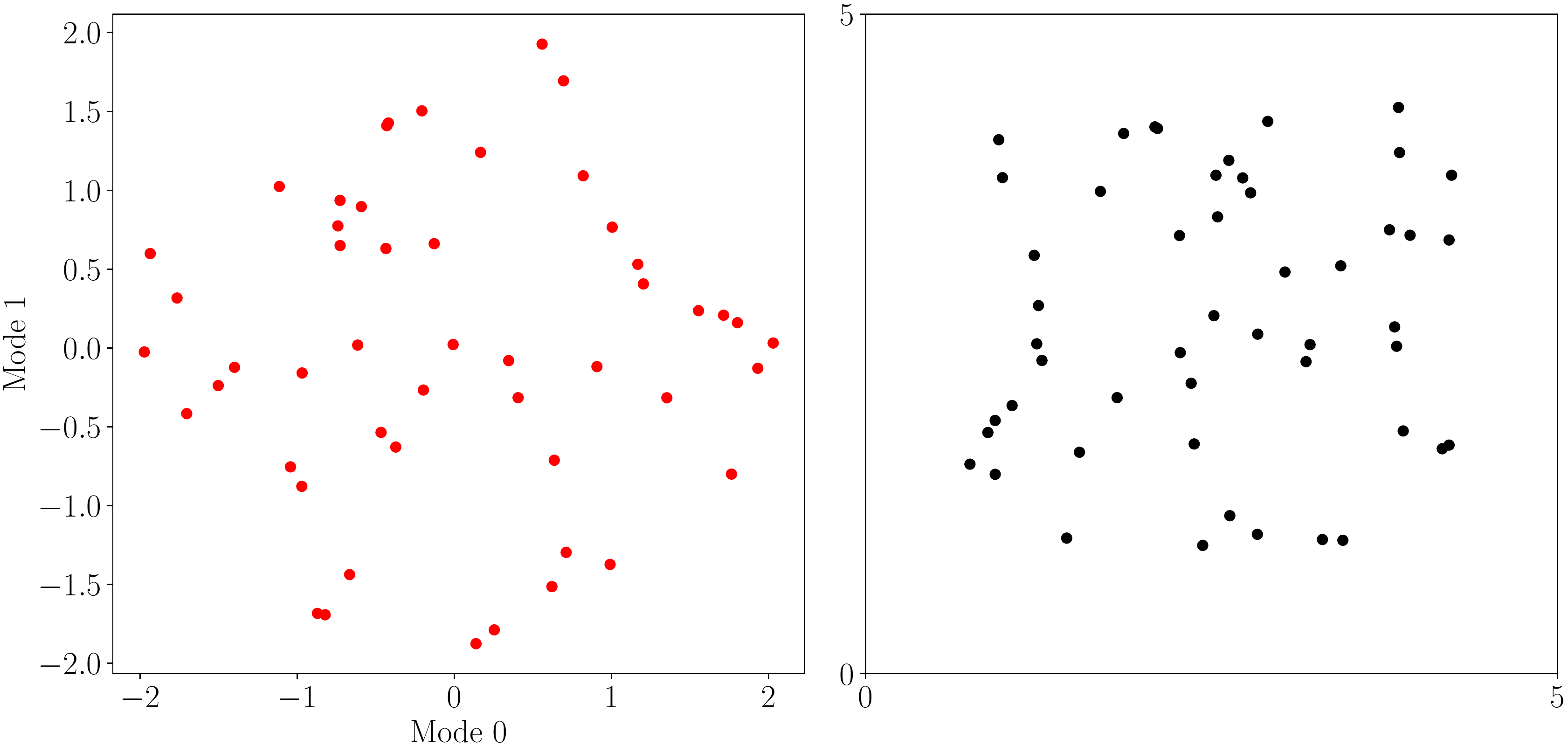}}
	\caption{Principal component analysis for the $\HK$ and $\SHK$ embeddings of the 3-dimensional disk data. Up to reflection and rotation the 2d PCA projection and the scatter plot of the sample disk centers look almost identical suggesting that the first two modes capture the translation of the samples.}
	\label{fig:varRad2D_PCA}
\end{figure}

\begin{figure}[hbtp]
	\centering
	\subcaptionbox{Exponential map along first four PCA modes for $\HK$.\label{fig:varRad2D_HKPCA_shooting}}%
	{\includegraphics[scale=.25]{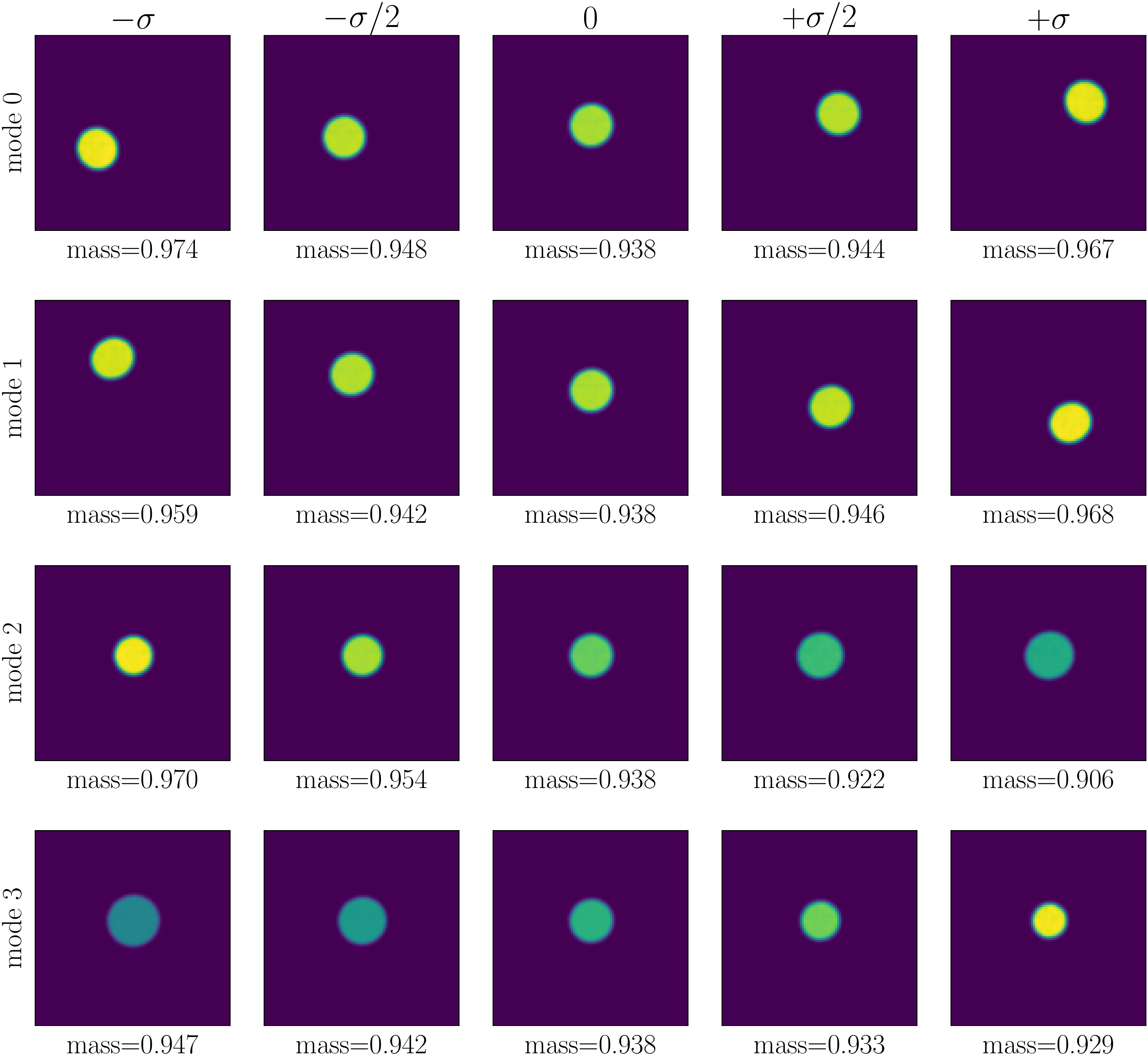}}
	
	\subcaptionbox{Exponential map along first three PCA modes for SHK.\label{fig:varRad2D_SHKPCA_shooting}}%
	{\includegraphics[scale=.25]{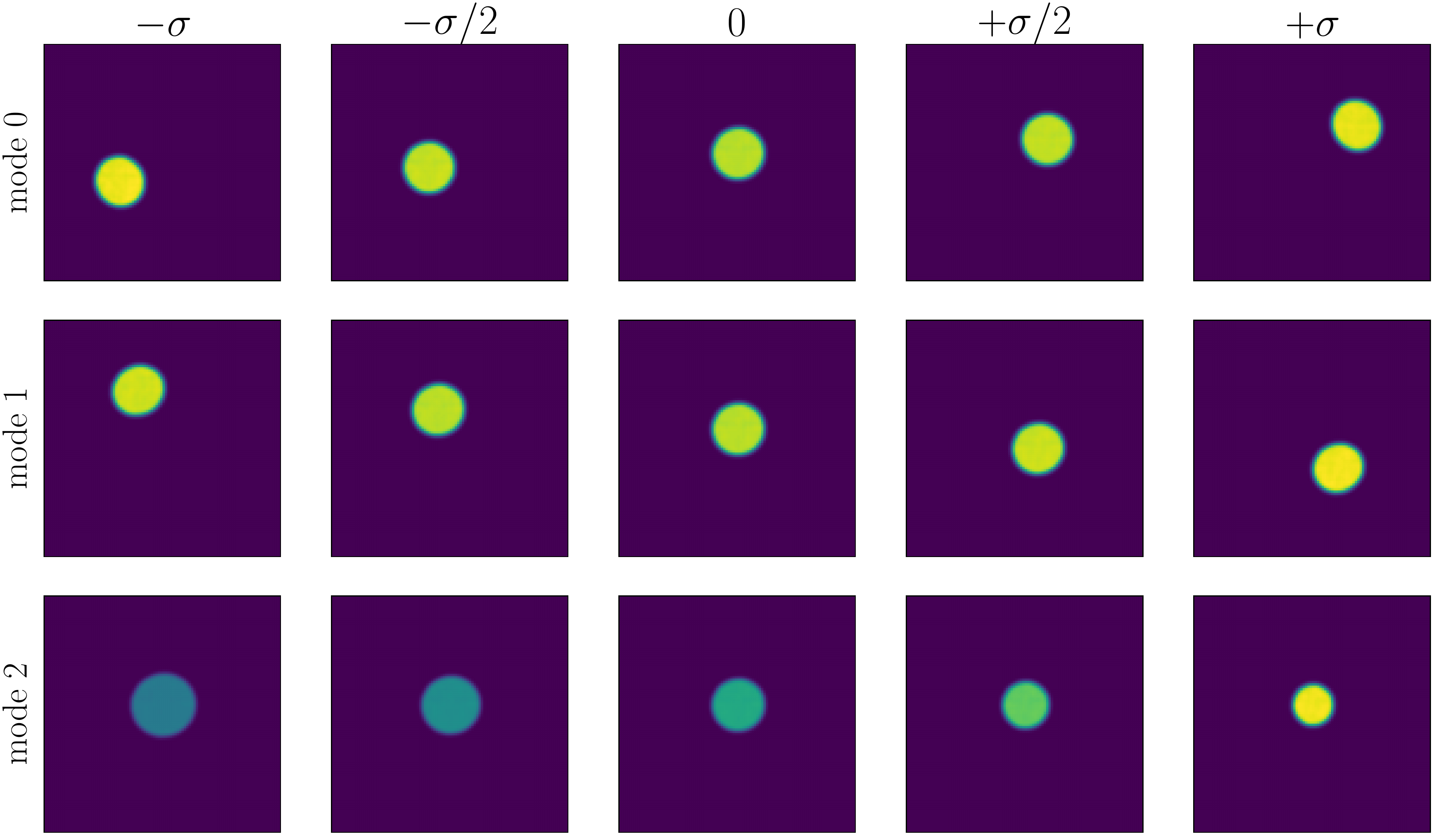}}
	\caption{Visualizing the dominant PCA eigenvectors via the exponential map for $\HK$ and $\SHK$ on the 2-dimensional disk data.}
	\label{fig:varRad2D_shooting}
\end{figure}

In the next experiment the data consists of uniform probability measures on disks of varying radii and centers, with radii and centers sampled uniformly from the interval $[R_{\min};R_{\max}]\assign[0.3;0.7])$ and $[R_{\max}, L-R_{\max}]^2$ respectively, where $L\assign 5$ and we set again $\kappa\assign6$.
The reference measure for the tangent space approximation corresponds to the disk of radius $R=0.5$, centered at $(L/2,L/2)$, see for instance \Cref{fig:varRad2D_Samples}.
Applying PCA to the tangent space embeddings of these measures for both HK and SHK yields the spectra and projection shown in \Cref{fig:varRad2D_HKPCA_spec}. Shooting along the modes via the exponential map is shown in \Cref{fig:varRad2D_shooting}.
From these Figures we conclude that for $\HK$ and $\SHK$ the first two modes encode essentially the positions of the centers of the disk (up to an exchange, rescaling  and small rotation of the axes, compare with the right-most Plot of \Cref{fig:varRad2D_HKPCA_spec}).
Moreover, mode 2 in SHK captures the radius variations. However, for $\HK$ mode 2 corresponds instead mostly to the mass bias, as in the previous experiments, whereas the radius variations are captured by mode 3.

\subsection{Linearized HK and SHK metrics on the sphere}

\begin{figure}[bt]
	\centering
	\subcaptionbox{Subset of generated data, reference measure in Black supported on the `north pole cap'.\label{fig:rotCirc3D_Samples}}%
	{\hspace{1em}\includegraphics[scale=.4]{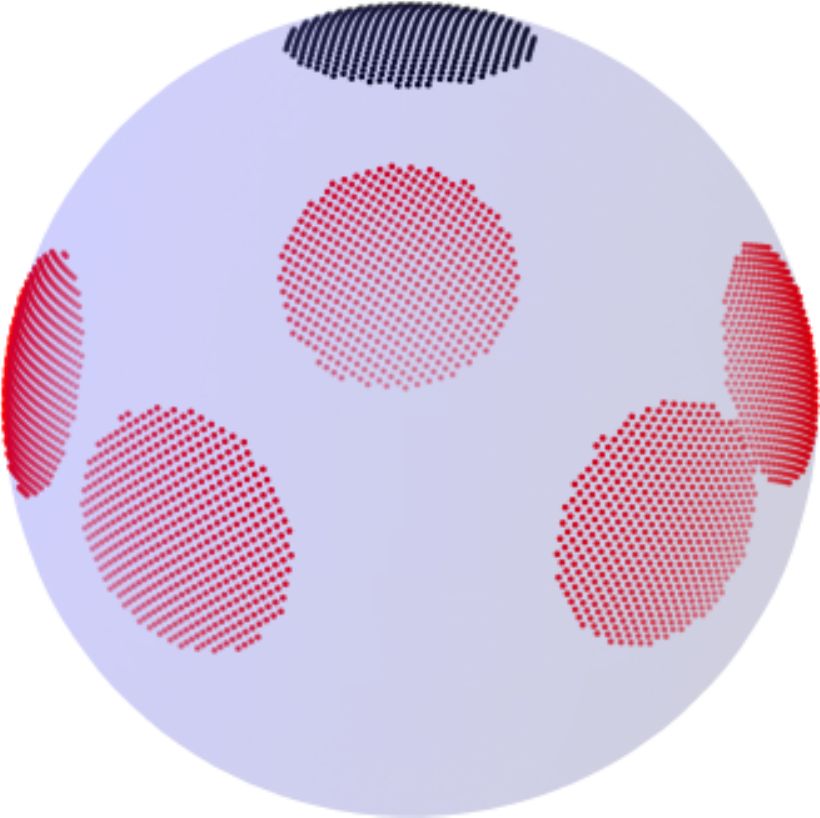}}%
	\qquad\vline\qquad
	\subcaptionbox{Ratios of explained variance for first PCA modes (log scale), and 2d PCA projection (for SHK embedding, $\HK$ embedding yields the same plot).\label{fig:rotCirc3D_HKPCA_spec}}%
	{\includegraphics[scale=.3]{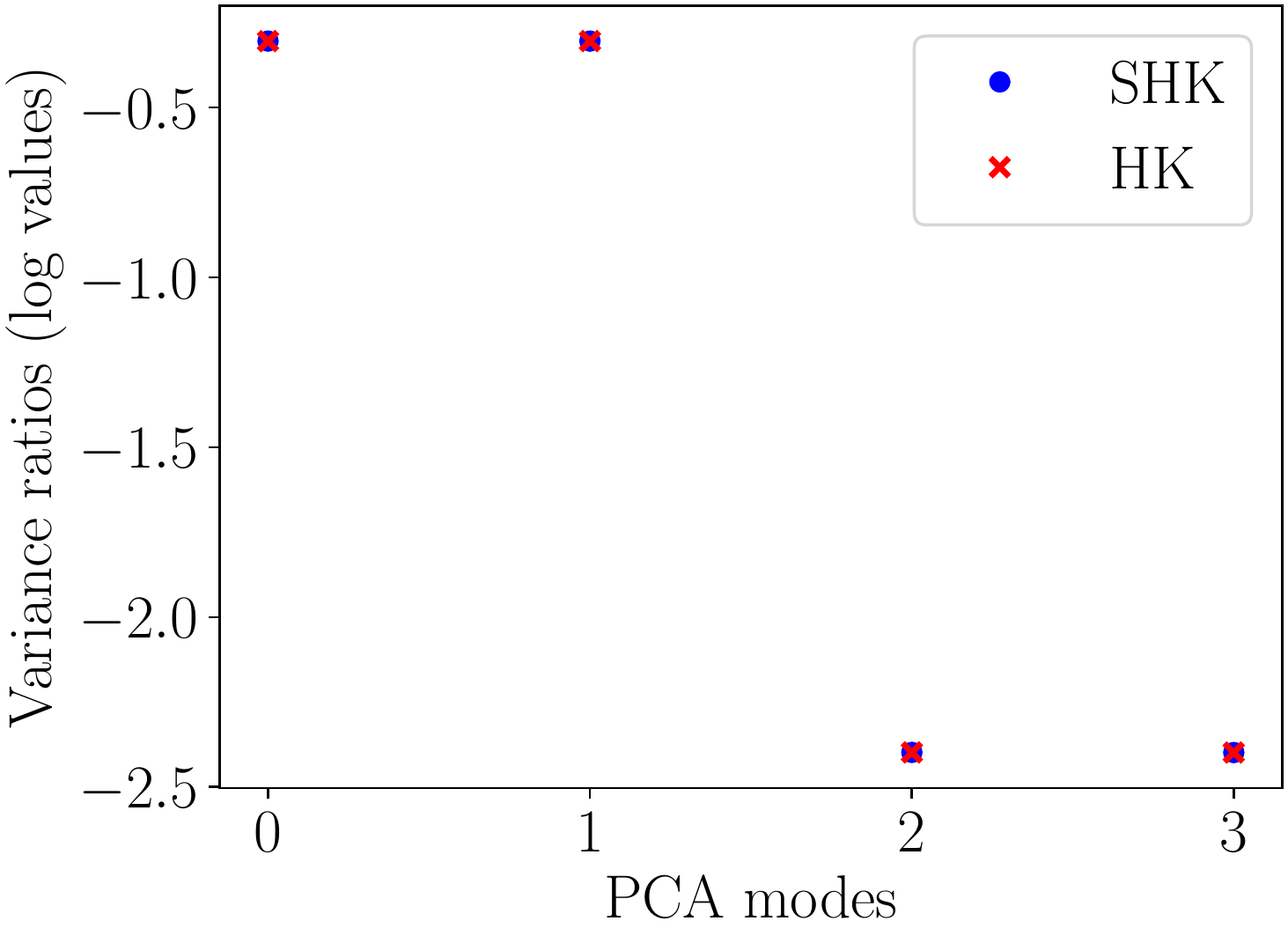}\quad \includegraphics[scale=.3]{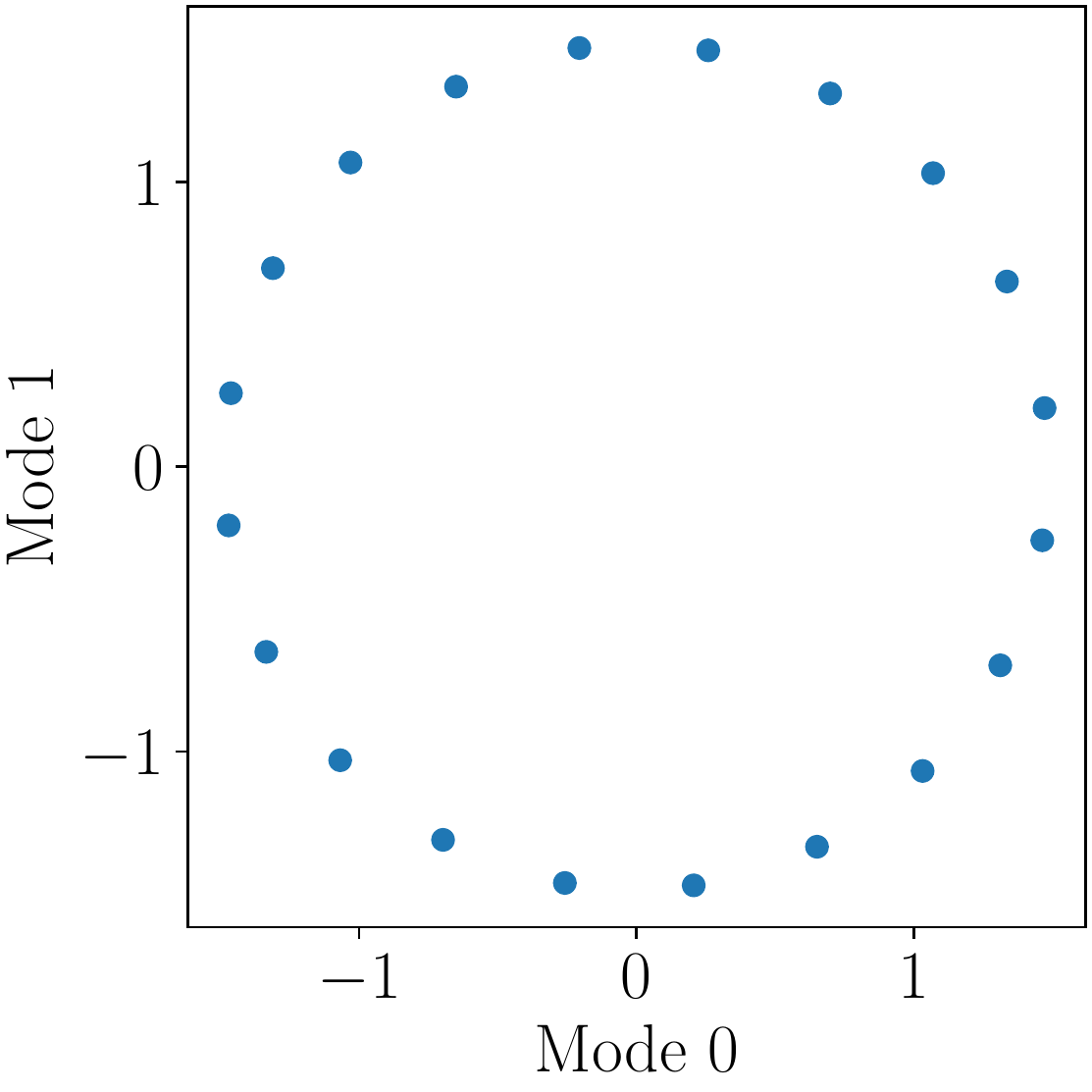}}
	\caption{Principal component analysis for the $\HK$ and $\SHK$ embeddings of the sphere example.\label{fig:rotCirc3D_PCA}}
\end{figure}
\begin{figure}[bt]
\centering
	\subcaptionbox{Exponential map along first two PCA modes for $\HK$.}%
	{\includegraphics[scale=.2]{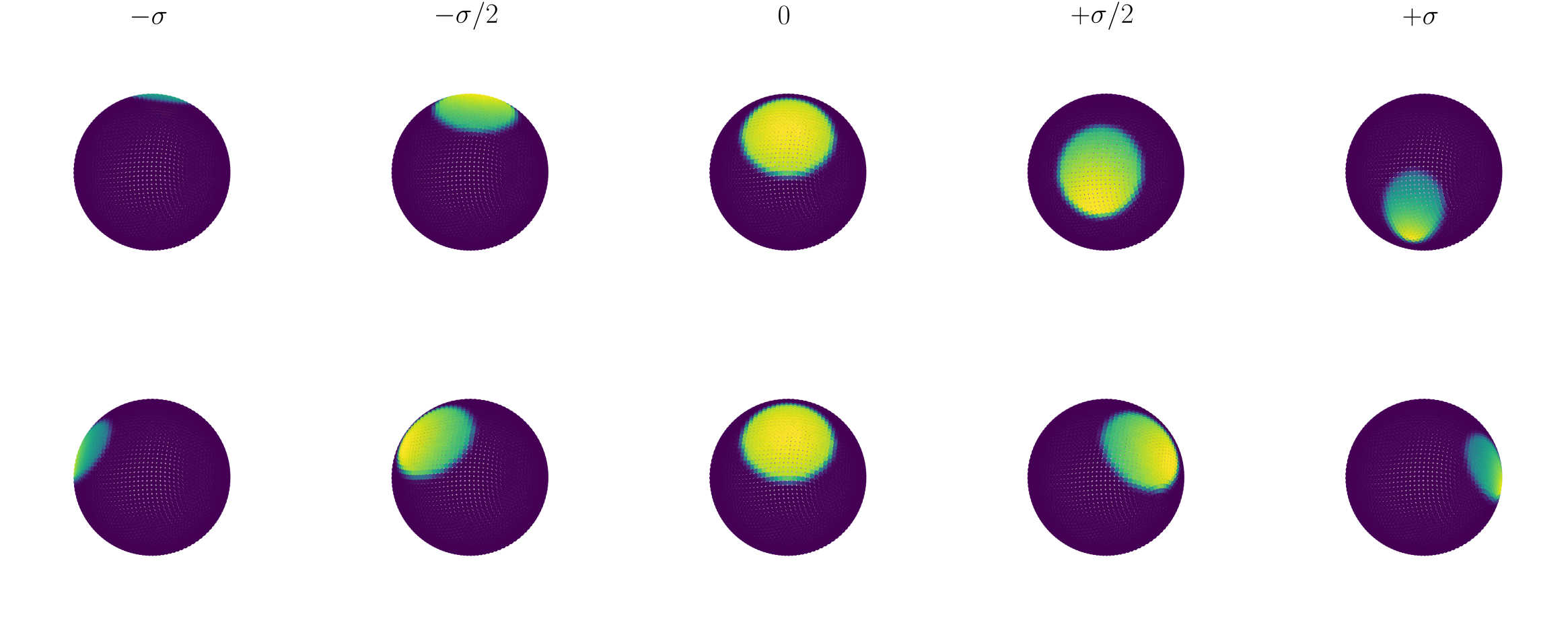}}
	~
	\subcaptionbox{Mass variations during the shooting for each mode}%
	{\;\;\includegraphics[scale=.22]{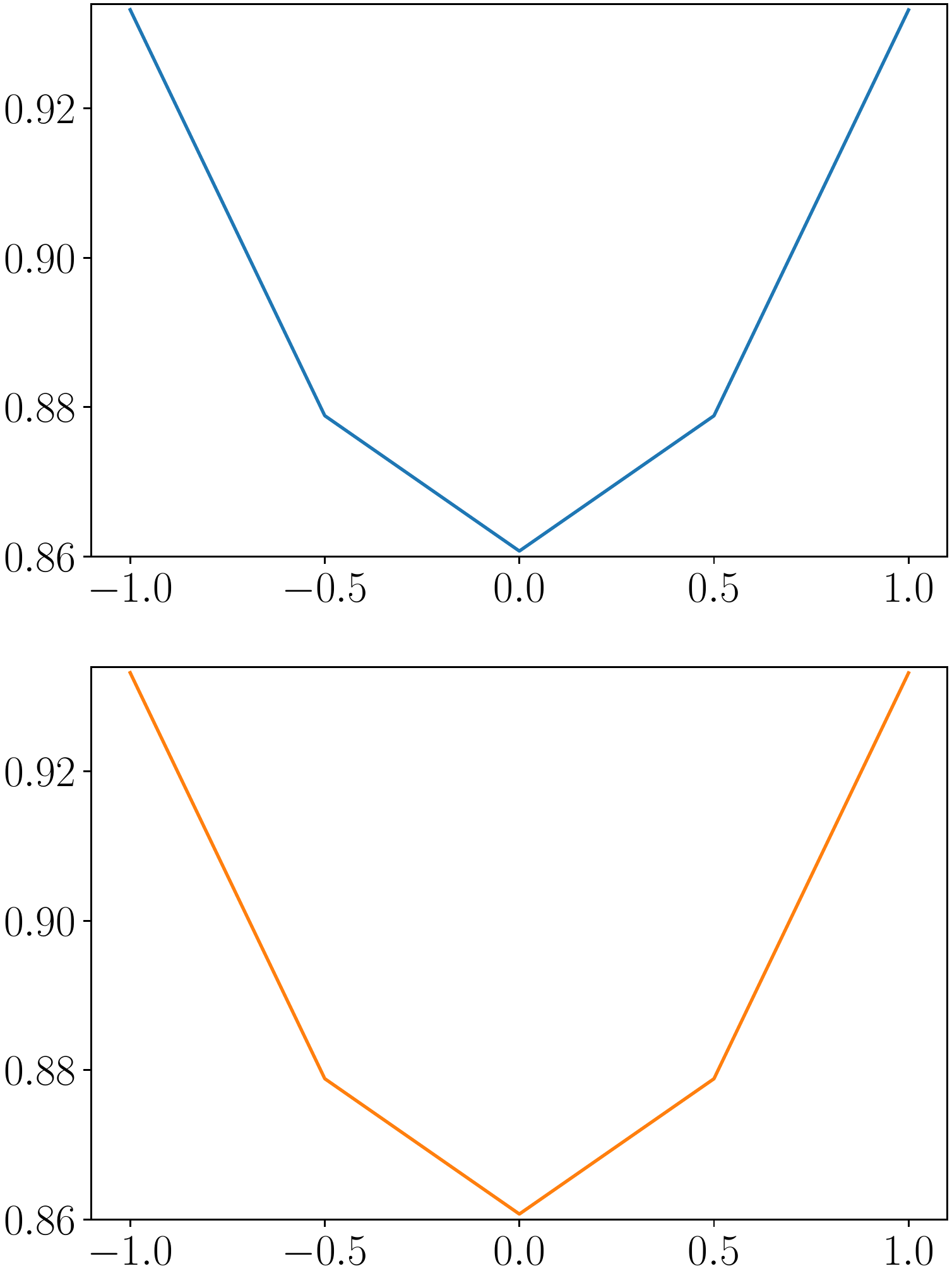}\;\;}
	\caption{Visualizing the dominant PCA eigenvectors via the exponential map for $\HK$ for the sphere example.}
	\label{fig:rotCirc3D_shooting}
\end{figure}

In this section we give an example for linearized optimal transport when the base space $X$ is the 2-dimensional unit sphere. We set $\kappa=4$.
As samples we pick uniform probability measures on balls (for the sphere metric) centered on points along the equator, whereas the reference measure is a ball centered on the north pole, see \Cref{fig:rotCirc3D_Samples}.
In this experiment the first four PCA eigenvalues for $\HK$ and $\SHK$ are essentially identical and
the first two components capture approximately the position of the samples from the perspective of the north pole, resulting in a circular structure, see \Cref{fig:rotCirc3D_HKPCA_spec}.
Exponential shooting confirms this interpretation (see \cref{fig:rotCirc3D_shooting}) showing the reference measure being pushed along two orthogonal directions relative to the north pole.
This results in a strong deformation of the balls which has to be corrected by the two subsequent modes, albeit carrying a much lower variance.

\subsection{Convexity of the range of the log map}
\label{sec:convexity_range_num}
We now revisit the question of convexity of the range of the logarithmic map in linearized optimal transport of Section \ref{sec:convexity_range}.

\begin{figure}[hbt]
	\centering
	\includegraphics[scale=.4]{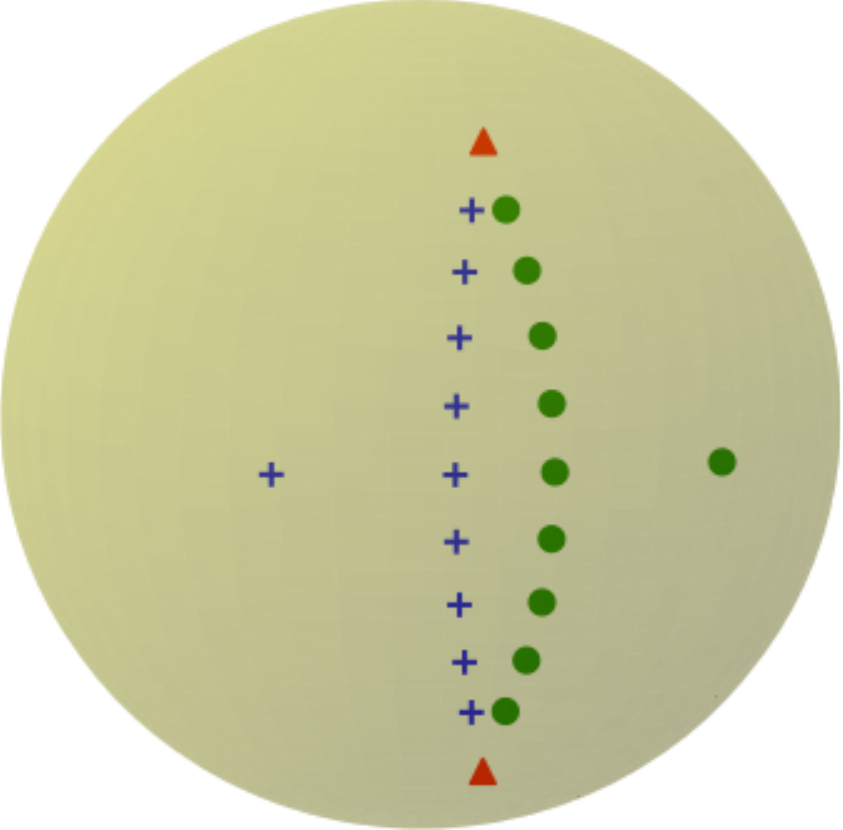}\hskip4cm
	\includegraphics[scale=.4]{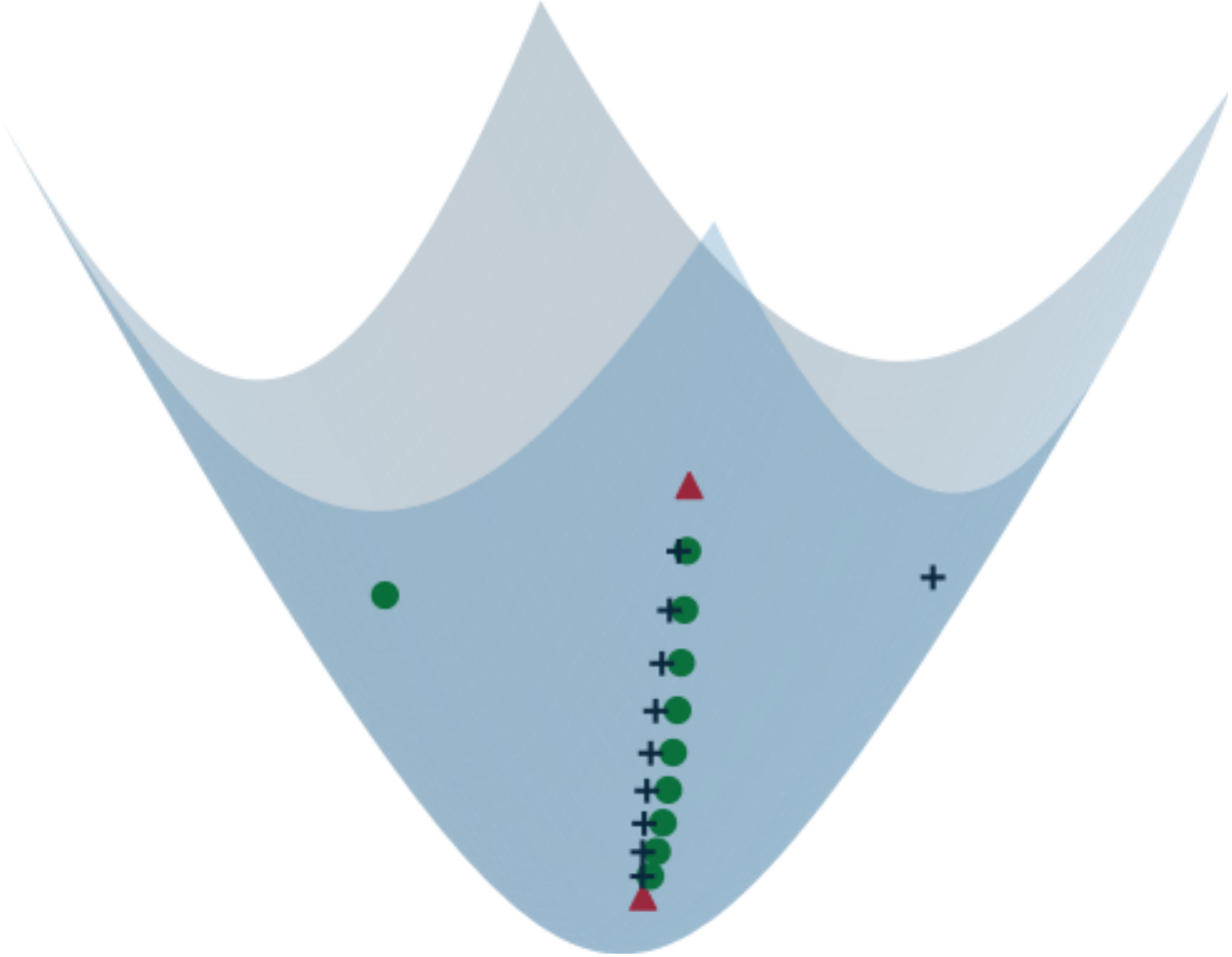}
	\caption{Interpolation between two discrete measures by mapping to tangent space, linear interpolation, and application of the exponential map for $\W_2$ on a sphere (left), and the paraboloid model of the hyperbolic space (right).
		In each case, $\nu_0\assign\delta_{y_0}$ and $\nu_1\assign\delta_{y_1}$ are indicated by red triangles. $\mu\assign\tfrac12 (\delta_{x_0}+\delta_{x_1})$ is indicated by the isolated cross ($i=0$) and dot ($i=1$). The interpolated measures $\nu_t$ (for some $t \in (0;1)$) are indicated by $T_t(x_i)$ (see text) by crosses ($i=0$) and dots ($i=1$) respectively, indicating from which $x_i$ the corresponding mass was transported.
		One can see that in the case of the sphere, the interpolated transport is optimal between the initial measure and the target one, \eqref{eq:LogdTCondition} is satisfied. For the hyperbolic space, it is not as the order of the Dirac masses (from left to right) is reversed, thus \eqref{eq:LogdTCondition} is violated.
		\label{fig:W2_examples}}
\end{figure}

\paragraph{$\W_2$ distance}
We chose as base measure $\mu\assign\frac{1}{2}\delta_{x_0}+\frac{1}{2}\delta_{x_1}$ and two sample measures $\nu_0\assign\delta_{y_0}$ and $\nu_1\assign\delta_{y_1}$, for four points $x_0,x_1,y_0,y_1 \in X$.
Then for $s \in \{0,1\}$ there are optimal transport maps, simply given by $T_s(x_i)=y_s$ for $i \in \{0,1\}$ and therefore for $v_s \assign \Log_{\mu}^{\W_2}(\nu_s)$, we find $v_s(x_i)=\Log_{x_i}^X(y_s)$.
For $t \in (0;1)$ then define $v_t \assign (1-t) \cdot v_0 + t \cdot v_1$ and $\nu_t \assign \Exp_{\mu}^{\W_2}(v_t)=T_{t\#} \mu$ where $T_t(x_i)=\Exp_{x_i}^X(v_t(x_i))$. $\nu_t$ takes the form $\nu_t = \tfrac12 (\delta_{T_t(x_0)}+\delta_{T_t(x_1)})$.
Thus the curve of measures obtained by logarithmic map, linear interpolation in the tangent space, and re-applying the exponential map consists of two Dirac masses.
If one has for all $t \in (0;1)$ that
\begin{equation}
\label{eq:LogdTCondition}
d(x_0,T_t(x_0))^2 + d(x_1,T_t(x_1))^2 \leq d(x_0,T_t(x_1))^2 + d(x_1,T_t(x_0))^2,
\end{equation}
then $T_t$ is indeed the optimal transport map between $\mu$ and $\nu_t$ and therefore $\Log_\mu^{\W_2}(\nu_t)=v_t$, and finally that the whole line segment between $v_0$ and $v_1$ in the tangent space of $\mu$ lies in the range of the logarithmic map.

For $X \subset \R^n$ one has $\Log_x^X(y)=y-x$ and $\Exp_x^X(v)=x+v$. In this case, $v_s(x_i)=y_s-x_i$ for $s \in \{0,1\}$, $v_t(x_i)=(1-t) \cdot y_0 + t \cdot y_1 - x_i$, and therefore $T_t(x_0)=T_t(x_1)=(1-t) \cdot y_0 + t \cdot y_1$. Thus, both Dirac masses travel on together on the straight line from $y_0$ to $y_1$, and \eqref{eq:LogdTCondition} is satisfied (since it becomes an equality).

For other manifolds, we can repeat this construction numerically and try to verify or falsify \eqref{eq:LogdTCondition} by visual inspection.
Figure \ref{fig:W2_examples} shows $x_i$, $y_s$, and $T_t(x_i)$ for highly symmetric examples on the unit sphere and for hyperbolic space (represented by its parabolo\"id model $Z=\sqrt{1+X^2+Y^2}$).
The sphere is known to satisfy the MTW(0,0) condition, thus the range of the logarithmic map is convex, and from the figure we can clearly see that \eqref{eq:LogdTCondition} is satisfied. 
Hyperbolic space has negative sectional curvature and therefore MTW(0,0) cannot be satisfied (this is a consequence of Loeper's identity for the Ma-Trudinger-Wang tensor \cite{Loeper2009}). Indeed, Figure \ref{fig:W2_examples} suggests that \eqref{eq:LogdTCondition} is violated.

\begin{figure}[hbt]
	\centering
	\includegraphics[scale=.4]{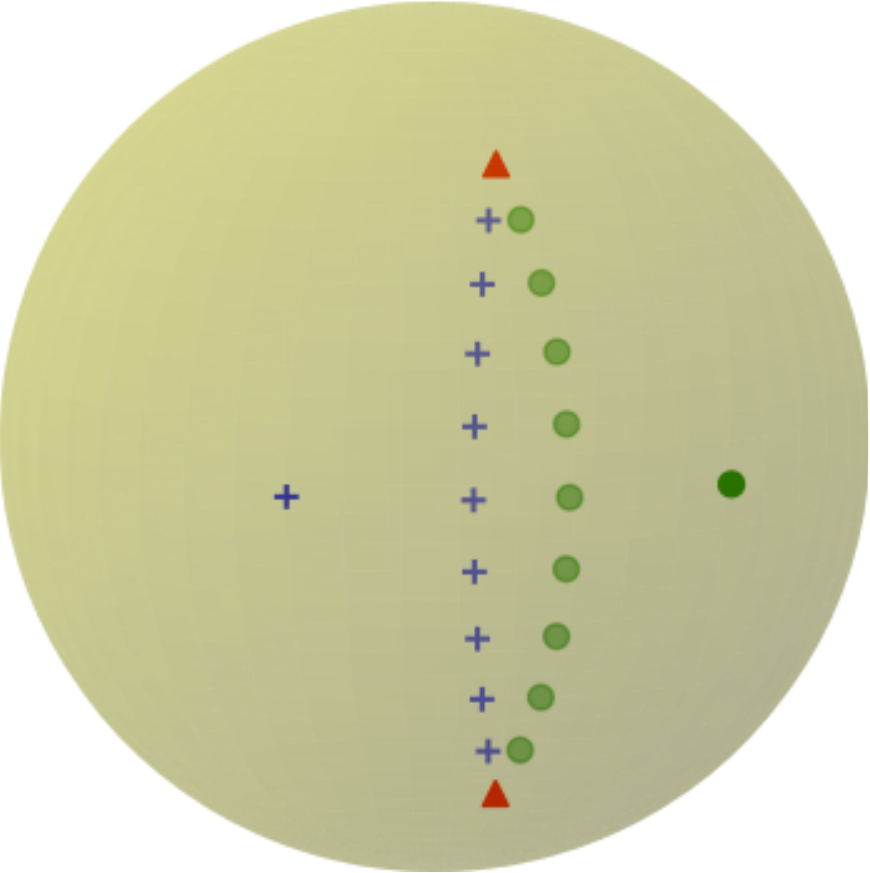}\hskip1cm
	\includegraphics[scale=.4]{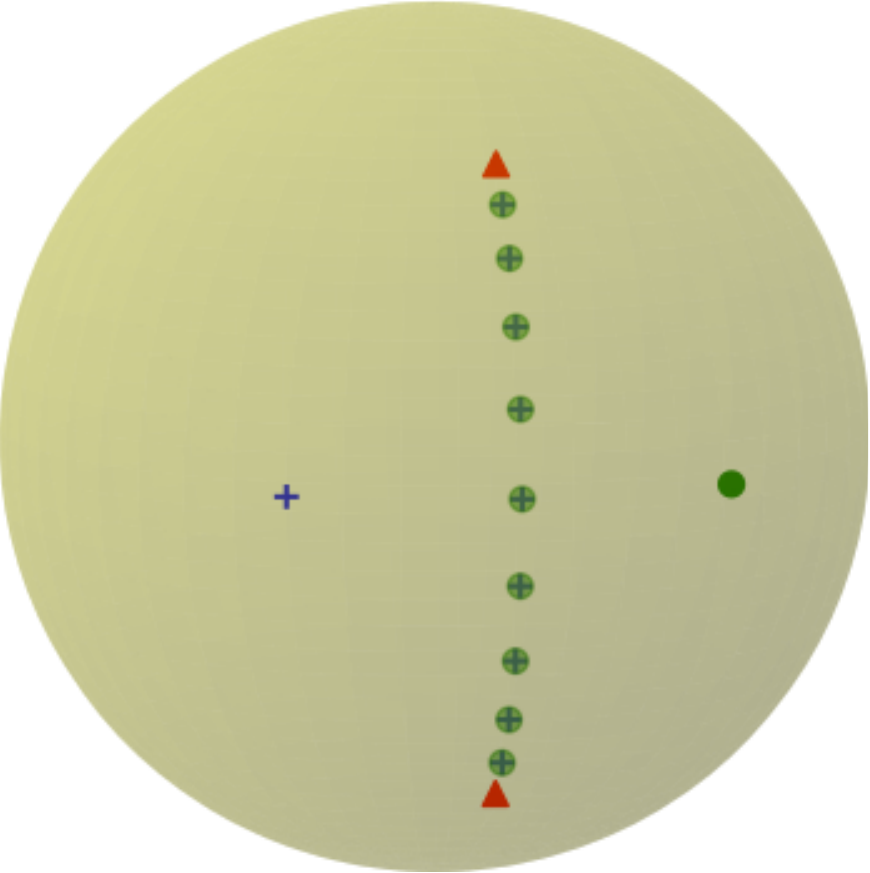}\hskip1cm
	\includegraphics[scale=.4]{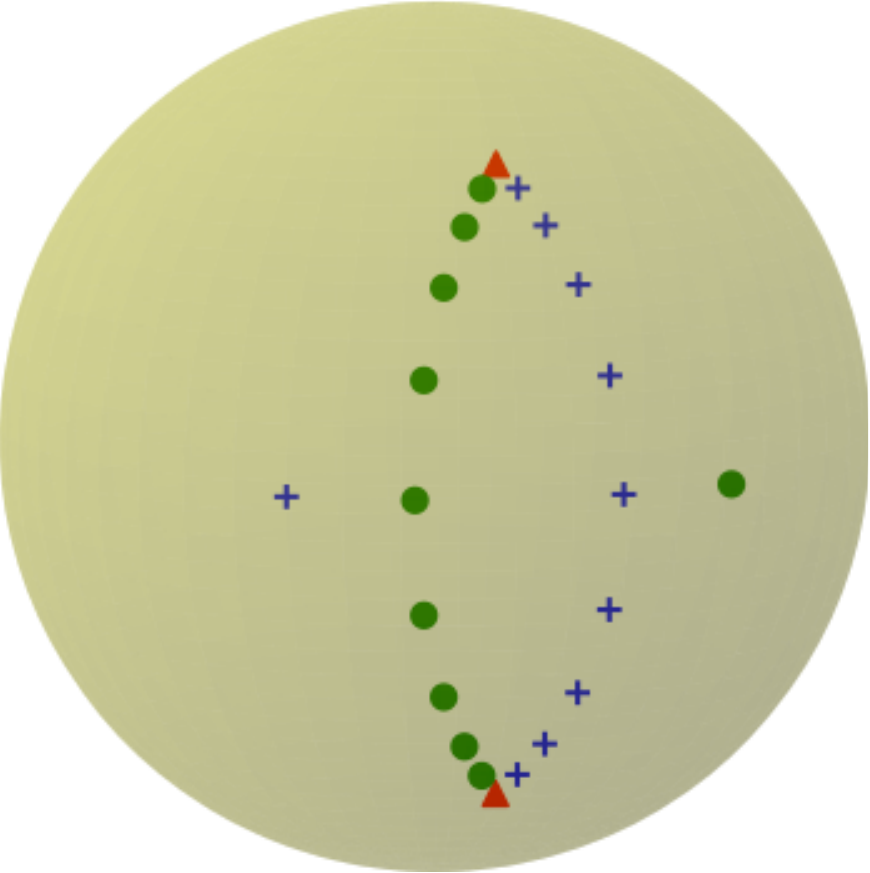}
	\caption{Same setup as in \Cref{fig:W2_examples} for $\HK_1$ on spheres with radii $r=0.1$,$1$ and $1.5$ (left to right). For $r\leq 1$, the convex combination of the logarithmic components lies in the range of the logarithmic map, for $r>1$ it does not.
		\label{fig:HK_examples}}
\end{figure}

\paragraph{HK and SHK distances.}
The above experiment can readily be adapted to the two unbalanced transport metrics.
Figure \ref{fig:HK_examples} visualizes the experiment for $\HK_1$ with $X$ being spheres of different radii.
For the unit sphere, convexity seems to hold (which can be related to the fact that the cone over the unit sphere is locally isometric to flat Euclidean space, see \cite[Section 7]{liero2018optimal} for more details). For radii less than one numerical examples suggest that convexity is also satisfied. For radii strictly greater than 1 convexity seem to fail. The same holds for flat space (not shown in Figure).
These observations seem to be consistent with the results on $c^{\HK}_\kappa$ for the sphere obtained in \cite{gallouet2021regularity}, albeit the precise relation is as of now unclear due to the aforementioned non-linear change of variables (\Cref{rem:logcconcave}).

These cases of non-convexity are not observed for the SHK metric which exhibits a behaviour much more similar to that of the $W_2$ distance (see \Cref{fig:SHK_examples}).

\begin{figure}[hbt]
	\centering
	\raisebox{0.25\height}{\includegraphics[scale=.4]{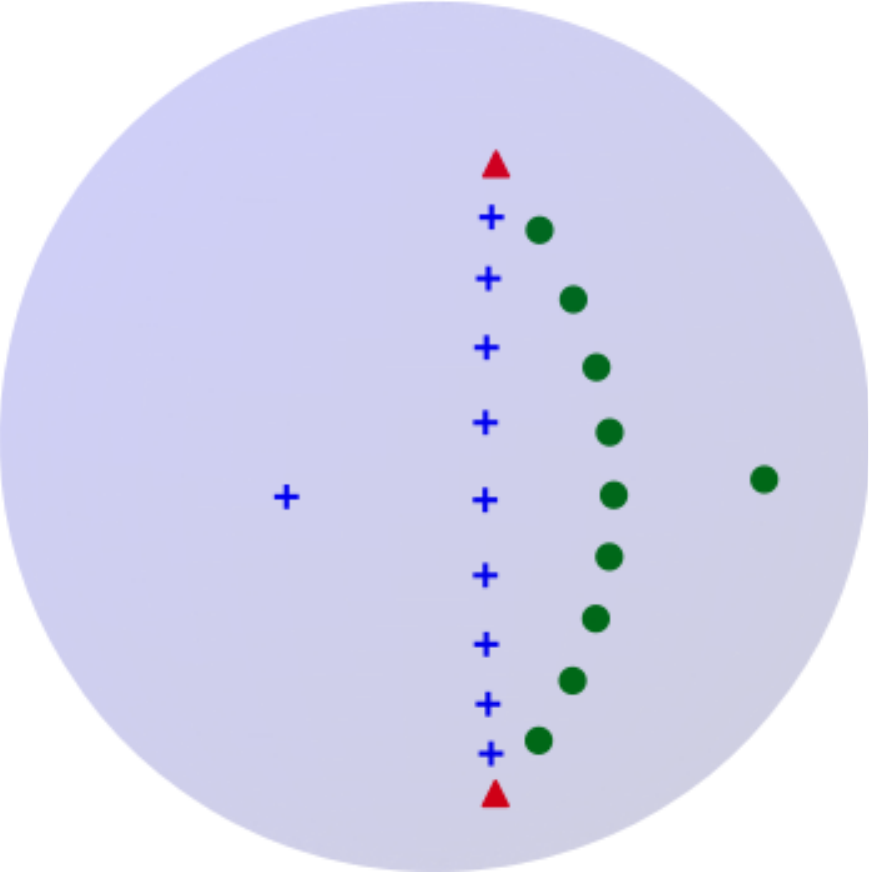}}
	\hskip1.5cm
	\includegraphics[scale=.25]{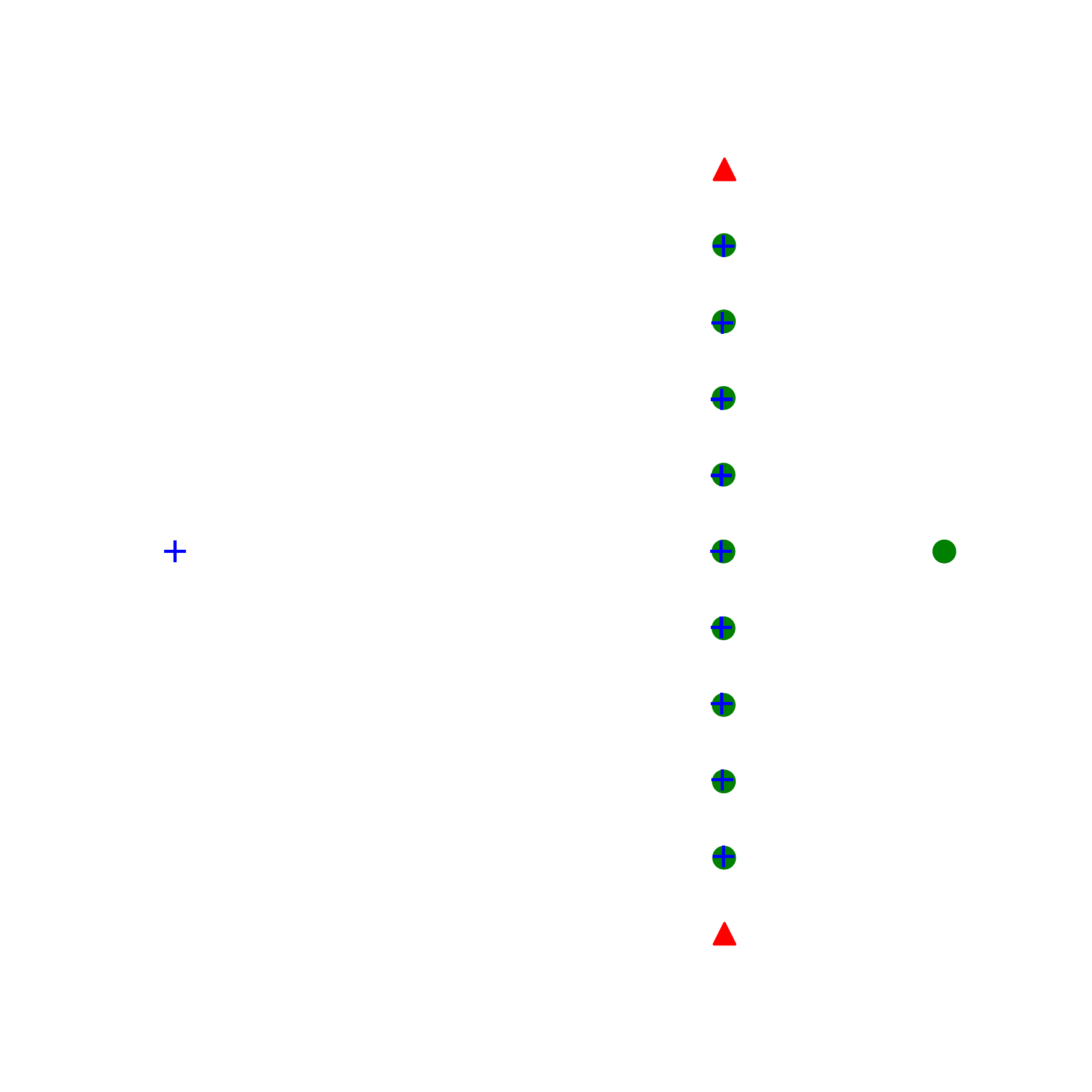}
	\caption{Same setup as in \Cref{fig:W2_examples,fig:HK_examples} for $\SHK_1$ on the sphere of radius 1.5 (left) and flat $\R^2$ (right). In both cases convexity of the range of the log map appears to be satisfied, with the interpolating measures being given by single Dirac masses in the flat case.
		\label{fig:SHK_examples}}
\end{figure}

\paragraph{Acknowledgements} The authors gratefully acknowledge support from the DFG CRC 1456 Mathematics of the Experiment A03.

\bibliographystyle{siamplain}
\bibliography{bib_HK.bib}

\appendix
\section{Approximate gradients and HK transport maps}
\label{ap:approx_grad}
In the case of quadratic optimal transport, solutions to the dual formulation are (locally) Lipschitz-continuous, allowing to use their gradient almost-everywhere as transport directions in \Cref{thm:Brenier,thm:McCann}. In the case of unbalanced transport the potentially infinite values of the cost functions prevent one from having such a high regularity and, even close to almost every point, there may not be a first order expansion in every direction. A way to define a similar notion in a measure theoretical setting is the use of \emph{approximate gradients} which capture the idea that at most points, one can find a vector describing the variations of the function, at least in most directions:

\begin{definition}[Approximate limit]
	For a Borel function $f:X\to \R$, we say that $f$ admits an approximate limit $l\in\R$ at $x\in X$ when Lebesgue-almost any sequence converges to $l$ through $f$ in the following sense:
	$$\tn{For any $\epsilon>0$, the set $\Delta_\epsilon=\{y~|~\abs{f(y)-l}\leq \epsilon\}$ has Lebesgue-density 1.}$$
\end{definition}

A corresponding notion of approximate differentiability for a function can then be defined as admitting both a weak limit and a weak limit for its growth rate:

\begin{definition}[Approximate gradient]
	\label{def:approx_diff}
	For a Borel function $f:\Surf\to\R$, we say that $f$ admits an approximate gradient $\tilde{\nabla} f(x)=u$ at $x\in X$ when 
	\begin{itemize}
		\item $f$ admits an approximate limit $l$ at $x$.
		\item for any $\epsilon>0$, the set
		\begin{equation}
			\Delta_\epsilon=\left\{y\in \Surf~\middle|~\Log_x^\Surf\text{ defined at y, }~\frac{\abs{f(y)-l-u\cdot \Log^X_x(y)}}{\norm{\Log^X_x(y)}}\leq\epsilon\right\}
		\end{equation}
		has Lebesgue-density $1$ at $x$.
	\end{itemize}
\end{definition}

One can show that this last definition is equivalent to finding a function $g$ differentiable at the chosen point $x\in \Surf$ such that the set $\{f=g\}$ has density 1 at $x$, and we shall use this reformulation for convenience in the upcoming proof, but we still give \cref{def:approx_diff} as it is more concrete in our opinion. For a more extensive introductions to these weaker notions of limit and gradient (albeit only in $\R^d$), we refer the reader to \cite[Section 5.5]{ambrosio2005gradient}. 

As mentioned in \Cref{sec:HK}, this notion of gradient is the natural one allowing to `differentiate' the primal dual optimality conditions $(i)$ and $(iii)$ in \Cref{prop:HK_L1_existence} and then solve the resulting equations to get an expression (valid $\mu_{0}-$almost everywhere) for $\HK$ optimal transport maps. This was \Cref{thm:McCann_HK}:
\begin{theoremnon}
	Let $\mu_0 \in \measpl(\Surf)$, $\mu_1 \in \measp(X)$ and $(\Phi_0,\Phi_1)$ be a maximizing pair for \eqref{eq:HK_L1_dual} with these measures. Then:
	\begin{enumerate}[(i)]
		\item The minimizer $\pi$ of \eqref{eq:HK_soft_primal} is unique and can be written as $\pi=(\id,T)_\# \pi_0$ for a corresponding optimal transport map $T$ and $\pi_0$ is the first marginal of $\pi$.
		\item The function $\Phi_0$ is $\mu_0$-almost everywhere approximately differentiable and the map $T$ satisfies
		\begin{equation*}
			T(x)=\Exp^X_x\left(-\kappa\arctan\left(\frac{1}{2\kappa}\frac{\norm{\tilde{\nabla}\Phi_0(x)}}{1-\Phi_0(x)/\kappa^2}\right)\frac{\tilde{\nabla}\Phi_0(x)}{\norm{\tilde{\nabla}\Phi_0(x)}}\right)
			\quad \text{for $\pi_0-$almost every $x$,}
		\end{equation*} 
		while $\tilde{\nabla}\Phi_0(x_0)=0$ for $\mu_0^\perp$-a.e. $x_0\in \Surf$, where $\mu_0^\perp$ is the part of $\mu_0$ that is singular with respect to $\pi_0$.
	\end{enumerate}	
\end{theoremnon}

\begin{proof}[Proof of \Cref{thm:McCann_HK}]
	The result and its proof are essentially minor adaptations of \cite[Proposition 16]{gallouet2021regularity} or \cite[Theorem 6.6 (iii)]{liero2018optimal}. For transparency we still give the full presentation here.
	
	Let us consider a dual optimizing pair $(\Phi_0,\Phi_1)$ for \eqref{eq:HK_L1_dual}. By the optimality conditions of \cref{prop:HK_L1_existence}, there exist two sets $A_0\subset \spt(\mu_0)$, $A_1\subset \spt(\mu_1)$ such that $\mu_i(\Surf\setminus A_i)=0$, $i=0,1$ and 
	\begin{subequations}
		\begin{align}
			& (1-\frac{\Phi_0(x_0)}{\kappa^2})(1-\frac{\Phi_1(x_1)}{\kappa^2})\geq \Cos^2(d(x_0,x_1)/\kappa)
			& & \text{for every $(x_0,x_1)\in A_0\times A_1$,}\label{eq:global_ineq}\\
			& (1-\frac{\Phi_0(x_0)}{\kappa^2})(1-\frac{\Phi_1(x_1)}{\kappa^2})=\cos^2(d(x_0,x_1)/\kappa)
			& & \text{for $\pi$-a.e.~$(x_0,x_1)\in A_0\times A_1$,}\label{eq:support_eq}\\
			& \Phi_i=\kappa^2
			& & \text{on $\spt(\mu_i^\perp)\cap A_i,~i=0,1$.}\label{eq:outside_sat}
		\end{align}
	\end{subequations}
	
	Let us first quickly notice that since $\Phi_0$ is constant equal to $\kappa^2$ $\mu_0^\perp$-almost-everywhere on $\spt(\mu_0^\perp)\cap A_0$, it has approximate gradient $0$ at each point of density 1 of this set, which is $\mu_0$-almost every point since $\mu_0\ll \vol$.
	
	We define the sub-level sets for $n\in\N\setminus\{0\}$,
	$$A_{1,n}=\left\{x_1\in A_1~\middle|~1-\frac{\Phi_1(x_1)}{\kappa^2}\geq\frac{1}{n}\right\}$$
	as well as the (Lipschitz-continuous) function on $\Surf$, 
	$$f_{0,n}:x_0\mapsto\sup_{x_1\in A_{1,n}}\frac{\Cos^2(d(x_0,x_1)/\kappa)}{1-\frac{\Phi_1(x_1)}{\kappa^2}}.$$
	It is immediate that $(1-\frac{\Phi_0}{\kappa^2})\geq f_{0,n}$ on $A_0$. Furthermore, on $\spt(\mu_0^\perp)\cap A_0$, both of these functions are identically $0$ (because any $x_0$ in that set is at distance at least $\kappa\frac{\pi}{2}$ from any point in $A_{1,n}$). On the other hand, $\pi_0$-almost any $x_0$ is associated to some $x_1\in A_{1,n}$ for some $n$, in the sense that $1-\Phi_0(x_0)/\kappa^2=\frac{\cos^2(d(x_0,x_1)/\kappa)}{1-\Phi_1(x_1)/\kappa^2}$ (by \eqref{eq:support_eq}). Therefore: 
	$$\pi_0\left(\Surf\setminus\bigcup_{n=1}^\infty\left\{x_0\in A_0~\middle|~1-\frac{\Phi_0(x_0)}{\kappa^2}=f_{0,n}(x_0)\right\}\right)=0.$$
	Furthermore, since $\mu_0$ and therefore $\pi_0$ are absolutely continuous w.r.t. the Lebesgue measure, denoting by $A'_0$ the subset of all points $x\in A_0$ with Lebesgue density 1 and such that $f_{0,n}$ be differentiable at $x$, Rademacher's theorem guarantees that $\pi_0$-almost any $x\in A_0$ is in $A'_0$ and therefore also
	$$\pi_0\left(\Surf\setminus\bigcup_{n=1}^\infty A_{0,n}\right)=0$$
	with the definition
	$$A_{0,n}\assign\left\{x_0\in A'_0~\middle|~1-\frac{\Phi_0(x_0)}{\kappa^2}=\frac{\Cos^2(d(x_0,x_1)/\kappa)}{1-\frac{\Phi_1(x_1)}{\kappa^2}}(=f_{0,n}(x_0))\tn{ for some }x_1\in A_{1,n}\right\}$$
	Now, any $x_0\in A_{0,n}$ is a point of strong differentiability for $f_{0,n}$ and therefore of approximate differentiability of $\Phi_0$. It follows that, for $\pi_0$-almost every $x_0\in \Surf$, there exists $n\in\N\setminus\{0\}$ s.t. $\tilde{\nabla}\Phi_0(x_0)=\nabla f_{0,n}(x_0)$.
	
	Let us now compute the gradient of the Lipschitz function $f_{0,n}$ (where it is defined). Consider $x_0\in A_{0,n}$ and $x_1\in A_{1,n}$, such that 
	$$f_{0,n}=1-\frac{\Phi_0(x_0)}{\kappa^2}=\frac{\cos^2(d(x_0,x_1)/\kappa)}{1-\frac{\Phi_1(x_1)}{\kappa^2}}.$$
	
	But for any other $x\in \Surf$, $\kappa^2-\Phi_1(x_1)\geq\frac{\cos^2(d(x,x_1))}{f_{0,n}(x)}=:g_n(x)$ (by definition of $f_{0,n}$) and therefore, $x_0$ is a maximum of $g_n$ i.e. $0\in\nabla^+g_n(x_0)$ where $\nabla^+g_n$ denotes the supergradient of the function $g_n$. Let us recall that this function is also sub-differentiable at $x_0$ giving us that it actually is differentiable at that point, with gradient $0$. By hypothesis, $f_{0,n}$ is differentiable and non-zero at $x_0$ and $\Cos^2(d(.,x_1))$ is sub-differentiable  (this is straightforward by super-differentiability of the squared distance on $X$, see \cite[Lemma 15]{gallouet2021regularity}), therefore $g_n$ is indeed sub-differentiable and therefore differentiable at $x_0$ (and its gradient is its only super-gradient, $0$). This implies, using the chain rule, that $d(.,x_1)$ and therefore $d^2(.,x_1)$ is also differentiable at $x_0$, with gradient $\nabla_xd^2(x_0,x_1)=-2\Log^X_{x_0}(x_1)$ (see \eqref{eq:LogMapBasic}) and:
	\begin{equation*}
		\begin{split}
			0= &  \frac{2}{\kappa}\frac{\sin\left(\frac{d(x_0,x_1)}{\kappa}\right)\cos\left(\frac{d(x_0,x_1)}{\kappa}\right)}{f_{0,n}(x_0)}\frac{\Log^X_{x_0}(x_1)}{\norm{\Log^X_{x_0}(x_1)}}-\frac{\cos^2\left(\frac{d(x_0,x_1)}{\kappa}\right)}{f_{0,n}(x_0)^2}\nabla f_{0,n}(x_0)
		\end{split}
	\end{equation*}
	But since $x_0\in A_{0,n}$,
	\begin{equation*}
		\begin{split}
			\tilde{\nabla}\Phi_0(x_0)=-\kappa^2\nabla f_{0,n}(x_0)= &  -2\kappa\tan\left(\frac{d(x_0,x_1)}{\kappa}\right)f_{0,n}(x_0)\frac{\Log^X_{x_0}(x_1)}{\norm{\Log^X_{x_0}(x_1)}}
		\end{split}
	\end{equation*}
	and using the fact that $d(x_0,x_1)<\kappa\frac{\pi}{2}$, we can conclude
	\begin{equation*}
		\begin{split}
			\Log_{x_0}(x_1)= & -\kappa\arctan\left(\frac{1}{2\kappa}\frac{\norm{\tilde{\nabla}\Phi_0(x_0)}}{1-\Phi_0(x_0)/\kappa^2}\right)\frac{\tilde{\nabla}\Phi_0(x_0)}{\norm{\tilde{\nabla}\Phi_0(x_0)}}
		\end{split}
	\end{equation*}
	and $x_1$ is in fact uniquely determined by $x_0$ (and the dual solution $\Phi_0$). Any optimal transport plan $\pi$ is therefore supported on the graph of the map $T$ given in the statement and, since the marginal densities $1-\Phi_0/\kappa^2$, $1-\Phi_1/\kappa^2$ (Proposition \ref{prop:HK_L1_existence} \eqref{item:HK_L1_marginals}) do not depend on the chosen optimal transport plan, by strict convexity of the KL divergence w.r.t.~its first argument (see Proposition \ref{prop:HKBasic} \eqref{item:HKBasicMinimizer}), the map $T$ and therefore also the optimal plan $\pi$ are unique as well.
\end{proof}

\end{document}